\documentclass[a4paper,12pt]{amsart}
\usepackage[margin=1.65in,footskip=0.50in]{geometry}
\usepackage{amsfonts}
\usepackage{mathrsfs}
\usepackage{amsmath,amssymb,latexsym,amsfonts,amscd}
\usepackage[usenames,dvipsnames]{color}
\usepackage{multicol}
\usepackage{verbatim}
\usepackage[normalem]{ulem}
\usepackage[all]{xy}
\usepackage{cancel}
\usepackage{xcolor}
\usepackage{bm}

\renewcommand{\mathbf}{\mathbb}

\newtheorem{thm}{Theorem}[section]
\newtheorem{theorem}[thm]{Theorem}
\newtheorem{lemma}[thm]{Lemma}

\newtheorem{prop}[thm]{Proposition}
\newtheorem{definition}[thm]{Definition}

\newtheorem{corollary}[thm]{Corollary}

\theoremstyle{definition}

\newtheorem{setup}[thm]{}

\newtheorem{remark}[thm]{Remark}

\newtheorem{notation}[thm]{Notation}
\newtheorem{convention}[thm]{Convention}
\theoremstyle{remark}

\numberwithin{equation}{thm}

\title[Abelian canonical covers up to degree $8$]{Abelian canonical covers of minimal rational surfaces and $\mathbb F_1$ up to degree $8$}
\author{Alexandre Dorothée}
\address{Departamento de Álgebra, Geometría y Topología, Universidad Complutense de Madrid, Madrid, Spain}
\email{doroalex@ucm.es}
\author{Francisco Javier Gallego}
\address{Departamento de Álgebra, Geometría y Topología and Instituto de Matemática Interdisciplinar, Universidad Complutense de Madrid, Madrid, Spain}
\email{gallego@ucm.es}
\date{}

\begin{document}

\begin{abstract}

We classify abelian canonical covers of degree at most $8$ of smooth surfaces of minimal degree, and of $\mathbb P^2$ and Hirzebruch surfaces not embedded  as surfaces of minimal degree. Such covers are the surface analogue of the canonical map of a hyperelliptic curve and play a role in the geography of surfaces of general type and in the study of the degree of the canonical map. By a result of Xiao, degree $8$ is a natural threshold, since for higher degree the geometric genus and the irregularity are bounded.
\smallskip

Let $\pi: X \longrightarrow W$ be an abelian canonical cover of a smooth surface of minimal degree $W$ with group $G$ and degree $n$.
 Degrees $n \le 4$ were classified earlier; we complete the picture for $5 \le n \le 8$:

\smallskip
\begin{enumerate}
    \item No such covers exist if $n$ is prime and $n > 3$ (in particular, if $n = 5, 7$), nor if $G = \mathbb Z_8$. Hence only $G = \mathbb Z_6$, $\mathbb Z_2 \times  \mathbb Z_2 \times \mathbb Z_2$ and $\mathbb Z_2 \times \mathbb Z_4$ occur.

    \item For each of these groups we give the complete list of covers, specifying its building data and the singularities of $X$. Only $W =    \mathbb P^2$, $\mathbb F_0$ and $\mathbb F_1$ arise. The list contains several families with unbounded $p_g$, most of them new; $X$ is smooth for general covers with $G=\mathbb Z_2 \times \mathbb Z_2 \times \mathbb Z_2$, but singular if $G=\mathbb Z_6$ and, with one exception, if $G=\mathbb Z_2 \times \mathbb Z_4$.
\end{enumerate}

\smallskip

When $W$ is $\mathbb P^2$ or a Hirzebruch surface not embedded as a surface of minimal degree, we show that abelian canonical covers exist only for $n = 2$ or for $n = 4$ with $G = \mathbb Z_2 \times \mathbb Z_2$, and we classify the latter completely: they form infinitely many families, most new, with unbounded $p_g$ and, most of them, with unbounded irregularity.
\smallskip

The slopes $K^2/\chi$ of $X$ in these families accumulate at $6$ (if $n=6$), at 8 (if $n=8$), and, for the quadruple covers above, at $8\alpha/(\alpha +1)$ for every integer $\alpha \ge 2$, as well as at $8$.
\smallskip

These results rest on more general ones, valid for $X$ normal, locally Gorenstein and of arbitrary dimension, and $W$ smooth with $p_g(W) = 0$: we determine which branch divisors of an abelian canonical cover must vanish when $n \le  8$ or $n$ is prime, and, for canonical covers whose $\pi_*\mathcal O_X$ splits as a sum of line bundles, we describe the $\mathcal O_W$-module structure of $\pi_*\mathcal O_X$, computing it completely when $W$ is  $\mathbb P^2$ or a Hirzebruch surface. 
\end{abstract}
\maketitle

\section*{Introduction}

The goal of this article is to give the complete classification of abelian canonical covers, up to degree $8$, of smooth surfaces of minimal degree, and also of $\mathbb P^2$ or Hirzebruch surfaces not embedded as surfaces of minimal degree. Abelian canonical
covers $\pi: X \longrightarrow W$ (i.e., abelian covers $\pi$ which are a factor of a canonical morphism $\varphi: X \longrightarrow \mathbb P^N$) of surfaces $W$ embedded in $\mathbb P^N$ as surfaces of minimal degree $W$ are central, from various points of view,  in the theory of complex projective surfaces. They can be seen as analogues of canonical morphisms of hyperelliptic curves.  They are important in the geography  and in relation to the degree of the canonical map of surfaces of general type, especially as this degree goes up. They can be used to find moduli components with loci analogous to the hyperelliptic locus of the moduli of curves.  They are also exceptional with respect to their canonical ring.

\smallskip

\noindent
{\bf Relevance of abelian canonical covers of surfaces of minimal degree.}

\vskip .05truecm
\noindent Varieties of minimal degree are the generalization of rational normal curves. Therefore, since double covers are necessarily abelian, abelian canonical covers of surfaces of minimal degree can be considered the anologue of canonical morphisms of hyperelliptic curves. However, unlike curves,  if the canonical map $\varphi$ of a smooth surface of general type is generically finite, then the degree $n$ of $\varphi$ can be as high as $36$ (see \cite[Proposition 5.7]{Persson} for the bound; see  
\cite[Theorem 1]{Yeung}, \cite[Theorem 1.4]{LaiYeung}
 and \cite[Theorem 1]{Rito} for examples that attain the bound). 
In this context, abelian canonical covers of surfaces of minimal degree become relevant in
 filling the geography of surfaces of general type with given invariants and possessing a canonical map of a specific degree
 (see e.g. \cite{Horikawa}, \cite{Persson}, \cite{Konno}, \cite{quad2}, \cite{quad1},  \cite{DuGao}
 ).
 The problem of finding out what degrees $n$ in the range $2 \le n \le 36$ actually occur is, at the same time, classical and very active until the present day, as the  contributions of many algebraic geometers 
 show.

\smallskip
 Another reason why canonical covers of surfaces of minimal degree are significant is that, as $n$ goes up, the list of possible images of $\varphi$ is more and more limited. Beauville proved  (see \cite[Theorem 3.1]{Beauville}) proved that if $\varphi$ is generically finite, then the image $W$ of $\varphi$ has $p_g(W)=0$ or $W$ is a surface of general type. Surfaces of minimal degree indeed have $p_g=0$. Moreover, if $n >4$, then $W$ is birationally equivalent to a (geometrically) ruled surface (see \cite[Proposition 4.1]{Beauville}). Since smooth surfaces of minimal degree (see e.g. \cite{EH}) are either $\mathbb P^2$ or rational (geometrically) ruled surfaces (the so-called Hirzebruch surfaces), they are still among the list of plausible candidates for $W$ as $n$ increases.

\smallskip
 Abelian canonical covers of surfaces minimal degree also play an interesting role in finding moduli components of surfaces of general type with ``jumping" loci (analogous to the hyperelliptic locus of the moduli space of curves of genus $g > 2$), where the degree of the canonical map jumps up, like in  \cite{BGMR}. The abelian covers of degree in Subsection~\ref{section.bidouble.rational} and onwards may be  suitable candidates to produce further components possessing such a jumping locus. 

\smallskip

Finally,  canonical covers of surfaces of minimal degree are remarkable from the point of view of its canonical ring. Indeed, if $X$ is regular and if we exclude the case $n >2$ when $W=\mathbb P^2$, then the canonical ring of $X$ is not generated in degree $1$ and $2$ (see \cite[Theorem 2.1]{canonical.ring}). This is truly exceptional, because, otherwise, the canonical ring  of a regular surface of general type with base-point-free canonical divisor is generated in degree $1$ and $2$ (see  \cite{Green} and \cite{Ciliberto}). 

\smallskip

\noindent
{\bf Results on abelian canonical covers of smooth surfaces of minimal degree and of $\mathbb P^2$ and Hirzebruch surfaces.}

\vskip .05truecm
\noindent By all the above reasons, it is worth having  a complete classification of abelian canonical covers of surfaces of minimal degree. In this article we obtain such a classification when the degree $n \le 8$ and $W$ is smooth. 
{The classification, by the first author, when $W$ is singular will appear elsewhere (see \cite{singular})}. 
The bound $n \le 8$ is important, because if $n > 8$, then $p_g(X)$ (and $q(X)$) is bounded, 
(see \cite[Theorem 3]{Xiao}),
so the families of abelian canonical covers with $n > 8$ are bounded. 

\smallskip
Canonical double and triple covers of surfaces of minimal degree were well-known, since the works of Horikawa, Persson, Konno, Mendes Lopes and Pardini (see \cite{Horikawa}, \cite{Horikawa.triple}, \cite{Persson}, \cite{Konno}, \cite{MLP}).  Degree $4$ abelian canonical covers of surfaces of minimal degree were completely classified in \cite{quad2}, \cite{quad1}. Thus, in this paper we go on from $n = 5$ until $n=8$.

\smallskip
The results for abelian canonical covers of degree $5 \le n \le 8$ of smooth surfaces of minimal degree are collected in Section~\ref{section.abelian.rational} (see Theorems~\ref{cor.prime.P^2}, \ref{cor.prime.ruled}, \ref{thm.P2.Z6}, \ref{thm.Hirzebruch.Z6.regular}, \ref{thm.Hirzebruch.Z6.irregular}, \ref{thm.tridouble.P2}, \ref{thm.scroll.regular}, \ref{thm.Hirzebruch.irregular}, \ref{thm.P2.Z2Z4}, \ref{thm.Hirzebruch.Z2Z4.regular}, \ref{thm.Hirzebruch.Z2Z4.irregular}, \ref{thm.noZ8.covers}).
We prove that covers of degrees $n=5, 7$ do not exist (in fact, we prove the non existence of 
covers 
of prime degree $p$ if $p > 3$), nor do  cyclic covers of degree $8$ exist. For the remaining possibilities ((cyclic of) degree $6$, $\mathbb Z_2 \times \mathbb Z_2 \times \mathbb Z_2$ and $\mathbb Z_2 \times \mathbb Z_4$) we give a complete list of the covers $\pi: X \longrightarrow W$ that occur, specifying the building data (i.e., the line bundles of $\pi_*\mathcal O_X$ and the divisors of the branch locus of $\pi$) and the multiplicative structure of $\pi_*\mathcal O_X$ (as the name ``building data" suggests, this allows anyone to build the canonical cover from the bottom $W$ up to $X$). 

\smallskip
Among the covers that appear in our classification, we recover the covers of $\mathbb P^2$ constructed by Du and Gao in \cite[Theorem 1.1]{DuGao} for degrees $n=6, 8$ (unlike there, here we give explicitly the group of the cover and the description of its building data, as explained before, as well as  the singularities of $X$). We also recover the family constructed by Nguyen Bin in \cite[3.1.1]{Bin} (this is one of the four families in our list of covers with group $\mathbb Z_2 \times \mathbb Z_2 \times \mathbb Z_2$, precisely the family of Theorem~\ref{thm.scroll.regular} when $W=\mathbb F_1$, whose invariants are in Row 10 of Table~\ref{table1}). 
Finally we recover as well two constructions by Beauville in the case  in which his covers are abelian (see \cite[Exemples 4.3, 4.4]{Beauville}, that would correspond to
Theorem~\ref{thm.Hirzebruch.Z6.irregular}, Case 1 of Theorem~\ref{thm.Hirzebruch.irregular} and  Case 1 of Theorem~\ref{thm.Hirzebruch.Z2Z4.irregular}; their invariants are in Rows 4, 9, 14 of Table~\ref{table1}). This is an instance in which  the exhaustivity of our list reveals  its importance (and, indeed, the importance of Beauville's constructions), as it allows us to assure that they are, in the abelian cover case, the only irregular canonical covers of degree $6$, and the only covers of degree $8$ with $q(X)=3$, of smooth surfaces of minimal degree.

\smallskip Our classification of canonical covers $\pi: X \longrightarrow W$ of surfaces of minimal degree tells that the only possibilities for $W$ are $\mathbb P^2$, $\mathbf F_0$ and $\mathbb F_1$. In addition, if the group is $\mathbb Z_2\times \mathbb Z_2 \times \mathbb Z_2$, then, for a general cover $\pi$, $X$ is smooth (see Theorems~\ref{thm.tridouble.P2}, \ref{thm.scroll.regular} and \ref{thm.Hirzebruch.irregular}). By contrast, if the group is $\mathbf Z_6$ the surface $X$ is singular (see Theorems~\ref{thm.P2.Z6}, \ref{thm.Hirzebruch.Z6.regular}, \ref{thm.Hirzebruch.Z6.irregular} or the last column of Table~\ref{table1} for details). The same happens if the group $\mathbb Z_2 \times \mathbb Z_4$, except for one of families (see Theorems~\ref{thm.P2.Z2Z4}, \ref{thm.Hirzebruch.Z2Z4.regular}, \ref{thm.Hirzebruch.Z2Z4.irregular} or the last column of Table~\ref{table1} for details). 

\smallskip In Section~\ref{section.abelian.rational} we do not only look at covers of smooth surfaces of minimal degree. As already mentioned, a smooth surface of minimal degree is abstractly isomorphic to either $\mathbb P^2$ or a Hirzebruch surface. Then, it makes sense to also study canonical covers $\pi: X \longrightarrow W$ when $W$ is  $\mathbb P^2$ or a Hirzebruch surface embedded by a complete linear series but not as a surface of minimal degree. 
We prove that, aside from $n=2$,   $W$ not being  of minimal degree only happens 
if $n=4$ and the group of the cover is $\mathbb Z_2 \times \mathbb Z_2$ (see Proposition~\ref{prop.split.minimal.degree} (1), 
Theorem~\ref{thm.Z4.non.scroll.regular} and Subsection~\ref{section.degree.6.rational} onward). 
This a distinguished feature that makes degree $4$ remarkable. Another distinguished feature is that quadruple canonical covers
$\pi: X \longrightarrow W$ are the only canonical covers of surfaces of minimal degree and, indeed, of Hirzebruch surfaces, with unbounded irregularity $q(X)$, as \cite[Theorem 0.1]{quad1} and Theorem~\ref{thm.Hirzebruch.Z2Z2.irregular}, together with \cite[Theorem 5]{Xiao}, show. 
In Theorems~\ref{thm.bidouble.nonscroll.regular}, \ref{thm.Hirzebruch.Z2Z2.irregular} we completely classify these canonical covers $\pi: X \longrightarrow W$ with $W$ a Hirzebruch surface group $\mathbb Z_2 \times \mathbb Z_2$. 
As it turns out, $W$ can only be $\mathbb F_0$ and $X$ is smooth if $\pi$ is general.

\smallskip
\noindent {\bf Invariants and geography.}

\vskip 0.05truecm

\noindent We now pay attention to the invariants of the above surfaces $X$ and to how they are located  in  the geography of surfaces of general type. 
These invariants are collected in Tables~\ref{table1} and \ref{table2}. 
Degree $8$ covers (see Subsections \ref{section.tridouble.rational}, \ref{section.Z2Z4.rational} and Rows 6 to 10 of Table \ref{table1} for details) yield eight families of surfaces $X$ with unbounded geometric genus and unbounded $c_1^2$. For these families the slope $K^2/\chi$  approaches $8$ 
as $p_g$ goes to infinity and $K^2/\chi=8$  for two of these  families. The irregularity $q(X)$ is $0$, $1$ or $3$.
\smallskip

Degree $6$ covers (see Subsection \ref{section.degree.6.rational} and Rows 1 to 5 of Table \ref{table1} for details) yield three  families of surfaces $X$ 
with unbounded geometric genus, unbounded $c_1^2$ and whose slope $K^2/\chi$ approaches $6$  as $p_g$ goes to infinity. The irregularity $q(X)$ is $0$ or $2$. 

\smallskip 
Quadruple abelian canonical covers of $\mathbb P^2$ and Hirzebruch surfaces not embedded as surfaces of minimal degree (see Subsection~\ref{section.bidouble.rational} and Table~\ref{table2} for details)
yield 
infinitely many families 
of surfaces $X$ with unbounded geometric genus,  unbounded $c_1^2$ and, as already mentioned, unbounded irregularity. For any integer $\alpha$, $\alpha \ge 2$, the number $\dfrac{8\alpha}{\alpha+1}$ is an accumulation point for the slopes $K^2/\chi$ of the surfaces $X$ of these families. Obviously, $8$ is also an accumulation point, since it is the limit of $K^2/\chi$ as both $p_g$ and $\alpha$ go to infinity; not only that, but are there are also infinitely many families with $K^2/\chi=8$. In sum, the limits of the slopes $K^2/\chi$ assume infinite, countably many distinct values in the range $[16/3,8]$ ($[4,8]$ if we include also the abelian canonical covers of \cite[Theorem 0.1 (II)]{quad1}). The set of these accumulation values strictly contains the set obtained  by Falluca and Pignatelli (see 
\cite[Theorem 3.1]{FallucaPignatelli}), since, for us, $\alpha +1$ is any integer bigger than $2$ (in particular,  $\alpha +1$ can be prime). Unlike theirs (except when the surface $X$ is a product of hyperelliptic curves; see Case 2 of Theorem~\ref{thm.Hirzebruch.Z2Z2.irregular} (1)), our surfaces are smooth when the cover is general, and the canonical map is a finite morphism.

\medskip

\begin{table}[!ht]\caption{Invariants of abelian canonical covers of smooth surfaces of minimal degree, $5 \le n \le 8$. }\label{table1}
\centerline{\vbox{\tabskip=0pt \offinterlineskip
\def\tablerule{\noalign{\hrule}}
\halign to 16truecm
{\strut
#& \vrule
\tabskip=0em plus 3em
\hfil
#
\hfil  
&
\hfil#\hfil 
& \vrule#&
\hfil#\hfil 
& \vrule#&
\hfil#\hfil 
& \vrule#&
\hfil#\hfil 
& \vrule#&
\hfil#\hfil 
& \vrule#&
\hfil#\hfil 
& \vrule#
&
\hfil#\hfil 
& \vrule#
&
\hfil#\hfil 
& \vrule#&
\hfil#\hfil 
& \vrule#&
\hfil#\hfil 
& \vrule#\tabskip=0pt
\cr\tablerule
&&
$n$ 
&&
$G$ 
&&
 $W$  
&&
  $W$ embedded by
  && $c_1^2$ && $p_g$ && $q$ &&$c_1^2/c_2$ &&$K^2/\chi$ &&  Sing$X$
&\cr\tablerule
\tablerule
\tablerule
&& 6
&& $\mathbb Z_6$
&& $\mathbb P^2$
&& $|\mathcal O_{\mathbb P^2}(1)|$
&& $6$
&& $3$
&& $0$
&& $1/7$ 
&& $3/2$
&&$4A_1$, $4A_2$
&\cr
\tablerule
&& $6$
&& $\mathbb Z_6$
&& $\mathbb F_0$
&& $|C_0+f|$
&& $12$
&& $4$
&& $0$
&& $1/4$
&& $12/5$
&& $6A_1$, $9A_2$
&\cr
\tablerule
 && 6
&& $\mathbb Z_6$
&& $\mathbb F_0$
&& $|C_0+mf|$, $m \ge 1$
&& $12m$
&& $2m+2$
&& $0$
&& \small $\dfrac{m}{m+3}$
&&\small  $\dfrac{12m}{2m+3}$
&&$(4m+2) A_1$, $4A_2$
&  \cr
\tablerule
&& $6$
&& $\mathbb Z_6$
&& $\mathbb F_0$
&& $|C_0+mf|$, $m \ge 1$
&& $12m$
&& $2m+2$
&& $2$
&& \small $\dfrac{m}{m+1}$
&&\small $\dfrac{12m}{2m+1}$
&&$(4m+8)A_1$$^*$
&\cr
\tablerule
&& $6$
&& $\mathbb Z_6$
&& $\mathbb F_1$
&& $|C_0+mf|$, $m \ge 2$
&& $12m-6$
&& $2m+1$
&& $0$
&&\small $\dfrac{2m-1}{2m+5}$
&&\small $\dfrac{6m-3}{m+1}$
&&$4mA_1$, $4A_2$
&\cr
\tablerule
\tablerule
\tablerule
&& $8$
&& $\mathbb Z_2^3$
&& $\mathbb P^2$
&& $|\mathcal O_{\mathbb P^2}(1)|$
&& $8$
&& $3$
&& $0$
&&$1/5$
&&$2$
&& smooth
&\cr
\tablerule
&& $8$
&& $\mathbb Z_2^3$
&& $\mathbb F_0$
&& $|C_0+mf|$, $m \ge 1$
&& $16m$
&& $2m+2$
&& $0$
&&\small $\dfrac{4m}{2m+9}$
&&\small $\dfrac{16m}{2m+3}$
&& smooth
&\cr
\tablerule
&& $8$
&& $\mathbb Z_2^3$
&& $\mathbb F_0$
&& $|C_0+mf|$, $m \ge 1$
&& $16m$
&& $2m+2$
&& $1$
&&\small $\dfrac{2m}{m+3}$
&&\small $\dfrac{8m}{m+1}$
&& smooth 
&\cr
\tablerule
&& $8$
&& $\mathbb Z_2^3$
&& $\mathbb F_0$
&& $|C_0+mf|$, $m \ge 1$
&& $16m$
&& $2m+2$
&& $3$
&&$2$
&&$8$
&& \ \ smooth $^*$
&\cr
\tablerule
&& $8$
&& $\mathbb Z_2^3$
&& $\mathbb F_1$
&& $|C_0+mf|$, $m \ge 2$
&& $16m-8$
&& $2m+1$
&& $0$
&&\small $\dfrac{2m-1}{m+4}$
&&\small $\dfrac{8m-4}{m+1}$
&& smooth
&\cr
\tablerule
\tablerule
\tablerule
&& $8$
&& $\mathbb Z_2 \times \mathbb Z_4$
&& $\mathbb P^2$
&& $|\mathcal O_{\mathbb P^2}(1)|$
&& $8$
&& $3$
&& $0$
&&$1/5$
&&$2$
&& $4A_1$
&\cr
\tablerule
&& $8$
&& $\mathbb Z_2 \times \mathbb Z_4$
&& $\mathbb F_0$
&& $|C_0+f|$
&& $16$
&& $4$
&& $1$
&&$1/2$
&&$4$
&&\ $16A_1$ $^*$
&\cr
\tablerule
&& $8$
&& $\mathbb Z_2 \times \mathbb Z_4$
&& $\mathbb F_0$
&& $|C_0+mf|$, $m \ge 1$
&& $16m$
&& $2m+2$
&& $0$
&&\small $\dfrac{4m}{2m+9}$
&&\small $\dfrac{16m}{2m+3}$
&& $4A_1$
&\cr
\tablerule
&& $8$
&& $\mathbb Z_2 \times \mathbb Z_4$
&& $\mathbb F_0$
&& $|C_0+mf|$, $m \ge 1$
&& $16m$
&& $2m+2$
&& $1$
&&\small $\dfrac{2m}{m+3}$
&&\small $\dfrac{8m}{m+1}$
&& \ $16A_1$ $^*$
&\cr
\tablerule
&& $8$
&& $\mathbb Z_2 \times \mathbb Z_4$
&& $\mathbb F_0$
&& $|C_0+mf|$, $m \ge 1$
&& $16m$
&& $2m+2$
&& $3$
&&$2$
&&$8$
&&\ \ smooth $^*$
&\cr
\tablerule
&& $8$
&& $\mathbb Z_2 \times \mathbb Z_4$
&& $\mathbb F_1$
&& $|C_0+mf|$, $m \ge 2$
&& $16m-8$
&& $2m+1$
&& $0$
&&\small $\dfrac{2m-1}{m+4}$
&&\small $\dfrac{8m-4}{m+1}$
&& $4A_1$ 
&\cr
\tablerule
\noalign{\smallskip}
}}}
 \vspace{0.1ex}

     {\raggedright {\small Invariants of columns 5 to 9 refer to $X$. Last column describes the singularities of $X$ when the cover $\pi$ is general. The as\-ter\-isk means that, in this case, these are the singularities of $X$ for all covers.}  \par}
\end{table}

\medskip

\begin{table}[!ht]\caption{Invariants of quadruple abelian canonical covers of Hirzebruch surfaces not embedded as surfaces of minimal degree.}\label{table2}
\renewcommand*{\arraystretch}{1.3}
\begin{tabular}{|c|c|c|c|}
\hline
$q$ &$c_1^2/c_2$ &$K^2/\chi$ &  Sing$X$ \\ \hline \hline 
 \rule{0pt}{3.75ex} $0$
& $\dfrac{2\alpha m}{\alpha m+3\alpha+3m+6}$ 
& $\dfrac{8\alpha m}{\alpha m+\alpha+m+2}$
&smooth \\ [8pt] 
\hline
\rule{0pt}{3.75ex} $\alpha$
& $\dfrac{2\alpha m}{\alpha m+3m+6}$ 
& $\dfrac{8\alpha m}{\alpha m+m+2}$
&smooth \\ [8pt] 
\hline
\rule{0pt}{3.75ex} $m$
& $\dfrac{2\alpha m}{\alpha m+3\alpha +6}$ 
& $\dfrac{8\alpha m}{\alpha m+\alpha +2}$
&smooth \\ [8pt] 
\hline
$\alpha + m +2$
& $2$ 
& $8$
&smooth$^*$ \\ 
\hline
\end{tabular}
\vspace{0.3cm}

     {\raggedright \small{For each row of this table, $n=4$, $G=\mathbb Z_2 \times \mathbb Z_2$,
     $W$ is embedded by $|\alpha C_0+mf|$ ($\alpha,m \ge 2$), 
 $c_1^2(X)=8\alpha m$
and $p_g(X)=(\alpha +1)(m+1)$. The meaning of the last column is the same as the meaning of the last column of Table~\ref{table1}.} \par}
\end{table}


\noindent {\bf More general results.}

\vskip 0.05truecm

\noindent Our results on abelian covers of $\mathbb P^2$ and Hirzebruch surfaces are based on the more general results we prove in Section~\ref{section.abelian.canonical}. These are in turn based on the even more general results of Section~\ref{section.split}. 
Besides their application to Section~\ref{section.abelian.rational}, the results of Sections~\ref{section.split} and \ref{section.abelian.canonical} are of independent interest and have applications to many kinds of varieties, 
{e.g., to give the classification of abelian canonical covers of elliptic ruled surfaces (see~\cite{elliptic.ruled}).}  In Section~\ref{section.abelian.canonical} we study abelian canonical covers $\pi: X \longrightarrow W$, with degree $n$ prime (see Theorem~\ref{thm.prime}, where we prove that $n$ prime implies $\pi$ is simple cyclic) and with $n \leq 8$ (see Theorems~\ref{thm.bidouble.alg.str}, \ref{thm.Z4.alg.str}, \ref{thm.Z6.alg.str}, \ref{thm.tridouble.alg.str}, \ref{thm.Z2Z4.alg.str}, \ref{thm.Z8.alg.str}), where $X$ and $W$ are projective varieties with $X$ normal, irreducible and locally Gorenstein and $W$ smooth with $p_g(W)=0$ (see Set-up~\ref{setup.beginning} and Definition~\ref{defi.canonical.cover}) and, occasionally, some extra condition on the torsion of Pic$W$.
In particular, the dimension of $X$ and $W$ is arbitrary.
The main point in  these results is that an abelian cover being canonical forces several of the divisors of its total branch locus to be trivial. This is the reason behind the fact that the singularities of the canonical covers of smooth surfaces of minimal degree with groups $\mathbb Z_6$ and $\mathbb Z_2 \times \mathbb Z_4$ are milder than a priori expected.  

\smallskip
In Section~\ref{section.split} we study (see Theorem~\ref{thm.splitting}) the structure of $\mathcal O_W$-module structure of $\pi_*\mathcal O_X$ for a canonical cover $\pi: X \longrightarrow W$, when $\pi_*\mathcal O_X$ splits a direct sum of line bundles but $\pi$ is not necessarily abelian, under very general conditions (see Set-up~\ref{setup.beginning} and Definition~\ref{defi.canonical.cover} (1); in particular, $X$ and $W$ have arbitrary dimension). We specialize this result to the case in which $W$ is $\mathbb P^2$ or a Hirzebruch surface, determining completely $\mathcal O_W$-module structure of $\pi_*\mathcal O_X$ (see Proposition~\ref{prop.split.minimal.degree}).

\section{Canonical covers whose trace zero module splits}\label{section.split}

 In this section we prove results for canonical covers $\pi$ which are not necessarily abelian (except in Proposition~\ref{prop.split.minimal.degree} (2) (b)) but whose trace zero modules splits as direct sum of line bundles (e.g, if $\pi$ is a natural deformation of an abelian cover; see \cite[Definition 5.1]{Pardini}, \cite[Definition 3.2]{FantechiPardini} or \cite[Definition 2.8]{Catanese}). We start by setting  notations and definitions that we will use in this section and in the remaining of the article.

\begin{setup}\label{setup.beginning}
{\rm {\bf Set-up and notation:} We work over an algebraically closed field of characteristic $0$. Throughout this article and unless otherwise said:
\begin{enumerate}
    \item 
$X$ will be an irreducible variety.

\item $W$ will be a smooth, projective variety  
and  $i: W \hookrightarrow \mathbf P^N$ will be an embedding of $W$ in $\mathbf P^N$ induced by a complete linear series. We will abridge $\mathcal O_W(1) := i^*\mathcal O_{\mathbf P^N}(1)$; $\omega_W(-1)= \omega_W \otimes \mathcal O_W(-1)$, etc. 
\end{enumerate}}
\end{setup}

\noindent We will adopt the following definition: 

\begin{definition}\label{defi.canonical.cover}
{\rm A morphism $\pi: X \longrightarrow W$ is a canonical cover of $W$ if 
\begin{enumerate}
\item $X$ is {a normal and  locally Gorenstein} variety of general type 
whose canonical bundle
is ample and base-point-free; 
\item the canonical morphism $\varphi: X \longrightarrow \mathbf P^N$ of $X$ factors as
$\varphi = i \circ \pi$.
\end{enumerate}} 
\end{definition}

\begin{remark}\label{remark.CMflatness}
{\rm Under the hypotheses of Definition~\ref{defi.canonical.cover}, $\pi$ is a flat, finite, surjective morphism (flatness follows, e.g., from \cite[Theorem 23.1%
]{Matsumura} or \cite[Theorem 18.16]{Eisenbud}). 
}
\end{remark}

\subsection{A general result} \ 

\smallskip
\noindent In this subsection we prove a result, namely, Theorem~\ref{thm.splitting},
describing the properties of canonical covers $\pi: X \longrightarrow W$ with split trace zero module, where the hypothesis on $X$ and $W$ are quite general (see Definition~\ref{defi.canonical.cover} (1) and \eqref{eq.p_g=0}; in particular $X$ and $W$ have arbitrary dimension). We also prove 
Lemma~\ref{lemma.criterion.canonical.cover}, which yields a sort of converse to Theorem~\ref{thm.splitting}. Theorem~\ref{thm.splitting} {(in particular, Theorem~\ref{thm.splitting} (II) (4))} and  Lemma~\ref{lemma.criterion.canonical.cover} will be crucial in the remaining of the article.

\begin{theorem}\label{thm.splitting}
   Let $\pi : X \longrightarrow  W$ be a canonical cover of degree $n$, $n \geq 2$. 
   \begin{enumerate}
       \item[(I)] Assume that  
       \begin{equation}\label{eq.p_g=0}
            h^0(\omega_W(-1))=0
       \end{equation}
       (e.g., if $p_g(W)=0$). Then  $\pi_*\mathcal O_X$ splits as 
   \begin{equation}\label{eq.trace.module.split}
 \pi_*\mathcal O_X \simeq   \mathcal{O}_{W} \oplus E' \oplus \omega_W(-1), 
 \end{equation}
 where $E'$ is a vector bundle of rank $n-2$ which is self-dual after twisting; precisely
 \begin{equation}\label{eq.self.duality}
     E'(1) \simeq \omega_W \otimes {E'}^*,
 \end{equation}
 and 
 \begin{equation}\label{eq.vanishing.global.sections}
     h^0(\omega_W \otimes {E'}^*)=0. 
 \end{equation}

 \item[(II)]  
 Assume in addition that $\pi_{*}\mathcal{O}_{X}$ splits as sum of line bundles.
In this case, we may set
\begin{equation}\label{formula.splitting}
    \pi_{*}\mathcal{O}_{X} = \mathcal{O}_{W} \oplus \mathcal L_{1}^{*} \oplus \cdots \oplus \mathcal L_{n-1}^{*}, 
\end{equation}
where $\mathcal L_1, \dots, \mathcal L_{n-1}$ are line bundles. Then the following holds: 

\begin{enumerate}
    \item[(1)]  $\mathcal L_{n-1} \simeq \omega_W^*(1)$. 
    \item[(2)] There is a permutation $\sigma$ of $S_{n-2}$, of order $1$ or $2$, such that \linebreak
    $\mathcal L_{\sigma(i)} \simeq \mathcal L_{i}^* \otimes \omega_W^*(1)$ for all $1 \leq i \leq n-2$.
    \item[(3)]  $p_g(W)=0$ and $h^0(\omega_{W} \otimes \mathcal L_{i})=0$ for all $1 \leq i \leq n-2$.
    \item[(4)] $\mathcal L_1^{\otimes 2} \otimes \cdots \otimes \mathcal L_{n-2}^{\otimes 2} \simeq \mathcal L_{n-1}^{\otimes n-2}$.
    \item[(5)] If, in addition, $n$ is even and the Picard group of $W$ is free of $2$-torsion, then 
    \begin{equation*}
   \mathcal L_1 \otimes \cdots \otimes \mathcal L_{n-2} \simeq \mathcal L_{n-1}^{\otimes \frac{n}{2}-1} \hskip -.2 cm.
    \end{equation*}
\end{enumerate}
   \end{enumerate}
\end{theorem}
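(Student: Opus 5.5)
The plan rests on three standard ingredients: relative duality for the finite flat morphism $\pi$ (Remark~\ref{remark.CMflatness}), the fact that $\varphi^*\mathcal O_{\mathbf P^N}(1)=\omega_X$, and the trace splitting. Since we are in characteristic $0$, the trace map gives $\pi_*\mathcal O_X\simeq \mathcal O_W\oplus E$ with $E$ the trace zero module.

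\textbf{Part (I).} We use duality for $\pi$ (valid since $X$ is Cohen--Macaulay and $W$ is smooth): $\pi_*\omega_X\simeq (\pi_*\mathcal O_X)^*\otimes\omega_W = \omega_W\oplus E^*\otimes\omega_W$. Since $\omega_X=\pi^*\mathcal O_W(1)$, the projection formula gives $\pi_*\omega_X\simeq \mathcal O_W(1)\oplus E(1)$. Comparing the two descriptions yields
\[
\mathcal O_W(1)\oplus E(1)\simeq \omega_W\oplus \omega_W\otimes E^*.
\]
The canonical map being given by the complete series $|\omega_X|$ and factoring through $i$ means that $H^0(\omega_X)=H^0(\mathcal O_W(1))$. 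Twisting by $\mathcal O_W(-1)$ shows that $h^0(E)=0$ and that the summand $\mathcal O_W$ of $\pi_*\omega_X(-1)=\mathcal O_W\oplus E$ is "unique". Indeed, $\mathcal O_W\oplus E\simeq \omega_W(-1)\oplus \omega_W\otimes E^*(-1)$, and the left side has $h^0=1$. Using \eqref{eq.p_g=0}, $h^0(\omega_W(-1))=0$, so $h^0(\omega_W\otimes E^*(-1))=1$.

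The delicate point is to extract a trivial summand from $\omega_W\otimes E^*(-1)$. Rather than argue directly, I would produce a nowhere-vanishing section and a splitting. A section $s$ of $\omega_W\otimes E^*(-1)$ is a map $E\to\omega_W(-1)$. Composing with the isomorphism above, the constant section $1\in H^0(\mathcal O_W)$ maps to a section in $\omega_W(-1)\oplus\omega_W\otimes E^*(-1)$; its first component vanishes, so the summand $\mathcal O_W$ embeds into $\omega_W\otimes E^*(-1)$. The projection back is split because the isomorphism carries the $\mathcal O_W$ summand onto a direct summand. Hence $\omega_W\otimes E^*(-1)\simeq\mathcal O_W\oplus F$, i.e. $E\simeq \omega_W(-1)\oplus E'$ with $E'=F^*\otimes\omega_W(-1)$, which is \eqref{eq.trace.module.split}. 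Substituting back and cancelling the summands $\mathcal O_W$ and $\omega_W$ gives \eqref{eq.self.duality}. Cancelling uses Krull--Schmidt (Atiyah) for vector bundles on a projective variety; I expect this bookkeeping to be the main technical obstacle. Finally, \eqref{eq.vanishing.global.sections} follows from $h^0(E(1))\cdot$-free counting: $h^0(\omega_X)=h^0(\mathcal O_W(1))$ forces $h^0(E(1))=0$, hence $h^0(E'(1))=0$, and by \eqref{eq.self.duality} this is $h^0(\omega_W\otimes E'^*)=0$.

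\textbf{Part (II).} Now $E'$ is a sum of line bundles $\mathcal L_i^*$, $1\le i\le n-2$, and $\mathcal L_{n-1}^*=\omega_W(-1)$, which gives (1). By Krull--Schmidt for line bundles, \eqref{eq.self.duality} says that the multiset $\{\mathcal L_i^*(1)\}$ equals $\{\omega_W\otimes\mathcal L_i\}$. This gives a permutation $\sigma$ with $\mathcal L_{\sigma(i)}^*(1)\simeq\omega_W\otimes\mathcal L_i$, i.e. $\mathcal L_{\sigma(i)}\simeq \mathcal L_i^*\otimes\omega_W^*(1)$. Applying this twice gives $\mathcal L_{\sigma^2(i)}\simeq\mathcal L_i$, so $\sigma$ can be chosen to be an involution by re-pairing isomorphic bundles; this is (2). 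For (3), \eqref{eq.vanishing.global.sections} reads $h^0(\omega_W\otimes\mathcal L_i)=0$. As for $p_g(W)=0$, we have $h^0(\mathcal O_X)=1$, and $H^0(\pi_*\omega_X)$ contains $H^0(\omega_W)$ as a summand. Since $\omega_W$ also appears in $\mathcal O_W(1)\oplus E(1)$ only through $E(1)$, whose $h^0$ vanishes, we get $h^0(\omega_W)=0$. For (4), take determinants of \eqref{eq.self.duality}: $\det E'\otimes\mathcal O_W(n-2)\simeq\omega_W^{n-2}\otimes(\det E')^{-1}$. Hence $(\det E')^{-2}\simeq(\omega_W^*(1))^{n-2}$, i.e. $\prod\mathcal L_i^{2}\simeq\mathcal L_{n-1}^{n-2}$. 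For (5), when $n$ is even, $n-2$ is even, and absence of $2$-torsion lets us take square roots, giving the stated relation.
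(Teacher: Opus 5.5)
Your proposal is correct and follows essentially the same route as the paper. Relative duality plus the projection formula give $\mathcal O_W\oplus E\simeq\omega_W(-1)\oplus(\omega_W\otimes E^*(-1))$. The hypothesis $h^0(\omega_W(-1))=0$ together with Krull--Schmidt yields the splitting and \eqref{eq.self.duality}, $h^0(\omega_X)=h^0(\mathcal O_W(1))$ yields the vanishing and $p_g(W)=0$, and determinants give (4)--(5). The only minor deviations are that you extract the trivial summand through an explicit split injection $\mathcal O_W\to\omega_W\otimes E^*(-1)$ rather than by Krull--Schmidt, and that you spell out why $\sigma$ can be chosen to be an involution; both are valid.
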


\begin{proof}
  Since $\pi$ is flat, $\pi_{*}\mathcal{O}_{X}$ is a locally free $\mathcal{O}_{W}$ module of rank $n$.
  We have 
  \begin{equation}\label{eq.trace.zero.module}
     \pi_{*}\mathcal{O}_{X} = \mathcal{O}_{W} \oplus E, 
  \end{equation}
  where $E$ is the trace zero module of $\pi$. 
  By adjunction (see \cite[Theorem 6.4.9]{Liu}) we have 
  \begin{equation*}
  \omega_{X} = \omega_{X/W} \otimes\pi^{*}\omega_{W},    
  \end{equation*}
 and by projection formula and relative duality 
 we have
 \begin{equation}\label{eq.relative.duality}
  \pi_*\omega_{X} 
  = (\pi_*\mathcal O_X)^* \otimes \omega_{W}.
  \end{equation}
  Since $\varphi$ is the canonical morphism of $X$, 
  \begin{equation*}
      \omega_X=\varphi^*\mathcal O_{\mathbf{P}^N}(1)=\pi^*\mathcal O_W(1),
  \end{equation*}
  so, by projection formula, 
    \begin{equation}\label{eq.by.projection.formula}
      \pi_*\omega_X=\pi_*\mathcal O_X \otimes \mathcal O_W(1).
  \end{equation}
  Thus
 \begin{equation}\label{eq.cor.relative.duality}
      \pi_*\mathcal O_X \otimes \mathcal O_W(1)= (\pi_*\mathcal O_X)^* \otimes \omega_{W}, 
  \end{equation}
so, from \eqref{eq.trace.zero.module}, we get 
\begin{equation}\label{eq0}
    \mathcal O_W \oplus E \simeq \omega_W(-1) \oplus (\omega_W \otimes E^*(-1)).
\end{equation}
By hypothesis, $h^0(\omega_W(-1))=0$. Then,  by the Krull-Schmidt theorem (see e.g. \cite[Theorems 1 and 3]{Atiyah}), $E \simeq E' \oplus \omega_W(-1)$, where $E'$ is vector bundle on $W$ of rank $n-2$, so \eqref{eq.trace.module.split} holds. Then 
\eqref{eq0} becomes 
\begin{equation*}
    \mathcal O_W \oplus E' \oplus \omega_W(-1) \simeq \omega_W(-1) \oplus (\omega_W \otimes {E'}^*(-1)) \oplus \mathcal O_W.
\end{equation*}
so, by the Krull-Schmidt theorem, $E'$ satisfies \eqref{eq.self.duality}. 
Since the canonical morphism $\varphi$ of $X$ factors as $\varphi = i \circ \pi$, we get $H^0(\omega_X)=H^0(\mathcal O_W(1))$. By \eqref{eq.by.projection.formula}, 
    \begin{equation*}
        H^0(\omega_X)=H^0(\pi_*\omega_X)=H^0(\pi_*\mathcal{O}_X \otimes \mathcal O_W(1)),
    \end{equation*} so 
\eqref{eq.trace.module.split} and \eqref{eq.self.duality} imply \eqref{eq.vanishing.global.sections}.

\smallskip
\noindent Now assume $\pi_*\mathcal O_X$ splits as direct sum of line bundles and notation \eqref{formula.splitting}. Then, after a harmless renumbering if necessary, \eqref{eq.trace.module.split} yields (1). Since $\pi_*\mathcal O_X$ splits as direct sum of line bundles, \eqref{eq.self.duality} implies (2).

    \smallskip
    \noindent  By \eqref{eq.trace.module.split}, \eqref{formula.splitting} and (1), 
    \begin{equation*}
        E'= \mathcal L_1^* \oplus \cdots \oplus \mathcal L_{n-2}^*,
    \end{equation*}
so (3) follows from \eqref{eq.vanishing.global.sections} and (2).

 \smallskip
    
    \noindent By 
   (1) and (2) we have
    \begin{equation}\label{eq1}
    \mathcal O_{W} \oplus \mathcal L_{1}^{*} \oplus \cdots \oplus \mathcal L_{n-1}^{*}  = \omega_{W}(-1)  \oplus (\mathcal L_{1}  \otimes
    \omega_{W}(-1)) \oplus \cdots \oplus (\mathcal L_{n-1}  \otimes
    \omega_{W}(-1)).    
  \end{equation}
    Then  (4) follows from (1) and from taking determinant on both sides of  \eqref{eq1}.

     \smallskip
    \noindent Statement (5) is a straight forward consequence of (4).
\end{proof}

\begin{remark}\label{remark.thm.splitting}
\begin{enumerate}
    \item With the assumptions of Theorem~\ref{thm.splitting} (I), the conditions $h^0(\omega_W(-1))=0$ and $p_g(W)=0$ are equivalent, so, as hypothesis in the statement of Theorem~\ref{thm.splitting}, we could have used  the latter instead of the former.  
    \item With the assumptions of Theorem~\ref{thm.splitting} (I) and (II), $h^0(\mathcal L_i^*(1))=0$ for all $1 \leq i \leq n-1$. 
    \item If we do not assume $\pi$ to be a canonical cover as in Definition~\ref{defi.canonical.cover} but assume only $i \circ \pi$ to be induced by a, maybe incomplete, linear series of $|\omega_X|$ 
    (in this case, $i$ is not necessarily induced by a complete linear series either), then, except for (I) (1.4.4) and (II) (3), Theorem~\ref{thm.splitting} still holds.
\end{enumerate}
\end{remark}

\begin{corollary}\label{cor.234}
   With the hypotheses of Theorem~\ref{thm.splitting}:
   \begin{enumerate}
       \item If $n=2$, then $\pi_*\mathcal O_X=\mathcal O_W \oplus \omega_W(-1)$ and $|\omega_W^{-2}(2)|$ has a reduced divisor $D$ (if $X$ is smooth, then  $|\omega_W^{-2}(2)|$ has a smooth divisor $D$). 
       \item If $n=3$, then $\mathcal L_1^{\otimes 2}=\mathcal L_2=\omega_W^*(1)$; in particular, $\omega_W^*(1)$ is $2$-divisible in $\mathrm{Pic}(W)$.
       \item If $n=4$
       {and Pic$(W)$ is free of $2$-torsion}, then $\mathcal L_1 \otimes \mathcal L_2=\mathcal L_3=\omega_W^*(1)$ (compare  with \cite[Proposition 2.1 (2)]{quad1}, where the same result is stated only for smooth surfaces $W$ of minimal degree) 
   \end{enumerate}
\end{corollary}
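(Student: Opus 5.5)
The plan is to derive each item directly from Theorem~\ref{thm.splitting}, treating the three degrees separately. Throughout, we use that the hypotheses include \eqref{eq.p_g=0}, so (I) applies. For (II) in the cases $n=2,3$ we use that any vector bundle of rank $\le 1$ is a line bundle. Hence for $n\le 3$ the splitting hypothesis of (II) holds automatically, since $E'$ has rank $n-2\le 1$.

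\emph{Case $n=2$.} Here $E'$ has rank $0$, so \eqref{eq.trace.module.split} reads $\pi_*\mathcal O_X=\mathcal O_W\oplus\omega_W(-1)$. A flat double cover is determined by $\mathcal L=\mathcal L_1=\omega_W^*(1)$ together with a section of $\mathcal L^{\otimes 2}$ whose zero divisor $D$ is the branch divisor, so $D\in|\omega_W^{-2}(2)|$. Locally $X$ is given by $z^2=f$, where $f$ is a local equation of $D$, and we argue as follows.
\begin{itemize}
\item Normality of $X$ forces $D$ to be reduced: a nonreduced component, locally $f=g^2h$, would make $X$ singular in codimension one, contradicting Serre's $R_1$.
\item If $X$ is smooth, then $D$ is smooth, since $z^2=f$ is singular exactly over the singular points of $D$.
\end{itemize}

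\emph{Case $n=3$.} Here $E'=\mathcal L_1^*$ is a line bundle. Item (II)(1) gives $\mathcal L_2=\omega_W^*(1)$, and item (II)(4) with $n=3$ gives $\mathcal L_1^{\otimes 2}\simeq\mathcal L_2$. Thus $\omega_W^*(1)$ is $2$-divisible. (Item (2) with $n-2=1$ would give $\mathcal L_1^{\otimes 2}\simeq\omega_W^*(1)$ as well.)

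\emph{Case $n=4$.} The splitting into line bundles is part of the hypotheses of (II), which is assumed here. Item (II)(5) with $n=4$ then yields $\mathcal L_1\otimes\mathcal L_2\simeq\mathcal L_3^{\otimes 1}=\omega_W^*(1)$, using (1) for the last equality.

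Alternatively, we can argue from (II)(2). If $\sigma$ swaps $1$ and $2$, this gives the product directly, with no torsion hypothesis needed. If $\sigma$ is the identity, then $\mathcal L_i^{\otimes2}\simeq\omega_W^*(1)$ for $i=1,2$. Then $(\mathcal L_1\otimes\mathcal L_2)^{\otimes 2}\simeq\omega_W^*(1)^{\otimes2}$, and $2$-torsion-freeness gives the claim.

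The only mildly delicate point is the reducedness and smoothness claim in the case $n=2$. It rests on the local description of flat double covers and on normality.
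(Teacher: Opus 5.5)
Your proposal is correct and follows essentially the same route as the paper. The paper reads items (2) and (3) off Theorem~\ref{thm.splitting} (II) (1), (4), (5). For item (1) it combines the splitting \eqref{eq.trace.module.split} with the local equation $z^2=f$ of a double cover, so that normality (respectively, smoothness) of $X$ forces $D\in|\omega_W^{-2}(2)|$ to be reduced (respectively, smooth).

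Your additional remarks are also correct and only make explicit what the paper leaves implicit: that the splitting hypothesis of (II) holds automatically for $n\le 3$ because $E'$ has rank $n-2\le 1$, and that the $n=4$ case can alternatively be derived from (II)(2).
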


\begin{proof}
    The corollary is a straightforward consequence of Theorem~\ref{thm.splitting} and, in the case of (1), of the fact that $\pi$ is a double cover, hence abelian, branched along a divisor $D$ of $|\omega_W^{\otimes 2}(2)|$; by the local equation of a simple cyclic cover (see e.g., the proof of \cite[Lemma I.17.1]{BHPV}), if $X$ is normal (respectively, smooth), then $D$ is reduced (respectively, smooth). 
   \end{proof}

\noindent We will need the following lemma in Sections~\ref{section.abelian.tridouble} and \ref{section.abelian.Z2xZ4}.

\begin{lemma}\label{lemma.criterion.canonical.cover}
Let $X$ be normal and locally Gorenstein and let $\pi: X \longrightarrow W$ be a finite cover of $W$. 
Assume that 
\begin{enumerate}
    \item there exists a numerically trivial line bundle $\mathcal L$ on $X$ such that $\omega_X = \pi^*\mathcal O_W(1) \otimes \mathcal L$;
    \item both \eqref{eq.trace.module.split} and \eqref{eq.self.duality} hold; and  
    \item both \eqref{eq.p_g=0} and \eqref{eq.vanishing.global.sections} hold.                  
\end{enumerate}
    Then $\mathcal L$ is trivial and $\pi$ is a canonical cover.
\end{lemma}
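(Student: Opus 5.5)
The plan is to test the numerically trivial line bundle $\mathcal L$ by pushing it forward to $W$ and using relative duality, exactly as in the proof of Theorem~\ref{thm.splitting}. By the hypothesis of (1), $\mathcal L \simeq \omega_X \otimes \pi^*\mathcal O_W(-1)$. Since $X$ is normal, locally Gorenstein and finite over the smooth $W$, $\pi$ is flat (Remark~\ref{remark.CMflatness}), and relative duality gives \eqref{eq.relative.duality}, that is, $\pi_*\omega_X \simeq (\pi_*\mathcal O_X)^*\otimes\omega_W$. By the projection formula,
\[
\pi_*\mathcal L \simeq (\pi_*\mathcal O_X)^*\otimes \omega_W(-1).
\]
Substituting \eqref{eq.trace.module.split}, the right-hand side is $\omega_W(-1)\oplus({E'}^*\otimes\omega_W(-1))\oplus\mathcal O_W$. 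It therefore contains $\mathcal O_W$ as a direct summand, so $h^0(X,\mathcal L)=h^0(W,\pi_*\mathcal L)\ge 1$. A numerically trivial line bundle on an irreducible projective (normal) variety with a nonzero section is trivial: the zero divisor of the section is effective and numerically trivial, hence zero. So $\mathcal L\simeq\mathcal O_X$ and $\omega_X\simeq \pi^*\mathcal O_W(1)$.

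Next I verify Definition~\ref{defi.canonical.cover}. Condition (1) is immediate: $\omega_X=\pi^*\mathcal O_W(1)$ is base-point-free as a pullback of a very ample bundle, and ample because $\pi$ is finite. Hence $X$ is of general type, and it is normal and locally Gorenstein by assumption. For (2), I compute $H^0(\omega_X)=H^0(\pi_*\mathcal O_X(1))$ using \eqref{eq.trace.module.split} and \eqref{eq.self.duality}:
\[
H^0(\omega_X)\simeq H^0(\mathcal O_W(1))\oplus H^0(\omega_W\otimes {E'}^*)\oplus H^0(\omega_W).
\]
The middle term vanishes by \eqref{eq.vanishing.global.sections}. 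If the last term also vanishes, then $\pi^*:H^0(\mathcal O_W(1))\to H^0(\omega_X)$ is an isomorphism. Since $i$ is given by the complete series $|\mathcal O_W(1)|$, the canonical morphism is then $\varphi=i\circ\pi$, which is what we want.

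The main obstacle is showing $p_g(W)=h^0(\omega_W)=0$, since hypothesis (3) literally only provides $h^0(\omega_W(-1))=0$. First I would use Remark~\ref{remark.thm.splitting}(1), which asserts that in the canonical-cover setting the two vanishings are equivalent, and check that its argument only needs $\omega_X\simeq\pi^*\mathcal O_W(1)$ together with the splitting \eqref{eq.trace.module.split}, both of which are now available. If that does not go through, I would read \eqref{eq.p_g=0} in hypothesis (3) in the sense intended by its parenthetical, namely $p_g(W)=0$; that condition implies $h^0(\omega_W(-1))=0$ because $\mathcal O_W(1)$ has a nonzero section. With that reading, the argument above finishes the proof.
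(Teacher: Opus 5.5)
Your argument is the paper's: push $\mathcal L$ forward, use relative duality and \eqref{eq.trace.module.split} to get a nonzero section, conclude $\mathcal L\simeq\mathcal O_X$, and then compute $H^0(\omega_X)$. The only difference is cosmetic: you read off the $\mathcal O_W$ summand from the $\omega_W(-1)$ summand, while the paper first uses \eqref{eq.self.duality} to get $\pi_*\mathcal L\simeq\pi_*\mathcal O_X$. You are right that the last step needs $h^0(\omega_W)=0$, and the paper's proof uses this silently when it writes $H^0(\omega_X)=H^0(\mathcal O_W(1))$.

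Discard your first way out, however. Remark~\ref{remark.thm.splitting}(1) presupposes that $\pi$ is already canonical, and without $p_g(W)=0$ the lemma is false. Take a smooth quartic K3 surface $W\subset\mathbf P^3$ and the double cover branched along a smooth member of $|\mathcal O_W(2)|$. All hypotheses hold with $\mathcal L$ trivial and $E'=0$, yet $p_g(X)=5>4=h^0(\mathcal O_W(1))$. So reading (3) as $p_g(W)=0$, which holds in every application in the paper, is necessary rather than a fallback.
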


\begin{proof}
    By the projection formula, (1) implies 
    \begin{equation}\label{eq.push.forward.omega1}
        \pi_*\omega_X = \pi_*\mathcal L  \otimes  \mathcal O_W(1).
    \end{equation}
    By \eqref{eq.relative.duality} and (2), 
    \begin{equation}\label{eq.push.forward.omega2}
        \pi_*\omega_X = \pi_*\mathcal O_X \otimes  \mathcal O_W(1).
    \end{equation}
    Then \eqref{eq.push.forward.omega1} and \eqref{eq.push.forward.omega2} imply 
    $\pi_*\mathcal L = \pi_*\mathcal O_X$, so $h^0(\mathcal L)=1$, hence $\mathcal L$ is effective. Since $\mathcal L$ is numerically trivial and $X$ is projective, $\mathcal L$ is trivial. Then $\omega_X = \pi^*\mathcal O_W(1)$ and, in particular, $\omega_X$ is ample and base-point-free. By \eqref{eq.push.forward.omega2}, (2) and (3), $H^0(\omega_X)=H^0(\mathcal O_W(1))$,  
    so $|\omega_X| = \pi^*|\mathcal O_W(1)|$. Therefore the canonical map of $X$ factors as
$i \circ \pi$.
\end{proof}

\subsection{Canonical covers of  {$\mathbb P^2$ and Hirzebruch surfaces}}  \ 

\smallskip
\noindent In this section we apply Theorem~\ref{thm.splitting} to completely describe (see \eqref{eq.splitting.P^2}, \eqref{eq.regular.splitting}, \eqref{eq.regular.splitting.a>1} and \eqref{eq.irregular.splitting}) the trace zero module of a  canonical cover $\pi: X \longrightarrow W$, when $W$ is $\mathbb P^2$ or a  Hirzebruch surface. This description  will be crucial in Section~\ref{section.abelian.rational}. 
We give the description when $\pi_*\mathcal O_X$  splits as direct sum of line bundles. The latter condition happens of course when $\pi$ is abelian, but in the case $X$ is regular, do not requite this hypothesis. 

\smallskip
\noindent We start by setting the notation we use when $W$ is a Hirzebruch surface. 

\begin{notation}\label{notation.Hirzebruch}
 If $W$ is a Hirzebruch surface $\mathbb{F}_{e}$, we will use this notation: 
 
 \begin{enumerate}
 
\item Following \cite[Notation V.2.8.1]{Hart}, if $e > 0$, then $C_0$ will be the minimal section of $\mathbb{F}_{e}$ (minimal with respect to self-intersection, which is $C_0^2=-e$) and $f$ will be a fiber of $W$; if $e=0$, then $C_0$ and $f$ will be two non linearly equivalent fibers.

\item We will set  $\mathcal O_W(1)=\mathcal O_W(\alpha C_0+mf)$, with $\alpha \geq 1$. Since $W$ is a smooth surface, $m \geq \alpha e+1$. If $e=0$, after applying to $\mathbb{F}_0$, if necessary, the automorphism that swaps the factors of $\mathbb{P}^1 \times \mathbb{P}^1$, we will assume $m \ge \alpha$. 
\end{enumerate}
\end{notation}

\begin{prop}\label{prop.split.minimal.degree}
 Let $\pi : X \longrightarrow W$ be a canonical cover of degree $n$, $n > 2$, such that $\pi_{*} \mathcal{O}_{X}$ splits as
a direct sum of line bundles.

\begin{enumerate}
  \item Let $W$ be $\mathbb{P}^{2}$.  Then $X$ is regular, $\mathcal O_W(1)=\mathcal O_{\mathbb{P}^2}(1)$ and 
  \begin{equation}\label{eq.splitting.P^2}
      \pi_{*}\mathcal{O}_{X} = \mathcal{O}_{\mathbb{P}^{2}} \oplus \mathcal{O}_{\mathbb{P}^{2}}(-2)^{\oplus n-2} \oplus \mathcal{O}_{\mathbb{P}^{2}}(-4).
  \end{equation}

  \item  Let $W$ be a Hirzebruch surface. Then $n$ is even. 

  \smallskip
  \begin{enumerate}
  \item Let $X$ be regular.
  \item[] If $\alpha=1$, then 
  \begin{multline}\label{eq.regular.splitting}
      \pi_{*}\mathcal{O}_{X} = \mathcal{O}_{W} \oplus \mathcal{O}_{W}(-C_{0} -(m+1)f)^{\oplus \frac{n-2}{2}}\, \oplus \\ 
     \mathcal{O}_{W}(-2C_{0} -(e+1)f)^{\oplus \frac{n-2}{2}} \oplus \mathcal{O}_{W}(-3C_{0} - (m+e+2)f). 
           \end{multline}

           \item[]    If $\alpha \ge 2$, then $e=0$ and
        \begin{multline}\label{eq.regular.splitting.a>1}
      \pi_{*}\mathcal{O}_{X} = \mathcal{O}_{W} \oplus \mathcal{O}_{W}(-C_{0} -(m+1)f)^{\oplus \frac{n-2}{2}}\, \oplus \\ 
     \mathcal{O}_{W}(-(\alpha +1)C_{0} -f)^{\oplus \frac{n-2}{2}} \oplus \mathcal{O}_{W}(-(\alpha +2)C_{0} - (m+2)f). 
           \end{multline} 

            \medskip
            
     \item Let $X$ be irregular and let $\pi$ be abelian.  Then $e = 0$ and there are nonnegative integers  $n_0, n_0', n_1, n_1'$ that satisfy $n_0+ n_0'+n_1' >0$ and 
     $(n_0 + n_0' + n_1 + n_1')=\frac{n-2}{2}$, such that 
     \scriptsize{\begin{multline}\label{eq.irregular.splitting}
  \pi_{*}\mathcal{O}_{X} =  \mathcal{O}_{W} \oplus \mathcal{O}_{W}(-(m+1)f)^{\oplus n_0} \oplus \mathcal{O}_{W}(-(m+2)f)^{\oplus n_0'} \oplus \\
 \hskip -1 cm   
 \mathcal{O}_{W}(-C_{0} -(m+1)f)^{\oplus n_1} \oplus
    \mathcal{O}_{W}(-C_{0} -(m+2)f)^{\oplus n_1'} \oplus \\
   \qquad  \qquad  \qquad   \qquad  
    \mathcal{O}_{W}(-(\alpha +1)C_{0})^{\oplus n_1'} \oplus 
    \mathcal{O}_{W}(-(\alpha +1)C_{0}-f)^{\oplus n_1} \oplus 
    \mathcal{O}_{W}(-(\alpha+2)C_{0})^{\oplus n_0'}  \oplus \\
 \qquad  \qquad  \mathcal{O}_{W}(-(\alpha+2)C_{0}-f)^{\oplus n_0}
   \oplus \mathcal{O}_{W}(-(\alpha+2)C_{0} - (m+2)f).
    \end{multline}}

\normalsize
     \medskip

     \item Conversely, if \eqref{eq.regular.splitting} holds or, if \eqref{eq.regular.splitting.a>1}, with $e=0$, holds, then $X$ is regular; and, if \eqref{eq.irregular.splitting}, with $e=0$ and $n_0+ n_0'+n_1' >0$, holds, then $X$ is irregular. 
     \end{enumerate}

\end{enumerate}
\end{prop}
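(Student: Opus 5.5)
The plan is to reduce everything to numerical conditions on the line bundles $\mathcal L_i$ via Theorem~\ref{thm.splitting} (II). By (1), $\mathcal L_{n-1}\simeq\omega_W^*(1)$. By (2) there is a pairing $\mathcal L_{\sigma(i)}\simeq \mathcal L_i^*\otimes\omega_W^*(1)$. For $1\le i\le n-2$ we have the two vanishings
\[
h^0(\omega_W\otimes\mathcal L_i)=0 \quad\text{(Theorem~\ref{thm.splitting} (II)(3))},\qquad h^0(\mathcal L_i^*(1))=0 \quad\text{(Remark~\ref{remark.thm.splitting} (2))},
\]
and these are exchanged by the involution $\sigma$. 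In addition, $h^0(\mathcal L_i^*)=0$ because $h^0(\mathcal O_X)=1$. Finally, $q(X)=h^1(\pi_*\mathcal O_X)=\sum_i h^1(\mathcal L_i^*)$, which is how regularity is read off in every case.

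For $W=\mathbb P^2$, I write $\mathcal O_W(1)=\mathcal O(d)$ and $\mathcal L_i=\mathcal O(a_i)$, so that $\omega_W^*(1)=\mathcal O(d+3)$. The vanishings become $a_i\le 2$ and $a_i>d$. Since $n>2$, at least one index $i\le n-2$ exists, and then $d\ge1$ forces $d=1$ and $a_i=2$ for all $i$, which gives \eqref{eq.splitting.P^2}. Regularity is immediate because line bundles on $\mathbb P^2$ have no $h^1$.

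For $W=\mathbb F_e$, I write $\mathcal L_i=\mathcal O_W(a_iC_0+b_if)$, so that $\omega_W^*(1)=\mathcal O_W((\alpha+2)C_0+(m+e+2)f)$. I will use the standard fact that, for $e\ge0$, $h^0(aC_0+bf)\neq0$ if and only if $a\ge0$ and $b\ge0$; the case $e>0$ needs a short check using $\pi_*$ to $\mathbb P^1$, i.e.\ $\mathrm{Sym}^a(\mathcal O\oplus\mathcal O(-e))\otimes\mathcal O(b)$. With this fact the vanishings read
\[
(a_i>\alpha \ \text{or}\ b_i>m)\quad\text{and}\quad(a_i\le1 \ \text{or}\ b_i\le e+1).
\]
Evenness of $n$ follows once I show that $\sigma$ has no fixed point. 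A fixed point would give $a_i=(\alpha+2)/2$ and $b_i=(m+e+2)/2$, and these contradict the two conditions together with $m\ge \alpha e+1$. Hence the $n-2$ summands come in pairs $\{\mathcal L_i,\ \mathcal L_i^*\otimes\omega_W^*(1)\}$.

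The main step, and the expected obstacle, is the lattice-point case analysis. For each pair, both members must satisfy the displayed conditions, and $h^0(\mathcal L_i^*)=0$ must also hold; the latter rules out $a_i\le0$ together with $b_i\le0$. I would split according to whether $a_i\le1$ or $b_i\le e+1$, and then according to $a_i>\alpha$ or $b_i>m$. I expect the surviving pairs to be $\{(1,m+1),(\alpha+1,e+1)\}$, together with, only when $e=0$, the extra pairs $\{(0,m+1),(\alpha+2,1)\}$, $\{(0,m+2),(\alpha+2,0)\}$ and $\{(1,m+2),(\alpha+1,0)\}$; for $\alpha=1$ the first pair reads $\{(1,m+1),(2,e+1)\}$. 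The condition $m\ge\alpha e+1$ is what should kill the extra pairs when $e>0$, and also the pair $(\alpha+1,e+1)$ when $\alpha\ge2$ and $e>0$.

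Next I compute $h^1$ of the duals with the same pushforward to $\mathbb P^1$ (Serre duality for $a\ge2$). For the main pair, $h^1(\mathcal O(-C_0-(m+1)f))=0$ and, dually, $h^1(\mathcal O(-(\alpha+1)C_0-(e+1)f))=0$. Each extra pair contributes positively: $h^1(\mathcal O(-(m+1)f))=m$, and similarly for the others. Hence the regular case gives exactly \eqref{eq.regular.splitting} and \eqref{eq.regular.splitting.a>1}, with $e=0$ forced when $\alpha\ge2$. In the irregular abelian case, $e=0$ and the multiplicities $n_0,n_0',n_1,n_1'$ count the pairs, giving \eqref{eq.irregular.splitting}. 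Here abelianness is used only to know that the splitting into line bundles holds; the count is the same.

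The converse (2)(c) follows from the same $h^1$ computations: irregularity equals the positive contribution of the $n_0,n_0',n_1'$ pairs. The delicate points will be the signs and the $e>0$ exclusions in the enumeration.
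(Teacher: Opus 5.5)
\textbf{Gap in (2)(b).} Your lattice-point enumeration does not produce the finite list you expect, and this breaks (2)(b). Fix a pair $\{i,\sigma(i)\}$ and impose the vanishings $h^0(\omega_W\otimes\mathcal L_i)=h^0(\mathcal L_i^*(1))=h^0(\mathcal L_i^*)=0$ on both members, together with $m\ge\alpha e+1$. This only says that each $(a_i,b_i)$ satisfies either $a_i\le 1,\ b_i\ge m+1$, or $a_i\ge \alpha+1,\ b_i\le e+1$, with $\sigma$ swapping the two kinds. Nothing else is imposed.

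So, for every $e$, pairs such as $\{(-1,m+1),(\alpha+3,e+1)\}$, $\{(0,m+1),(\alpha+2,e+1)\}$ and $\{(1,m+2),(\alpha+1,e)\}$ pass all your tests; you can check each vanishing directly. The last two are exactly your ``$e=0$ only'' pairs, and they survive unchanged for $e\ge1$. The inequality $m\ge\alpha e+1$ removes none of them. In the irregular case your $h^1$ computation cannot remove them either, since they only contribute to $q(X)$. Hence neither $e=0$ nor the shape \eqref{eq.irregular.splitting} follows.

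The sentence ``abelianness is used only to know that the splitting holds'' is exactly where the argument breaks. The paper uses that $\pi$ is abelian and $X$ normal in two further ways:
\begin{itemize}
\item By Theorem~\ref{thm.Pardini}, $\mathcal L_i^{\otimes d_i}$ ($d_i$ the order of the corresponding character) is linearly equivalent to $\sum\delta_{H,\psi}D_{H,\psi}$ with $0\le\delta_{H,\psi}\le d_i-1$. This gives $a_i,b_i\ge 0$ and leaves only $a_i\in\{0,1,\alpha+1,\alpha+2\}$.
\item The total branch divisor is reduced (Theorem~\ref{thm.normal}), so $d_iC_0$ cannot lie in the fixed part of $|d_i\mathcal L_i|$. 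If $X$ is irregular, there is a member with $a_j=\alpha+2$, or one with $a_j=\alpha+1$ and $b_j\le\alpha e$. Intersecting with $C_0$ for such a member forces $e=0$.
\end{itemize}
Only after these two steps is your list of four pair types correct.

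\textbf{Regular case.} The same overcounting occurs, but it can be repaired, and the repair is the paper's argument: regularity must enter the enumeration itself.
\begin{itemize}
\item A member with $a_i\le 0$ has $b_i\ge m+1\ge 2$, so $h^1(\mathcal O_W(-a_iC_0-b_if))=\sum_{k=0}^{-a_i}(b_i+ke-1)>0$. Hence $a_i=1$.
\item Its partner $(\alpha+1,b')$, with $b'\le e+1$, has $h^1(\mathcal O_W(-(\alpha+1)C_0-b'f))=0$ if and only if $b'\ge \alpha e+1$. So $b'=e+1$ if $\alpha=1$, while $\alpha\ge 2$ forces $e=0$ and $b'=1$.
\end{itemize}
In particular, your claim $h^1(\mathcal O_W(-(\alpha+1)C_0-(e+1)f))=0$ is false when $\alpha\ge2$ and $e\ge1$; it equals $e\alpha(\alpha-1)/2$. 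This nonvanishing, not $m\ge\alpha e+1$, is what forces $e=0$.

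On the positive side, your fixed-point argument for the evenness of $n$ is correct and needs neither regularity nor abelianness. So it also covers the irregular non-abelian case, which the paper's proof does not address.
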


\begin{proof}
Let us prove (1). Let $l$ positive integer such that $\mathcal O_W(1)=\mathcal O_{\mathbb{P}^2}(l)$. For each $i$, $1 \leq i \leq n-2$ and $\mathcal L_i$ as in \eqref{formula.splitting}, let $\mathcal L_i=\mathcal O_{\mathbb P^2}(\alpha_i)$. Theorem~\ref{thm.splitting} (II) (3) implies $\alpha_i \leq 2$ and Theorem~\ref{thm.splitting} (II) (2) implies $\alpha_i \geq l+1$, hence $l=1$ 
and $\alpha_i=2$ for all $i$, $1 \leq i \leq n-2$. Theorem~\ref{thm.splitting} (II) (1) completes the proof of (1).

\smallskip
 \noindent Now assume that $W$ is a Hirzebruch surface. Recall that  $$\omega_{W} = \mathcal{O}_{W}(-2C_{0} -(e+2)f).$$ By Theorem~\ref{thm.splitting} (II) (1), the line bundle $\mathcal L_{n-1}$ of \eqref{formula.splitting} is  
 $$\mathcal L_{n-1}=\omega_W^*(1)=\mathcal O_W((\alpha+2)C_0+(m+e+2)f).$$
For any $1 \leq i \leq n-2$ and $\mathcal L_i$ of \eqref{formula.splitting}, we will set $\mathcal L_i=\mathcal O_W(\alpha_iC_0+\beta_if)$. According to Theorem~\ref{thm.splitting} (II) (2), we also have
 \begin{equation*}
    \mathcal L_{\sigma(i)}= \mathcal L_i^* \otimes \omega_W^*(1) = 
 \mathcal O_W((\alpha+2-\alpha_i)C_0+(m-\beta_i+e+2)f),   
 \end{equation*}
so
\begin{equation}\label{eq.self.duality.Hirz}
\begin{matrix}
  \alpha_{\sigma(i)} & = & \alpha+2-\alpha_i \\
 \beta_{\sigma(i)} & = & m-\beta_i+e+2.
\end{matrix}
\end{equation}

 \smallskip
 \noindent Now we prove (2) (a), including the claim that $n$ is even in this case. We are assuming that $X$ is regular. 
 We claim $\alpha_i \geq 1$ for all $1 \leq i \leq n-2$. Indeed, suppose the contrary and let $j$ be such that $\alpha_j \leq 0$. By Remark~\ref{remark.thm.splitting} (2), 
 \begin{equation}\label{eq.vanishing.L*(1)}
 H^0(\mathcal O_W((\alpha-\alpha_j)C_0 + (m-\beta_j)f))=H^0(\mathcal L_j^*(1))=0. 
 \end{equation}
 Then $\beta_j \geq m+1$. Since $X$ is regular, $H^1(\pi^*\mathcal O_X)=H^1(\mathcal O_X)=0$, so $H^1(\mathcal L_j^*)=0$. Since $\alpha_j \leq 0$, it follows that $\beta_j \leq 1$. But then $m+1 \leq \beta_j \leq 1$, so $m \leq 0$. This is impossible since $m \geq \alpha e+1 \geq 1$, as seen in Notation~\ref{notation.Hirzebruch}. 

 \noindent 
 Then, for all  $1 \leq i \leq n-2$, we have $\alpha+2-\alpha_i = \alpha_{\sigma(i)} \geq 1$, hence $1 \le \alpha_i \leq \alpha+1$. 
 If 
 $1 \le \alpha_i \le \alpha$, then, by Remark~\ref{remark.thm.splitting} (2), 
 \begin{equation}\label{eq.lower.bound.on.b_i}
    0=h^0(\mathcal L_i^*(1))\ge h^0(\mathcal O_{\mathbb P^1}(m-\beta_i)), 
 \end{equation}
so $\beta_i \geq m+1$ in this case. 
 
 \smallskip
 \noindent We now prove that there are no $i$, $1 \leq i \leq n-2$, such that $2 \le \alpha_i \le \alpha$. Suppose the contrary and let $i$ be such that $2 \le \alpha_i \le \alpha$. On the one hand, by \eqref{eq.self.duality.Hirz}, $2 \le \alpha_{\sigma(i)} \le \alpha$, so  $\beta_{i} \ge m+1$ and
 $\beta_{\sigma(i)} \ge m+1$. On the other hand, by \eqref{eq.self.duality.Hirz}, 
  $\beta_{\sigma(i)}=m-\beta_i+e+2$, so $\beta_{\sigma(i)} \le e+1$. Thus $m \le e$. This is impossible since $m \geq \alpha e+1$ (see Notation~\ref{notation.Hirzebruch}). Thus, if $i$, $1 \le i \le n-2$, then $\alpha_i=1$, in which case, by \eqref{eq.self.duality.Hirz}, $\alpha_{\sigma(i)}=\alpha+1 \neq 1$ (because $\alpha \ge 1$); or $\alpha_i=\alpha+1$, in which case $\alpha_{\sigma(i)}=1 \neq  \alpha+1 $. Therefore $n-2$ is even and there are exactly $\frac{n-2}{2}$ of the $\alpha_i$ equal to $1$ and exactly $\frac{n-2}{2}$ of the $\alpha_i$ equal to $\alpha+1$.
\smallskip

\noindent Let us show that, if $\alpha \ge 2$, then $e=0$ and \eqref{eq.regular.splitting.a>1} holds. Let   $i$, $1 \le i \le n-2$ such that $\alpha_i=1$ (hence  $\alpha_{\sigma(i)}=\alpha+1$). 
Since $X$ is regular, $H^1(\mathcal L_{\sigma(i)}^*)=0$ and,  
 by Serre duality, 
  $$0=h^{1}(\mathcal L_{\sigma(i)}^{*}) = h^{1}(\mathcal{O}_{W}((\alpha-1)C_0+(\beta_{\sigma(i)} - e - 2)f)) \geq 
  h^1(\mathcal O_{\mathbb P^1}(\beta_{\sigma(i)} -\alpha e-2)),$$ so $\beta_{\sigma(i)} \geq \alpha e+1$. Recall that $\beta_i \ge m+1$, so  $\beta_{\sigma(i)}=m-\beta_i+e+2 \le e+1$. Then
  \begin{equation}\label{eq.bound.b_j}
    \alpha e+1 \le \beta_{\sigma(i)} \le e+1,   
  \end{equation}
  so, if $\alpha \ge 2$, then $e=0$, $\beta_{\sigma(i)}=1$ and, by \eqref{eq.self.duality.Hirz}, $\beta_i=m+1$.

  \smallskip
  \noindent If $\alpha=1$, then \eqref{eq.bound.b_j} yields $\beta_{\sigma(i)}=e+1$ and, by \eqref{eq.self.duality.Hirz}, $\beta_i=m+1$, so \eqref{eq.regular.splitting} holds in this case. This completes the proof of (2) (a).

\smallskip

\noindent Let us now prove (2) (b), including the claim that $n$ is even in this case. We are assuming that $X$ is irregular. By Theorem~\ref{thm.split.abelian}, $\pi$ is under the assumption of Theorem~\ref{thm.splitting} (II).  
By Theorem~\ref{thm.Pardini}, there is an integer $d_i$, $d_i \geq 2$ (if $\mathcal L_i=L_\chi$, with $\chi$ as in Theorem~\ref{thm.split.abelian}, then $d_i$ is the order of $\chi$), such that $|d_i \alpha_iC_0+ d_i \beta_if|$ is non empty, so $\alpha_i, \beta_i \geq 0$. Then, by \eqref{eq.self.duality.Hirz}, $\alpha_i \leq \alpha+2$ for all $1 \leq i \leq n-2$. If $0 \leq \alpha_i \leq \alpha$, then $\beta_i \geq m+1$, because \eqref{eq.vanishing.L*(1)} and \eqref{eq.lower.bound.on.b_i} also hold for $X$ irregular. If $\alpha_j=\alpha+1$ or $\alpha+2$, then, by  \eqref{eq.self.duality.Hirz}, $\alpha_{\sigma(j)}=1$ or $0$ respectively, so $\beta_{\sigma(j)} \geq m+1$ and, again by \eqref{eq.self.duality.Hirz}, $\beta_j \leq e+1$. Arguing as in the case when $X$ is regular, we get  that there are no $i$, $1 \leq i \leq n-2$, such that $2 \le \alpha_i \le \alpha$, so, for any $i$, $1 \leq i \leq n-2$, $\alpha_i=0, 1, \alpha+1$ or $\alpha+2$.
\smallskip

\noindent
Let us show $e=0$. First assume that $\alpha_i \neq 0$ for all $1 \leq i \leq n-2$. Then, by \eqref{eq.self.duality.Hirz}, each $\alpha_i$ is either $1$ or $\alpha+1$. Since $H^1(\mathcal L_i^*)=0$ if $\alpha_i=1$ and $X$ is irregular, there is some $\alpha_j=\alpha+1$ such that $H^1(\mathcal L_j^*) \neq 0$. Since
$$h^1(\mathcal O_W(-(\alpha+1)C_0-\beta_jf))=h^1(\mathcal O_W((\alpha-1)C_0+(\beta_j-e-2)f),$$
$H^1(\mathcal L_j^*) \neq 0$ if and only if 
$H^1(\mathcal O_{\mathbb P^1}(\beta_j-\alpha e-2)) \neq 0$;
therefore there is $j$, $1 \leq j \leq n-2$, such that $\alpha_j=\alpha+1$ and $\beta_j \leq \alpha e$. 
 The total branch locus of $\pi$ is a divisor $D=\sum D_{H,\psi}$ (see \eqref{eq.branch.divisor}), where the $D_{H,\psi}$ are effective divisors on $W$ indexed by the cyclic subgroups $H$ of $G$ and, for each $H$, the generators $\psi$ of the group of characters of $H$.  By Theorem~\ref{thm.Pardini}, for each $1 \leq i \leq n-2$,
 the divisor $d_i \alpha_iC_0+ d_i \beta_if$ is linearly equivalent to a combination of the $D_{H,\psi}$ with  integer coefficients $\delta^i_{H,\psi}$ such that  $0 \leq \delta^i_{H,\psi} \leq d_i-1$. If $d_iC_0$ were in the fixed locus of 
  $|d_i \alpha_iC_0+ d_i \beta_if|$, then $C_0$ would appear in $D$ with coefficient greater than $1$, so, by  Theorem~\ref{thm.normal}, $X$ would be non normal. Therefore, $d_iC_0$ cannot be in the fixed locus of 
  $|d_i \alpha_iC_0+ d_i \beta_if|$, for any $1 \leq i \leq n-2$ and, in particular, for any $i=j$, $1 \leq j \leq n-2$, such that $\alpha_j=\alpha+1$ and $\beta_j \leq \alpha e$.
  Thus, for such $j$,  
  $$0 \leq ((\alpha d_j+1)C_0+d_j\beta_jf)C_0= -(\alpha d_j+1)e + d_j\beta_j \le -(\alpha d_j+1)e + \alpha d_je = -e,$$
  so $e=0$.

\smallskip
\noindent Now assume that, for some $i$, $1 \leq i \leq n-2$, $\alpha_i=0$. Then, by \eqref{eq.self.duality.Hirz}, for some $j$, $1 \leq j \leq n-2$, $\alpha_j=\alpha+2$. Since $X$ is normal, arguing as above, for such $j$, $d_jC_0$ cannot be in the fixed locus of $|(\alpha+2)d_jC_0+ d_j \beta_jf|$. Therefore 
\begin{multline*}
 0 \leq   ((\alpha d_j+d_j+1) C_0+ d_j \beta_jf)C_0 = -(\alpha d_j+d_j+1)e+d_j \beta_j \leq \\
 -(\alpha d_j+d_j+1)e+d_j (e+1)=d_j-e(\alpha d_j+1),
\end{multline*}
which implies $e \leq \dfrac{d_j}{\alpha d_j+1} < 1$, so $e=0$.

\smallskip

\noindent Now recall that, for any $1 \leq i \leq n-2$,  $\alpha_i=0, 1, \alpha+1$ or $\alpha+2$.  Since $e=0$, if $\alpha_j=\alpha+1$ or $\alpha+2$, then $\beta_j=0$ or $1$ and, by \eqref{eq.self.duality.Hirz}, if $\alpha_i=0$ or $1$, then  $\beta_i=m+1$ or $m+2$. More precisely, Theorem \ref{thm.splitting} II (2) implies that, for each $\lambda$, $\mu$, the sets $$\{i \ | \ \alpha_i=\lambda, 
\beta_i=\mu\}  \ \mathrm{and} \   \{j \ | \ \alpha_j=\alpha+2-\lambda, 
{\beta_j=m+2-\mu}\}$$ 
have the same cardinal. Since $\alpha \geq 1$, this implies that $n$ is even. This also proves the splitting \eqref{eq.irregular.splitting} except for the claim that $n_0+ n_0'+n_1' >0$.
To prove this claim, recall that, on the one hand, $$H^1(\mathcal{O}_{W})=H^1(\mathcal{O}_{W}(-C_{0} -(m+1)f))=
   H^1(\mathcal{O}_{W}(-(\alpha+1)C_{0} -f))=0.$$ On the other hand, by Serre's duality, $$h^1(-(\alpha+2)C_0-(m+2)f)=h^1(\mathcal{O}_{W}(\alpha C_0+mf))=0.$$
Since $H^1(\mathcal O_X)=H^1(\pi_*\mathcal O_X)$ and $X$ is irregular, then at least one among $n_0$, $n_0'$ and $n_1'$ is positive, hence $n_0+ n_0'+n_1' >0$. This completes the proof of (2) (b).

\smallskip

\noindent Now we prove (2) (c).  Assume now that $\pi_*\mathcal O_X$ satisfies \eqref{eq.regular.splitting}, or \eqref{eq.regular.splitting.a>1} and $e=0$. The group $H^1(\mathcal O_W(-C_0-(m+1)f)$  vanishes.  The cohomology group $H^1(\mathcal O_W(-(\alpha+1)C_0-f)$  vanishes if $e=0$. By Serre's duality, $$h^1(\mathcal O_W(-2C_0-(e+1)f)=
  h^1(\mathcal O_W(-f)= h^1(\mathcal O_{\mathbb P^1}(-1)),$$ which also vanishes.
  By Serre's duality, $$h^1(\mathcal O_W(-(\alpha+2)C_0-(m+e+2)f)=
  h^1(\mathcal O_W(\alpha C_0+mf),$$ which vanishes because $m \ge \alpha e+1$.
   Since $H^1(\mathcal O_X)=H^1(\pi_*\mathcal O_X)$, then $X$ is regular.  

   \smallskip
\noindent Now assume that $e=0$ and \eqref{eq.irregular.splitting} holds, with $n_0+ n_0'+n_1' >0$. This means that at least one among $n_0$, $n_0'$ and $n_1'$ is positive. On the one hand, since $m \geq \alpha e+1 = 1$, $H^1(\mathcal{O}_{W}(-(m+1)f))$ and 
$H^1(\mathcal{O}_{W}(-(m+2)f))$ do not vanish; on the other hand, $h^1(\mathcal{O}_{W}(-(\alpha +1)C_0)) = \alpha \geq 1$. Thus, $X$ is irregular.
\end{proof}

\section{Background on Abelian covers}

In the remaining of the article we deal with abelian covers of algebraic varieties. This class of covers were studied extensively and in detail by Pardini and we refer the reader to \cite{Pardini} (see also  Catanese's precursor description of bidouble covers in \cite[Section 2]{Catanese}). For convenience, we recall here some basic facts about them that we will use in Sections~\ref{section.abelian.canonical} and  \ref{section.abelian.rational}.

\begin{definition}\rm{(\cite[Definition 1.1]{Pardini}). Let $G$ be a finite abelian group and let $\mathcal X$, $\mathcal Y$ be {irreducible} algebraic varieties. An abelian cover with group $G$ is a finite morphism $\pi: \mathcal X \longrightarrow \mathcal Y$ together with a faithful action of $G$ on $\mathcal X$ such that $\pi$ exhibits $\mathcal Y$ as a quotient of $\mathcal X$ by $G$. 
}
\end{definition}

\noindent If an abelian cover $\pi$ is flat
(as observed in Remark~\ref{remark.CMflatness}, this happens if \linebreak $\pi: X \longrightarrow W$ is an abelian canonical cover), we have the following result:   

\begin{theorem}\label{thm.split.abelian}
    Let $\pi: \mathcal X \longrightarrow \mathcal Y$ be an abelian cover with group $G$ and let $G^*$ be the group of characthers of $G$. If $\pi$ is flat, then  
    \begin{equation}\label{eigenspaces} 
    \pi_*\mathcal O_\mathcal X = \mathcal O_\mathcal Y 
    \oplus \bigoplus_{\chi \in G^*, \chi \neq 1} L_\chi^*,
    \end{equation}
    where, for each non-trivial character $\chi$ of $G^*$, $L_\chi$ is a line bundle and $G$ acts on $L_\chi^*$ via $\chi$. 
\end{theorem}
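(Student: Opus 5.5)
The plan is to decompose $\pi_*\mathcal O_{\mathcal X}$ under the action of $G$ and show that each isotypic piece is an invertible sheaf. Since $\pi$ is finite and $G$ acts on $\mathcal X$ over $\mathcal Y$, $G$ acts $\mathcal O_{\mathcal Y}$-linearly on the coherent sheaf $\pi_*\mathcal O_{\mathcal X}$. We work in characteristic $0$ over an algebraically closed field, and $G$ is finite abelian. Hence the group algebra of $G$ is semisimple and split, with the characters $\chi \in G^*$ as its irreducible representations. The orthogonal idempotents
\[
e_\chi=\tfrac{1}{|G|}\sum_{g\in G}\chi(g^{-1})\,g
\]
act on $\pi_*\mathcal O_{\mathcal X}$ and decompose it as a direct sum $\bigoplus_{\chi\in G^*}(\pi_*\mathcal O_{\mathcal X})_\chi$, where $G$ acts on the $\chi$-summand through $\chi$. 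The invariant summand ($\chi=1$) is $(\pi_*\mathcal O_{\mathcal X})^G=\mathcal O_{\mathcal Y}$, because $\mathcal Y=\mathcal X/G$. For $\chi\ne1$ we set $L_\chi^*:=(\pi_*\mathcal O_{\mathcal X})_\chi$; it remains to prove that $L_\chi^*$ is a line bundle.

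Because $\pi$ is flat and finite, $\pi_*\mathcal O_{\mathcal X}$ is locally free of rank $|G|$. Each eigensheaf is a direct summand of it, so each is locally free. To obtain rank one, we compute at the generic point. The function field extension $K(\mathcal X)/K(\mathcal Y)$ is Galois with group $G$, since the action is faithful and $\mathcal X$ is irreducible. By the normal basis theorem, $K(\mathcal X)\cong K(\mathcal Y)[G]$ as $G$-modules. This is the regular representation, in which every character occurs exactly once. Therefore each eigensheaf has generic rank $1$, and so it is invertible. The ranks also add up to $|G|$, which is consistent with the total rank of $\pi_*\mathcal O_{\mathcal X}$.

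We expect the main obstacle to be the generic-rank step: one must justify that $\pi$ is generically Galois with group exactly $G$. Faithfulness of the action on the irreducible variety $\mathcal X$ gives injectivity of $G\to\mathrm{Aut}(K(\mathcal X)/K(\mathcal Y))$. The equality $K(\mathcal X)^G=K(\mathcal Y)$ follows from the fact that $\mathcal Y$ is the quotient. Artin's theorem then yields that the extension is Galois with group $G$.
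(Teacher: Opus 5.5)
The paper gives no proof of this statement; it is recalled as background from Pardini \cite{Pardini}. Your argument is correct and is the standard one behind that result. The idempotents $e_\chi$ split $\pi_*\mathcal O_{\mathcal X}$ into eigensheaves, which is legitimate in characteristic $0$ over an algebraically closed field. Flatness then makes each summand locally free, and at the generic point $G$ acts on $K(\mathcal X)$ by the regular representation, so each eigensheaf has rank one.

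The step you flag as delicate is handled properly:
\begin{itemize}
\item injectivity of $G\to\mathrm{Aut}(K(\mathcal X)/K(\mathcal Y))$ follows from faithfulness;
\item $K(\mathcal X)^G=K(\mathcal Y)$ follows from $\mathcal O_{\mathcal Y}=(\pi_*\mathcal O_{\mathcal X})^G$ via the usual norm trick $a/s=a\prod_{g\neq 1}g(s)/\prod_{g}g(s)$;
\item Artin's theorem then gives a Galois extension with group $G$.
\end{itemize}

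You could also bypass the normal basis theorem. If $v\neq 0$ and $u$ both lie in the $\chi$-eigenspace of $K(\mathcal X)$, then $u/v\in K(\mathcal X)^G=K(\mathcal Y)$. So each eigenspace has dimension at most one, and these dimensions sum to $[K(\mathcal X):K(\mathcal Y)]=|G|$.
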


\noindent  If $\pi:\mathcal X \longrightarrow \mathcal Y$ is an abelian cover with group $G$, the ring structure of $\pi_*\mathcal O_\mathcal X$ is compatible with the action of $G$, so the multiplication is determined by $\mathcal O_\mathcal Y$-linear maps
\begin{equation*}
\begin{matrix} 
 \mathcal O_\mathcal Y \otimes \mathcal O_\mathcal Y &\longrightarrow & \mathcal O_\mathcal Y \\ 
 \mathcal O_\mathcal Y \otimes L_\chi^* & \longrightarrow & L_\chi^* \\
L_\chi^* \otimes L_{\chi'}^* & \longrightarrow & L_{\chi\chi'}^* & \hskip -.2cm ,
\end{matrix}
\end{equation*}
for any non trivial $\chi,\chi' \in G^*$, 
where the first one among them is the ring multiplication on  $\mathcal O_\mathcal Y$ and the maps in second row are determined by the $\mathcal O_\mathcal Y$-module structure of the  $L_\chi$. 
For each $\chi,\chi' \in G^*$, the maps in the third row are given by the multiplication by a global section %
of the line bundle  $L_\chi \otimes L_{\chi'} \otimes L_{\chi\chi'}^*$, so that we have the following result (see \cite[Theorem 2.1]{Pardini}): 

\begin{theorem}\label{thm.Pardini}
Let $G$ be an abelian group, let $\mathcal Y$ be a smooth, irreducible variety, let $\mathcal X$ be an irreducible, normal variety and  let $\pi: \mathcal X \longrightarrow \mathcal Y$ be an abelian cover with group $G$. Then, for all $\chi, \chi'$ non trivial characters of $G$, there exists an effective divisor $D_{\chi, \chi'}$, 
such that 
    \begin{equation}\label{relation.ring.structure}
    L_\chi \otimes L_{\chi'} =  L_{\chi\chi'}(D_{\chi\chi'}),
\end{equation}
where $D_{\chi\chi'}$ is the sum 
\begin{equation}\label{relation.divisor.ring.structure}
D_{\chi\chi'}=  \sum_{H \in \mathfrak{C}} \sum_{\psi \in S_{H}} \epsilon_{\chi, \chi'}^{H, \psi} D_{H, \psi},
\end{equation}
with $\mathfrak{C}$, $S_{H}$, $\epsilon_{\chi, \chi'}^{H, \psi}$ and the (effective) divisors are as in $D_{H, \psi}$
as in \cite[(1.9), (2.1), (2.2)]{Pardini}.

\smallskip
\noindent Conversely, to any set of data $L_\chi, D_{H,\psi}$ satisfying \eqref{relation.ring.structure} and \eqref{relation.divisor.ring.structure} we can associate an abelian cover $\pi: \mathcal X \longrightarrow \mathcal Y$ with group $G$ in a natural way. If the cover so constructed is normal, $L_\chi, D_{H,\psi}$  is its building data (see \cite[Definition 2.1]{Pardini}). 

\noindent Moreover, if $\mathcal Y$ is complete, $L_\chi, D_{H,\psi}$ together with \eqref{relation.ring.structure} and \eqref{relation.divisor.ring.structure} determine the cover $\pi: \mathcal X \longrightarrow \mathcal Y$ up to isomorphism of abelian covers.
\end{theorem}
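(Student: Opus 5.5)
The plan is to read off the divisors $D_{\chi,\chi'}$ from the multiplicative structure of $\pi_*\mathcal O_\mathcal X$ in its eigensheaf decomposition, via a local computation in codimension one. First I would start from the splitting \eqref{eigenspaces} of Theorem~\ref{thm.split.abelian}. Since $G$ acts compatibly with the algebra structure, multiplication sends $L_\chi^*\otimes L_{\chi'}^*$ into the $\chi\chi'$-eigensheaf $L_{\chi\chi'}^*$. It is therefore given by a global section $s_{\chi,\chi'}$ of $L_\chi\otimes L_{\chi'}\otimes L_{\chi\chi'}^*$.

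Step one is to show $s_{\chi,\chi'}\neq 0$. Because $\mathcal X$ is irreducible and normal, it is integral. Local generators of two eigensheaves are nonzero elements of a domain, so their product is nonzero. Set $D_{\chi,\chi'}:=\operatorname{div}(s_{\chi,\chi'})$; this gives \eqref{relation.ring.structure}. It remains to identify this effective divisor. Away from the branch locus $\pi$ is étale, so multiplication is an isomorphism and $s_{\chi,\chi'}$ does not vanish there. Since $\mathcal Y$ is smooth, an effective divisor is determined by its vanishing orders at generic points of prime divisors. So we only need to compute the order of $s_{\chi,\chi'}$ along each component of the branch locus.

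Step two is that codimension-one computation, which I expect to be the main obstacle. Take a prime divisor $\Delta$ in the branch locus and a component $\Delta'$ of $\pi^{-1}\Delta$. Since $\mathcal X$ is normal, the local ring at $\Delta'$ is a DVR. The inertia group $H$ of $\Delta'$ acts faithfully on the one-dimensional cotangent space of that DVR, so $H$ is cyclic of some order $m$. Its action is through a character $\psi$ that generates $H^*$; this is what sorts $\Delta$ into the divisor $D_{H,\psi}$. After an étale base change around the generic point of $\Delta$, the cover becomes a disjoint union of copies of the $H$-cover $z^m=t$, where $t$ is a local equation of $\Delta$. In that model the $\chi$-eigenfunctions are generated by $z^{r_\chi}$ times a unit. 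Here $r_\chi\in\{0,\dots,m-1\}$ is defined by $\chi|_H=\psi^{r_\chi}$. Then
\begin{equation*}
z^{r_\chi}z^{r_{\chi'}}=t^{\epsilon}z^{r_{\chi\chi'}},\qquad \epsilon=\Big\lfloor \frac{r_\chi+r_{\chi'}}{m}\Big\rfloor\in\{0,1\},
\end{equation*}
so $s_{\chi,\chi'}$ vanishes to order exactly $\epsilon=\epsilon^{H,\psi}_{\chi,\chi'}$ along $\Delta$. This gives \eqref{relation.divisor.ring.structure}. The delicate points are justifying the reduction to the local model, namely that the decomposition group acts through $H$ after completion, and checking that the index conventions match Pardini's.

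Step three is the converse. Given data $L_\chi, D_{H,\psi}$ satisfying \eqref{relation.ring.structure} and \eqref{relation.divisor.ring.structure}, define a multiplication on $\mathcal O_\mathcal Y\oplus\bigoplus L_\chi^*$ using the sections $s_{\chi,\chi'}$ that cut out the $D_{\chi,\chi'}$, and take $\mathcal X=\operatorname{Spec}$ of this algebra, with $G$ acting on $L_\chi^*$ via $\chi$. Commutativity is the symmetry $\epsilon_{\chi,\chi'}=\epsilon_{\chi',\chi}$. Associativity reduces to the identity of divisors $D_{\chi,\chi'}+D_{\chi\chi',\chi''}=D_{\chi',\chi''}+D_{\chi,\chi'\chi''}$. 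This identity follows from the explicit floor formula, since both sides equal $\lfloor (r_\chi+r_{\chi'}+r_{\chi''})/m\rfloor$. Equality of divisors gives equality of sections only up to a nonzero constant. Those constants are absorbed by choosing the sections compatibly, e.g.\ fixing them on a basis of $G^*$ and defining the rest inductively.

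Step four is uniqueness when $\mathcal Y$ is complete. Then $H^0(\mathcal O_\mathcal Y)=k$, so the sections $s_{\chi,\chi'}$ are determined by the divisors up to scalars $c_{\chi,\chi'}\in k^*$. These scalars form a symmetric $2$-cocycle on $G^*$, by associativity and commutativity. Symmetric cocycles classify commutative extensions, i.e.\ elements of $\operatorname{Ext}(G^*,k^*)$, which vanishes because $k^*$ is divisible ($k$ algebraically closed). So $c$ is a coboundary. Rescaling the isomorphisms of the $L_\chi$ accordingly gives an isomorphism of $G$-algebras, hence of abelian covers.
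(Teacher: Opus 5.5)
The paper gives no proof of this statement; it quotes \cite[Theorem 2.1]{Pardini}. Your outline is the standard argument behind that theorem: structure maps as sections $s_{\chi,\chi'}$ of $L_\chi\otimes L_{\chi'}\otimes L_{\chi\chi'}^*$, nonvanishing by integrality of $\mathcal X$, vanishing orders read off from the local model $z^m=t$, and a converse and uniqueness built on the exponents $\epsilon^{H,\psi}_{\chi,\chi'}$. Steps 1, 2 and 4 are sound, and killing the symmetric scalar cocycle with $\operatorname{Ext}^1_{\mathbb Z}(G^*,k^*)=0$ is a clean way to get uniqueness. One small repair in Step 1: Theorem~\ref{thm.split.abelian} assumes $\pi$ flat, which is not a hypothesis here. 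Flatness holds because, $\mathcal X$ being normal, each eigensheaf of $\pi_*\mathcal O_{\mathcal X}$ is reflexive of rank one on the smooth $\mathcal Y$, hence a line bundle.

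The converse has two genuine holes.

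\emph{Associativity.} Completeness is not assumed in this part, so two sections with the same divisor differ by a unit of $H^0(\mathcal Y,\mathcal O_{\mathcal Y}^*)$, not by a constant. You give no argument that the resulting associator can be trivialized. Moreover, ``fixing the sections on a basis of $G^*$ and defining the rest inductively'' is well defined only if the products are already associative, which is what must be shown. The way out is to make associativity inherited rather than checked:
\begin{itemize}
\item Take a basis $\chi_1,\dots,\chi_k$ of $G^*$ with orders $d_j$. Iterating the relations gives $L_{\chi_j}^{\otimes d_j}\cong\mathcal O_{\mathcal Y}(F_j)$ with $F_j=\sum_{H,\psi}(d_jr^{H,\psi}_{\chi_j}/|H|)D_{H,\psi}$.
\item Fix only these $k$ isomorphisms (sections $\sigma_j$) and form the Kummer algebra $\mathcal B$, locally $\mathcal O_{\mathcal Y}[w_1,\dots,w_k]/(w_j^{d_j}-\sigma_j)$.
\item For $\chi=\prod_j\chi_j^{a_j}$ with $0\le a_j<d_j$, set $\mathcal A_\chi=\bigl(\bigotimes_jL_{\chi_j}^{-a_j}\bigr)\otimes\mathcal O_{\mathcal Y}\bigl(\sum_{H,\psi}q^{H,\psi}_\chi D_{H,\psi}\bigr)\subset\mathcal B\otimes K(\mathcal Y)$, where $q^{H,\psi}_\chi=(\sum_ja_jr^{H,\psi}_{\chi_j}-r^{H,\psi}_\chi)/|H|\ge 0$.
\item The relations give $\mathcal A_\chi\cong L_\chi^*$. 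The identity $r_{\chi\chi'}=r_\chi+r_{\chi'}-|H|\,\epsilon^{H,\psi}_{\chi,\chi'}$ gives $\mathcal A_\chi\cdot\mathcal A_{\chi'}=\mathcal O_{\mathcal Y}(-D_{\chi,\chi'})\cdot\mathcal A_{\chi\chi'}$ inside $\mathcal B\otimes K(\mathcal Y)$.
\end{itemize}
Hence $\bigoplus_\chi\mathcal A_\chi$ is a commutative, associative subalgebra whose structure maps are exactly multiplication by $\prod s_{H,\psi}^{\epsilon^{H,\psi}_{\chi,\chi'}}$.

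\emph{Building data of the constructed cover.} You never address the clause that, when the constructed cover is normal, its building data are the given $L_\chi$, $D_{H,\psi}$. The $L_\chi$ are built in, but for the branch divisors you must run Step 2 backwards. Where all $s_{\chi,\chi'}$ are units, the algebra is étale (characteristic $0$). At a general point of a component of $D_{H,\psi}$, it is étale-locally induced from $z^{|H|}=t$ with $H$ acting through $\psi$. So each component of $D_{H,\psi}$ enters the branch locus of the constructed cover with exactly the label $(H,\psi)$, and nothing else does.
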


\noindent If $\pi$ is an abelian cover, the divisor 
\begin{equation}\label{eq.branch.divisor}
    D = \sum_{H \in \mathfrak{C}} \sum_{\psi \in S_{H}}  D_{H, \psi}
\end{equation}
is the total branch locus of $\pi$ (see \cite[page 193, (1.9) and Definition 2.2]{Pardini}).

\smallskip

\noindent Given data $L_\chi, D_{H,\psi}$ satisfying \eqref{relation.ring.structure} and \eqref{relation.divisor.ring.structure}, the abelian cover that, as claimed in Theorem~\ref{thm.Pardini}, can be associated to   $L_\chi, D_{H,\psi}$ (by defining an algebra structure on \eqref{eigenspaces}
as explained in the proof of \cite[Theorem 2.1]{Pardini}; see \cite[(2.8)]{Pardini}) is called the \emph{standard abelian cover} associated to $L_\chi, D_{H,\psi}$ (see \cite[Definition 2.2]{Pardini}). We recall now the characterization of standard abelian covers which are normal (see \cite[Corollary 3.1]{Pardini}):

\begin{theorem}\label{thm.normal}
    Let $\mathcal Y$ be a smooth, irreducible variety and let $\pi: \mathcal X \longrightarrow \mathcal Y$ be a standard abelian cover with building data $L_\chi, D_{H,\psi}$. Then $\mathcal X$ is normal if and only if the total branch locus $D$ $\mathrm{(}$see \eqref{eq.branch.divisor}$\mathrm{)}$ is a reduced divisor. 
\end{theorem}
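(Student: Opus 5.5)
The statement is local on $\mathcal Y$, so I would work over an affine open $U \subset \mathcal Y$ on which all the $L_\chi$ are trivial. I would then reduce normality of $\mathcal X$ to a codimension-one check. Since $\pi$ is flat and finite over the smooth variety $\mathcal Y$, the algebra $\pi_*\mathcal O_{\mathcal X}$ is locally free, hence Cohen--Macaulay, so $\mathcal X$ satisfies $S_2$. By Serre's criterion, $\mathcal X$ is normal if and only if it is regular in codimension one. A codimension-one point of $\mathcal X$ lies over either the generic point of $\mathcal Y$ or the generic point of a prime divisor $\Delta\subset\mathcal Y$, because $\pi$ is finite. So it suffices to localize at the discrete valuation ring $R=\mathcal O_{\mathcal Y,\Delta}$, with uniformizer $t$, and decide whether $R\otimes\pi_*\mathcal O_{\mathcal X}$ is normal (equivalently regular, being one-dimensional).

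\textbf{Step 1: $\Delta \not\subset D$.} Then every $D_{\chi,\chi'}$ is a unit near the generic point of $\Delta$, by \eqref{relation.divisor.ring.structure}. Choosing local generators $x_\chi$ of $L_\chi^*$, all the multiplications $x_\chi x_{\chi'}=(\text{unit})\,x_{\chi\chi'}$ are then isomorphisms. Hence the algebra is a twisted group algebra $R[G]$, which is étale over $R$, hence regular.

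\textbf{Step 2: $D$ reduced.} Then $\Delta$ occurs, with coefficient $1$, in exactly one $D_{H,\psi}$. Using Pardini's formulas \cite[(2.1),(2.2)]{Pardini}, the coefficients $\epsilon^{H,\psi}_{\chi,\chi'}$ show that the local algebra is $R[z]/(z^{|H|}-ut)$, with $u$ a unit, tensored with an étale $G/H$-algebra. Concretely, $z=x_\chi$ for a character $\chi$ restricting to $\psi$ on $H$. Since $z^{|H|}=ut$ with $t$ a uniformizer, this ring is regular. Thus $\mathcal X$ is regular in codimension one, hence normal.

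\textbf{Step 3: converse, $D$ not reduced.} This is the main obstacle. Non-reducedness means that some prime divisor $\Delta$ either appears with multiplicity $k\ge 2$ in one $D_{H,\psi}$, or appears in two distinct $D_{H,\psi}$, $D_{H',\psi'}$. In either case I would exhibit an element of the fraction field that is integral over $R\otimes\pi_*\mathcal O_{\mathcal X}$ but does not lie in it.

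In the first case, take $\chi$ with $\chi|_H=\psi$ and let $d=|H|$. Locally, $x_\chi^{d}=t^{k}\cdot(\text{unit})\cdot(\text{product of } x_{\chi'} \text{ for characters trivial on } H)$. The choice of element depends on $k$: if $k\ge d$, then $x_\chi/t$ is integral but not in the algebra; if $2\le k<d$, I would take a suitable monomial $x_\chi^a/t^b$ with $ak/d\ge b>a/d\cdot(\text{allowed exponent})$. The target is to show that the $\chi^a$-eigenspace of the normalization strictly contains $L_{\chi^a}^*$. This mirrors Pardini's reduction of building data, where the normalization has $L_\chi$ replaced by $L_\chi(-\lfloor\cdot\rfloor\Delta)$ with a positive floor term.

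In the second case, the stalk over $R$ would have inertia generated by $H$ and $H'$ acting with the two characters. However, the inertia group at a codimension-one point of a normal variety is cyclic and acts faithfully on the cotangent line. So the relations $x_\chi x_{\chi'}=t^{\epsilon}\,x_{\chi\chi'}$, with $\epsilon\ge1$ coming from both divisors, again produce a quotient $x_\chi x_{\chi'}/(t\,x_{\chi\chi'})$ that is integral but absent. I would verify both cases by comparing the eigenspaces of the normalization with the $L_\chi^*$ using \eqref{relation.ring.structure}, which I expect to be the only computational step requiring care.
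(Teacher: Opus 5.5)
The paper does not prove this statement; it quotes Pardini's Corollary~3.1. A self-contained argument via Serre's criterion is therefore a legitimate alternative, and your ``if'' direction (Steps~1--2) is sound. One small correction in Step~2: the local algebra is $A'[z]/(z^{|H|}-vt)$, where $A'=\bigoplus_{\eta|_H=1}R\,x_\eta$ is \'etale over $R$ and $v\in A'^{\times}$. You cannot in general choose $z$ with $z^{|H|}\in R$: for $G=\mathbb Z_4$, $H=\{0,2\}$, both characters restricting to $\psi$ have order $4$. So the algebra is not literally a tensor product. Regularity still follows, because $z^{|H|}-vt$ is Eisenstein at every maximal ideal of $A'$.

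The genuine gap is in the converse, specifically the second case of Step~3. Write $B=R\otimes\pi_*\mathcal O_{\mathcal X}$. Each $x_\eta$ is a non-zero-divisor, since $x_\eta x_{\eta^{-1}}$ is a unit times a power of $t$. Hence your witness $x_\chi x_{\chi'}/(t\,x_{\chi\chi'})$ is just $u\,t^{\epsilon-1}$. It lies in $R\subset B$ when $\epsilon\ge 1$, and it is not integral when $\epsilon=0$, so it never detects non-normality. Take the basic example $G=\mathbb Z_2\times\mathbb Z_2$ with $D_{10}=D_{01}=\Delta$ and $D_{11}=0$ near $\Delta$. There $x_{10}x_{01}=x_{11}$ up to a unit, so $\epsilon=0$ for that pair, and the real witness is $x_{11}/t$, whose square is a unit.

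The inertia heuristic does not close the hole either. To know that $H$ and $H'$ lie in the inertia group of a hypothetically normal $\mathcal X$ along $\Delta$, you must already know that the given $D_{H,\psi}$ agree with the intrinsic branch data of $\mathcal X$, which is essentially what is being proved. Even granting that, $H+H'$ can be cyclic. Examples: $G=\mathbb Z_3$ with $\Delta\subset D_1\cap D_2$ (same $H$, different $\psi$), or $G=\mathbb Z_4$ with $\Delta\subset D_1\cap D_2$, where $H_1+H_2=\mathbb Z_4$.

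A uniform argument covering both of your cases is as follows.
\begin{itemize}
\item Let $k_{H,\psi}$ be the multiplicity of $\Delta$ in $D_{H,\psi}$. Locally $x_\chi x_{\chi'}=u\,t^{e_{\chi,\chi'}}x_{\chi\chi'}$ with $e_{\chi,\chi'}=\sum k_{H,\psi}\epsilon^{H,\psi}_{\chi,\chi'}$.
\item Since $i_\chi+i_{\chi^{-1}}\in\{0,|H|\}$, one gets $e_{\chi,\chi^{-1}}=\sum_{\chi|_H\neq 1}k_{H,\psi}$.
\item If $D$ is not reduced along $\Delta$, some $\chi$ has $e_{\chi,\chi^{-1}}\ge 2$. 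If some $k_{H,\psi}\ge 2$, take $\chi$ nontrivial on $H$. If two distinct pairs occur, take $\chi$ outside the union of the two proper subgroups of characters trivial on $H$, resp.\ $H'$; a group is not a union of two proper subgroups.
\item Counting carries in the iterated products gives $x_\chi^{d}=u\,t^{d\rho(\chi)}$, with $d=\mathrm{ord}\,\chi$ and $\rho(\chi)=\sum k_{H,\psi}\,i^{H,\psi}_\chi/|H|$. Moreover $\rho(\chi)+\rho(\chi^{-1})=e_{\chi,\chi^{-1}}\ge 2$.
\item Hence $\rho(\chi)\ge 1$ or $\rho(\chi^{-1})\ge 1$. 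So $x_\chi/t$ or $x_{\chi^{-1}}/t$ is a root of the monic polynomial $T^d-u\,t^{d\rho-d}$, hence integral over $B$.
\item Neither element lies in $B$, since $B$ is $R$-free on the $x_\eta$. So $B$ is not normal.
\end{itemize}
In the $\mathbb Z_4$ example this produces $x_3/t$, with $(x_3/t)^4=t$ up to a unit.
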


\noindent We end this section by introducing the following notation that we will use thoughout the remaining of the paper:

\begin{notation}\label{notation.building.data}
 Let  $G$ be a finite abelian group and let $G=\mathbf{Z}_{n_1} \times \cdots \times  \mathbf{Z}_{n_l}$ be the invariant factor decomposition of $G$.
 For all $j=1, \dots, l$, let $\zeta_j$ be the primitive $n_j$-root of unity $$\zeta_j={e^{\frac{2\pi i}{n_j}}}.$$ 
 
 \begin{enumerate}
     \item  The elements of $G^*$ are the group homomorphisms from $G$ to $ \mathbb C^*$ of the form 
$$
\chi_{a_1a_2\dots a_l} :  (\bar x_1, \bar x_2, \dots \bar x_l)  \mapsto  \zeta_1^{a_1x_1}\zeta_2^{a_2x_2} \cdots \zeta_l^{a_lx_l},
$$ 
for all $a_j = 0,1, \dots, n_j-1$ and $j=1, \dots, l$.
 \end{enumerate}

 \smallskip
\noindent Let $\pi: \mathcal X \longrightarrow \mathcal Y$ be an abelian cover with group  $G$ 
 and let $a_j \in \{ 0,1, \dots, n_j-1\}$ for all $j=1, \dots, l$. 
  \smallskip

 \begin{enumerate}
     \item[(2)]  We will denote $L_{\chi_{a_1a_2\dots a_l}}$ by $L_{a_1a_2\dots a_l}$ (note that the notation $L_a$ and the notation $\mathcal L_1, \dots, \mathcal L_{n-1}$ 
     of Theorem~\ref{thm.splitting} (II) are different!).
     \item[(3)] Let $\sigma=(\bar b_1, \bar b_2, \dots, \bar b_l)$, with $b_j \in \{ 0,1, \dots, n_j-1\}$ for all $j=1, \dots, l$, be a  nonzero element of $G$ of order $\nu$ and let $H_\sigma=<\sigma>$. We will also denote $H_\sigma$ by $H_{b_1b_2 \dots, b_l}$. Let $\psi_\sigma$ be the generator of the group $H^*$ of characters of $H$ determined by $$\psi_\sigma(\sigma)=e^{\frac{2\pi i}{\nu}}.$$ We will also denote $\psi_\sigma$ by $\psi_{b_1b_2 \dots  b_l}$. The effective divisor $D_{H_\sigma,\psi_\sigma}$, introduced in \cite[(1.9)]{Pardini}, of the branch locus of $\pi$  will be denoted by $D_\sigma$ or $D_{b_1 b_2 \dots  b_l}$. Thus we index the divisors introduced in \cite[(1.9)]{Pardini} by the (nonzero) elements $\sigma$ of $G$.
 \end{enumerate}
\end{notation}

\begin{convention}
From now on, we use the letter $L$ (alone, or with some subscript) to denote line bundles {on $W$} and the letter $D$ (alone, or with some subscript) to denote \emph{effective} divisors {on $W$}.    
\end{convention}

\color{black}

\section{Abelian canonical covers up to degree $8$}\label{section.abelian.canonical}

In this section we characterize the building data of abelian canonical covers \linebreak $\pi: X \longrightarrow W$ (see Set-up and notation~\ref{setup.beginning} and Definition~\ref{defi.canonical.cover}; hence, in particular, $X$ has arbitrary dimension and $W$ is smooth), with $p_g(W)=0$ and degree $n$ of $\pi$ being $n \leq 8$ or $n$ prime. We also characterize those building data for which $X$ is smooth. Recall that we will use Notation~\ref{notation.building.data} throughout this section.

\subsection{Abelian canonical covers of prime degree} \

\smallskip
\noindent Even if the  main focus of the section is abelian covers  up to  degree $8$,  we start looking at abelian canonical covers of any prime degree $p$. We prove that these covers are simple cyclic (recall \cite[Definition 1.3]{Pardini}).  {The first case, $p=2$, is easy and well-known}:

\begin{remark}\label{thm.double}
   \textrm{If $p_g(W)=0$, by  Corollary~\ref{cor.234} (1), $L_1=\omega_W^*(1)$ and a canonical double cover (necessarily abelian, in fact, necessarily simple cyclic) of $W$ is determined by this and a choice of a reduced (respectively, smooth) divisor in $|\omega_W^{-2}(2)|$. Therefore, canonical double covers  $\pi: X \longrightarrow W$ (respectively, canonical double covers with $X$ smooth) exist if and only if  $|\omega_W^{-2}(2)|$ possesses reduced divisors (respectively, smooth divisors, e.g., if  $\omega_W^{-2}(2)$ is base-point-free). All this follows from the ramification formula} 
   and the local equation of a simple cyclic cover. 
\end{remark}

\noindent 
We will need the next lemma in the proof of  Theorem~\ref{thm.prime}.

\begin{lemma}\label{lemma.prime}
 Let $p$ be a prime number and let $\pi: \mathcal X \longrightarrow \mathcal Y$ be an abelian  cover 
with group $G=\mathbb Z_p$.   For $1 \le a, b \le p-1$, the exponent $i_{\chi_a}$, defined in \cite[(2.1)]{Pardini}, with respect to $(G, \psi_b)$  is the remainder of dividing $ab$ by $p$. 
\end{lemma}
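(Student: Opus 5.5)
The plan is to unwind Pardini's definition of the exponent and compute it directly in the cyclic group $\mathbb Z_p$. Recall from \cite[(2.1)]{Pardini} that, given a cyclic subgroup $H\subseteq G$, a generator $\psi$ of $H^*$ and a character $\chi\in G^*$, the integer $i_\chi$ (with respect to $(H,\psi)$) is defined as the unique integer with $0\le i_\chi\le |H|-1$ such that the restriction satisfies $\chi|_H=\psi^{i_\chi}$.

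For $G=\mathbb Z_p$ with $p$ prime, the only nontrivial cyclic subgroup is $H=G$ itself. So I need to express the character $\chi_a$ as a power of $\psi_b$ in $G^*$. In Notation~\ref{notation.building.data}, with $\zeta=e^{2\pi i/p}$, the character is $\chi_a(\bar x)=\zeta^{ax}$. The generator $\psi_b=\psi_{\sigma}$, for $\sigma=\bar b$ (an element of order $p$ since $1\le b\le p-1$), is determined by $\psi_b(\bar b)=\zeta$.

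The key step is to write $\psi_b$ explicitly.
\begin{enumerate}
\item Since $p$ is prime and $1\le b\le p-1$, $b$ is invertible modulo $p$. Let $c$ satisfy $bc\equiv 1 \pmod p$.
\item Then $\psi_b(\bar x)=\zeta^{cx}$. This is a well-defined character, and it takes the value $\zeta^{cb}=\zeta$ at $\bar b$, as required.
\item For any integer $i$, $\psi_b^{\,i}(\bar x)=\zeta^{icx}$. So $\chi_a=\psi_b^{\,i}$ if and only if $\zeta^{ax}=\zeta^{icx}$ for all $x$, that is, $a\equiv ic \pmod p$.
\item Multiplying by $b$ shows this is equivalent to $i\equiv ab \pmod p$.
\end{enumerate}
By the normalization $0\le i_{\chi_a}\le p-1$, it follows that $i_{\chi_a}$ is the remainder of $ab$ on division by $p$.

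I expect no real obstacle here. The only point needing care is matching the conventions: the indexing of $\psi_b$ by the group element $\bar b$ rather than by an exponent, and the range $0\le i_\chi<|H|$ in Pardini's (2.1). Getting these wrong would produce $ab^{-1}$ instead of $ab$, so I will check the computation against the defining identity $\psi_b(\bar b)=\zeta$.
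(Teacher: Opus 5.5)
Your proposal is correct and follows the paper's proof essentially step for step. The paper likewise notes $\psi_b(\bar 1)=\zeta^{b'}$ with $b'$ the inverse of $b$ modulo $p$, turns $\chi_a=\psi_b^{\,c}$ into the congruence $a\equiv b'c \pmod p$, and multiplies by $b$ to get $c\equiv ab \pmod p$.
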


\begin{proof}
Let $\zeta = e^{\frac{2\pi i}{p}}$. 
    Since $\psi_b(\bar b)=\zeta$, $\psi_b(\bar 1)=\zeta^{b'}$, where $b'$ is the inverse of $b$ modulo $p$.
    Let $c$ be the exponent $i_{\chi_a}$ with respect to $(G, \psi_b)$, i.e., $\chi_a=\psi_b^c$. Then $\zeta^a=\zeta^{b'c}$, so 
    \begin{equation*}
        a \equiv b' c \, (p).
    \end{equation*}
    Thus 
    \begin{equation*}
        c \equiv a b \, (p).
    \end{equation*}
\end{proof}

\begin{theorem}\label{thm.prime}
Let $p_g(W)=0$.
\begin{enumerate}
    \item If $\pi: X \longrightarrow W$ is an abelian canonical cover 
of prime degree $p \ge 3$, then $\pi$ is simple cyclic. Without loss of generality, we may assume

\begin{enumerate}
    \item[(i)] $L_{p-1}=\omega_W^*(1)$. 
\end{enumerate}

In this case, 

\begin{enumerate}
\item[(ii)] the total branch locus of $\pi$ is $D_1$; in particular, $D_2=\cdots = D_{p-1}=0$;
\item[(iii)] $D_1$ is reduced; 
\item[(iv)]  $L_1^{\otimes a}=L_a$ and  $L_1^{\otimes p} = \mathcal O_W(D_1)$; in particular $L_1^{\otimes p-1}=\omega_W^*(1) $; and 
\item[(v)] $h^0(\omega_W \otimes L_1^{\otimes a})=0$, 
for all $a=1, \dots, p-2$.
\end{enumerate}
   
\smallskip
\noindent If $X$ is smooth, then, in addition,

 \begin{enumerate}
\item[(iii')] {$D_{1}$ is  smooth.}
\end{enumerate}
    \smallskip
\item Conversely, if $p \ge 3$ is prime and $\{L_a,D_b\}$ $(a, b=1, \dots, p-1)$ satifies (i) to (v),   
then there exists a simple cyclic canonical cover 
$\pi: X \longrightarrow W$  of degree $p$ having $\{L_{a}, D_{b}\}$ as its building data with cyclic, with total branch locus $D_1$.

\item[] Moreover, if (iii') holds, then $X$ is smooth.
\end{enumerate}
\end{theorem}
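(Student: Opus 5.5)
The plan is to work with Pardini's building data for $G=\mathbb Z_p$ and to feed the determinant identity of Theorem~\ref{thm.splitting} (II) (4) into Pardini's relations. The aim is to show that all branch divisors except one vanish, which is exactly the content of (ii).

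\emph{Normalization.} Since $p$ is prime, every nonzero $b\in\mathbb Z_p$ generates $G$. So the branch divisors are $D_1,\dots,D_{p-1}$, all attached to $H=G$, and for each character $\chi_a$ Pardini's relation reads
\[
L_a^{\otimes p}=\mathcal O_W\Big(\sum_{b=1}^{p-1} r(ab)\,D_b\Big),
\]
where $r(ab)$ is the remainder of $ab$ mod $p$, by Lemma~\ref{lemma.prime}. By Theorem~\ref{thm.split.abelian} the splitting \eqref{formula.splitting} holds. Hence Theorem~\ref{thm.splitting} (II) (1) says that some $L_c$ equals $\omega_W^*(1)$. Composing the $G$-action with the automorphism $x\mapsto c'x$ of $\mathbb Z_p$ (a relabelling of characters and of the $D_b$) I will assume $c=p-1$, which gives (i).

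\emph{Vanishing of $D_b$, $b\ge 2$.} This is the main step. I apply Theorem~\ref{thm.splitting} (II) (4), namely $\bigotimes_{a=1}^{p-2}L_a^{\otimes 2}\simeq L_{p-1}^{\otimes p-2}$, and raise it to the $p$-th power. Substituting the relations above, the coefficient of $D_b$ is
\[
2\sum_{a=1}^{p-2} r(ab)-(p-2)\,r(-b).
\]
Using $\sum_{a=1}^{p-1}r(ab)=p(p-1)/2$ and $r(-b)=p-b$, this coefficient equals $p(b-1)$. Hence the effective divisor $\Delta=\sum_b (b-1)D_b$ satisfies $\mathcal O_W(p\Delta)\simeq\mathcal O_W$. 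Thus $\Delta$ is numerically trivial and effective on the projective variety $W$, so $\Delta=0$ and $D_b=0$ for all $b\ge 2$. This proves (ii), and so $\pi$ is simple cyclic.

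\emph{The remaining items of (1).} (iii) follows from Theorem~\ref{thm.normal}, since $X$ is normal. For (iv), the ring relations give $L_a\otimes L_1=L_{a+1}(D_{\chi_a,\chi_1})$ with
\[
D_{\chi_a,\chi_1}=\Big\lfloor \tfrac{a+1}{p}\Big\rfloor D_1 ,
\]
which vanishes for $a\le p-2$. By induction $L_a=L_1^{\otimes a}$, and $L_1^{\otimes p}=\mathcal O_W(D_1)$ is the $a=1$ relation. (v) is Theorem~\ref{thm.splitting} (II) (3), since the $L_a$ with $a\le p-2$ are the $\mathcal L_i$. For (iii'), the local equation $z^p=f$ with $f$ a local equation of $D_1$ shows that $X$ smooth forces $D_1$ smooth.

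\emph{Converse (2).} Given data satisfying (i)--(v), Theorem~\ref{thm.Pardini} produces the simple cyclic cover. It is normal by Theorem~\ref{thm.normal} and (iii), and Gorenstein since it is locally a hypersurface $z^p=f$. The ramification formula gives $\omega_X=\pi^*(\omega_W\otimes L_1^{\otimes p-1})=\pi^*\mathcal O_W(1)$. Now I check the hypotheses of Lemma~\ref{lemma.criterion.canonical.cover} with $\mathcal L$ trivial:
\begin{itemize}
\item \eqref{eq.trace.module.split} holds by (i);
\item \eqref{eq.self.duality} holds because $L_a^{-1}\otimes L_{p-1}=L_{p-1-a}$;
\item \eqref{eq.p_g=0} holds because $p_g(W)=0$ (Remark~\ref{remark.thm.splitting} (1));
\item \eqref{eq.vanishing.global.sections} is (v).
\end{itemize}
The lemma then shows that $\pi$ is canonical. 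Smoothness of $X$ under (iii') again follows from the local equation $z^p=f$.
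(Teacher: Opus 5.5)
Your proof is correct, and its overall architecture matches the paper's. You normalize to $L_{p-1}=\omega_W^*(1)$, feed Theorem~\ref{thm.splitting}~(II)~(4) into Pardini's relations to kill $D_2,\dots,D_{p-1}$, and read off (iv) from $L_1\otimes L_a=L_{a+1}$. The converse comes from the ramification formula plus the vanishings; your appeal to Lemma~\ref{lemma.criterion.canonical.cover} just packages the paper's projection-formula argument.

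The genuine difference is in the key vanishing step.

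\textbf{The paper's route.} It stays with the pairwise multiplication relations $L_a\otimes L_{p-1-a}=L_{p-1}(D_{\chi_a,\chi_{p-1-a}})$ of Theorem~\ref{thm.Pardini}. It observes that $D_1$ never occurs in them, because the exponents with respect to $\psi_1$ add up to $p-1<p$. For each $b\ge 2$ it picks the $a$ with $ab\equiv -1 \pmod p$, which shows that $D_b$ occurs with coefficient $1$ in at least one of these relations. Multiplying over $a$ and comparing with (II)~(4) gives an honest linear equivalence $\sum_{b\ge 2}c_bD_b\sim 0$ with every $c_b\ge 1$. Only positivity of the $c_b$ is needed.

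\textbf{Your route.} You use the $p$-th power relations $L_a^{\otimes p}=\mathcal O_W(\sum_b r(ab)D_b)$ and a character-sum computation. This is choice-free and gives exact multiplicities. Two points to note:
\begin{itemize}
\item These $p$-th power relations are Pardini's reduced relations. The paper never states them, though they follow from Theorem~\ref{thm.Pardini} by telescoping $L_{\chi^k}\otimes L_\chi=L_{\chi^{k+1}}(\cdots)$ over $k$; you should say so.
\item You only obtain $p\Delta\sim 0$ rather than $\Delta\sim 0$. This is harmless, because $\Delta$ is effective.
\end{itemize}

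\textbf{The two computations give the same divisor.} In each pairwise relation the coefficient of $D_b$ is $\epsilon=(i_{\chi}+i_{\chi'}-i_{\chi\chi'})/p$. Summing over $a$, the paper's coefficients are
\[
c_b=\Bigl(2\sum_{a=1}^{p-2}r(ab)-(p-2)(p-b)\Bigr)\big/p=b-1 .
\]
So the paper's $\sum_{b\ge 2} c_bD_b$ is exactly your $\Delta$, and your formula is the precise form of the paper's positivity statement.
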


\begin{proof}
First we prove (1). For any $1 \le a \le p-1$, there is an automorphism of $G^*$ sending $\chi_a$ and $\chi_{p-1}$, so, after possibly relabeling the $D_{b}$ by an automorphism of $G$ and $L_{a}$ by the inverse of its dual automorphism on $G^*$, we may assume $\omega_W^*(1)=L_{p-1}$. This proves (1) (i). 

\smallskip
\noindent Let $2 \le b \le p-1$. Then there is exist a (unique) $a$ with $1 \le a \le p-2$ such that $p-1$ is the remainder when dividing $a 
b$ by $p$. Then, for such $b$ and $a$, by Lemma~\ref{lemma.prime} the sum of the exponents $i_{\chi_a}$, $i_{\chi_{p-a-1}}$ with respect to the pair $(G,\psi_b)$ is not less than $p$. Therefore, by Theorem~\ref{thm.Pardini},
\begin{equation}\label{eq.D_b}
L_a \otimes L_{p-a-1} = L_{p-1} (D_b + D'),
\end{equation}
where $D'$ is a combination of $D_1, \dots, D_{b-1}, D_{b+1}, \dots, D_{p-1}$ whose coefficients are $0$ or $1$. On the other hand, since for $(G,\psi_1)$, $i_{\chi_a}=a$, the sum of the exponents $i_{\chi_a}$, $i_{\chi_{p-a-1}}$ with respect to $(G,\psi_1)$, for all $1 \le a \le p-2$, is $p-1$. This and Theorem~\ref{thm.Pardini} imply, for all $1 \le a \le p-2$
\begin{equation}\label{eq.D_1}
    L_a \otimes L_{p-a-1} = L_{p-1} (D''),
\end{equation}
where $D''$ is a combination of $D_2, \dots, D_{p-1}$ whose coefficients are $0$ or $1$. Then 
\eqref{eq.D_b} and \eqref{eq.D_1} imply 
\begin{equation}\label{eq.Z_p}
    (L_1 \otimes \dots \otimes L_{p-2})^{\otimes 2} = L_{p-1}^{\otimes p-2} (c_2D_2 + \cdots + c_{p-1}D_{p-1}), 
\end{equation}
where the coefficients $c_2, \dots, c_{p-1}$ are positive integers. Then Theorem~\ref{thm.splitting} (II) (4) and \eqref{eq.Z_p} imply 
\begin{equation*}
    c_2D_2 + \cdots + c_{p-1}D_{p-1} \sim 0, 
\end{equation*}
(note that $\mathcal L_{p-1} = L_{p-1}$ in this case). Since $D_2, \dots, D_{p-1}$ are effective and $W$ is projective, this implies
\begin{equation*}
    D_2 = \cdots = D_{p-1} = 0. 
\end{equation*}
Then the total branch divisor of $\pi$ is $D_1$ so $\pi$ is a simple cyclic cover and (1) (ii) is proved. 

\smallskip
\noindent
Since the sum of the exponents $i_{\chi_1}$, $i_{\chi_a}$  with respect to the pair $(G,\psi_1)$, for all $1 \le a \le p-1$, is $a+1$, Theorem~\ref{thm.Pardini} and (1) (ii) imply 
\begin{equation*}
   L_1 \otimes L_{a} = L_{a+1},  
\end{equation*}
if $1 \le a \le p-2$ and
\begin{equation*}
   L_1 \otimes L_{p-1} = \mathcal O_W(D_1).
\end{equation*}
Therefore, 
\begin{equation*}
   L_1^{\otimes a} = L_a
\end{equation*}
and 
\begin{equation*}
   L_1^{\otimes p} = \mathcal O_W(D_1).
\end{equation*}
In particular, 
\begin{equation*}
   L_1^{\otimes p-1} = \omega_W^*(1).
\end{equation*}
This completes the proof of (1) (iv). 

\smallskip
\noindent
Parts (iii) and (iii') of (1) follow from the local equation of a simple cyclic cover (see e.g.,  \cite[Section I.17]{BHPV}) and Part (v)  of (1) follows
from Theorem~\ref{thm.splitting} (I) (3).

\smallskip
\noindent Now we prove (2). If (i), (ii) and (iv) hold, 
then there exists an abelian cover of degree $p$ 
with building data  $\{L_{a}, D_{b}\}$, which is simple cyclic 
with total branch locus $D_1$. The ramification formula (see e.g.  \cite[Lemma I.17.1 (iii)]{BHPV}) and (1) (iv) imply 
\begin{equation*}
    \omega_X = \pi^*(\omega_W \otimes L_1^{\otimes p-1}) = \pi^*\mathcal O_W(1). 
\end{equation*}
Then the projection formula together with (1) (iv) and (1) (v) implies that $\pi$ is a canonical cover, since the local equation of a simple cyclic cover (see e.g.  \cite[Section I.17]{BHPV}) and  (1) (iii) imply that $X$ is normal and locally Gorenstein. Finally, because of the local equation of a simple cyclic cover, (1) (iii') implies $X$ is smooth.
\end{proof}

\subsection{Abelian canonical covers with group $\mathbf{Z}_2 \times \mathbf{Z}_2$}\label{section.abelian.bidouble} \ 

\smallskip
\noindent We start this subsection making explicit, for a group $\mathbb Z_2^r$, the algebra structure \eqref{relation.ring.structure}. This will be also used  in Subsection~\ref{section.abelian.tridouble}.

\begin{prop}\label{prop.alg.struct.bidouble}
  If $\pi: \mathcal X \longrightarrow \mathcal Y$ is an abelian cover with group $\mathbf{Z}_2^r$, $r \ge 1$, then \eqref{relation.ring.structure} is explicitly
 \begin{equation}\label{eq.bidouble.alg.str}
   L_{\chi} \otimes  L_{\chi'} = L_{\chi\chi'}\left (\sum D_{\sigma}\right ),  
 \end{equation}
  where the sum on the right hand side of \eqref{eq.bidouble.alg.str} ranges on  all the $\sigma$ of  $\mathbf{Z}_2^r$ such that $\chi(\sigma) = \chi'(\sigma) = -1$. 
  \end{prop}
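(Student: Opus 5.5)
The plan is to specialize Pardini's formula in Theorem~\ref{thm.Pardini}, namely \eqref{relation.ring.structure} together with \eqref{relation.divisor.ring.structure}, to the group $G=\mathbf Z_2^r$. The first step is to list the cyclic subgroups and their character generators. Every nonzero $\sigma\in\mathbf Z_2^r$ has order $2$, so each cyclic subgroup $H\in\mathfrak C$ is $H_\sigma=\{0,\sigma\}$. Its character group $H^*$ has exactly one generator $\psi_\sigma$, with $\psi_\sigma(\sigma)=-1$. Hence the pairs $(H,\psi)$ correspond bijectively to the nonzero $\sigma$. This matches Notation~\ref{notation.building.data}: the divisors are $D_\sigma$, and \eqref{relation.divisor.ring.structure} reads $D_{\chi\chi'}=\sum_{\sigma\neq 0}\epsilon^{\sigma}_{\chi,\chi'}D_\sigma$.

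The second step is to compute the coefficients $\epsilon^{H,\psi}_{\chi,\chi'}$ following \cite[(2.1), (2.2)]{Pardini}. For a character $\chi$ of $G$, the exponent $i_\chi\in\{0,1\}$ with respect to $(H_\sigma,\psi_\sigma)$ is defined by $\chi|_{H_\sigma}=\psi_\sigma^{i_\chi}$. So $i_\chi=1$ exactly when $\chi(\sigma)=-1$, and $i_\chi=0$ when $\chi(\sigma)=1$. Pardini's coefficient is $\epsilon^{H,\psi}_{\chi,\chi'}=\lfloor (i_\chi+i_{\chi'})/m_H\rfloor$, where $m_H=|H|=2$. Therefore $\epsilon^{\sigma}_{\chi,\chi'}=1$ if $\chi(\sigma)=\chi'(\sigma)=-1$, and $\epsilon^{\sigma}_{\chi,\chi'}=0$ otherwise.

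The third step is to substitute these coefficients into \eqref{relation.ring.structure}, which gives \eqref{eq.bidouble.alg.str} directly. The cases where $\chi$, $\chi'$ or $\chi\chi'$ is trivial can be handled by the convention $L_1=\mathcal O$. In particular, when $\chi=\chi'$ we get $\chi\chi'=1$, and the formula becomes $L_\chi^{\otimes 2}=\mathcal O\bigl(\sum_{\chi(\sigma)=-1}D_\sigma\bigr)$, which is consistent with Pardini.

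The only point needing care is to state correctly Pardini's definitions of $i_\chi$ and of $\epsilon$ as the floor of $(i_\chi+i_{\chi'})/m_H$. Once that is in place, the rest is bookkeeping.
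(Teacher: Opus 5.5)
Your proposal is correct and is exactly the routine specialization of Pardini's formula (Theorem~\ref{thm.Pardini}, with $i_\chi\in\{0,1\}$ and $\epsilon^{\sigma}_{\chi,\chi'}=\lfloor (i_\chi+i_{\chi'})/2\rfloor$) that the paper intends. The paper gives no written proof of this proposition, and it derives its analogous relations for $\mathbf Z_4$, $\mathbf Z_6$ and $\mathbf Z_8$ the same way, by computing the exponents $i_\chi$.
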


\noindent After a routine computation, Proposition~\ref{prop.alg.struct.bidouble} yields  the following corollary:

\begin{corollary}
    If $\pi: \mathcal X \longrightarrow \mathcal Y$ is an abelian cover with group $\mathbf{Z}_2 \times \mathbb{Z}_2$, then we have, among others,   these relations:

    \begin{equation}\label{eq.2L_10}
   L_{10} \otimes  L_{10} =  \mathcal O_W(D_{10}+D_{11})
     \end{equation}
\begin{equation}\label{relations.ring.structure.Z_2^2}
   L_{10} \otimes  L_{01} =  L_{11}(D_{11})
     \end{equation}
     \begin{equation}\label{eq.2L_11}
         \hskip 1.5cm  L_{11} \otimes  L_{11}  \ \, =  \ \, \mathcal O_W(D_{10} + D_{01}). 
     \end{equation}
\end{corollary}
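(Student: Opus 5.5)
The plan is to specialize Proposition~\ref{prop.alg.struct.bidouble} to $r=2$ and evaluate, for each relevant pair of characters, the set of $\sigma\in\mathbf Z_2\times\mathbf Z_2$ on which both characters take the value $-1$. By Notation~\ref{notation.building.data}, the characters are
\[
\chi_{ab}(\bar x,\bar y)=(-1)^{ax+by},
\]
the nonzero elements are $\sigma=10,01,11$, and the product of characters is $\chi_{ab}\chi_{a'b'}=\chi_{(a+a')(b+b')}$ with indices taken mod $2$. For the trivial character $\chi_{00}$ we use the convention $L_{00}=\mathcal O_W$, consistent with the summand $\mathcal O_{\mathcal Y}$ in \eqref{eigenspaces}. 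Equivalently, this is \eqref{relation.ring.structure} with $\chi\chi'$ trivial.

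\textbf{Relation \eqref{eq.2L_10}.} Take $\chi=\chi'=\chi_{10}$. Then $\chi\chi'=\chi_{00}$, so the left-hand side is $\mathcal O_W(\cdot)$. Since $\chi_{10}(\sigma)=(-1)^{x}$, the condition $\chi_{10}(\sigma)=-1$ holds exactly for $\sigma=10$ and $\sigma=11$. Hence \eqref{eq.bidouble.alg.str} gives $L_{10}^{\otimes 2}=\mathcal O_W(D_{10}+D_{11})$.

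\textbf{Relation \eqref{relations.ring.structure.Z_2^2}.} Take $\chi=\chi_{10}$ and $\chi'=\chi_{01}$, so $\chi\chi'=\chi_{11}$. We need $(-1)^x=(-1)^y=-1$, which forces $\sigma=11$ only. This yields $L_{10}\otimes L_{01}=L_{11}(D_{11})$.

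\textbf{Relation \eqref{eq.2L_11}.} Take $\chi=\chi'=\chi_{11}$. Then $\chi\chi'$ is trivial, and the condition $\chi_{11}(\sigma)=(-1)^{x+y}=-1$ holds exactly when $x+y$ is odd, that is, for $\sigma=10$ and $\sigma=01$. This gives $L_{11}^{\otimes 2}=\mathcal O_W(D_{10}+D_{01})$.

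There is no genuine obstacle. The only point requiring care is to check that the indexing conventions of Notation~\ref{notation.building.data} match: $D_\sigma$ is indexed by group elements, while $L_{ab}$ is indexed by characters. It also helps to note that the other relations ("among others") follow by the same symmetric computation, for example by swapping the roles of the two factors.
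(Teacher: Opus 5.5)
Your proof is correct and is the same argument as the paper's, which obtains these relations by a routine specialization of Proposition~\ref{prop.alg.struct.bidouble} to $r=2$. For each pair of characters you determine which nonzero $\sigma$ satisfy $\chi(\sigma)=\chi'(\sigma)=-1$, with $L_{00}=\mathcal O_W$ when $\chi\chi'$ is trivial. One small slip: in the first relation it is the right-hand side, not the left-hand side, that becomes $\mathcal O_W(\cdot)$.
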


\begin{theorem}\label{thm.bidouble.alg.str}
Let $p_g(W)=0$. 

\begin{enumerate}
    \item If $\pi: X \longrightarrow W$ is an abelian canonical cover with group $\mathbf{Z}_2 \times \mathbf{Z}_2$, then, without loss of generality, we may assume  

\begin{enumerate}
    \item[(i)] $L_{11} = \omega_{W}^*(1)$.
\end{enumerate}
 In this case, we have 
 \begin{enumerate}
 \item[(ii)] $D_{11} = 0$;  
 \item[(iii)] {$D_{10} + D_{01}$ is reduced;}
 \item[(iv)] the $L_{a_1a_2}$ and the $D_{b_1b_2}$ satisfy \eqref{eq.bidouble.alg.str}; and  
 \item[(v)]
 $h^0(\omega_W \otimes L_{10})=h^0(\omega_W \otimes L_{01})=0$.
\end{enumerate}

\smallskip

If $X$ is smooth, then, in addition, 


\begin{enumerate}
 \item[(iii')] 
 {$D_{10}, D_{01}$ are smooth  and meet transversally.}

\end{enumerate}
\smallskip

\item Conversely, let $\{L_{a_1a_2}, D_{b_1b_2}\}$ $(a_1, a_2, b_1, b_2=0, 1; (a_1,a_2), (b_1, b_2) \neq (0,0))$  satisfy (i) to (v). Let
  $\pi: X \longrightarrow W$ be the  abelian  cover having $\{L_{a_1a_2}, D_{b_1b_2}\}$ as its building data and with algebra structure determined by \eqref{eq.bidouble.alg.str} and (ii). 

   \smallskip
  
  \begin{enumerate}
      \item If $X$ is locally Gorenstein, then $\pi$ is an abelian canonical cover with group $\mathbf{Z}_2 \times \mathbf{Z}_2$.

      \item If (iii') holds, then $\pi$ is an abelian canonical cover with group $\mathbf{Z}_2 \times \mathbf{Z}_2$ and $X$ is smooth.
  \end{enumerate} 
\end{enumerate}
\end{theorem}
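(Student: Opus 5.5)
For part (1) I follow the pattern of the proof of Theorem~\ref{thm.prime}. Theorem~\ref{thm.split.abelian} gives $\pi_*\mathcal O_X=\mathcal O_W\oplus L_{10}^*\oplus L_{01}^*\oplus L_{11}^*$, so Theorem~\ref{thm.splitting} (II) (1) makes one of the three $L_\chi$ equal to $\omega_W^*(1)$. The group $\mathrm{Aut}(\mathbb Z_2\times\mathbb Z_2)\cong S_3$ permutes the three nontrivial characters transitively, and the dual automorphism relabels the $D_\sigma$ compatibly. Hence we may assume $L_{11}=\omega_W^*(1)$, which is (i).

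For (ii), take determinants as in Theorem~\ref{thm.splitting} (II) (4) with $n=4$: $L_{10}^{\otimes 2}\otimes L_{01}^{\otimes 2}\simeq L_{11}^{\otimes 2}$. On the other hand, \eqref{relations.ring.structure.Z_2^2} gives $L_{10}\otimes L_{01}=L_{11}(D_{11})$, and squaring yields $L_{10}^{\otimes2}\otimes L_{01}^{\otimes2}=L_{11}^{\otimes2}(2D_{11})$. So $2D_{11}\sim 0$. Since $D_{11}$ is effective and $W$ is projective, $D_{11}=0$.

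The remaining items of (1) are then direct. (iv) is Theorem~\ref{thm.Pardini} made explicit by Proposition~\ref{prop.alg.struct.bidouble}. For (iii), the total branch locus is $D_{10}+D_{01}$ by (ii), and Theorem~\ref{thm.normal} together with the normality of $X$ makes it reduced. (v) is Theorem~\ref{thm.splitting} (II) (3). For (iii'), I use the local description of bidouble covers (Catanese, \cite[Section 2]{Catanese}, or Pardini's smoothness criterion). Smoothness of $X$ forces each $D_\sigma$ to be smooth, and any two branch components with distinct stabilizers meet transversally. Since $D_{11}=0$, this is exactly (iii'). Note that $D_{10}$ and $D_{01}$ generate $G$, which is why a common point is allowed at all.

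For the converse (2), Theorem~\ref{thm.Pardini} gives an abelian cover with the prescribed data. By (iii) and Theorem~\ref{thm.normal} it is normal, and under (iii') it is smooth, hence in particular locally Gorenstein. Next I compute $\omega_X$ by the Hurwitz-type formula for abelian covers: $K_X=\pi^*(K_W+\tfrac12(D_{10}+D_{01}))$, with $D_{11}=0$. Using \eqref{eq.2L_11}, $2L_{11}\sim D_{10}+D_{01}$. This gives $\omega_X\otimes\pi^*\omega_W^{-1}(-1)$ as a line bundle whose square is trivial, because $2(K_W+L_{11})\sim 2K_W+D_{10}+D_{01}$. I expect the main obstacle to be making this identification exact rather than only up to 2-torsion, so that the ramification formula is valid on a normal Gorenstein $X$. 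I would first try to write $\omega_X=\pi^*(\omega_W\otimes L_{11})$ directly from Pardini's description $R_\sigma$ of the ramification: $\pi^*D_\sigma=2R_\sigma$ and $\pi^*L_{11}\simeq\mathcal O_X(R_{10}+R_{01})$, the latter via the eigen-section of $L_{11}^*$. If that fails, I fall back on Lemma~\ref{lemma.criterion.canonical.cover} with $\mathcal L$ numerically trivial.

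To apply Lemma~\ref{lemma.criterion.canonical.cover}, I need its hypotheses (2) and (3). Hypothesis (2) follows from \eqref{eq.bidouble.alg.str}: $L_{10}\otimes L_{01}=L_{11}$, so $L_{10}\simeq L_{01}^*\otimes\omega_W^*(1)$. This gives \eqref{eq.trace.module.split} and \eqref{eq.self.duality} with $E'=L_{10}^*\oplus L_{01}^*$. Hypothesis (3) comes from $p_g(W)=0$ and (v). The lemma then shows that $\pi$ is canonical, which proves (2)(a) and (2)(b).
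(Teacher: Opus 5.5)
Your proposal is correct and follows essentially the same route as the paper. In part (1) you relabel so that $L_{11}=\omega_W^*(1)$ and combine the determinant identity with \eqref{relations.ring.structure.Z_2^2} to get $2D_{11}\sim 0$. For the converse you use the ramification formula to get $\omega_X^{\otimes 2}=\pi^*\mathcal O_W(2)$ and then apply Lemma~\ref{lemma.criterion.canonical.cover} with a $2$-torsion $\mathcal L$, exactly as the paper does (via the proof of Theorem~\ref{thm.tridouble.alg.str} (2)). The only slip is notational: the $2$-torsion line bundle you want is $\omega_X\otimes\pi^*\mathcal O_W(-1)=\omega_X\otimes\pi^*(\omega_W\otimes L_{11})^{*}$, not $\omega_X\otimes\pi^*\omega_W^{-1}(-1)$.
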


\begin{proof}
First we prove (1), so we 
assume  that $\pi: X \longrightarrow W$ is an abelian canonical cover with group $\mathbf{Z}_2 \times \mathbf{Z}_2$. 
There are automorphisms of $G^*$ swapping any two order $2$ elements of $G$, so, after possibly relabeling the $D_{b_1b_2}$ by an automorphism of $G$ and $L_{a_1a_2}$ by the inverse of its dual automorphism on $G^*$, by Theorem~\ref{thm.splitting} (II) (1), we may assume without loss of generality that $L_{11}=\omega_W^*(1)$.
 Then, Theorem~\ref{thm.splitting} (II) (4) implies 
\begin{equation*}
 L_{10}^{\otimes 2} \otimes L_{01}^{\otimes 2} \ = L_{11}^{\otimes 2}.
\end{equation*}
Then, from \eqref{relations.ring.structure.Z_2^2} it follows that
$2D_{11} \sim 0$. 
Since  $D_{11}$ is effective and $W$ is projective, we get 
$D_{11} = 0$.
This finishes the proof of (ii). Part (iii) and (iii') follow from Theorem~\ref{thm.normal} and \cite[Proposition 3.1]{Pardini}, Part (iv) follows from Proposition~\ref{prop.alg.struct.bidouble} and Part (v) follows from Theorem~\ref{thm.splitting} (II) (3).

\smallskip
\noindent Assume now the hypotheses of (2). First we prove (a). By Theorems~\ref{thm.Pardini} and \ref{thm.normal} and (1) (iii), the $L_{a_1a_2}$ and the $D_{b_1b_2}$ are indeed the building data of an abelian cover 
$\pi: X \longrightarrow W$ with algebra structure determined by \eqref{eq.bidouble.alg.str} and (ii) and $X$ normal. 
To see that $\pi$ is a canonical cover we argue similarly to the proof of Theorem~\ref{thm.tridouble.alg.str} (2), taking now into account that the  ramification formula, (i), (ii) and \eqref{eq.2L_11} yield 
\begin{equation*}
    \omega_X^{\otimes 2} = \pi^*(\omega_W^{\otimes 2} \otimes \mathcal O_W(D_{10} + D_{01} ))= \pi^*(\mathcal O_W(2))
\end{equation*}
and that (1) (i), (1) (ii) and \eqref{relations.ring.structure.Z_2^2} imply \eqref{eq.self.duality}.

\smallskip
\noindent
Now we prove (b). 
Since $H_{10}$ and $H_{01}$ satisfy condition iii) a) of  \cite[Proposition 3.1]{Pardini}, this proposition and (iii') imply $X$ is smooth and, in particular, locally Gorenstein, so we conclude by (a).
\end{proof}

\subsection{Abelian canonical covers with group $\mathbf{Z}_4$}\label{section.abelian.Z4} \

\medskip
\noindent We start with subsection with a lemma that will be also used in Subsections~\ref{section.abelian.degree.6} and \ref{subsection.abelian.Z8}:

\begin{lemma}~\label{lemma.congruence}
    Let $G=\mathbb{Z}_n$, $n \ge 1$ and let $b \in \{1, \dots, n-1\}$. Consider $(H_b,\psi_b)$ and let $\nu = |H_b|$. If $\psi_b=\chi_c|_{H_b}$, then, for any $a \in \{1, \dots, n-1\}$ the exponent $i_{\chi_a}$ with respect to $(H_b,\psi_b)$, as defined in \cite[(2.1)]{Pardini},  satisfies 
    \begin{equation}\label{equ.congruence}
        i_{\chi_a} \equiv a c' (\nu),
    \end{equation}
  where $c'$ is the inverse of $c$ modulo $\nu$.  Moreover, if 
$n=2, 3, 4, 6, 8, 12, 24$, then 
 \begin{equation}\label{eq.congruence2}
        i_{\chi_a} \equiv a c \, (\nu).
    \end{equation}
\end{lemma}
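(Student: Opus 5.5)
The plan is to unwind the definition of the exponent $i_{\chi_a}$ from \cite[(2.1)]{Pardini}: for a pair $(H,\psi)$, with $\psi$ a generator of $H^*$, the exponent $i_\chi$ is the unique integer $0 \le i_\chi \le |H|-1$ such that $\chi|_H=\psi^{i_\chi}$. So we need to compute, for $H=H_b=\langle \bar b\rangle$ with $\nu=|H_b|$, the integer $i=i_{\chi_a}$ with $\chi_a|_{H_b}=\psi_b^{\,i}$. Writing $d=\gcd(n,b)$, we have $\nu=n/d$, and we set $b_0=b/d$, so that $\gcd(\nu,b_0)=1$.

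First we translate the hypothesis $\psi_b=\chi_c|_{H_b}$. Since $\psi_b^{\,i}=(\chi_c|_{H_b})^i=\chi_{ci}|_{H_b}$, the defining relation becomes $\chi_{a-ci}|_{H_b}=1$. As $H_b$ is generated by $\bar b$, this says $\zeta^{(a-ci)b}=1$ with $\zeta=e^{2\pi i/n}$, i.e.\ $n \mid (a-ci)b$. Dividing by $d$, this is $\nu \mid (a-ci)b_0$, and since $\gcd(\nu,b_0)=1$, it is equivalent to $\nu\mid a-ci$, that is, $a\equiv ci \pmod{\nu}$.

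Next we check that $c$ is invertible modulo $\nu$. By definition, $\psi_b(\bar b)=e^{2\pi i/\nu}=\zeta^{n/\nu}=\zeta^{d}$, while $\chi_c(\bar b)=\zeta^{cb}$. Hence $cb\equiv d \pmod n$, i.e.\ $cb_0\equiv 1\pmod{\nu}$, so $c$ is a unit mod $\nu$ (indeed $c'\equiv b_0$). Multiplying $a\equiv ci$ by $c'$ gives \eqref{equ.congruence}.

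For \eqref{eq.congruence2}, note that each listed $n$ divides $24$, hence so does $\nu$. The group $(\mathbb Z/24)^*$ has exponent $2$, and so does every quotient $(\mathbb Z/\nu)^*$ with $\nu\mid 24$. Therefore $c^2\equiv 1\pmod{\nu}$, so $c'\equiv c$, and \eqref{eq.congruence2} follows. The only delicate point is the bookkeeping in the reduction from $n\mid(a-ci)b$ to $\nu\mid(a-ci)$, which uses the coprimality of $\nu$ and $b_0$; everything else is direct.
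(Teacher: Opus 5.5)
Your proposal is correct and follows essentially the paper's argument. Like the paper, you evaluate $\chi_a|_{H_b}=(\chi_c|_{H_b})^{i_{\chi_a}}$ at the generator $\bar b$ to obtain $a\equiv c\,i_{\chi_a} \pmod{\nu}$, invert $c$, and use $c'=c$ when $\nu$ divides $24$. You are more careful than the paper, which passes directly to a primitive $\nu$-th root of unity and takes the invertibility of $c$ modulo $\nu$ for granted; you justify both steps, via $\gcd(\nu,b/d)=1$ and $c\,(b/d)\equiv 1 \pmod{\nu}$.
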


\begin{proof}
We have 
\begin{equation*}
    \chi_a|_{H_b} = \psi_b^{i_{\chi_a}} = (\chi_c|_{H_b})^{i_{\chi_a}}, 
\end{equation*}
which is equivalent to
\begin{equation*}
    \chi_a(\bar b) =  \chi_c(\bar b)^{i_{\chi_a}}
\end{equation*}
By Notation~\ref{notation.building.data}, this is equivalent to
\begin{equation*}
    \zeta^a=\zeta^{ci_{\chi_a}},
\end{equation*}
where $\zeta$ is a primitive $\nu$-root of unity. This is equivalent to 
\begin{equation*}
  a \equiv   c \, i_{\chi_a} (\nu) 
    \end{equation*}
    and, hence, to \eqref{equ.congruence}. If $n=2, 3, 4, 6, 8, 12, 24$, then $\nu \in \{2, 3, 4, 6, 8, 12, 24\}$. 
     In this case,  $c'=c$, so  \eqref{equ.congruence} becomes \eqref{eq.congruence2}.
\end{proof}

\begin{prop}
    If $\pi: \mathcal X \longrightarrow \mathcal Y$ is an abelian cover with group $\mathbf{Z}_4$, then we have, among others,   these relations:
   \begin{equation}\label{eq.Z_4.2L_1}
   L_{1} \otimes  L_{1} =  L_{2}(D_2+D_3)
     \end{equation}
     \begin{equation}\label{eq.Z_4.L_1L_2}
        L_{1} \otimes  L_{2}   =   L_3(D_3)
     \end{equation}
         \begin{equation}\label{eq.Z_4.L_1L_3}
        L_{1} \otimes  L_{3}   =   \mathcal O_W(D_1+D_2+D_3)
     \end{equation}
     \begin{equation}\label{eq.Z_4.2L_2}
   L_{2} \otimes  L_{2} =  \mathcal O_W(D_1+D_3)
     \end{equation}
      \begin{equation}\label{eq.Z_4.2L_3}
   L_{3} \otimes  L_{3} =  L_{2}(D_1+D_2)
     \end{equation}
\end{prop}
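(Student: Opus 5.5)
The plan is to read off each relation from Pardini's formula \eqref{relation.ring.structure}--\eqref{relation.divisor.ring.structure}. For $G=\mathbf Z_4$ this formula, in the form given in \cite[(2.2)]{Pardini}, says that the coefficient of $D_{H,\psi}$ in $D_{\chi,\chi'}$ is
\[
\epsilon^{H,\psi}_{\chi,\chi'}=\left\lfloor \frac{i_{\chi}+i_{\chi'}}{|H|}\right\rfloor ,
\]
where $i_\chi$ is the exponent of \cite[(2.1)]{Pardini} with respect to $(H,\psi)$. The only work is therefore to compute, for each nonzero $b\in\mathbf Z_4$, the exponents $i_{\chi_a}$ with respect to $(H_b,\psi_b)$, $a=1,2,3$. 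For this I will use Lemma~\ref{lemma.congruence}, specifically \eqref{eq.congruence2}, since $n=4$ is in the allowed list.

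First I determine, for each $b$, the order $\nu=|H_b|$ and a $c$ with $\psi_b=\chi_c|_{H_b}$, using the normalization $\psi_b(\bar b)=e^{2\pi i/\nu}$ of Notation~\ref{notation.building.data}.
\begin{enumerate}
\item For $b=1$: $\nu=4$ and $\chi_1(\bar 1)=i$, so $c=1$ and $i_{\chi_a}=a$.
\item For $b=2$: $\nu=2$ and $\chi_1(\bar 2)=-1$, so $c=1$ and $i_{\chi_a}\equiv a \pmod 2$, giving exponents $1,0,1$ for $a=1,2,3$.
\item For $b=3$: $\nu=4$, and we need $\chi_c(\bar 3)=i^{3c}=i$, so $c=3$ and $i_{\chi_a}\equiv 3a\pmod 4$, giving exponents $3,2,1$.
\end{enumerate}
In summary, the exponent vectors $(i^{D_1},i^{D_2},i^{D_3})$ are $(1,1,3)$ for $\chi_1$, $(2,0,2)$ for $\chi_2$ and $(3,1,1)$ for $\chi_3$.

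Second, I add the exponent vectors of the two characters in each relation and take the floor of each entry divided by the corresponding $\nu$ (which is $4,2,4$). For $L_1\otimes L_1$ the sums $(2,2,6)$ give coefficients $(0,1,1)$, which is $D_2+D_3$, and $\chi_1^2=\chi_2$. For $L_1\otimes L_2$ the sums $(3,1,5)$ give $(0,0,1)$, which is $D_3$, and the product character is $\chi_3$. For $L_1\otimes L_3$ the sums $(4,2,4)$ give $(1,1,1)$ with trivial product character, so $L_{\chi_0}=\mathcal O_W$. For $L_2\otimes L_2$ the sums $(4,0,4)$ give $D_1+D_3$. For $L_3\otimes L_3$ the sums $(6,2,2)$ give $D_1+D_2$ with product character $\chi_2$. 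These are exactly \eqref{eq.Z_4.2L_1}--\eqref{eq.Z_4.2L_3}.

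The only delicate step is getting the generator $\psi_3$ right, that is, $c=3$ rather than $c=1$. This is where \eqref{eq.congruence2} (with $c'=c$) must be applied carefully, because a mistake there would swap the roles of $D_1$ and $D_3$ in every relation. Everything else is routine arithmetic.
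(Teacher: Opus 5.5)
Your proposal is correct and follows the paper's own route: you compute the exponents $i_{\chi_a}$ with respect to each $(H_b,\psi_b)$ via \eqref{eq.congruence2} of Lemma~\ref{lemma.congruence}, then read off the coefficients of Pardini's formula \eqref{relation.divisor.ring.structure}. Your exponent vectors $(1,1,3)$, $(2,0,2)$, $(3,1,1)$ are right, including the choice $c=3$ for $b=3$, and so are the five relations you derive from them.
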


\begin{proof}
    The above relations are a subset of those we obtain when we explicitly write, for the group $\mathbb Z_4$, the relations
    in \eqref{relation.ring.structure}.
    In order to do so, we use \eqref{eq.congruence2} of Lemma~\ref{lemma.congruence} to compute the coefficients of \eqref{relation.divisor.ring.structure}.  
\end{proof}

\begin{theorem}\label{thm.Z4.alg.str}
Let $W$ be such that $p_g(W)=0$.

\begin{enumerate}
    \item If  $\mathrm{Pic} \, W$ has no $2$-torsion or $\omega_W^{\otimes 2}$ is not very ample (e.g, if $W$
  is  not of general type) and $\pi: X \longrightarrow W$ is an abelian canonical cover with group $\mathbf{Z}_4$, then, without loss of generality, we may assume  

\begin{enumerate}
    \item[(i)] $L_{3} = \omega_{W}^*(1)$.
\end{enumerate}
 In this case, we have 
 \begin{enumerate}
 \item[(ii)] $D_{3} = 0$;   
 \item[(iii)] $D_1+D_2$ is a reduced divisor;
 \item[(iv)] the $L_{a}$ and the $D_{b}$ satisfy \eqref{relation.ring.structure}; and  
\item[(v)] $h^0(\omega_W \otimes L_1)=h^0(\omega_W \otimes L_2)=0$.
\end{enumerate}

\smallskip
 \item[] If $X$ is smooth, then, in addition, 
\smallskip

\begin{enumerate}
  \item[(iii')] 
  {$D_{1}, D_{2}$ are smooth  and disjoint.}
\end{enumerate}

\smallskip
\item Conversely, let $\{L_{a}, D_{b}\}$ $(a, b=1,2,3)$ satisfy (i) to (v) 
and let \linebreak
  $\pi: X \longrightarrow W$ be the  abelian  cover having $\{L_{a}, D_{b}\}$ as its building data and with algebra structure determined by \eqref{relation.ring.structure} and (ii). 
  \smallskip
  
  \begin{enumerate}
      \item If $X$ is locally Gorenstein, then $\pi$ is an abelian canonical cover with group $\mathbf{Z}_4$.

      \item If (iii') holds, then $\pi$ is an abelian canonical cover with group $\mathbf{Z}_4$ and $X$ is smooth.
  \end{enumerate} 
\end{enumerate}
\end{theorem}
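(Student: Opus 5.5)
The proof of (1) rests on Theorem~\ref{thm.splitting} (II) (1), which says that one of $L_1, L_2, L_3$ is isomorphic to $\omega_W^*(1)$. The first task is to show that this line bundle can be taken to be $L_3$. The automorphism $\bar x \mapsto -\bar x$ of $\mathbb Z_4$ swaps $\chi_1$ and $\chi_3$. So after relabeling the $D_b$ by this automorphism and the $L_a$ by its dual, it suffices to rule out $L_2=\omega_W^*(1)$, and this is where the hypothesis on $\mathrm{Pic}\, W$ or $\omega_W^{\otimes 2}$ enters.

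Suppose $L_2 = \omega_W^*(1)$. Theorem~\ref{thm.splitting} (II) (4) gives $L_1^{\otimes 2}\otimes L_3^{\otimes 2} = L_2^{\otimes 2}$. Now square \eqref{eq.Z_4.L_1L_3} and compare it with \eqref{eq.Z_4.2L_2}. This yields $D_1+2D_2+D_3 \sim 0$, hence $D_1=D_2=D_3=0$ because the $D_b$ are effective and $W$ is projective. Then \eqref{eq.Z_4.2L_2} gives $L_2^{\otimes 2}\simeq \mathcal O_W$, so $\omega_W^{\otimes 2}\simeq \mathcal O_W(2)$.
\begin{itemize}
\item If $\mathrm{Pic}\, W$ has no $2$-torsion, then $\omega_W \simeq \mathcal O_W(1)$. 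This gives $p_g(W)>0$, a contradiction.
\item Otherwise $\omega_W^{\otimes 2}\simeq\mathcal O_W(2)$ is very ample, contradicting the hypothesis.
\end{itemize}
This proves (i).

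With (i) in place, (ii) follows in the same way as in Theorem~\ref{thm.bidouble.alg.str}. By Theorem~\ref{thm.splitting} (II) (4), $L_1^{\otimes 2}\otimes L_2^{\otimes 2}=L_3^{\otimes 2}$. Squaring \eqref{eq.Z_4.L_1L_2} then gives $2D_3\sim 0$, so $D_3=0$. The remaining parts follow from earlier results:
\begin{itemize}
\item (iii) follows from Theorem~\ref{thm.normal}, since by (ii) the total branch locus is $D_1+D_2$.
\item (iv) is Theorem~\ref{thm.Pardini}.
\item (v) is Theorem~\ref{thm.splitting} (II) (3).
\item (iii') comes from \cite[Proposition 3.1]{Pardini}. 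The divisors $D_1$ and $D_2$ must be smooth. The inertia groups are $H_1=\mathbb Z_4$ and $H_2=\{\bar 0,\bar 2\}\subset H_1$, and the subgroup they generate is not their direct sum, so smoothness forces $D_1\cap D_2=\emptyset$.
\end{itemize}

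For (2) I will apply Lemma~\ref{lemma.criterion.canonical.cover}. By (iii), Theorems~\ref{thm.Pardini} and \ref{thm.normal} give a normal $\mathbb Z_4$-cover. To check hypothesis (1) of the lemma, I use the ramification formula. Over $D_1$ the ramification index is $4$ and over $D_2$ it is $2$, so
\[
\omega_X^{\otimes 4}=\pi^*\bigl(\omega_W^{\otimes 4}(3D_1+2D_2)\bigr).
\]
From \eqref{eq.Z_4.2L_3}, \eqref{eq.Z_4.2L_2} and $D_3=0$ we get $L_3^{\otimes 4}=L_2^{\otimes 2}(2D_1+2D_2)=\mathcal O_W(3D_1+2D_2)$. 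Combined with (i), this gives $\omega_X^{\otimes 4}=\pi^*\mathcal O_W(4)$, so $\omega_X\otimes\pi^*\mathcal O_W(-1)$ is torsion and hence numerically trivial. Hypothesis (2) of the lemma follows from (i) together with \eqref{eq.Z_4.L_1L_2} and $D_3=0$, which give $L_1\otimes L_2=L_3=\omega_W^*(1)$; this is exactly \eqref{eq.self.duality} for $E'=L_1^*\oplus L_2^*$. Hypothesis (3) is $p_g(W)=0$ together with (v).

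Lemma~\ref{lemma.criterion.canonical.cover} then gives (a), using that $X$ is locally Gorenstein. For (b), \cite[Proposition 3.1]{Pardini} and (iii') give that $X$ is smooth, hence locally Gorenstein, and (a) applies. I expect the main obstacle to be the exclusion of $L_2=\omega_W^*(1)$, since it is the only step that needs the extra hypothesis on $\mathrm{Pic}\, W$ or $\omega_W^{\otimes 2}$.
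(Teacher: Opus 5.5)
Your proposal is correct and follows the paper's proof almost step by step. Both arguments share the same four ingredients:
\begin{itemize}
\item the exclusion of $L_2=\omega_W^*(1)$ via Theorem~\ref{thm.splitting} (II) (4), \eqref{eq.Z_4.L_1L_3} and \eqref{eq.Z_4.2L_2};
\item the symmetry $\chi_1\leftrightarrow\chi_3$;
\item Pardini's criteria for (iii) and (iii');
\item Lemma~\ref{lemma.criterion.canonical.cover}, together with $\omega_X^{\otimes 4}=\pi^*\mathcal O_W(4)$, for the converse.
\end{itemize}

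There are two small deviations. The first is in finishing the exclusion of $L_2$. The paper deduces $L_2\simeq\mathcal O_W$ and contradicts the connectedness of $X$ via $h^0(\mathcal O_X)\ge 2$. You instead deduce $\omega_W\simeq\mathcal O_W(1)$ and contradict $p_g(W)=0$; the two are equivalent.

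The second deviation is more substantive and is in your favour. For (ii) the paper invokes Theorem~\ref{thm.splitting} (II) (5) to get $L_1\otimes L_2\simeq L_3$. That statement requires $\mathrm{Pic}\,W$ to be free of $2$-torsion, but the theorem only assumes the disjunction ``no $2$-torsion or $\omega_W^{\otimes 2}$ not very ample''. You square \eqref{eq.Z_4.L_1L_2} and compare with (II) (4) to get $2D_3\sim 0$, which uses no torsion hypothesis. Your argument therefore also covers the case where $\mathrm{Pic}\,W$ has $2$-torsion but $\omega_W^{\otimes 2}$ is not very ample, which the paper's proof, as written, leaves unjustified.
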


\begin{proof}
   Let us prove (1). By \eqref{formula.splitting}, 
\begin{equation}\label{eq.character.omega.Z4}
    \omega_W^*(1)=L_{a}
\end{equation}
 for some $a\in \{1,2,3\}$. 

 \smallskip

\noindent
To prove (1) (i), we see first that the subindex $a$ in \eqref{eq.character.omega.Z4} cannot be $2$. Suppose the contrary. Then $\omega_W^*(1)=L_{2}$.  By Theorem~\ref{thm.splitting} (II) (4), \eqref{eq.Z_4.L_1L_3} and \eqref{eq.Z_4.2L_2},
\begin{equation*}
  \mathcal O_W(D_1+D_3)=L_2^{\otimes 2} = L_1^{\otimes 2} \otimes L_3^{\otimes 2} =  \mathcal O_W(2D_1+2D_2+2D_3).
\end{equation*}
Therefore
\begin{equation*}
    D_1+2D_2+D_3 \sim 0.
\end{equation*}
Then, since $W$ is projective, 
\begin{equation*}
D_1 =  D_2 = D_3 =  0.
\end{equation*}
Then \eqref{eq.Z_4.2L_2} and our assumption implies 
\begin{equation*}
   \omega_W^{-2}(2) = L_2^{\otimes 2} = \mathcal O_W.
\end{equation*}
This implies $\omega_W^{\otimes 2}$ is  very ample, hence, by hypothesis, Pic$W$ has no $2$-torsion. Then $L_2=\mathcal O_W$, so \eqref{formula.splitting} implies $h^0(\mathcal O_X) \ge 2$, so $X$ would not be connected, which is a contradiction. 

\smallskip
\noindent Then, in \eqref{eq.character.omega.Z4}, $a=1$ or $a=3$. 
There is an automorphism of $G^*$ swapping the two generators of $G^*$, so, after possibly relabeling the $D_{b}$ by an automorphism of $G$ and the $L_{a}$ by the inverse of its dual automorphism on $G^*$, by Theorem~\ref{thm.splitting} (II) (1), we may assume without loss of generality that $L_{3}=\omega_W^*(1)$, so (1) (i) is proved. 

\smallskip
\noindent
Now, Theorem~\ref{thm.splitting} (II) (5) implies 
\begin{equation*}
 L_{1} \otimes L_2  = L_{3}.
\end{equation*} 
Then, from \eqref{eq.Z_4.L_1L_2} it follows that
$2D_{3} \sim 0$. 
Since  $D_{3}$ is effective and $W$ is projective, we get 
$D_{3} = 0$, so (1) (ii) is proved. Parts (iii) and (v) of (1) are proved as in the proof of Theorem~\ref{thm.bidouble.alg.str} and Part (iv) of (1) follows from Theorem~\ref{thm.Pardini}. Finally, since $H_1$ and $H_2$ do not satisfy condition iii) a) of \cite[Proposition 3.1]{Pardini}, if $X$ is smooth, this proposition implies (1) (iii').

\smallskip
\noindent Assume now the hypotheses of (2). To prove (a), arguing as is the proof of Theorem~\ref{thm.bidouble.alg.str} (2), we see that $\{L_{a},D_{b}\}$ is the building data of an abelian cover
$\pi: X \longrightarrow W$ with algebra structure determined by \eqref{relation.ring.structure} and (ii).
To see that $\pi$ is a canonical cover we argue similarly to the proof of Theorem~\ref{thm.tridouble.alg.str} (2), taking now into account that the  ramification formula, (i), (ii), \eqref{eq.Z_4.2L_2} and \eqref{eq.Z_4.2L_3} yield 
\begin{equation*}
    \omega_X^{\otimes 4} = \pi^*(\omega_W^{\otimes 4} \otimes \mathcal O_W(3D_{1} + 2D_{2} ))= \pi^*(\mathcal O_W(4))
\end{equation*}
and that (1) (i), (1) (ii) and \eqref{eq.Z_4.L_1L_2} imply \eqref{eq.self.duality}.

\smallskip
\noindent
Now we prove (b). 
  \cite[Proposition 3.1]{Pardini} and (iii') imply $X$ is smooth and, in particular, locally Gorenstein, so we conclude by (a).
\end{proof}

\subsection{Abelian canonical covers of degree $6$}\label{section.abelian.degree.6} \

\smallskip

\noindent We start this subsection making explicit, for the group $\mathbb Z_6$, those relations in \eqref{relation.ring.structure} that we will use in the remaining of the subsection:

\begin{prop}\label{prop.alg.struct.cyclic6}
  If $\pi: \mathcal X \longrightarrow \mathcal Y$ is an abelian cover with group $\mathbf{Z}_6$, then its building data satisfy, among others, the following relations: 
  \begin{equation}\label{eq.alg.str.Z6.L_1L_1}
   L_1 \otimes L_1 = L_2(D_3+D_4+D_5)
\end{equation}
\begin{equation}\label{eq.alg.str.Z6.L_1L_2}
    L_1 \otimes L_2 = L_3(D_2+D_4+D_5)
\end{equation}
\begin{equation}\label{eq.alg.str.Z6.L_1L_4}
   L_1 \otimes L_4 = L_5(D_4+D_5)
\end{equation}
\begin{equation}\label{eq.alg.str.Z6.L_1L_5}
   L_1 \otimes L_5 = \mathcal O_W(D_1+D_2+D_3+D_4+D_5)
\end{equation}
\begin{equation}\label{eq.alg.str.Z6.L_2L_2}
   L_2 \otimes L_2 = L_4(D_2+D_5)
\end{equation}
\begin{equation}\label{eq.alg.str.Z6.L_2L_3}
L_2 \otimes L_3 = L_5(D_{5})
\end{equation}
\begin{equation}\label{eq.alg.str.Z6.L_2L_4}
L_2\otimes L_4
=\mathcal{O}_W(D_1+D_2+D_4+D_5)
\end{equation}
\begin{equation}\label{eq.alg.str.Z6.L_3L_3}
L_{3}\otimes L_3 = \mathcal{O}_{W}(D_{1} + D_{3} + D_{5})
\end{equation}
\begin{equation}\label{eq.alg.str.Z6.L_3L_5}
   L_3 \otimes L_5 = L_2(D_1+D_3)
\end{equation}
\begin{equation}\label{eq.alg.str.Z6.L_4L_4}
   L_4 \otimes L_4 = L_2(D_1+D_4)
\end{equation}
\begin{equation}\label{eq.alg.str.Z6.L_4L_5}
   L_4 \otimes L_5 = L_3(D_1+D_2+D_4)
\end{equation}
\begin{equation}\label{eq.alg.str.Z6.L_5L_5}
   L_5 \otimes L_5 = L_4(D_1 + D_2 + D_3)
\end{equation}
\end{prop}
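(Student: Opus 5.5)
The plan is a direct specialization of Pardini's formula \eqref{relation.divisor.ring.structure}. For $G=\mathbf Z_6$ this formula reads
\[
L_{\chi_a}\otimes L_{\chi_{a'}}=L_{\chi_{a+a'}}\Bigl(\textstyle\sum_b \epsilon^{b}_{a,a'}D_b\Bigr),
\qquad
\epsilon^{b}_{a,a'}=\Bigl\lfloor \frac{i^{b}_{\chi_a}+i^{b}_{\chi_{a'}}}{\nu_b}\Bigr\rfloor ,
\]
where $i^{b}_{\chi_a}\in\{0,\dots,\nu_b-1\}$ is the exponent of $\chi_a|_{H_b}$ with respect to $\psi_b$ (\cite[(2.1),(2.2)]{Pardini}). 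In Notation~\ref{notation.building.data}, the divisors $D_{H,\psi}$ are indexed by the nonzero elements $b\in\{1,\dots,5\}$ of $\mathbf Z_6$. So the whole proof amounts to computing these exponents for every pair $(a,b)$ and then reading off the twelve listed relations.

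\emph{Step 1: subgroups and the characters $\chi_c$.} The orders are $\nu_1=\nu_5=6$, $\nu_2=\nu_4=3$ and $\nu_3=2$. For each $b$ I determine $c$ with $\psi_b=\chi_c|_{H_b}$, that is, $\zeta_6^{cb}=e^{2\pi i/\nu_b}$:
\begin{itemize}
\item $b=1$: $c=1$;
\item $b=5$: $5c\equiv 1\ (6)$, so $c=5$;
\item $b=2$: $2c\equiv 2\ (6)$, so $c\equiv 1\ (3)$;
\item $b=4$: $4c\equiv 2\ (6)$, so $c\equiv 2\ (3)$;
\item $b=3$: $3c\equiv 3\ (6)$, so $c\equiv 1\ (2)$.
\end{itemize}

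\emph{Step 2: the exponents.} Since $n=6$, Lemma~\ref{lemma.congruence}, \eqref{eq.congruence2}, gives $i^{b}_{\chi_a}\equiv ac\ (\nu_b)$. Hence
\begin{itemize}
\item $i^{1}_{\chi_a}=a$;
\item $i^{5}_{\chi_a}=5a \bmod 6$;
\item $i^{2}_{\chi_a}=a\bmod 3$;
\item $i^{4}_{\chi_a}=2a \bmod 3$;
\item $i^{3}_{\chi_a}=a\bmod 2$.
\end{itemize}

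\emph{Step 3: read off the coefficients.} For each of the twelve pairs $(a,a')$ in the statement, I compute the five coefficients $\epsilon^b_{a,a'}$ and keep the $D_b$ with coefficient $1$. The coefficients can only be $0$ or $1$, since both exponents are at most $\nu_b-1$. As a check, take $(a,a')=(1,1)$:
\begin{itemize}
\item $b=1$: $\lfloor 2/6\rfloor=0$;
\item $b=5$: $\lfloor 10/6\rfloor=1$;
\item $b=2$: $\lfloor 2/3\rfloor=0$;
\item $b=4$: $\lfloor 4/3\rfloor=1$;
\item $b=3$: $\lfloor 2/2\rfloor=1$.
\end{itemize}
This gives $L_1^{\otimes 2}=L_2(D_3+D_4+D_5)$, which is \eqref{eq.alg.str.Z6.L_1L_1}. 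The remaining relations are checked the same way. Relations with $a+a'\equiv 0$ yield $L_0=\mathcal O_W$, and then every $D_b$ with $i^b_{\chi_a}\neq 0$ appears. For example, in \eqref{eq.alg.str.Z6.L_2L_4} the divisor $D_3$ is absent because $2$ and $4$ are even.

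\emph{Main obstacle.} There is no conceptual difficulty; the only delicate point is Step 1. There one must get the generator $\psi_b$ right: it is normalized by $\psi_b(b)=e^{2\pi i/\nu_b}$, not by its value on $\bar 1$. In particular, for $b=4$ and $b=5$ this produces $c=2$ and $c=5$ rather than $1$. An error there would swap the roles of $D_1,D_5$ or of $D_2,D_4$ in all the relations, so I would double-check these two cases against the symmetry $a\mapsto -a$, which must exchange $D_b$ with $D_{-b}$.
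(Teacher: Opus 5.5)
Your proposal is correct and follows the same route as the paper. The paper's proof applies Pardini's formula \eqref{relation.divisor.ring.structure} with the exponents computed via \eqref{eq.congruence2} of Lemma~\ref{lemma.congruence}, which is exactly your plan. Your exponent table, with $c=1,1,1,2,5$ for $b=1,\dots,5$, does reproduce all twelve listed relations, so you have supplied the explicit computation that the paper leaves to the reader.
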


\begin{proof}
The above relations are a subset of those we obtain when we explicitly write, for the group $\mathbb Z_6$, the relations
    in \eqref{relation.ring.structure}.
    In order to do so, we use \eqref{eq.congruence2} of Lemma~\ref{lemma.congruence} to compute the coefficients of \eqref{relation.divisor.ring.structure}. 
\end{proof}

\begin{theorem}\label{thm.Z6.alg.str}
  Let $W$ be such that $p_g(W)=0$.

  \smallskip

  \begin{enumerate}
    \item If $\mathrm{Pic} \, W$ is free of $2$-torsion and $3$-torsion and $\pi: X \longrightarrow W$ is an abelian canonical cover with group $G=\mathbb{Z}_6$, then, without loss of generality, we may assume  

\begin{enumerate}
    \item[(i)] $L_{5} = \omega_{W}^{*}(1)$.
\end{enumerate}
 In this case, we have 
 \begin{enumerate}
 \item[(ii)] 
 $D_{4} = D_{5} = 0$;    
 \item[(iii)] $D_{1}+D_{2}+D_{3}$ is a reduced divisor; 
 \item[(iv)] the $L_{a}$ and the $D_{b}$ satisfy \eqref{relation.ring.structure}; and 
 \item[(v)] $h^{0}(\omega_{W} \otimes L_{a}) = 0$ for all $a \neq 5$. 
 \end{enumerate}

\smallskip
 \item[] If $X$ is smooth, then, in addition, 
 \begin{enumerate}
     \item[(iii')] $D_{1}, D_{2}, D_{3}$ are smooth; $D_{1}$ meets neither  $D_{2}$ nor $D_{3}$;   and $D_{2}$ and $D_{3}$ meet transversely. 
 \end{enumerate}

\smallskip

\item Conversely, let $\{L_{a}, D_{b}\}$ $(a, b=1, \dots, 5)$ satisfy (i) to (v)
and let \linebreak
  $\pi: X \longrightarrow W$ be the  abelian  cover having $\{L_{a}, D_{b}\}$ as its building data and with algebra structure determined by \eqref{relation.ring.structure} and (ii). 
  
    \begin{enumerate}
      \item If $X$ is locally Gorenstein, then $\pi$ is an abelian canonical cover with group $\mathbf{Z}_6$.

      \item If (iii') holds, then $\pi$ is an abelian canonical cover with group $\mathbf{Z}_6$ and $X$ is smooth.
  \end{enumerate} 
\end{enumerate} 
\end{theorem}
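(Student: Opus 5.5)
The plan follows the proof of Theorem~\ref{thm.Z4.alg.str}. First I identify which character carries $\omega_W^*(1)$. Then I use Theorem~\ref{thm.splitting} (II) (5), together with the relations of Proposition~\ref{prop.alg.struct.cyclic6}, to force some branch divisors to be linearly equivalent to $0$; since they are effective and $W$ is projective, they vanish.

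\emph{Locating $\omega_W^*(1)$.} By \eqref{formula.splitting} and Theorem~\ref{thm.splitting} (II) (1), $\omega_W^*(1)=L_a$ for some $a\in\{1,\dots,5\}$. The automorphism $x\mapsto -x$ of $\mathbb Z_6$ swaps $\chi_1\leftrightarrow\chi_5$ and $\chi_2\leftrightarrow\chi_4$, so after relabelling it suffices to rule out $a=2$ and $a=3$. Since $n=6$ is even and $\mathrm{Pic}\,W$ has no $2$-torsion, Theorem~\ref{thm.splitting} (II) (5) says that the product of the four $L_i$ with $i\neq a$ equals $L_a^{\otimes 2}$.
\begin{itemize}
\item[$a=3$:] Combining \eqref{eq.alg.str.Z6.L_1L_5}, \eqref{eq.alg.str.Z6.L_2L_4} and \eqref{eq.alg.str.Z6.L_3L_3} gives $D_1+2D_2+2D_4+D_5\sim 0$, so $D_1=D_2=D_4=D_5=0$. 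Then \eqref{eq.alg.str.Z6.L_2L_2} and \eqref{eq.alg.str.Z6.L_2L_4} give $L_2^{\otimes 3}=\mathcal O_W$. Since $\mathrm{Pic}\,W$ has no $3$-torsion, $L_2=\mathcal O_W$, so $h^0(\mathcal O_X)\ge 2$, contradicting the irreducibility of $X$.
\item[$a=2$:] By \eqref{eq.alg.str.Z6.L_1L_4} and \eqref{eq.alg.str.Z6.L_3L_5}, $L_1L_3L_4L_5=L_2\otimes L_5(D_1+D_3+D_4+D_5)$, so (II) (5) gives $L_2=L_5(D_1+D_3+D_4+D_5)$. Squaring, and comparing via \eqref{eq.alg.str.Z6.L_2L_2} and \eqref{eq.alg.str.Z6.L_5L_5}, yields $3D_1+3D_3+2D_4+D_5\sim0$. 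Hence $D_1=D_3=D_4=D_5=0$, so \eqref{eq.alg.str.Z6.L_3L_3} gives $L_3^{\otimes2}=\mathcal O_W$. Absence of $2$-torsion gives $L_3=\mathcal O_W$, the same contradiction as before.
\end{itemize}
Thus $a\in\{1,5\}$, and the automorphism above lets us assume (i).

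\emph{Proof of (ii).} With $L_5=\omega_W^*(1)$, (II) (5) gives $L_1L_2L_3L_4=L_5^{\otimes 2}$. By \eqref{eq.alg.str.Z6.L_1L_4} and \eqref{eq.alg.str.Z6.L_2L_3}, the left side is $L_5^{\otimes2}(D_4+2D_5)$. Hence $D_4+2D_5\sim 0$, so $D_4=D_5=0$.

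\emph{Remaining items of (1).} Part (iii) follows from Theorem~\ref{thm.normal}, (iv) from Theorem~\ref{thm.Pardini}, and (v) from Theorem~\ref{thm.splitting} (II) (3). For (iii') I apply \cite[Proposition 3.1]{Pardini} to the remaining pairs $(H_b,\psi_b)$, $b=1,2,3$. The subgroups $H_2$ and $H_3$ (of orders $3$ and $2$) generate $\mathbb Z_6$ as a direct sum, so $D_2$ and $D_3$ may meet transversally. Any pair involving $H_1=\mathbb Z_6$ fails condition iii) a), so $D_1$ must be disjoint from $D_2$ and $D_3$. Smoothness of each $D_b$ is also part of that proposition.

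\emph{Converse (2).} Theorems~\ref{thm.Pardini} and \ref{thm.normal} with (iii) give a normal abelian cover with these building data. The ramification formula gives
\[
\omega_X^{\otimes 6}=\pi^*\bigl(\omega_W^{\otimes6}(5D_1+4D_2+3D_3)\bigr).
\]
Using \eqref{eq.alg.str.Z6.L_1L_5} and \eqref{eq.alg.str.Z6.L_5L_5} with (ii), the divisor $5D_1+4D_2+3D_3$ is linearly equivalent to $L_5^{\otimes6}$. Therefore $\omega_X=\pi^*\mathcal O_W(1)\otimes\mathcal L$ with $\mathcal L$ torsion, hence numerically trivial. The relations \eqref{eq.alg.str.Z6.L_1L_4}, \eqref{eq.alg.str.Z6.L_2L_3}, \eqref{eq.alg.str.Z6.L_1L_5} with (ii) give $L_aL_{5-a}=L_5$, which is \eqref{eq.self.duality}, while (v) gives \eqref{eq.vanishing.global.sections}. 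Lemma~\ref{lemma.criterion.canonical.cover} then shows that $\pi$ is canonical when $X$ is locally Gorenstein. Under (iii'), \cite[Proposition 3.1]{Pardini} shows that $X$ is smooth.

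I expect the main obstacle to be the case $a=2$ (and its mirror $a=4$): the relations have to be chained correctly to reach a relation in which every coefficient is positive.
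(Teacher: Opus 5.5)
Your proof is correct and is essentially the paper's argument. You rule out $a=2$ and $a=3$ by combining Theorem~\ref{thm.splitting} (II) (4)/(5) with the $\mathbb Z_6$ relations, obtain (ii) from (II) (5) together with \eqref{eq.alg.str.Z6.L_1L_4} and \eqref{eq.alg.str.Z6.L_2L_3}, and prove the converse via the ramification formula and Lemma~\ref{lemma.criterion.canonical.cover}. For $a=2$ you chain the relations differently, but you reach the same relation $3D_1+3D_3+2D_4+D_5\sim 0$.

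One citation needs fixing: $L_5^{\otimes 6}\simeq\mathcal O_W(5D_1+4D_2+3D_3)$ does not follow from \eqref{eq.alg.str.Z6.L_1L_5} and \eqref{eq.alg.str.Z6.L_5L_5} alone. From \eqref{eq.alg.str.Z6.L_5L_5} you get $L_5^{\otimes 6}\simeq L_4^{\otimes 3}(3D_1+3D_2+3D_3)$, and you still need $L_4^{\otimes 3}\simeq\mathcal O_W(2D_1+D_2)$. That comes from \eqref{eq.alg.str.Z6.L_2L_4} and \eqref{eq.alg.str.Z6.L_4L_4} with (ii), as in the paper.
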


\begin{proof}
 By \eqref{formula.splitting},  \begin{equation}\label{eq.character.omega.Z6}
    \omega_W^*(1)=L_{a}
\end{equation} for some $a \in \{1, \dots, {5}\}$.
We divide the proof of (1) (i) into several steps:
\smallskip

\noindent 
\underline{\emph{Step 1}}: The subindex $a$ in \eqref{eq.character.omega.Z6} cannot be ${2}$ or ${4}$.

\noindent 
Suppose the contrary. There are automorphisms of $G^*$ swapping $\chi_2$ and $\chi_4$, so, after possibly relabeling the $D_{b}$ by an automorphism of $G$ and $L_{a}$ by the inverse of its dual automorphism on $G^*$, we may assume $\omega_W^*(1)=L_{2}$. This assumption, together with Theorem~\ref{thm.splitting} (II) (4), 
\eqref{eq.alg.str.Z6.L_1L_1}, \eqref{eq.alg.str.Z6.L_3L_5} and \eqref{eq.alg.str.Z6.L_4L_4}, yields
\begin{equation*}
 L_2^{\otimes 4} =  L_1^{\otimes 2}\otimes L_3^{\otimes 2}\otimes L_4^{\otimes 2}\otimes L_5^{\otimes 2} = L_2^{\otimes 4}(3D_1 +3D_3+2D_4+3D_5),
\end{equation*}
which gives 
$$
3D_1+3D_3+2D_4+
{3}D_5 \sim 0.
$$
Since the $D_b$ are effective and $W$ is projective, this implies
$$
D_1=D_3=D_4=D_5=0.
$$
Then  \eqref{eq.alg.str.Z6.L_3L_3}
gives
$
L_3^{\otimes 2}=\mathcal{O}_W$.
Since $\mathrm{Pic}(W)$ has no $2$-torsion, 
we get $L_3=\mathcal{O}_W$, so 
$h^0(\mathcal{O}_X)\geq 2
$, 
which contradicts $X$ being connected. 

\smallskip

\noindent
\underline{\emph{Step 2}}: The subindex $a$ in \eqref{eq.character.omega.Z6} cannot be $3$.

\noindent Suppose the contrary. This assumption, together with Theorem~\ref{thm.splitting} (II) (5), 
\eqref{eq.alg.str.Z6.L_1L_2} and \eqref{eq.alg.str.Z6.L_4L_5}, yields 
$$L_3^{\otimes 2}=L_1\otimes L_2\otimes L_4\otimes L_5 = L_3^{\otimes 2}(D_1+2D_2+2D_4+D_5),$$
which gives 
$$D_1 + 2D_2 + 2D_4 + D_5 \sim 0.$$
Arguing as in Step 1, we get
$$
D_1=D_2=D_4=D_5=0.
$$
This together with \eqref{eq.alg.str.Z6.L_2L_2} and 
\eqref{eq.alg.str.Z6.L_2L_4} implies $L_2^{\otimes 3}=\mathcal O_W$.
Since $\mathrm{Pic}(W)$ has no $3$-torsion, we get
$h^0(\mathcal{O}_X)\geq 2
$,
which contradicts $X$ being connected. 

\smallskip
\noindent
\underline{\emph{Step 3}}:

\noindent There is an automorphism of $G^*$ swapping the two generators of $G^*$, so, after possibly relabeling the $D_{b}$ by an automorphism of $G$ and the $L_{a}$ by the inverse of its dual automorphism on $G^*$,  we may assume, without loss of generality, that $L_{5}=\omega_W^*(1)$. This completes the proof of 
(1) (i).

\smallskip
\noindent Now, Theorem~\ref{thm.splitting} (I) (5), \eqref{eq.alg.str.Z6.L_1L_4} and \eqref{eq.alg.str.Z6.L_2L_3} imply
 \begin{equation*}
     L_5^{\otimes 2}= L_1 \otimes L_2 \otimes L_3 \otimes L_4 = L_5^{\otimes 2}(D_4+2D_5).
 \end{equation*}
 This implies 
 \begin{equation*}
  D_4+2D_5 \sim 0.   
 \end{equation*}
 Since $D_4$ and $D_5$ are effective and $W$ is projective, 
  \begin{equation*}
  D_4=D_5=0,   
 \end{equation*}
 so (1) (ii) is proved. Parts (iii), (iv) and (v) of (1) are proved as in the proof of Theorem~\ref{thm.Z4.alg.str}. Since neither $H_1, H_2$ nor $H_1, H_3$ fulfill condition iii) a) of 
 \cite[Proposition 3.1]{Pardini}, if $X$ is smooth, then this proposition implies (1) (iii').

 \smallskip
\noindent Assume now the hypotheses of (2). To prove (a), arguing as is the proof of Theorem~\ref{thm.bidouble.alg.str} (2), we see that $\{L_{a},D_{b}\}$ is the building data of an abelian cover
$\pi: X \longrightarrow W$ with algebra structure determined by \eqref{relation.ring.structure} and (ii).
To see that $\pi$ is a canonical cover we argue similarly to the proof of Theorem~\ref{thm.tridouble.alg.str} (2), taking now into account that the  ramification formula, (1) (i), (2) (ii), \eqref{eq.alg.str.Z6.L_2L_4}, \eqref{eq.alg.str.Z6.L_4L_4} and \eqref{eq.alg.str.Z6.L_5L_5} yield 
\begin{equation*}
 \omega_{X}^{\otimes 6} = \pi^{*}(\omega_{W}^{\otimes 6} \otimes \mathcal O_W(5D_{1}+4D_{2}+3D_{3})) = \pi^{*}(\mathcal{O}_{W}(6))
\end{equation*}
and that (1) (i), (1) (ii),  \eqref{eq.alg.str.Z6.L_1L_4} and \eqref{eq.alg.str.Z6.L_2L_3} imply \eqref{eq.self.duality}. 

\smallskip
\noindent
Now we prove (b). 
Since $H_2$ and $H_3$ satisfy condition iii) a) of \cite[Proposition 3.1]{Pardini}, this proposition and (iii') imply $X$ is smooth  and, in particular, locally Gorenstein, so we conclude by (a).
\end{proof}

\subsection{Abelian canonical covers with group $\mathbf{Z}_2 \times \mathbf{Z}_2 \times  \mathbf{Z}_2$}\label{section.abelian.tridouble} \

\smallskip

\noindent We start this subsection making explicit, for the group $\mathbb Z_2 \times \mathbb Z_2 \times \mathbb Z_2$, those relations in Proposition~\ref{prop.alg.struct.bidouble} that we will use in the remaining of the subsection:

\begin{corollary}\label{cor.alg.struct.tridouble}
  If $\pi: \mathcal X \longrightarrow \mathcal Y$ is an abelian cover with group $\mathbf{Z}_2 \times \mathbf{Z}_2 \times  \mathbf{Z}_2$, then, among others, we have  these relations:
     \begin{equation}\label{relations.ring.structure.Z_2^3}
     \begin{matrix}
   L_{100} \otimes  L_{011} & = &  L_{111}(D_{110} + D_{101})\\
L_{010} \otimes L_{101} & = & L_{111}(D_{110} + D_{011}) \\
L_{001} \otimes L_{110} & = & L_{111}(D_{011} + D_{101})   
     \end{matrix}
     \end{equation}
     and 
     \begin{equation}\label{eq.2L_111}
         \hskip 1.5cm  L_{111} \otimes  L_{111}  \ \, =  \ \, \mathcal O_W(D_{100} + D_{010} + D_{001} + D_{111}). 
     \end{equation}   
\end{corollary}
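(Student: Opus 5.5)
This is a direct specialization of Proposition~\ref{prop.alg.struct.bidouble}, which says that for $G=\mathbb Z_2^r$ we have $L_\chi\otimes L_{\chi'}=L_{\chi\chi'}(\sum D_\sigma)$. The sum runs over the nonzero $\sigma\in G$ with $\chi(\sigma)=\chi'(\sigma)=-1$. So for each product listed I will identify the character $\chi\chi'$ and the set of $\sigma$ on which both characters equal $-1$.

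With Notation~\ref{notation.building.data} and $r=3$, the character $\chi_{a_1a_2a_3}$ sends $\sigma=(\bar b_1,\bar b_2,\bar b_3)$ to $(-1)^{a_1b_1+a_2b_2+a_3b_3}$. Hence $\chi_{a}(\sigma)=-1$ exactly when the dot product $a\cdot b$ is odd. Characters multiply by adding indices mod $2$, so in all three relations of \eqref{relations.ring.structure.Z_2^3} the product character is $\chi_{111}$. For the relation \eqref{eq.2L_111} the product is trivial, so $L_{\chi\chi'}=L_{000}=\mathcal O_W$.

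The three relations of \eqref{relations.ring.structure.Z_2^3} are finite parity checks over the seven nonzero $\sigma$.
\begin{itemize}
\item[] For $\chi_{100},\chi_{011}$ we need $b_1=1$ and $b_2+b_3$ odd, which gives $\sigma=110$ or $101$. This is the first line.
\item[] For $\chi_{010},\chi_{101}$ we need $b_2=1$ and $b_1+b_3$ odd, which gives $\sigma=110$ or $011$.
\item[] For $\chi_{001},\chi_{110}$ we need $b_3=1$ and $b_1+b_2$ odd, which gives $\sigma=101$ or $011$.
\end{itemize}
For $\chi=\chi'=\chi_{111}$ we need $b_1+b_2+b_3$ odd, which gives exactly $\sigma=100,010,001,111$. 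This is \eqref{eq.2L_111}.

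There is no real obstacle; the only thing to be careful about is the identification of $\chi_a(\sigma)$ with $(-1)^{a\cdot b}$. That identification comes directly from Notation~\ref{notation.building.data} with $\zeta_j=-1$, and I will state it explicitly before doing the enumeration.
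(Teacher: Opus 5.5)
Your proposal is correct and follows the paper's approach: the paper also obtains these relations from Proposition~\ref{prop.alg.struct.bidouble} by a routine computation. Your parity check, using $\chi_{a}(\sigma)=(-1)^{a\cdot b}$ and $L_{000}=\mathcal O_W$ for the trivial product, carries out that computation explicitly and correctly.
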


\begin{proof}
    The above relations follow from Proposition~\ref{prop.alg.struct.bidouble} after a routine computation.
\end{proof}

\begin{theorem}\label{thm.tridouble.alg.str}
Let $p_g(W)=0$. 

\begin{enumerate}
    \item If $\pi: X \longrightarrow W$ is an abelian canonical cover with group $\mathbf{Z}_2 \times \mathbf{Z}_2 \times  \mathbf{Z}_2$, then, without loss of generality, we may assume
    \smallskip

\begin{enumerate}
    \item[(i)] $L_{111} = \omega_{W}^*(1)$.
\end{enumerate}
\smallskip

\noindent In this case, we have 
\smallskip

 \begin{enumerate}
 \item[(ii)] $D_{110} = D_{101} = D_{011} = 0$;     
 \item[(iii)] $D_{100}+D_{010}+D_{001}+D_{111}$ is a reduced divisor;  
 \item[(iv)] the $L_{a_1a_2a_3}$ and the $D_{b_1b_2b_3}$ satisfy \eqref{eq.bidouble.alg.str}; 
\item[(v)] $h^0(\omega_{W} \otimes L_{a_1a_2a_3})=0$ for any $(a_1,a_2,a_3) \neq (1,1,1)$.  
\end{enumerate}
\smallskip

\noindent 
 If $X$ is smooth, then we also have
 \smallskip

\begin{enumerate}
 \item[(iii')] 
 {$D_{100}, D_{010}, D_{001}, D_{111}$ are smooth,   $D_{100} \cap D_{010} \cap D_{001} \cap D_{111}= \emptyset$  and $D_{100} +D_{010} + D_{001} + D_{111}$ is a simple normal crossing divisor.}
\end{enumerate}

\smallskip
\item Conversely, let $\{L_{a_1a_2a_3}, D_{b_1b_2b_3}\}$ $(a_1, a_2, a_3, b_1, b_2, b_3=0, 1; (a_1,a_2,a_3),$ $(b_1,b_2,b_3) \neq (0,0,0))$ satisfy (i) to (v) above 
and let
  $\pi: X \longrightarrow W$ be the  abelian  cover having $\{L_{a_1a_2a_3}, D_{b_1b_2b_3}\}$ as its building data and with algebra structure determined by \eqref{eq.bidouble.alg.str} and (ii).
  \begin{enumerate}
      \item If $X$ is locally Gorenstein, then $\pi$ is an abelian canonical cover with group $\mathbf{Z}_2 \times \mathbf{Z}_2 \times \mathbf{Z}_2$.

      \item If (iii') holds, then $\pi$ is an abelian canonical cover with group $\mathbf{Z}_2 \times \mathbf{Z}_2 \times \mathbf{Z}_2$ and $X$ is smooth.
  \end{enumerate} 
\end{enumerate}
\end{theorem}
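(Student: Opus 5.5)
We follow the pattern of the proofs of Theorems~\ref{thm.bidouble.alg.str}, \ref{thm.Z4.alg.str} and \ref{thm.Z6.alg.str}. For (1)(i), Theorem~\ref{thm.splitting} (II) (1) gives $\omega_W^*(1)=L_\chi$ for some nontrivial $\chi\in G^*$. Since $\mathrm{Aut}(\mathbb Z_2^3)=GL_3(\mathbb F_2)$ acts transitively on nonzero vectors of $G^*$, we relabel the $D_\sigma$ by an automorphism of $G$ and the $L_\chi$ by the inverse dual automorphism. Then we may assume $L_{111}=\omega_W^*(1)$. Unlike the $\mathbb Z_4$ and $\mathbb Z_6$ cases, no torsion hypothesis on $\mathrm{Pic}\,W$ is needed here.

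For (1)(ii), apply Theorem~\ref{thm.splitting} (II) (4) with $n=8$. It gives $\bigotimes_{\chi\neq 1,\chi_{111}}L_\chi^{\otimes 2}=L_{111}^{\otimes 6}$. We group the six characters other than $\chi_{111}$ into the three complementary pairs $\{100,011\}$, $\{010,101\}$, $\{001,110\}$. Squaring and multiplying the three relations \eqref{relations.ring.structure.Z_2^3} then yields
\[
L_{111}^{\otimes 6}\bigl(4D_{110}+4D_{101}+4D_{011}\bigr)=L_{111}^{\otimes 6}.
\]
Hence $4(D_{110}+D_{101}+D_{011})\sim 0$. Since the $D_\sigma$ are effective and $W$ is projective, all three vanish.

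Part (iii) then follows from Theorem~\ref{thm.normal}, because the total branch locus is $D_{100}+D_{010}+D_{001}+D_{111}$. Part (iv) is Proposition~\ref{prop.alg.struct.bidouble}, and (v) is Theorem~\ref{thm.splitting} (II) (3). For (iii'), we use \cite[Proposition 3.1]{Pardini}. Any two or three of the subgroups $H_{100},H_{010},H_{001},H_{111}$ generate a subgroup whose order is the product of their orders, so condition iii) a) holds for them. All four together do not satisfy it, since they generate only a group of order $8<16$. Smoothness of $X$ therefore forces the four divisors to be smooth, to have simple normal crossings, and to have empty total intersection.

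For (2)(a), Theorems~\ref{thm.Pardini} and \ref{thm.normal} together with (iii) give a normal abelian cover with the prescribed data. We then apply Lemma~\ref{lemma.criterion.canonical.cover}. By the ramification formula, \eqref{eq.2L_111}, (i) and (ii),
\[
\omega_X^{\otimes 2}=\pi^*\bigl(\omega_W^{\otimes 2}(D_{100}+D_{010}+D_{001}+D_{111})\bigr)=\pi^*\mathcal O_W(2),
\]
so $\mathcal L:=\omega_X\otimes\pi^*\mathcal O_W(-1)$ is $2$-torsion and in particular numerically trivial. The relations \eqref{relations.ring.structure.Z_2^3} with (ii) give $L_\chi\otimes L_{\chi\chi_{111}}=L_{111}=\omega_W^*(1)$, which is \eqref{eq.self.duality} for $E'=\bigoplus_{\chi\ne 1,\chi_{111}}L_\chi^*$; hence \eqref{eq.trace.module.split} holds. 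Conditions \eqref{eq.p_g=0} and \eqref{eq.vanishing.global.sections} follow from $p_g(W)=0$ and (v). The lemma then shows $\mathcal L$ is trivial and $\pi$ is canonical.

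For (2)(b), \cite[Proposition 3.1]{Pardini} and (iii') give $X$ smooth, hence locally Gorenstein, and we conclude by (a).

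The main obstacle is the Gorenstein and ramification bookkeeping in (2)(a). One must check that $\omega_X$ is a line bundle with $\omega_X^{\otimes 2}$ equal to the pullback above; this uses that all inertia groups are of order $2$, as in the bidouble case. One must also check that Lemma~\ref{lemma.criterion.canonical.cover} applies with a merely $2$-torsion $\mathcal L$.
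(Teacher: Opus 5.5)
Your proposal is correct and follows the paper's proof essentially step by step. You use the same relabeling via the transitive action of $GL_3(\mathbb F_2)$, and the same squared product of the three relations \eqref{relations.ring.structure.Z_2^3}, giving $4(D_{110}+D_{101}+D_{011})\sim 0$. You also make the same use of \cite[Proposition 3.1]{Pardini} for (iii'), and apply Lemma~\ref{lemma.criterion.canonical.cover} in (2)(a) with the same $2$-torsion line bundle coming from \eqref{eq.2L_111}. (One minor slip: \eqref{eq.trace.module.split} follows directly from (i), not from the self-duality relation, but this does not affect the argument.)
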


\begin{proof}
First we prove (1), so we 
assume  that $\pi: X \longrightarrow W$ is an abelian canonical cover with group $\mathbf{Z}_2 \times \mathbf{Z}_2 \times  \mathbf{Z}_2$. 
There are automorphisms of $G^*$ swapping any two order $2$ elements of $G^*$, so, after possibly relabeling the $D_{b_1b_2b_3}$ by an automorphism of $G$ and $L_{a_1a_2a_3}$ by the inverse of its dual automorphism on $G^*$, by Theorem~\ref{thm.splitting} (II) (1), we may assume without loss of generality that $L_{111} = \omega_{W}^*(1)$. This proves (1) (i).

\smallskip
\noindent Theorem~\ref{thm.splitting} (II) (4) implies 
\begin{equation*}
L_{100}^{\otimes 2} \otimes L_{010}^{\otimes 2} \otimes L_{001}^{\otimes 2} \otimes L_{110}^{\otimes 2} \otimes L_{101}^{\otimes 2} \otimes L_{011}^{\otimes 2} = L_{111}^{\otimes 6}.
\end{equation*}
From \eqref{relations.ring.structure.Z_2^3} it follows that
\begin{equation*}
4(D_{110} + D_{101} +   D_{011}) \sim 0. 
\end{equation*}
Since the $D_{a_1a_2a_3}$ are effective and $W$ is projective, we get 
\begin{equation*}
    D_{011} = D_{101} = D_{110} = 0.
\end{equation*}
This finishes the proof of (1) (ii). 
Parts (iii), (iv) and (v) are proved arguing as in the proof of Theorem~\ref{thm.bidouble.alg.str}.

\smallskip
\noindent
Finally, if $X$ is smooth, then $D_{100}, D_{010}, 
{D_{001}}, D_{111}$ 
are smooth outside their intersections by \cite[Proposition 3.1.ii)]{Pardini}; the divisor $D_{100} + D_{010}+ 
{D_{001}}+ D_{111}$ is a simple crossing divisor by \cite[Proposition 3.1.iii) b)]{Pardini} and, in particular, $D_{100}, D_{010}, 
{D_{001}}, D_{111}$ are smooth at their intersections; and, since the 
{quadruplet} $H_{100}$, 
{$H_{010}$},
${H_{001}}$, $H_{111}$ does not satisfy \cite[Proposition 3.1.iii) a)]{Pardini}, there cannot be points in $D_{100}  \cap 
{D_{010}}  %
\cap {D_{001}} \cap D_{111}$.

\smallskip
\noindent Assume now the hypotheses of (2). Then, by Theorems~\ref{thm.Pardini} and \ref{thm.normal}, the $L_{a_1a_2a_3}$ and the $D_{b_1b_2b_3}$ are the building data of an abelian cover 
$\pi: X \longrightarrow W$, with $X$ normal, 
 whose algebra structure is determined by \eqref{eq.bidouble.alg.str}.  

\smallskip
\noindent The ramification formula (see e.g. \cite[I.(20), Lemma I.16.1]{BHPV} or, directly, \cite[Proposition 2.5]{Liedtke}), (i), (ii) and \eqref{eq.2L_111} yield 
\begin{equation}\label{eq.2.torsion}
    \omega_X^{\otimes 2} = \pi^*(\omega_W^{\otimes 2} \otimes \mathcal O_W(D_{100} + D_{010} + D_{001} + D_{111}))= \pi^*(\mathcal O_W(2)).
\end{equation}
Since $X$ is assumed to be locally Gorenstein, $\pi^*\mathcal O_W(1) \otimes \omega_X^*$
is a line bundle $\mathcal L$ on $X$, which is $2$-torsion,  hence (1) of Lemma~\ref{lemma.criterion.canonical.cover} holds. Since (1) (i) holds, the abelian cover $\pi$ satisfies \eqref{eq.trace.module.split}. By (1) (iv) we have \eqref{relations.ring.structure.Z_2^3}. Then $D_{110}=D_{101}=D_{011}=0$ implies Theorem~\ref{thm.splitting} (II) (2) and, hence, \eqref{eq.self.duality}. Thus, (2) of Lemma~\ref{lemma.criterion.canonical.cover} is satisfied. Finally, by (v), Theorem~\ref{thm.splitting} (II) (3) is satisfied and, hence, so are \eqref{eq.p_g=0} and \eqref{eq.vanishing.global.sections}. Thus, (3) of Lemma~\ref{lemma.criterion.canonical.cover} holds. Therefore, we can apply Lemma~\ref{lemma.criterion.canonical.cover}, so $\pi$ is a canonical cover. This completes the proof of (a).

\smallskip

\noindent Now we prove (b), so we assume (iii') holds.  Since any three among $H_{100}$, $H_{010}$, $
{H_{001}}$, $H_{111}$ satisfy \cite[Proposition 3.1.iii) a)]{Pardini}, condition $D_{100}  \cap D_{010} \cap D_{001} \cap D_{111}= \emptyset$ and the other conditions of (iii') imply, by \cite[Proposition 3.1]{Pardini}, that $X$ is smooth and, in particular, locally Gorenstein, so, by (a), $\pi$ is an abelian canonical cover and we conclude the proof of (b). 
\end{proof}

 \subsection{Abelian canonical covers with group $\mathbb{Z}_2 \times \mathbb{Z}_4$}
  \label{section.abelian.Z2xZ4} \ 

  \smallskip

\noindent We start this subsection making explicit, for the group $\mathbb Z_2 \times \mathbb Z_4$, those relations in Proposition~\ref{prop.alg.struct.bidouble} that we will use in the remaining of the subsection:

\begin{prop}\label{prop.alg.struct.Z2Z4}
  If $\pi: \mathcal X \longrightarrow \mathcal Y$ is an abelian cover with group $\mathbb{Z}_2 \times \mathbb{Z}_4$, then its building data satisfy, among others, the following relations: 
  \begin{equation}\label{eq.alg.str.L_01L_02}
    L_{01} \otimes L_{02} =L_{03}(D_{03} + D_{13})
\end{equation}
  \begin{equation}\label{eq.alg.str.L_01L_03}
    L_{01} \otimes L_{03} = \mathcal O_W(D_{01} + D_{02} + D_{03} + D_{11} + D_{12} + D_{13}  )
\end{equation}
\begin{equation}\label{eq.alg.str.L_01L_11}
   L_{01} \otimes L_{11} = L_{12}(D_{02} + D_{03} + D_{11} + D_{13})
\end{equation}
\begin{equation}\label{eq.alg.str.L_01L_12}
    L_{01} \otimes L_{12} =L_{13}(D_{03} + D_{12})
\end{equation}
\begin{equation}\label{eq.alg.str.L_02L_02}
    L_{02} \otimes L_{02} = \mathcal O_W(D_{01} + D_{03} + D_{11} + D_{13})
\end{equation}
\begin{equation}\label{eq.alg.str.L_02L_03}
    L_{02} \otimes L_{03} =L_{01}(D_{01} + D_{11})
\end{equation}
\begin{equation}\label{eq.alg.str.L_02L_11}
    L_{02} \otimes L_{11} =L_{13}(D_{03} + D_{11})
\end{equation}
\begin{equation}\label{eq.alg.str.L_03L_10}
    L_{03} \otimes L_{10} = L_{13}(D_{11} + D_{12})
\end{equation}
\begin{equation}\label{eq.alg.str.L_03L_13}
    L_{03} \otimes L_{13} = L_{12}(D_{01} + D_{02} + D_{11} + D_{13} )
\end{equation}
\begin{equation}\label{eq.alg.str.L_10L_12}
    L_{10} \otimes L_{12} = L_{02}(D_{10} + D_{12})
\end{equation}
\begin{equation}\label{eq.alg.str.L_11L_13}
    L_{11} \otimes L_{13} = \mathcal O_W(D_{01}+ D_{02}  + D_{03}  + D_{10} + D_{11} + D_{13})
\end{equation}
\begin{equation}\label{eq.alg.str.L_13L_13}
    L_{13} \otimes L_{13} = L_{02}(D_{01}+ D_{02} + D_{10}  + D_{13} )
\end{equation}
\end{prop}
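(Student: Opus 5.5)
The plan is to derive each of the twelve relations directly from Pardini's formula \eqref{relation.ring.structure}--\eqref{relation.divisor.ring.structure}, as we did for $\mathbb Z_4$ and $\mathbb Z_6$. The only work is computing the coefficients $\epsilon_{\chi,\chi'}^{H,\psi}$. Recall from \cite[(2.1), (2.2)]{Pardini} how they are defined. For a nonzero $\sigma\in G$ of order $\nu$, with $H_\sigma=\langle\sigma\rangle$ and $\psi_\sigma(\sigma)=e^{2\pi i/\nu}$, the exponent $i_\chi\in\{0,\dots,\nu-1\}$ of a character $\chi$ is determined by $\chi|_{H_\sigma}=\psi_\sigma^{i_\chi}$. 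Then $\epsilon_{\chi,\chi'}^{H_\sigma,\psi_\sigma}=\lfloor (i_\chi+i_{\chi'})/\nu\rfloor$, which is $0$ or $1$. So $D_\sigma$ appears in $D_{\chi\chi'}$ if and only if $i_\chi+i_{\chi'}\ge \nu$.

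The first step is a closed formula for $i_\chi$, in the spirit of Lemma~\ref{lemma.congruence}. Take $\chi=\chi_{a_1a_2}$ and $\sigma=(\bar b_1,\bar b_2)$ in Notation~\ref{notation.building.data}. Then
$\chi(\sigma)=(-1)^{a_1b_1}\, i^{a_2b_2}=e^{2\pi i(2a_1b_1+a_2b_2)/4}$.
Since $\chi$ is determined on $H_\sigma$ by its value at $\sigma$, the defining identity $\chi(\sigma)=e^{2\pi i\, i_\chi/\nu}$ gives
$i_{\chi_{a_1a_2}}\equiv \frac{\nu}{4}(2a_1b_1+a_2b_2) \pmod \nu$.
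This expression is an integer: when $\nu=2$, the element $\sigma$ has $b_2$ even, so $2a_1b_1+a_2b_2$ is even. The elements of order $4$ are $01,03,11,13$, and the elements of order $2$ are $02,10,12$.

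The second step is mechanical. For each of the twelve listed pairs $(\chi,\chi')$, I will first read off $\chi\chi'$ by adding indices in $\mathbb Z_2\times\mathbb Z_4$. Then, for each of the seven nonzero $\sigma$, I compute $i_\chi$ and $i_{\chi'}$ by the formula above and test whether $i_\chi+i_{\chi'}\ge\nu$.

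As a sample check of \eqref{eq.alg.str.L_01L_02}, take $\chi=\chi_{01}$, $\chi'=\chi_{02}$, so $\chi\chi'=\chi_{03}$. For $\sigma=03$ we get $i=3,2$, whose sum is at least $4$, so $D_{03}$ appears. For $\sigma=13$ we also get $3,2$, so $D_{13}$ appears. For $\sigma=01$ and $\sigma=11$ we get $1,2$, which fails. For the order-$2$ elements $02,12$ we get $i_{01}=1$ and $i_{02}=0$, which fails. For $\sigma=10$ both exponents are $0$. This yields $L_{03}(D_{03}+D_{13})$, as stated.

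The third step is to run the same check for the remaining eleven relations. When $\chi\chi'$ is trivial, $L_{\chi\chi'}=\mathcal O_W$ by convention. I do not expect any conceptual obstacle. The only real risk is bookkeeping: keeping the normalization $\psi_\sigma(\sigma)=e^{2\pi i/\nu}$ consistent with the indexing $D_\sigma$, and applying the factor $\nu/4$ correctly for order-$2$ elements. As a cross-check I will verify degree consistency, namely that relations sharing a character, such as \eqref{eq.alg.str.L_01L_03} and \eqref{eq.alg.str.L_02L_02}, combine consistently with $L_\chi^{\otimes \mathrm{ord}\chi}=\mathcal O_W(\sum_\sigma \frac{\mathrm{ord}(\chi)\, i_\chi}{\nu}D_\sigma)$.
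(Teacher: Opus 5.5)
Your proposal is correct and takes the same route as the paper. The paper's proof only says that the relations come from writing out Pardini's formula \eqref{relation.ring.structure}--\eqref{relation.divisor.ring.structure}, after a routine computation of the exponents $i_{\chi_{a_1a_2}}$ for every pair $(H_{b_1b_2},\psi_{b_1b_2})$. Your closed formula $i_{\chi_{a_1a_2}}\equiv \frac{\nu}{4}(2a_1b_1+a_2b_2) \pmod{\nu}$ is right, and applying the test $i_\chi+i_{\chi'}\ge \nu$ with it reproduces all twelve stated relations.
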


\begin{proof}
    The above relations are a subset of the relations that we obtain when we explicitly write the relations
    in \eqref{relation.ring.structure}.
    In order to write these relations explicitly,  we need to know, for every $\chi_{a_1a_2} \in G^*$ and every pair $(H_{b_1b_2}, \psi_{b_1b_2})$ as in \ref{notation.building.data}, the exponent $i_{\chi_{a_1a_2}}$ of \cite[(2.1)]{Pardini}. 
 This can be achieved from 
Theorem~\ref{thm.Pardini} by an elementary, routine computation.
\end{proof}

\begin{theorem}\label{thm.Z2Z4.alg.str}
  Let $W$ be such that $p_g(W)=0$. 

  \begin{enumerate}
    \item If  $\mathrm{Pic} \, W$ has no $2$-torsion and $\pi: X \longrightarrow W$ is an abelian canonical cover with group $G=\mathbb{Z}_2 \times \mathbb{Z}_4$, then, without loss of generality, we may assume  

\begin{enumerate}
    \item[(i)] $L_{13} = \omega_{W}^*(1)$.
\end{enumerate}
 In this case, we have 
 \begin{enumerate}
 \item[(ii)] $D_{03} = D_{11} = D_{12} = 
 0$;  
 \item[(iii)] $D_{01}+D_{02}+  D_{10}+ D_{13}$ is a reduced divisor; 
 \item[(iv)] the $L_{a_1a_2}$ and the $D_{b_1b_2}$ satisfy 
 \eqref{relation.ring.structure}; 
 \item[(v)] $h^0(\omega_W \otimes L_{a_1a_2})=0$ for all $(a_1,a_2) \neq (1,3)$.
\end{enumerate}
If $X$ is smooth, then we also have
 \begin{enumerate}
\item[(iii')] 
{$D_{01}, D_{02},  D_{10}, D_{13}$ are smooth; $D_{01}, D_{02}, D_{13}$ are pairwise disjoint; and $D_{10}$ meets transversally each one of $D_{01}, D_{02}, D_{13}$.}
\end{enumerate}

\smallskip

\item Conversely, let  $\{L_{a_1a_2}, D_{b_1b_2}\}$ $(a_1, b_1 = 0,1$, $a_2, b_2 =0,1, 2, 3; (a_1,a_2),$ $(b_1,b_2) \neq (0,0))$ satisfy (i) to (v) above 
and let 
  $\pi: X \longrightarrow W$ be the  abelian  cover having $\{L_{a_1a_2}, D_{b_1b_2}\}$ as its building data and with algebra structure determined by \eqref{relation.ring.structure} and (ii).
  \begin{enumerate}
      \item If $X$ is locally Gorenstein, then $\pi$ is an abelian canonical cover with group $\mathbf{Z}_2 \times \mathbf{Z}_4$.

      \item If (iii') holds, then $\pi$ is an abelian canonical cover with group $\mathbf{Z}_2 \times \mathbf{Z}_4$ and $X$ is smooth.
  \end{enumerate} 
 \end{enumerate} 
\end{theorem}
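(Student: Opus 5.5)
The plan follows the template of Theorems~\ref{thm.Z4.alg.str} and \ref{thm.Z6.alg.str}. The work is in deciding which eigensheaf $L_{a_1a_2}$ equals $\omega_W^*(1)$, and then using Theorem~\ref{thm.splitting} (II) (5) together with Proposition~\ref{prop.alg.struct.Z2Z4} to force divisors to vanish.

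By \eqref{formula.splitting} and Theorem~\ref{thm.splitting} (II) (1), $\omega_W^*(1)=L_{a_1a_2}$ for some nontrivial character. The nontrivial characters of $G$ split into the order $4$ ones $\chi_{01},\chi_{03},\chi_{11},\chi_{13}$ and the order $2$ ones $\chi_{02},\chi_{10},\chi_{12}$. $\mathrm{Aut}(G)$ acts transitively on the order $4$ characters; for instance $(x,y)\mapsto(x+y,y)$ sends $\chi_{01}$ to $\chi_{11}$. Among the order $2$ characters, $\chi_{02}$ is distinguished because it is the square of every order $4$ character, while $\chi_{10}$ and $\chi_{12}$ are swapped by an automorphism. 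So, after relabeling the $D_b$ by an automorphism of $G$ and the $L_a$ by the dual inverse, it suffices to rule out $\omega_W^*(1)=L_{02}$ and $\omega_W^*(1)=L_{10}$; then we may take $L_{13}=\omega_W^*(1)$.

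\textbf{Case $L_{02}$.} Theorem~\ref{thm.splitting} (II) (5) (which is available since $\mathrm{Pic}\,W$ has no $2$-torsion) gives $L_{01}L_{03}L_{11}L_{13}L_{10}L_{12}=L_{02}^{\otimes 3}$. Pair the factors using \eqref{eq.alg.str.L_01L_03}, \eqref{eq.alg.str.L_11L_13} and \eqref{eq.alg.str.L_10L_12}, and compare with \eqref{eq.alg.str.L_02L_02}. This yields a relation $\sum c_bD_b\sim 0$ with positive coefficients on essentially all $D_b$, so those $D_b$ vanish. Then $L_{02}^{\otimes 2}=\mathcal O_W$, hence $L_{02}=\mathcal O_W$, so $h^0(\mathcal O_X)\ge 2$, contradicting connectedness.

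\textbf{Case $L_{10}$.} The argument is the same, but it needs a few additional multiplication rules (products pairing to $L_{10}$), which I would compute from Theorem~\ref{thm.Pardini} exactly as in Proposition~\ref{prop.alg.struct.Z2Z4}. The conclusion should again be that enough $D_b$ vanish to make some $L_{a_1a_2}$ $2$-torsion, hence trivial. I expect this case to be the main obstacle: the needed relations are not among those listed, and one has to check that the resulting effective combination is really positive enough to kill the relevant divisors.

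With $L_{13}=\omega_W^*(1)$, Theorem~\ref{thm.splitting} (II) (5) reads $L_{01}L_{12}\cdot L_{02}L_{11}\cdot L_{03}L_{10}=L_{13}^{\otimes 3}$. By \eqref{eq.alg.str.L_01L_12}, \eqref{eq.alg.str.L_02L_11} and \eqref{eq.alg.str.L_03L_10} this gives
\[
2D_{03}+2D_{11}+2D_{12}\sim 0,
\]
hence (ii). The remaining parts of (1) are proved as in Theorem~\ref{thm.bidouble.alg.str}: (iii) comes from Theorem~\ref{thm.normal}, (iv) from Theorem~\ref{thm.Pardini}, and (v) from Theorem~\ref{thm.splitting} (II) (3). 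For (iii') I would use \cite[Proposition 3.1]{Pardini}, checking which pairs among $H_{01},H_{02},H_{10},H_{13}$ satisfy condition iii) a) there. Only pairs involving $H_{10}$ generate $G$ as a direct sum, which explains the stated pattern of disjointness and transversality.

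For the converse (2) (a), I would apply Lemma~\ref{lemma.criterion.canonical.cover}, just as in Theorem~\ref{thm.tridouble.alg.str}. By the ramification formula, with order $4$ inertia contributing coefficient $3$ and order $2$ inertia coefficient $1$,
\[
\omega_X^{\otimes 4}=\pi^*\bigl(\omega_W^{\otimes 4}\otimes\mathcal O_W(3D_{01}+2D_{02}+2D_{10}+3D_{13})\bigr).
\]
Using \eqref{eq.alg.str.L_13L_13} and \eqref{eq.alg.str.L_02L_02} together with (ii), the right-hand side equals $\pi^*(\omega_W^{\otimes4}\otimes L_{13}^{\otimes 4})=\pi^*\mathcal O_W(4)$. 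So $\pi^*\mathcal O_W(1)\otimes\omega_X^*$ is torsion, hence numerically trivial. Self-duality \eqref{eq.self.duality} follows from (ii) and the three pairings above, and (v) gives \eqref{eq.vanishing.global.sections}. Part (b) follows from \cite[Proposition 3.1]{Pardini} and (iii'), which give that $X$ is smooth, hence locally Gorenstein, so (a) applies.
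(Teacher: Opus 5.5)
Your proposal follows the paper's proof essentially step by step. The one computation you leave open (the case $\omega_W^*(1)=L_{10}$; the paper treats the equivalent case $L_{12}$) does go through, so there is no real gap.

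\textbf{Shared steps.} You rule out $\chi_{02}$ exactly as the paper does, via Theorem~\ref{thm.splitting} (II) (5) together with \eqref{eq.alg.str.L_01L_03}, \eqref{eq.alg.str.L_10L_12}, \eqref{eq.alg.str.L_11L_13} and \eqref{eq.alg.str.L_02L_02}. Your reduction to $\chi_{13}$ by automorphisms is the same. You derive (ii) from the three pairings to $\chi_{13}$, and you prove (2) through Lemma~\ref{lemma.criterion.canonical.cover} and the ramification formula, both as in the paper.

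\textbf{The $L_{10}$ case.} Computing the exponents from Theorem~\ref{thm.Pardini}, one gets
\[
L_{01}\otimes L_{13}=L_{10}(D_{01}+D_{02}+D_{03}+D_{13}),\quad
L_{03}\otimes L_{11}=L_{10}(D_{01}+D_{02}+D_{03}+D_{11}),\quad
L_{02}\otimes L_{12}=L_{10}(D_{01}+D_{03}).
\]
Theorem~\ref{thm.splitting} (II) (5) then gives $3D_{01}+2D_{02}+3D_{03}+D_{11}+D_{13}\sim 0$, so all five divisors vanish. Then \eqref{eq.alg.str.L_02L_02} gives $L_{02}^{\otimes 2}=\mathcal O_W$, and you finish as in the $L_{02}$ case. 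Your worry that the needed relations are not listed applies to the paper too. Its $L_{12}$ step uses $L_{02}\otimes L_{10}=L_{12}(D_{11}+D_{13})$, which is not in Proposition~\ref{prop.alg.struct.Z2Z4}.

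\textbf{Two small corrections.}
\begin{enumerate}
\item In (iii'), the condition in \cite[Proposition 3.1 iii) a)]{Pardini} is that the subgroups intersect trivially (injectivity of $H_1\oplus H_2\to G$). It is not that they generate $G$ as a direct sum; for instance $H_{10}\oplus H_{02}$ has order $4$. The disjointness and transversality pattern you state is nonetheless correct.
\item In (2) (a), say explicitly that $X$ is normal by (iii) and Theorem~\ref{thm.normal}, since Lemma~\ref{lemma.criterion.canonical.cover} assumes normality.
\end{enumerate}
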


\begin{proof}
   By \eqref{formula.splitting}, 
\begin{equation}\label{eq.character.omega.Z2Z4}
    \omega_W^*(1)=L_{a_1a_2}
\end{equation}
 for some $a_1\in \{0,1\}$ and $a_2 \in \{0,1,2,3\}$ with $(a_1, a_2) \neq (0,0)$. We divide the proof of (1) (i) into several steps.

\smallskip

\noindent
\underline{\emph{Step 1.}} The pair $(a_1, a_2)$ in \eqref{eq.character.omega.Z2Z4} cannot be $(0,2)$: 
\smallskip

\noindent Suppose the contrary. 
By Theorem~\ref{thm.splitting} (II) (5), 
\begin{equation*}
    L_{01}\otimes L_{03} \otimes L_{10} \otimes L_{11} \otimes L_{12} \otimes L_{13} = L_{02}^{\otimes 3}.
\end{equation*}
Then, on the one hand,  
\eqref{eq.alg.str.L_01L_03}, \eqref{eq.alg.str.L_10L_12}, \eqref{eq.alg.str.L_11L_13} imply 
\begin{equation*}
L_{02}^{\otimes 2}= 
\mathcal O_W(2(D_{01}+D_{02}+D_{03}+D_{10}+D_{11}+D_{12}
+D_{13})),
\end{equation*}
 and, on the other hand, 
 \begin{equation*}
L_{02}^{\otimes 2}=  \mathcal O_W((D_{01}+D_{03}+D_{11}
+D_{13})), 
 \end{equation*}
 by \eqref{eq.alg.str.L_02L_02}.
 Then
\begin{equation*}
    D_{01}+2D_{02}+D_{03}+2D_{10}+D_{11}+2D_{12}+D_{13} \sim 0.
\end{equation*}
Then, since $W$ is projective, 
\begin{equation*}
D_{01}=D_{02}=D_{03}=D_{10}=D_{11}=D_{12}
=D_{13} = 0.
\end{equation*}
Then \eqref{eq.alg.str.L_02L_02} implies $L_{02}^{\otimes 2}= \mathcal O_W$.
Since Pic$W$ has no $2$-torsion, $L_{02}=\mathcal O_W$. Then \eqref{formula.splitting} implies $h^0(\mathcal O_X) \ge 2$, so $X$ would not be connected, which is a contradiction.

\smallskip

\noindent
\underline{\emph{Step 2.}} The pair $(a_1, a_2)$ in \eqref{eq.character.omega.Z2Z4} cannot be $(1,0)$ or $(1,2)$:
\smallskip

\noindent Suppose the contrary. There are automorphisms of $G^*$ swapping $\chi_{10}$ and $\chi_{12}$, so, after possibly relabeling the $D_{b_1b_2}$ by an automorphism of $G$ and $L_{a_1a_2}$ by the inverse of its dual automorphism on $G^*$, we may assume $\omega_W^*(1)=L_{12}$.  By Theorem~\ref{thm.splitting} (II) (5) and \eqref{eq.alg.str.L_01L_11}, \eqref{eq.alg.str.L_03L_13}, \eqref{eq.alg.str.L_10L_12},
\begin{equation*}
    L_{12}^{\otimes 3}= L_{01}\otimes L_{02} \otimes L_{03} \otimes L_{10} \otimes L_{11} \otimes L_{13} = L_{12}^{\otimes 3}(D_{01} + 2D_{02} + D_{03}+ 3D_{11} + 3D_{13}),
\end{equation*}
so 
\begin{equation*}
    D_{01} + 2D_{02} + D_{03}+ 3D_{11} + 3D_{13} \sim 0.
\end{equation*}
Since $W$ is projective, 
\begin{equation*}
D_{01}=D_{02}=D_{03}=D_{11}=D_{13} = 0.
\end{equation*}
Then \eqref{eq.alg.str.L_02L_02} implies $L_{02}^{\otimes 2} = \mathcal O_W$ and we get a  contradiction as in Step 1.

\smallskip
\noindent \underline{\emph{Step 3.}} We may assume without loss of generality $a_1=1, a_2=3$ in \eqref{eq.character.omega.Z2Z4}:
\smallskip

\noindent Steps 1 and 2 imply $(a_1,a_2)=(0,1), (0,3), (1,1)$ or $(1,3)$. 
There are automorphisms of $G^*$ taking $\chi_{13}$ to any of  $\chi_{01}, \chi_{03}$ and $\chi_{11}$ so, after possibly relabeling the $D_{b_1b_2}$ by an automorphism of $G$ and $L_{a_1a_2}$ by the inverse of its dual automorphism on $G^*$, we may assume $\omega_W^*(1)=L_{13}$. This completes the proof of (1) (i). 

\medskip
\noindent Now we prove (1) (ii). By Theorem~\ref{thm.splitting} (II) (5) and \eqref{eq.alg.str.L_01L_12}, \eqref{eq.alg.str.L_02L_11}, \eqref{eq.alg.str.L_03L_10}
\begin{equation*}
 L_{13}^{\otimes 3}=   L_{01} \otimes L_{02}\otimes L_{03} \otimes L_{10} \otimes L_{11} \otimes L_{12} = L_{13}^{\otimes 3}(2(D_{03}+D_{11}+D_{12})).
\end{equation*}
Then 
\begin{equation*}
   2(D_{03}+D_{11}+D_{12})\sim 0. 
\end{equation*}
Since $W$ is projective, 
\begin{equation*}
 D_{03}  = D_{11} = D_{12} = 0.
\end{equation*}
This finishes the proof of (1) (ii).

\smallskip
\noindent Since $X$ is normal, (1) (ii) and Theorem~\ref{thm.normal} imply (1) (iii). Theorem~\ref{thm.Pardini} implies (1) (iv) and Theorem~\ref{thm.splitting} implies (1) (v).

\smallskip
\noindent Finally, if $X$ is smooth, since  $H_{01}, H_{02}$; $H_{01}, H_{13}$; $H_{02}, H_{13}$ are pairs of cyclic subgroups of $G$ having nonzero intersection, \cite[Proposition 3.1 iii) a)]{Pardini} implies $D_{01}, D_{02}, D_{13}$ are pairwise disjoint. In addition,   
\cite[Proposition 3.1 ii), iii) b)]{Pardini} implies $D_{01}, D_{02},  D_{10}, D_{13}$ are smooth 
and $D_{10}$ meets transversally each one of $D_{01}, D_{02}, D_{13}$.

\smallskip

\noindent Assume now the hypotheses of (2). We first prove (a). 
By Theorem~\ref{thm.normal} and (1) (iii),  $X$ is normal.
 To see that $\pi$ is a canonical cover we argue similarly to the proof of Theorem~\ref{thm.tridouble.alg.str} (2), taking now into account that the  ramification formula, (i), (ii), \eqref{eq.alg.str.L_02L_02}, \eqref{eq.alg.str.L_13L_13}  yield 
\begin{equation*}
    \omega_X^{\otimes 4} = \pi^*(\omega_W^{\otimes 4} \otimes \mathcal O_W(3D_{01} + 2D_{02} + 2D_{10}+3D_{13}))= \pi^*(\mathcal O_W(4))
\end{equation*}   
and that \eqref{eq.alg.str.L_01L_12}, \eqref{eq.alg.str.L_02L_11}, \eqref{eq.alg.str.L_03L_10} and (1) (ii) imply \eqref{eq.self.duality}.

\smallskip
\noindent If  (iii') holds, 
since  the pair $H_{01}, H_{10}$ satisfies \cite[Proposition 3.1 iii) a)]{Pardini} but the pairs  $H_{01}, H_{02}$; $H_{01}, H_{13}$; $H_{02}, H_{13}$ do not, the condition that $D_{01}, D_{02}, D_{13}$ are pairwise disjoint and  the other conditions of (iii') imply, by \cite[Proposition 3.1]{Pardini}, that $X$ is smooth and, in particular, locally Gorenstein, so we conclude by (a).
\end{proof}

  \subsection{Abelian canonical covers with group $\mathbb{Z}_8$}
  \label{subsection.abelian.Z8} \

\smallskip
\noindent We start this subsection by making explicit, for the group $\mathbb Z_8$, those relations in \eqref{relation.ring.structure} that we will use in the remaining of this section: 

\begin{prop}\label{prop.alg.struct.cyclic}
  If $\pi: \mathcal X \longrightarrow \mathcal Y$ is an abelian cover with group $\mathbf{Z}_8$, then its building data satisfy, among others, the following relations: 
  \begin{equation}\label{eq.alg.str.Z8.L_1L_2}
    L_1 \otimes L_2 = L_3(D_3+D_6+D_7)
\end{equation}
\begin{equation}\label{eq.alg.str.Z8.L_1L_6}
    L_1 \otimes L_6 = L_7(D_5+D_6+D_7)
\end{equation}
\begin{equation}\label{eq.alg.str.Z8.L_1L_7}
    L_1 \otimes L_7 = \mathcal O_W(D_1+D_2+D_3+D_4+D_5+D_6+D_7)
\end{equation}
\begin{equation}\label{eq.alg.str.Z8.L_2L_2}
    L_2 \otimes L_2 = L_4(D_2+D_3+D_6+D_7)
\end{equation}
\begin{equation}\label{eq.alg.str.Z8.L_2L_4}
    L_2 \otimes L_4 = L_6(D_3+D_7)
\end{equation}
\begin{equation}\label{eq.alg.str.Z8.L_2L_5}
    L_2 \otimes L_5 = L_7(D_3+D_6+D_7)
\end{equation}
\begin{equation}\label{eq.alg.str.Z8.L_2L_6}
    L_2 \otimes L_6 = \mathcal O_W(D_1+D_2+D_3+D_5+D_6+D_7)
\end{equation}
\begin{equation}\label{eq.alg.str.Z8.L_3L_4}
    L_3 \otimes L_4 = L_7(D_5+D_7)
\end{equation}
\begin{equation}\label{eq.alg.str.Z8.L_3L_5}
    L_3 \otimes L_5 = \mathcal O_W(D_1+D_2+D_3+D_4+D_5+D_6+D_7)
\end{equation}
\begin{equation}\label{eq.alg.str.Z8.L_4L_4}
    L_4 \otimes L_4 = \mathcal O_W(D_1+D_3+D_5+D_7)
\end{equation}
\begin{equation}\label{eq.alg.str.Z8.L_6L_6}
    L_6 \otimes L_6 =  L_{4}(D_1+D_2+D_5+D_6)
\end{equation}
\begin{equation}\label{eq.alg.str.Z8.L_7L_7}
    L_7 \otimes L_7 = L_6(D_1+D_2+D_3+D_4).
\end{equation}
\end{prop}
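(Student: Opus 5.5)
The plan is to mimic the proofs of the analogous statements for $\mathbb Z_4$ and $\mathbb Z_6$. Each displayed relation is an instance of \eqref{relation.ring.structure}, so the only work is to compute, for each pair of characters $\chi_a,\chi_{a'}$ occurring and each nonzero $b\in\mathbb Z_8$, the coefficient $\epsilon^{H_b,\psi_b}_{\chi_a,\chi_{a'}}$ in \eqref{relation.divisor.ring.structure}.

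By \cite[(2.2)]{Pardini}, this coefficient is
\begin{equation*}
\epsilon^{H_b,\psi_b}_{\chi_a,\chi_{a'}}=\left\lfloor \frac{i_{\chi_a}+i_{\chi_{a'}}}{\nu}\right\rfloor ,
\end{equation*}
where $\nu=|H_b|$ and $i_{\chi}$ denotes the exponent of $\chi$ with respect to $(H_b,\psi_b)$.

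To compute the exponents, write $g=\gcd(b,8)$, $\nu=8/g$ and $b=gb'$. Then
\begin{equation*}
\chi_a(\bar b)=e^{2\pi i ab/8}=e^{2\pi i ab'/\nu},
\end{equation*}
while $\psi_b(\bar b)=e^{2\pi i/\nu}$. Hence $i_{\chi_a}$ is the residue of $ab'$ modulo $\nu$. This is exactly what Lemma~\ref{lemma.congruence} gives: $\psi_b=\chi_c|_{H_b}$ with $c\equiv b'^{-1}$ modulo $\nu$. Since $n=8$, we have $c'=c$, so \eqref{eq.congruence2} applies and $i_{\chi_a}\equiv ab'$ modulo $\nu$, because $b'$ is odd and therefore its own inverse modulo $\nu\in\{2,4,8\}$.

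The seven subgroups to consider are as follows:
\begin{itemize}
\item $b=1,3,5,7$ give $\nu=8$ and $b'=b$;
\item $b=2,6$ give $\nu=4$ and $b'=1,3$;
\item $b=4$ gives $\nu=2$ and $b'=1$.
\end{itemize}

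For each relation one then tabulates $i_{\chi_a}+i_{\chi_{a'}}$ for these seven values of $b$, and records $D_b$ exactly when the sum is at least $\nu$.

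As a sample check, take $L_1\otimes L_2$:
\begin{itemize}
\item for $b=3$, the sum is $3+6=9\ge 8$, so $D_3$ appears;
\item for $b=6$, it is $3+2=5\ge 4$, so $D_6$ appears;
\item for $b=7$, it is $7+6=13\ge 8$, so $D_7$ appears;
\item for $b=1,2,4,5$, the sums $3$, $3$, $1$, $7$ fall below $\nu$.
\end{itemize}
This reproduces \eqref{eq.alg.str.Z8.L_1L_2}.

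The remaining eleven relations are verified the same way. When $\chi_a\chi_{a'}=1$, every $D_b$ whose $H_b$ does not lie in $\ker\chi_a$ appears with coefficient $1$, and this gives \eqref{eq.alg.str.Z8.L_1L_7}, \eqref{eq.alg.str.Z8.L_2L_6}, \eqref{eq.alg.str.Z8.L_3L_5} and \eqref{eq.alg.str.Z8.L_4L_4}. No step is conceptually hard. The only real obstacle is bookkeeping: getting the normalization of $\psi_b$ and the reduction modulo $\nu$ right for the non-generating elements $b=2,4,6$. This is where I would double-check against the already established $\mathbb Z_4$ relations, by restricting to the quotient.
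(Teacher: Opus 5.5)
Your proposal is correct and is essentially the paper's argument: the paper also computes the exponents $i_{\chi_a}$ via \eqref{eq.congruence2} of Lemma~\ref{lemma.congruence} and reads off the coefficients $\lfloor (i_{\chi_a}+i_{\chi_{a'}})/\nu\rfloor$ of \eqref{relation.divisor.ring.structure}. Your table of $(\nu,b')$, the sample computation for $L_1\otimes L_2$, and the shortcut for the cases with $\chi_a\chi_{a'}=1$ are all right, and the same bookkeeping does confirm the other eleven relations.
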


\begin{proof}
    The above relations are a subset of the relations that we obtain when we explicitly write the relations
    in \eqref{relation.ring.structure}.
    In order to do so,  we need to know, for every $a \in \{1, \dots, 7\}$ and every pair $(H_b, \psi_b)$, the exponent $i_{\chi_a}$ of \cite[(2.1)]{Pardini}. To do so, we use \eqref{eq.congruence2} and the relations follow easily from Theorem~\ref{thm.Pardini}.
\end{proof}

\begin{theorem}\label{thm.Z8.alg.str}
  Let $W$ be such that $p_g(W)=0$.

  \begin{enumerate}
    \item If  $\mathrm{Pic} \, W$ has no $2$-torsion or 
{$\omega_W^{\otimes 4}$} is not very ample (e.g, if $W$ 
  is  not of general type) and $\pi: X \longrightarrow W$ is an abelian canonical cover with group $G=\mathbb{Z}_8$, then, without loss of generality, we may assume  

\begin{enumerate}
    \item[(i)] $L_{7} = \omega_{W}^*(1)$.
\end{enumerate}
 In this case, we have 
 \begin{enumerate}
 \item[(ii)] $D_{3} = D_{5} = D_{6} = D_7=0$;    
 \item[(iii)] $D_{1}+D_{2}+D_{4}$ is a reduced divisor;
 \item[(iv)] The $L_{a}$ and the $D_{b}$ satisfy \eqref{relation.ring.structure}; and 
 \item[(v)] $h^0(\omega_W \otimes L_a)=0$ 
 for all $a \neq 7$.
\end{enumerate}
If $X$ is smooth, then we also have
\begin{enumerate}
 \item[(iii')] 
 {$D_{1}, D_{2}, D_{4}$ are smooth and pairwise disjoint.}
\end{enumerate}

\smallskip

\item Conversely, let  $\{L_{a}, D_{b}\}$ $(a, b=1, \dots, 7)$ satisfy (i) to (v) above 
and let 
  $\pi: X \longrightarrow W$ be the  abelian  cover having $\{L_{a}, D_{b}\}$ as its building data and with algebra structure determined by \eqref{relation.ring.structure} and (ii).
  \begin{enumerate}
      \item If $X$ is locally Gorenstein, then $\pi$ is an abelian canonical cover with group $\mathbf{Z}_8$.

      \item If (iii') holds, then $\pi$ is an abelian canonical cover with group $\mathbf{Z}_8$ and $X$ is smooth.
  \end{enumerate} 
\end{enumerate} 
\end{theorem}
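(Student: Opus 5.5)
We follow the scheme of the proofs of Theorems~\ref{thm.Z4.alg.str} and \ref{thm.Z6.alg.str}. By \eqref{formula.splitting}, $\omega_W^*(1)=L_a$ for some $a\in\{1,\dots,7\}$. We first rule out the even indices $a=2,4,6$. If the relations we need are not already listed in Proposition~\ref{prop.alg.struct.cyclic}, we will add them using \eqref{eq.congruence2} of Lemma~\ref{lemma.congruence}.

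\emph{Case $a=4$.} Assume $L_4=\omega_W^*(1)$ and apply Theorem~\ref{thm.splitting} (II) (4), which avoids any use of $2$-torsion:
\begin{equation*}
(L_1\otimes L_7)^{\otimes 2}\otimes(L_3\otimes L_5)^{\otimes 2}\otimes(L_2\otimes L_6)^{\otimes 2}=L_4^{\otimes 6}.
\end{equation*}
The left-hand side is computed by \eqref{eq.alg.str.Z8.L_1L_7}, \eqref{eq.alg.str.Z8.L_3L_5} and \eqref{eq.alg.str.Z8.L_2L_6}. The right-hand side is $\mathcal O_W(3(D_1+D_3+D_5+D_7))$ by \eqref{eq.alg.str.Z8.L_4L_4}. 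Comparing the two sides gives a linear equivalence $\sum c_bD_b\sim 0$ with all $c_b>0$, so every $D_b=0$ since $W$ is projective. Then $L_4^{\otimes 2}=\mathcal O_W$, hence $\omega_W^{-2}(2)$ is trivial.
\begin{itemize}
\item[] If $\mathrm{Pic}\,W$ has no $2$-torsion, then $L_4=\mathcal O_W$, so $h^0(\mathcal O_X)\ge 2$, which is absurd.
\item[] Otherwise $\omega_W^{\otimes 4}\simeq\mathcal O_W(4)$ is very ample, contradicting the hypothesis.
\end{itemize}

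\emph{Case $a=2$ or $a=6$.} Multiplication by $3$ is an automorphism of $G^*$ swapping $\chi_2$ and $\chi_6$, so after relabeling we may assume $L_6=\omega_W^*(1)$. Using \eqref{eq.alg.str.Z8.L_1L_7}, \eqref{eq.alg.str.Z8.L_3L_5} and \eqref{eq.alg.str.Z8.L_2L_4}, Theorem~\ref{thm.splitting} (II) (4) gives
\begin{equation*}
L_6^{\otimes 4}=\mathcal O_W\bigl(2(2\textstyle\sum_b D_b+D_3+D_7)\bigr).
\end{equation*}
On the other hand, \eqref{eq.alg.str.Z8.L_6L_6} and \eqref{eq.alg.str.Z8.L_4L_4} give
\begin{equation*}
L_6^{\otimes 4}=\mathcal O_W\bigl(2(D_1+D_2+D_5+D_6)+D_1+D_3+D_5+D_7\bigr).
\end{equation*}
The difference of the two divisors has coefficients $1,2,5,4,1,2,5$ on $D_1,\dots,D_7$, all positive. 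Hence all $D_b=0$, so $L_4^{\otimes 2}=\mathcal O_W$ and $L_6^{\otimes 4}=\mathcal O_W$. We then conclude exactly as in the case $a=4$.

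\emph{Normalization (i).} So $a$ is odd, and $\chi_a$ generates $G^*$. Some automorphism of $G$ (multiplication by a unit of $\mathbb Z_8$), with its dual on $G^*$, moves $\chi_a$ to $\chi_7$. Relabeling accordingly gives (1)(i).

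\emph{Vanishing (ii).} By Theorem~\ref{thm.splitting} (II) (5) (here $n=8$ and, in the torsion-free case, we use it; otherwise we use (II) (4) and square everything),
\begin{equation*}
L_1\otimes L_2\otimes L_3\otimes L_4\otimes L_5\otimes L_6=L_7^{\otimes 3}.
\end{equation*}
Group the left-hand side as $(L_1\otimes L_6)(L_2\otimes L_5)(L_3\otimes L_4)$. By \eqref{eq.alg.str.Z8.L_1L_6}, \eqref{eq.alg.str.Z8.L_2L_5} and \eqref{eq.alg.str.Z8.L_3L_4}, this yields
\begin{equation*}
D_3+2D_5+2D_6+3D_7\sim 0,
\end{equation*}
so $D_3=D_5=D_6=D_7=0$.

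\emph{Remaining parts of (1).} Parts (iii), (iv) and (v) follow as in Theorem~\ref{thm.Z4.alg.str}, from Theorems~\ref{thm.normal}, \ref{thm.Pardini} and \ref{thm.splitting} (II) (3). For (iii'): $H_1$, $H_2$, $H_4$ are pairwise nontrivially intersecting subgroups of $\mathbb Z_8$, so none of these pairs satisfies \cite[Proposition 3.1 iii) a)]{Pardini}. That proposition then forces $D_1, D_2, D_4$ to be smooth and pairwise disjoint.

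\emph{Converse (2).} We argue as in Theorem~\ref{thm.tridouble.alg.str} (2) through Lemma~\ref{lemma.criterion.canonical.cover}. Normality comes from (iii) and Theorem~\ref{thm.normal}. The ramification formula together with (i), (ii) and the relations \eqref{eq.alg.str.Z8.L_7L_7}, \eqref{eq.alg.str.Z8.L_6L_6} and \eqref{eq.alg.str.Z8.L_4L_4} should give
\begin{equation*}
\omega_X^{\otimes 8}=\pi^*\bigl(\omega_W^{\otimes 8}\otimes\mathcal O_W(7D_1+6D_2+4D_4)\bigr)=\pi^*\mathcal O_W(8),
\end{equation*}
so $\pi^*\mathcal O_W(1)\otimes\omega_X^{-1}$ is torsion. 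Self-duality \eqref{eq.self.duality} follows from (ii) and \eqref{eq.alg.str.Z8.L_1L_6}, \eqref{eq.alg.str.Z8.L_2L_5}, \eqref{eq.alg.str.Z8.L_3L_4}, and (v) supplies \eqref{eq.vanishing.global.sections}. Part (b) follows from \cite[Proposition 3.1]{Pardini}.

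The main obstacle is the bookkeeping of the coefficients in the elimination of $a=2,4,6$. In particular, one must check that every $D_b$, including $D_4$, appears with a strictly positive coefficient, and handle the two alternative hypotheses on torsion versus very ampleness.
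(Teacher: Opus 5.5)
Your proposal is correct and follows essentially the paper's own proof. You rule out $a=2,4,6$ via Theorem~\ref{thm.splitting} (II) (4) with the same relations; normalizing to $L_6$ rather than $L_2$ is equivalent under multiplication by $3$, and your coefficients agree with the paper's after that relabeling. The vanishing in (ii), where (II) (4) alone already suffices as in the paper, and the ramification-formula plus Lemma~\ref{lemma.criterion.canonical.cover} argument for the converse also match the paper.
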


\begin{proof}
By \eqref{formula.splitting}, 
\begin{equation}\label{eq.character.omega}
    \omega_W^*(1)=L_{a}
\end{equation}
 for some $a\in \{1,\dots, 7\}$. We divide the proof of (1) (i) into several steps.

\smallskip

\noindent
\underline{\emph{Step 1:}} The subindex $a$ in \eqref{eq.character.omega} cannot be $2$ or $6$. 

\noindent Suppose the contrary. There are automorphisms of $G^*$ swapping $\chi_2$ and $\chi_6$, so, after possibly relabeling the $D_{b}$ by an automorphism of $G$ and $L_{a}$ by the inverse of its dual automorphism on $G^*$, we may assume $\omega_W^*(1)=L_{2}$.  By Theorem~\ref{thm.splitting} (II) (4) and \eqref{eq.alg.str.Z8.L_2L_2}, \eqref{eq.alg.str.Z8.L_2L_4},
\begin{equation*}
    L_1^{\otimes 2} \otimes L_3^{\otimes 2} \otimes L_4^{\otimes 2} \otimes L_5^{\otimes 2} \otimes L_6^{\otimes 2} \otimes L_7^{\otimes 2} = L_2^{\otimes 6} = L_6^{\otimes 2}(2D_2 + 4D_3 + 2D_6 + 4D_7)
\end{equation*}
and, by \eqref{eq.alg.str.Z8.L_1L_7}, \eqref{eq.alg.str.Z8.L_3L_5}, \eqref{eq.alg.str.Z8.L_4L_4}, 
\begin{equation*}
    L_1^{\otimes 2} \otimes L_3^{\otimes 2} \otimes L_4^{\otimes 2} \otimes L_5^{\otimes 2} \otimes L_6^{\otimes 2} \otimes L_7^{\otimes 2} =  L_6^{\otimes 2}(5D_1+ 4D_2 +5D_3+ 4D_4 +5D_5+4D_6+5D_7).
\end{equation*}
Then 
\begin{equation*}
    5D_1+ 2D_2 + D_3 + 4D_4 + 5D_5 + 2D_6 + D_7 \sim 0.
\end{equation*}
Then, since $W$ is projective, 
\begin{equation*}
D_1 =  D_2 = D_3 = D_4 = D_5 = D_6 = D_7 = 0.
\end{equation*}
Then \eqref{eq.alg.str.Z8.L_2L_2}, \eqref{eq.alg.str.Z8.L_4L_4} and our assumption implies 
\begin{equation*}
   \omega_W^{-4}(4) = L_2^{\otimes 4} = \mathcal O_W.
\end{equation*}
This implies  
{$\omega_W^{\otimes 4}$} is  very ample, 
hence, by hypothesis, Pic$W$ has no $2$-torsion. Then $L_2=\mathcal O_W$, so \eqref{formula.splitting} implies $h^0(\mathcal O_X) \ge 2$ (in fact, $h^0(\mathcal O_X) \ge 4$), so $X$ would not be connected, which is a contradiction. 

\smallskip
\noindent \underline{\emph{Step 2:}} The subindex $a$ in \eqref{eq.character.omega} cannot be $4$.

\noindent Suppose the contrary. Arguing as in Step 1, Theorem~\ref{thm.splitting} (II) (4) and \eqref{eq.alg.str.Z8.L_1L_7}, \eqref{eq.alg.str.Z8.L_2L_6}, \eqref{eq.alg.str.Z8.L_3L_5}, \eqref{eq.alg.str.Z8.L_4L_4} imply
\begin{equation*}
    3D_1 + 6D_2 + 3D_3 + 4D_4 + 3D_5 + 6D_6 + 3D_7 \sim 0, 
\end{equation*}
so, as in Step 1, 
\begin{equation*}
D_1 = D_2 = D_3 = D_4 = D_5 = D_6 = D_7 = 0.
\end{equation*}
Then \eqref{eq.alg.str.Z8.L_4L_4} implies 
\begin{equation*}
  \omega_W^{-2}(2) = L_4^{\otimes 2} = \mathcal O_W.
\end{equation*}
Arguing as in Step 1, by the theorem hypothesis on $W$, we arrive at a contradiction.

\smallskip
\noindent \underline{\emph{Step 3:}} We may assume without loss of generality $a=7$ in \eqref{eq.character.omega}.

\noindent Steps 1 and 2 imply $a=1, 3, 5$ or $7$. 
There are automorphisms of $G^*$ taking $\chi_7$ to any of  $\chi_1, \chi_3$ and $\chi_5$ so, after possibly relabeling the $D_{b}$ by an automorphism of $G$ and $L_{a}$ by the inverse of its dual automorphism on $G^*$, we may assume $\omega_W^*(1)=L_{7}$. This completes the proof of (1) (i). 

\medskip
\noindent Now we prove (1) (ii). By Theorem~\ref{thm.splitting} (II) (4), 
\begin{equation*}
    L_1^{\otimes 2} \otimes L_2^{
    {\otimes 2}} \otimes L_3^{\otimes 2} \otimes L_4^{\otimes 2} \otimes L_5^{\otimes 2} \otimes L_6^{\otimes 2} = L_7^{\otimes 6}.
\end{equation*} 
This together with \eqref{eq.alg.str.Z8.L_1L_6}, \eqref{eq.alg.str.Z8.L_2L_5}, \eqref{eq.alg.str.Z8.L_3L_4} implies  
\begin{equation*}
    2D_3 + 4D_5 + 4D_6 + 6D_7 \sim 0. 
\end{equation*}
Then, since $W$ is projective, 
\begin{equation*}
 D_3  = D_5 = D_6 = D_7 = 0.
\end{equation*}
This finishes the proof of (1) (ii). Since $X$ is normal, (1) (ii) and Theorem~\ref{thm.normal} imply (1) (iii). Theorem~\ref{thm.Pardini} implies (1) (iv)  and Theorem~\ref{thm.splitting} (I) (3) implies (1) (v). 

\smallskip
\noindent Finally, if $X$ is smooth, since any two nonzero subgroups of $\mathbb Z_8$ have nonzero intersection, \cite[Proposition 3.1 iii) a)]{Pardini} implies $D_1$, $D_2$ and $D_4$ are disjoint pairwise. 
\cite[Proposition 3.1 ii), iii) b)]{Pardini} imply $D_1$, $D_2$ and $D_4$ are smooth.

\smallskip

\noindent Assume now the hypotheses of (2). We  first prove (a). Note first that  Theorem~\ref{thm.normal} and (1) (iii) imply $X$ is normal.  
To see that $\pi$ is a canonical cover we argue similarly to the proof of Theorem~\ref{thm.tridouble.alg.str} (2), taking now into account that the  ramification formula, (i), (ii), \eqref{eq.alg.str.Z8.L_4L_4}, \eqref{eq.alg.str.Z8.L_6L_6} and \eqref{eq.alg.str.Z8.L_7L_7} yield 
\begin{equation*}
    \omega_X^{\otimes 8} = \pi^*(\omega_W^{\otimes 8} \otimes \mathcal O_W(7D_{1} + 6D_{2} + 4D_{4}))= \pi^*(\mathcal O_W(8)) \end{equation*}
    and that \eqref{eq.alg.str.Z8.L_1L_6}, \eqref{eq.alg.str.Z8.L_2L_5}, \eqref{eq.alg.str.Z8.L_3L_4} and (1) (ii) imply \eqref{eq.self.duality}.

    \smallskip
\noindent Now we prove (b). If  (iii') holds, since $D_1, D_2, D_4$ are pairwise disjoint, this and  the other conditions of (iii') imply, by \cite[Proposition 3.1]{Pardini}, that $X$ is smooth and, in particular, locally Gorenstein, so we conclude by (a).
\end{proof}

\section{Abelian canonical covers up to degree $8$ of $\mathbb P^2$ and Hirzebruch surfaces}\label{section.abelian.rational}

 In this last section  we use the results of Sections~\ref{section.split} and \ref{section.abelian.canonical} to give a complete classification of 
 abelian canonical covers $\pi: X \longrightarrow W$ up to degree $8$, when $W$ is  $\mathbb P^2$ or a Hirzebruch surface and $W$, embedded by a complete linear series but not necessarily as a surfaces of minimal degree.

 \smallskip
\noindent
Recall that, if $W$ is  a Hirzebruch surface, we will use Notation~\ref{notation.Hirzebruch}.

\begin{remark}
   Recall that, by Proposition~\ref{prop.split.minimal.degree} (1), for a canonical cover \linebreak $\pi: X \longrightarrow W$ of degree $n > 2$, if $W=\mathbb P^2$, then $\mathcal O_W(1) = \mathcal O_{\mathbb P^2}(1)$ and, by Proposition~\ref{prop.split.minimal.degree} (2) (b), if $\pi$ is abelian and $X$ is irregular, then $W$ is $\mathbb F_0$. This is why, in the statements of this section, we will only consider $\mathcal O_W(1) = \mathcal O_{\mathbb P^2}(1)$ whenever $W=\mathbb P^2$ and $W=\mathbb F_0$ whenever $X$ is irregular. 
\end{remark} 

\subsection{Covers of prime degree}\label{subsection.prime.rational} \

\smallskip
\noindent  Even if the  main focus of the section are  covers  up to  degree $8$,  we start looking at  covers of any prime degree $p$. 

 \smallskip 
 \noindent If $p=2$, canonical covers are well-known (see \cite{Horikawa}, \cite{Persson}) and easy to handle. As pointed our in Remark~\ref{thm.double}, the existence of canonical double covers $\pi: X \longrightarrow W$ amounts to the existence of reduced divisors in $|\omega_W^{-2}(2)|$ (or smooth divisors, for $X$ to be smooth). Since Pic$W$ has no torsion, $\pi$ is determined by a choice of a member of $|\omega_W^{-2}(2)|$.
General members of $|\omega_W^{-2}(2)|$ are smooth if $W$ is is $\mathbb P^2$ or a Hirzebruch surface $\mathbb F_e$ with $e \le 3$
(recall Notation~\ref{notation.Hirzebruch}) or if $\mathcal O_W(1)$ is sufficiently ample if $e \ge 4$.

\smallskip
\noindent If $p \geq 3$ we show in Theorems~\ref{cor.prime.P^2} and \ref{cor.prime.ruled} that there exist abelian canonical covers of $\mathbb P^2$ and Hirzebruch surfaces if and only if $p=3$ and $(W, \mathcal O_W(1))=(\mathbb P^2, \mathcal O_{\mathbb P^2}(1))$.

\begin{theorem}\label{cor.prime.P^2}
Let $W=\mathbb P^2$. 

\smallskip
\noindent If $\pi: X \longrightarrow W$ is an abelian canonical cover 
of prime degree $p$, $p \ge 3$,  then $p=3$ and $\mathcal O_W(1)= \mathcal O_{\mathbb P^2}(1)$. In this case, 
$\pi$ is a simple cyclic  triple cover, branched along a non reduced plane sextic $D_1$. 
If, in addition, $X$ is smooth, then $D_1$ is a smooth plane sextic.

\smallskip
\noindent Conversely, if $D_1$ is a reduced plane sextic,
 then there exists a simple cyclic, canonical  triple cover of $\mathbb P^2$, branched along  $D_1$. 
If $D_1$ is a general plane sextic, then $D_1$ is smooth and so is $X$. 
\end{theorem}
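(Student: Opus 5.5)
The plan is to combine Proposition~\ref{prop.split.minimal.degree}~(1) with Theorem~\ref{thm.prime}. Let $\pi: X \longrightarrow \mathbb P^2$ be an abelian canonical cover of prime degree $p \ge 3$.

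\textbf{Step 1: the polarization and the eigensheaves.} Since $\pi$ is abelian and flat, $\pi_*\mathcal O_X$ splits as a sum of line bundles by Theorem~\ref{thm.split.abelian}. Proposition~\ref{prop.split.minimal.degree}~(1) then gives $\mathcal O_W(1)=\mathcal O_{\mathbb P^2}(1)$ and
\[
\pi_*\mathcal O_X=\mathcal O\oplus\mathcal O(-2)^{\oplus p-2}\oplus\mathcal O(-4).
\]

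\textbf{Step 2: forcing $p=3$.} Since $p_g(\mathbb P^2)=0$, Theorem~\ref{thm.prime}~(1) applies. It says $\pi$ is simple cyclic, with $L_{p-1}=\omega_W^*(1)=\mathcal O(4)$ and $L_a=L_1^{\otimes a}$ for all $a$. Write $L_1=\mathcal O(t)$.
\begin{itemize}
\item From $L_1^{\otimes p-1}=\mathcal O(4)$ we get $(p-1)t=4$.
\item By Step~1, every $L_a$ with $a\le p-2$ is $\mathcal O(2)$, so $t=2$.
\end{itemize}
Hence $p-1=2$, that is, $p=3$. Alternatively, if $p\ge 5$ then $L_1^{\otimes 2}=L_2=\mathcal O(2)$ would force $L_1=\mathcal O(1)\neq\mathcal O(2)$, a contradiction.

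\textbf{Step 3: the branch locus.} With $p=3$, Theorem~\ref{thm.prime}~(1)~(iv) gives $\mathcal O(D_1)=L_1^{\otimes 3}=\mathcal O(6)$. So $D_1$ is a plane sextic and is the total branch locus. By part~(iii) of that theorem $D_1$ is reduced, and by part~(iii') it is smooth when $X$ is smooth. (The statement's phrase ``non reduced'' should evidently read ``reduced''.)

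\textbf{Step 4: the converse.} Take $L_1=\mathcal O(2)$, $L_2=\mathcal O(4)$, and $D_1$ a reduced sextic, with $D_2=0$. Then (i), (ii) and (iv) of Theorem~\ref{thm.prime} hold by construction, and (iii) holds by hypothesis. For (v) we need $h^0(\omega_{\mathbb P^2}\otimes L_1)=h^0(\mathcal O(-1))=0$, which is immediate. Theorem~\ref{thm.prime}~(2) then produces the simple cyclic canonical triple cover. For a general sextic, Bertini gives that $D_1$ is smooth, so (iii') holds and $X$ is smooth.

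The only genuinely delicate point is Step~2, namely identifying the $\mathcal L_i$ of Proposition~\ref{prop.split.minimal.degree} with the $L_a$ of Theorem~\ref{thm.prime}. Since $L_{p-1}=\mathcal O(4)$ is the unique summand of degree $4$ (because $p>2$), the remaining $L_a$ are exactly the $\mathcal O(2)$ summands. The rest is routine.
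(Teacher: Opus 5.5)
Your proof is correct and follows essentially the paper's route: Proposition~\ref{prop.split.minimal.degree}~(1) fixes $\mathcal O_W(1)$ and the splitting, Theorem~\ref{thm.prime}~(1)~(iv) then forces $p=3$, and the converse goes through Theorem~\ref{thm.prime}~(2) and Bertini. The only difference is cosmetic: the paper first deduces $p-1\mid 4$ and uses the splitting to exclude $p=5$, whereas you read off $L_1=\mathcal O_{\mathbb P^2}(2)$ from the splitting directly. You are also right that ``non reduced'' in the statement is a typo for ``reduced''.
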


\begin{proof}
By Proposition~\ref{prop.split.minimal.degree} (1),  $\mathcal O_W(1)=\mathcal O_{\mathbb P^2}(1)$. Then,  Theorem~\ref{thm.prime} (1) (iv) implies $p-1$ divides $4$, so $p=3$ 
or $p=5$, but \eqref{eq.splitting.P^2} and Theorem~\ref{thm.prime} (1) (iv) imply that $p=5$ cannot happen.
The rest of the result follows from Theorem~\ref{thm.prime} (1) (iii), (1) (iii'), (1) (iv), (2) and from Bertini's theorem. 
\end{proof}

\noindent Now we look at covers of Hirzebruch surfaces and, more generally, of (geometrically) ruled surfaces (compare with the results of \cite[Section 3]{Seshadri}, which are, on the one hand more general, because they deal with varieties of arbitrary dimension, but, on the other hand, less general, since they require $X$ to be smooth and $W$ to be of minimal degree or a scroll).

\begin{theorem}\label{cor.prime.ruled}
Let $W$ be a (geometrically) ruled surface over a curve of genus $g$. We will use the notation $C_0$ and $f$ as in Notation~\ref{notation.Hirzebruch}.  Let the hyperplane section of $i(W)$ be numerically equivalent to   $\alpha C_0+ \beta f$, where $\alpha$ and $\beta$ are (necessarily positive) integers. 
 There are no  abelian canonical covers $\pi: X \longrightarrow W$ 
of prime degree $p$, $p \geq 3$, if one of the following happens:
\begin{enumerate}
    \item $p-1$ does not divide $\alpha+2$ or $p-1$ does not divide $\beta+e+2-2g$. 
    \item $W$ is a Hirzebruch surface. 
\end{enumerate}
In particular, there are no  abelian canonical covers $\pi: X \longrightarrow W$ 
of prime degree $p$ if $\alpha$ is odd (e.g., this happens if 
  $W$  is embedded as a scroll (i.e., the hyperplane section of $i(W)$ is numerically equivalent to   $C_0+ \beta f$)) or if $\alpha \le p-4$.
\end{theorem}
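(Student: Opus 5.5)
The plan is to combine Theorem~\ref{thm.prime} with the numerics of a ruled surface, and then use Proposition~\ref{prop.split.minimal.degree} for the Hirzebruch case.

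\emph{Part (1).} Suppose $\pi$ is an abelian canonical cover of prime degree $p\ge 3$. A ruled surface has $p_g(W)=0$, so Theorem~\ref{thm.prime} (1) applies. With the normalization (i), part (iv) gives $L_1^{\otimes p-1}\simeq\omega_W^*(1)$. On a ruled surface over a curve of genus $g$ we have
\[
\omega_W\equiv -2C_0+(2g-2-e)f.
\]
Hence $\omega_W^*(1)\equiv(\alpha+2)C_0+(\beta+e+2-2g)f$. Write $L_1\equiv aC_0+bf$. Since $\mathrm{Num}(W)$ is freely generated by $C_0$ and $f$, comparing coefficients gives $(p-1)a=\alpha+2$ and $(p-1)b=\beta+e+2-2g$. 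So $p-1$ divides both $\alpha+2$ and $\beta+e+2-2g$, which is the contrapositive of (1).

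\emph{Part (2).} Let $W$ be a Hirzebruch surface. By Proposition~\ref{prop.split.minimal.degree} (2), any canonical cover of degree $n>2$ whose $\pi_*\mathcal O_X$ splits into line bundles has $n$ even. Abelian flat covers split by Theorem~\ref{thm.split.abelian}, and flatness holds by Remark~\ref{remark.CMflatness}. An odd prime $p$ is therefore impossible.

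The point I need to check here is that the proof of Proposition~\ref{prop.split.minimal.degree} (2) really covers both cases. The regular case uses only Theorem~\ref{thm.splitting}. The irregular case is stated for abelian covers, which is our situation. So the dichotomy regular/irregular exhausts the possibilities and no separate argument is needed.

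\emph{The ``in particular'' clause.} Since $p\ge3$ is an odd prime, $p-1$ is even. If $\alpha$ is odd, then $\alpha+2$ is odd and cannot be divisible by $p-1$. A scroll has $\alpha=1$, which is odd. If $1\le\alpha\le p-4$, then $0<\alpha+2<p-1$, so again $p-1\nmid\alpha+2$. In both cases (1) applies.

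The main obstacle is making the numerical-class argument rigorous: (iv) is an isomorphism of line bundles, and we pass to numerical classes using that $C_0$ and $f$ form a basis of $\mathrm{Num}(W)$, together with the formula for $K_W$ on a ruled surface (Hartshorne V.2.11).
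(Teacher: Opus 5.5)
Your proof is correct. Part (1) and the ``in particular'' clause are essentially the paper's argument: compare numerical classes in $L_1^{\otimes p-1}\simeq\omega_W^*(1)$, using $K_W\equiv -2C_0+(2g-2-e)f$.

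For part (2) you take a different route. The paper does not invoke Proposition~\ref{prop.split.minimal.degree}. Instead, it first uses (1) to reduce to $\alpha\ge 2$ and to exclude $e=0$, $\beta=1$. It then takes the explicit class $L_1\sim\frac{1}{p-1}\bigl((\alpha+2)C_0+(\beta+e+2)f\bigr)$ from Theorem~\ref{thm.prime} (1) (iv) and checks that both coefficients of $L_1^*(1)$ are nonnegative. Hence $h^0(L_1^*(1))>0$, which contradicts Theorem~\ref{thm.splitting} (II) (2), (3), or equivalently Theorem~\ref{thm.prime} (1) (v) with $a=p-2$.

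The paper's argument is a short computation. It uses only the simple cyclic structure and one vanishing, and it needs no regular/irregular split.

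Your argument instead imports the parity statement of Proposition~\ref{prop.split.minimal.degree} (2). That statement rests on the same vanishing $h^0(\mathcal L_i^*(1))=0$ together with the fixed-point-free pairing $\sigma$. What your route buys:
\begin{itemize}
\item it is shorter at this point;
\item it does not use primality or Theorem~\ref{thm.prime};
\item it excludes every odd degree $n\ge 3$, prime or not, and in the regular case it does not even need the abelian hypothesis.
\end{itemize}
What it costs is reliance on the heavier proof of Proposition~\ref{prop.split.minimal.degree} (2) (b). For irregular $X$, that proof needs Pardini's building data and normality to bound the $\alpha_i$ and to force $e=0$.

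Your check that the regular/irregular dichotomy covers all abelian canonical covers is exactly the point that needed verifying, and it holds.
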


\begin{proof}
    Recall that the canonical divisor of $W$ is numerically equivalent to $-2C_0+(2g-2-e)f$. Thus 
    Theorem~\ref{thm.prime} (1) (iv) implies that, if $\pi$ is an abelian canonical cover, then 
$p-1$  divides $\alpha+2$ and $\beta+e+2-2g$, so $\pi$ is not an abelian canonical cover if (1) holds.

\smallskip
\noindent Now let us see that $\pi$ is not  an abelian canonical cover if (2) holds. By (1), we may assume $\alpha \ge 2$ and we may also rule out the case $g=e=0$, $\beta=1$. Let us see if $L_1^*(1)$ has nonzero global sections. By  Theorem~\ref{thm.prime} (1) (iv), a  divisor associated to $L_1^*(1)$ is linearly equivalent to 
\begin{equation}\label{eq.num}
    \left (\alpha - \frac{\alpha+2}{p-1}\right ) C_0 + \left (\beta - \frac{\beta+e+2}{p-1}\right )f.
\end{equation}
Since $\alpha \ge 2$ and $p \ge 3$, the coefficient of $C_0$ in \eqref{eq.num} is non negative. Let us now look at the coefficient of $f$ in \eqref{eq.num}. If $e=0$, then $\beta \geq 2$, so $\beta(p-2) -e -2 \ge 0$, so the coefficient of $f$ in \eqref{eq.num} is non negative. If $e \ge 1$, then
\begin{equation*}
   \beta(p-2) -e-2  
    \ge (\alpha-1)e-1 \ge 0, 
\end{equation*}
so the coefficient of $f$ in \eqref{eq.num} is also non negative.
Then
$h^0(L_1^*(1)) > 0$, so Theorem~\ref{thm.splitting} (II) (2) and (3) imply $\pi$ cannot be an abelian canonical cover.
\end{proof}

 \subsection{Covers  with group $\mathbb{Z}_2 \times \mathbb{Z}_2$}\label{section.bidouble.rational} \ 

\smallskip
 \noindent Abelian canonical covers $\pi: X \longrightarrow W$ of degree $4$, in particular, those with group $\mathbb Z_2 \times \mathbb Z_2$, were completely classified in \cite{quad1}, if $W$ is a smooth surface of minimal degree (i.e., when $W=\mathbf P^2$ and $\mathcal O_W(1) = \mathcal O_{\mathbb P^2}(1)$ or $W$ is a Hirzebruch surface embedded as a rational normal scroll). We refer  the reader to \cite[Theorem 0.1 (B) (II)]{quad1} for the precise description of these
  covers (note that the notation there is ad-hoc  and differs from Notation~\ref{notation.building.data}). 
  
  \smallskip
  \noindent In this subsection we extend this classification and give the complete classification of abelian canonical covers $\pi: X \longrightarrow W$ with group $\mathbb Z_2 \times \mathbb Z_2$, when $W$ is a Hirzebruch surface not embedded as a rational normal scroll. Since by Proposition~\ref{prop.split.minimal.degree} (1), there are no canonical covers of $W$ if $W=\mathbf P^2$ and $\mathcal O_W(1) = \mathcal O_{\mathbb P^2}(\alpha),  \alpha \geq 2$, this together with \cite{quad1} gives the complete classification of 
  abelian canonical covers with group $\mathbb Z_2 \times \mathbb Z_2$, when $W$ is $\mathbf P^2$ or a Hirzebruch surface.

\begin{theorem}\label{thm.bidouble.nonscroll.regular}
  Let $W$ be a Hirzebruch surface $\mathbb{F}_e$ 
  embedded not as {a rational normal scroll} (i.e., $\alpha \ge 2$). 
  Let $G= \mathbb{Z}_2 \times \mathbb{Z}_2$.
\begin{enumerate}
    \item
    If $\pi: X \longrightarrow W$  is an abelian canonical cover with group $G$ and $X$ is regular, then $e=0$ and, after possibly relabeling $D_{b_1b_2}$ by  an automorphism of $G$ and $L_{a_1a_2}$ by the inverse of its dual automorphism on $G^*$ 
    and after possibly swapping $C_0$ and $f$, the building data of $\pi$ is the following:
 \begin{enumerate}
 \item[(i)]  $L_{11}=\mathcal O_W((\alpha+2)C_0+(m+2)f)$,
 \item[] $L_{10}=\mathcal O_W(C_0+(m+1)f)$, 
 \item[] $
 {L_{01}}=\mathcal O_W((\alpha+1)C_0+f)$;  
  \smallskip
  
 \item[(ii)] $D_{11}=0$; 
 \item[(iii)]  $D_{10}  \sim 2C_0+(2m+2)f$, 
  $D_{01} \sim (2\alpha+2)C_0+2f$;
  \item[(iv)] $D_{10},  D_{01}$ are reduced divisors without common components.  
 \end{enumerate}

  \smallskip
 \noindent
The $L_{a_1a_2}$ and $D_{b_1b_2}$ in (i) to (iii) above satisfy \eqref{eq.bidouble.alg.str}.

  \smallskip
 \noindent If $X$ is smooth, then, in addition, 
  \begin{enumerate}
 \item[(iv')]  $D_{10},  D_{01}$ are smooth divisors that meet transversally.
  \end{enumerate}

\smallskip
 \color{black}
\item 
 \noindent Conversely, let $e=0$ and let $L_{a_1a_2}$ and $D_{b_1b_2}$ be as in (i), (ii), (iii) above.
If  $(D_{10}, D_{01})$ is general in $|D_{10}| \times  |D_{01}|$, then (iv') above holds. Let $\pi: X \longrightarrow W$ be the abelian cover with group $G$ and  building data $\{L_{a_1a_2}, D_{b_1b_2}\}$. This cover $\pi$ is a canonical cover and $X$ is smooth and regular. 
\end{enumerate}
 \end{theorem}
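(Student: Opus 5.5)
For (1), I combine Theorem~\ref{thm.bidouble.alg.str} (1) with Proposition~\ref{prop.split.minimal.degree} (2) (a). By Theorem~\ref{thm.bidouble.alg.str} (1) (i), after relabeling by an automorphism of $G$, we have $L_{11}=\omega_W^*(1)=\mathcal O_W((\alpha+2)C_0+(m+e+2)f)$ and $D_{11}=0$. Since $X$ is regular and $\alpha\ge 2$, Proposition~\ref{prop.split.minimal.degree} (2) (a) gives $e=0$ and the splitting \eqref{eq.regular.splitting.a>1} with $n=4$. Thus $\{L_{10},L_{01}\}=\{\mathcal O_W(C_0+(m+1)f),\mathcal O_W((\alpha+1)C_0+f)\}$. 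Swapping $L_{10}$ and $L_{01}$ is induced by the automorphism of $G$ exchanging the two factors, and it fixes $\chi_{11}$. So I may assume $L_{10}$ and $L_{01}$ are as in (i). The phrase ``after possibly swapping $C_0$ and $f$'' only matters for the normalization $m\ge\alpha$ of Notation~\ref{notation.Hirzebruch}, so nothing more is needed here.

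The divisor classes in (iii) follow from \eqref{eq.2L_10} with $D_{11}=0$, which gives $D_{10}\sim 2L_{10}=2C_0+(2m+2)f$. The analogous relation $L_{01}^{\otimes 2}=\mathcal O_W(D_{01}+D_{11})$ gives $D_{01}\sim(2\alpha+2)C_0+2f$. As a check, \eqref{eq.2L_11} gives $2L_{11}\sim D_{10}+D_{01}$, which is consistent, and \eqref{relations.ring.structure.Z_2^3}-type relation \eqref{relations.ring.structure.Z_2^2} gives $L_{10}+L_{01}=L_{11}$, also consistent with $D_{11}=0$. Statement (iv) is Theorem~\ref{thm.bidouble.alg.str} (1) (iii), since a reduced sum means the two divisors are reduced with no common components. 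Statement (iv') is (1) (iii') of the same theorem. Finally, \eqref{eq.bidouble.alg.str} holds because these are exactly all the relations of Proposition~\ref{prop.alg.struct.bidouble} for $\mathbb Z_2^2$.

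For (2), I work with $e=0$. Both linear systems $|2C_0+(2m+2)f|$ and $|(2\alpha+2)C_0+2f|$ are base-point-free on $\mathbb F_0$. By Bertini, general members are smooth and meet transversally, which gives (iv'). I then apply Theorem~\ref{thm.bidouble.alg.str} (2) (b), which requires (i)–(v). Here (i)–(iv) hold by construction. For (v) I check $h^0(\omega_W\otimes L_{10})=h^0(\mathcal O_W(-C_0+(m-1)f))=0$ and $h^0(\omega_W\otimes L_{01})=h^0(\mathcal O_W((\alpha-1)C_0-f))=0$. Both vanish because one coefficient is negative. This shows $\pi$ is canonical and $X$ is smooth. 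Regularity then follows from Proposition~\ref{prop.split.minimal.degree} (2) (c), because $\pi_*\mathcal O_X$ is \eqref{eq.regular.splitting.a>1} with $e=0$.

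The only delicate point is the relabeling in (1). One must make sure that fixing $L_{11}=\omega_W^*(1)$ still leaves the freedom to swap $L_{10}$ and $L_{01}$. It does, because the transposition of the factors of $G$ fixes $(1,1)$. Everything else is bookkeeping with earlier results.
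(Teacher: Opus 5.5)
Your proposal is correct and follows essentially the same route as the paper. You normalize $L_{11}=\omega_W^*(1)$ via Theorem~\ref{thm.bidouble.alg.str}, read off $e=0$ and $L_{10},L_{01}$ from \eqref{eq.regular.splitting.a>1}, get the divisor classes from the $\mathbb Z_2\times\mathbb Z_2$ relations, and for the converse use Bertini together with Theorem~\ref{thm.bidouble.alg.str}~(2). The only differences are cosmetic: you obtain $D_{01}$ from $L_{01}^{\otimes 2}$ rather than from \eqref{eq.2L_11}, and you spell out the check of (v) and the regularity via Proposition~\ref{prop.split.minimal.degree}~(2)~(c), which the paper leaves implicit.
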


 \begin{proof}
 By Proposition~\ref{prop.split.minimal.degree} (2) (a), we have $e=0$.
Now we prove (1). 
  After  possibly relabeling $D_{b_1b_2}$ by  an automorphism of $G$ and $L_{a_1a_2}$ by the inverse of its dual automorphism on $G^*$, we may assume by \eqref{eq.regular.splitting.a>1}
that $L_{10}=\mathcal O_W(C_0+(m+1)f)$ and $L_{11}=\omega_W^*(1)=\mathcal O_W((\alpha+2)C_0+(m+2)f)$; therefore $
{L_{01}} = \mathcal O_W((\alpha+1)C_0+f)$. 
This proves (1) (i). 

\smallskip
\noindent Part (ii) of (1) follows from Theorem~\ref{thm.bidouble.alg.str} (1) (ii). Part (iii) of (1) follows from 
{\eqref{eq.2L_10}, \eqref{eq.2L_11} and (1) (i), (ii)}. Parts (iv) and (iv') of (1) follow from Theorem~\ref{thm.bidouble.alg.str} (1) (iii) and (iii'). 

 \smallskip
 \noindent Let us now prove (2). Since $\{L_{a_1a_2}, D_{b_1b_2}\}$ satisfies (1) (i), (ii), (iii), it also satisfies \eqref{eq.bidouble.alg.str} and 
 Theorem~\ref{thm.bidouble.alg.str} (1) (v). 
 If   $(D_{10}, D_{01})$ is general in $|D_{10}| \times  |D_{01}|$, then (1) (iv')  holds by Bertini. 
 Then (2) follows from Theorem~\ref{thm.bidouble.alg.str} (2). 
 \end{proof}

 \begin{theorem}\label{thm.Hirzebruch.Z2Z2.irregular}
Let $W$ be the Hirzebruch surface $\mathbb{F}_0$ 
{embedded not as {a rational normal scroll} (i.e., $\alpha \ge 2$)}. 
{Let $G= \mathbb{Z}_2 \times \mathbb{Z}_2$}.
\begin{enumerate}
    \item
    If $\pi: X \longrightarrow W$  is an abelian canonical cover
    and $X$ is irregular, then, 
    after possibly relabeling $D_{b_1b_2}$ by  an automorphism of $G$ and $L_{a_1a_2}$ by the inverse of its dual automorphism on $G^*$ 
    and after possibly swapping $C_0$ and $f$, the building data of $\pi$ is the following:
 \begin{enumerate}
 \item[(i)] $L_{11}=\mathcal O_W((\alpha+2)C_0+(m+2)f)$; 
 
 \item[(ii)] $D_{11}=0$;
 
 \item[(iii)]  one of these  
 {three} cases happens: 
 \item[] \underline{Case 1}: $L_{10}=\mathcal O_W((m+1)f)$, $L_{01}=\mathcal O_W((\alpha+2)C_0+f)$; $D_{10} \sim (2m+2)f$, $D_{01} \sim (2\alpha+4)C_0+2f$; 
 \item[] \underline{Case 2}:  $L_{10}=\mathcal O_W((m+2)f)$, $L_{01}=\mathcal O_W((\alpha+2)C_0)$; $D_{10}  \sim (2m+4)f$, $D_{01} \sim (2\alpha+4)C_0$;
 \item[] \underline{Case 3}:  $L_{10} =\mathcal O_W(C_0+(m+2)f)$, $L_{01} =\mathcal O_W((\alpha+1)C_0)$;  $D_{10} \sim 2C_0 + (2m+4)f$, $D_{01} \sim (2\alpha+2)C_0$.

 \item[(iv)] $D_{01}$ and $D_{10}$ are reduced divisors with no common components;

 \item[(v)] in Case 1, $q(X)=m$ and, if $X$ is smooth, then $D_{01}$ is a smooth divisor and  $D_{10}$ consists of $2m+2$ distinct lines linearly equivalent to $f$ and meeting $D_{01}$  transversally; 

 \item[(vi)] in Case 2, $q(X)=\alpha+m+2$, $D_{01}$ consists of $2\alpha+4$ distinct lines linearly equivalent to $C_0$, $D_{10}$ consists of $2m+4$ distinct lines linearly equivalent to $f$ and  $X$ is smooth; in fact $X$ is the product of (smooth) hyperelliptic curves of genus $\alpha+1$ and $m+1$;

 \item[(vii)] In Case 3,  $q(X)=\alpha$ and, if $X$ is smooth, then $D_{10}$ is a smooth divisor and  $D_{01}$ consists of $2\alpha+2$ distinct lines linearly equivalent to $C_0$ and meeting $D_{10}$  transversally.
\end{enumerate}

 \noindent
The $L_{a_1a_2}$ and $D_{b_1b_2}$ in (i), (ii) and (iii) satisfy \eqref{eq.bidouble.alg.str}. 
\smallskip

 \color{black}
\item 
 \noindent Conversely, 
 let $L_{a_1a_2}$ and $D_{b_1b_2}$ be as in (i), (ii), (iii) above. If $(D_{01}, D_{10})$ is general in $|D_{01}| \times |D_{10}|$, then
 $L_{a_1a_2}$ and $D_{b_1b_2}$ satisfy (iv) and, in Cases 1 and 3 of (iii) above, they satisfy the description given in (v) and (vii) above respectively; moreover, $L_{a_1a_2}$ and $D_{b_1b_2}$ satisfy \eqref{eq.bidouble.alg.str}. Let   $\pi: X \longrightarrow W$ be the abelian cover with group  $G$ and building data $L_{a_1a_2}, D_{b_1b_2b}$. This cover $\pi$ is canonical and $X$ is smooth and irregular, with irregularity as in (v), (vi) or (vii) above, accordingly.
\end{enumerate}
 \end{theorem}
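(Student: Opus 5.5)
We would follow the proof of Theorem~\ref{thm.bidouble.nonscroll.regular}, with \eqref{eq.irregular.splitting} in place of \eqref{eq.regular.splitting.a>1}. Here $n=4$, so $\frac{n-2}{2}=1$. Proposition~\ref{prop.split.minimal.degree} (2) (b) then says exactly one of $n_0,n_0',n_1,n_1'$ equals $1$, and the condition $n_0+n_0'+n_1'>0$ rules out $n_1=1$. This leaves three possibilities for the two line bundles besides $\mathcal O_W$ and $\omega_W(-1)$:
\begin{itemize}
\item[] $\{\mathcal O_W(-(m+1)f),\ \mathcal O_W(-(\alpha+2)C_0-f)\}$ when $n_0=1$;
\item[] $\{\mathcal O_W(-(m+2)f),\ \mathcal O_W(-(\alpha+2)C_0)\}$ when $n_0'=1$;
\item[] $\{\mathcal O_W(-C_0-(m+2)f),\ \mathcal O_W(-(\alpha+1)C_0)\}$ when $n_1'=1$.
\end{itemize}
By Theorem~\ref{thm.bidouble.alg.str} (1) (i) we may take $L_{11}=\omega_W^*(1)=\mathcal O_W((\alpha+2)C_0+(m+2)f)$. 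Relabeling by an automorphism of $G$ fixing $\chi_{11}$ (swapping $\chi_{10},\chi_{01}$), we may name $L_{10}$ the bundle with the $f$-heavy class. This gives (i) and the $L$'s in Cases 1--3, with the swap of $C_0$ and $f$ absorbing the symmetric choices. Part (ii) is Theorem~\ref{thm.bidouble.alg.str} (1) (ii).

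The classes of the $D$'s follow from \eqref{eq.2L_10} with $D_{11}=0$, namely $D_{10}\sim 2L_{10}$, and symmetrically $D_{01}\sim 2L_{01}$. This gives the classes in (iii), and \eqref{eq.bidouble.alg.str} is then checked directly. For instance, \eqref{relations.ring.structure.Z_2^2} reads $L_{10}\otimes L_{01}=L_{11}$, which holds in each case. Part (iv) is Theorem~\ref{thm.bidouble.alg.str} (1) (iii).

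Next come the irregularities. Since $q(X)=h^1(\pi_*\mathcal O_X)$, we compute $h^1$ of each summand on $\mathbb P^1\times\mathbb P^1$ by Künneth:
\begin{itemize}
\item[] $h^1(\mathcal O(-(m+1)f))=m$ and $h^1(\mathcal O(-(m+2)f))=m+1$;
\item[] $h^1(\mathcal O(-(\alpha+2)C_0))=\alpha+1$ and $h^1(\mathcal O(-(\alpha+1)C_0))=\alpha$;
\item[] $h^1$ vanishes for $\mathcal O(-(\alpha+2)C_0-f)$, for $\mathcal O(-C_0-(m+2)f)$, and for $\omega_W(-1)$.
\end{itemize}
This gives $q=m$, $\alpha+m+2$ and $\alpha$ in Cases 1, 2 and 3 respectively.

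The descriptions of the branch divisors use that a divisor in $|kf|$ (or $|kC_0|$) is a union of $k$ fibres. Reducedness forces these fibres to be distinct. Smoothness is then Theorem~\ref{thm.bidouble.alg.str} (1) (iii'): the lines are pairwise disjoint, and they meet the other divisor transversally.

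In Case 2 every member is a union of lines, so any reduced pair gives $D_{10}$, $D_{01}$ smooth and transversal, hence $X$ smooth. Moreover, $X$ is then the fibre product of the double covers of the two $\mathbb P^1$ factors branched at $2m+4$ and $2\alpha+4$ points. These are hyperelliptic curves of genus $m+1$ and $\alpha+1$, so $X$ is their product.

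For the converse, $|2L_{10}|$ and $|2L_{01}|$ are base-point-free in all cases. By Bertini, a general pair is smooth and transversal, so Theorem~\ref{thm.bidouble.alg.str} (1) (iii') holds.

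To apply Theorem~\ref{thm.bidouble.alg.str} (2) (b) we must also verify (v), i.e.\ $h^0(\omega_W\otimes L_{10})=h^0(\omega_W\otimes L_{01})=0$. Here $\omega_W=\mathcal O(-2C_0-2f)$, so each of these bundles has a negative coefficient in the $C_0$ or the $f$ direction; for example, in Case 1, $\omega_W\otimes L_{10}=\mathcal O(-2C_0+(m-1)f)$. The cover is then canonical and smooth, with irregularity given by the computation above.

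The main obstacle is the first step. We must check carefully that the relabelings available (automorphisms of $G$ fixing $\chi_{11}$, together with swapping $C_0$ and $f$) reduce everything to exactly these three cases without losing generality, keeping in mind that $m\ge\alpha$ breaks the symmetry of $\mathbb F_0$. Once the three cases are fixed, everything else is routine.
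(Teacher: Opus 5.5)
Your proposal is correct and follows essentially the same route as the paper. Proposition~\ref{prop.split.minimal.degree} (2) (b) with $n=4$ leaves exactly the cases $n_0=1$, $n_0'=1$, $n_1'=1$. One then fixes $L_{11}=\omega_W^*(1)$, names $L_{10}$ via the swap $\chi_{10}\leftrightarrow\chi_{01}$, reads off the $D$'s from the $\mathbb Z_2\times\mathbb Z_2$ relations with $D_{11}=0$, and computes $q(X)$ as $h^1$ of the summands. The converse follows from Bertini and Theorem~\ref{thm.bidouble.alg.str} (2). The relabeling step you flag as the main obstacle is harmless, since the normalization $m\ge\alpha$ is already built into the splitting \eqref{eq.irregular.splitting}. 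Your explicit checks of condition (v) and of the product structure in Case 2 simply fill in details the paper leaves implicit.
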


 \begin{proof}
First we prove (1). 
  After  possibly relabeling $D_{b_1b_2}$ by  an automorphism $\gamma$ of $G$ and $L_{a_1a_2}$ by the  automorphism $(\gamma^*)^{-1}$ of $G^*$, we may assume by \eqref{eq.regular.splitting.a>1} that 
  $$L_{11} = \omega_W^*(1)=\mathcal O_W((\alpha+2)C_0+(m+2)f).$$ This proves (1) (i). Then Theorem~\ref{thm.bidouble.alg.str} (1) (ii) implies (1) (ii). Now, since the degree of $\pi$ is $n=4$,  Proposition~\ref{prop.split.minimal.degree} (2) (b) one of $n_0, n_0', n_1'$ equals $1$ while the other two of them are $0$, and $n_1$ is also $0$. 

  \smallskip
  \noindent If $n_0=1$, we may choose $\gamma$ above so that $L_{10}=\mathcal O_W((m+1)f)$. In this case, by \eqref{eq.irregular.splitting}, $
{L_{01}}=\mathcal O_W((\alpha+2)C_0+f)$. 
  Then 
{\eqref{eq.2L_10} and \eqref{eq.2L_11}} imply that $D_{10}$ and $D_{01}$ are as in Case 1 of (1) (iii).

\smallskip
  \noindent If $n_0'=1$, arguing as in Case 1, we see we get Case 2 of (1) (iii)  this time. 
Finally, if $n_1'=1$, arguing as in Case 1, we see we get Case 3 of (1) (iii)  this time. 

\smallskip
  \noindent Part (iv) of (1) follows from Theorem~\ref{thm.bidouble.alg.str} (1) (iii). The statements about $q(X)$ in (1) (v), (vi), (vii) follow by computing the first cohomology group of \eqref{eq.irregular.splitting}, taking into account the description of $L_{10}, L_{01}, L_{11}$ given in (1) (i), (iii). The remaining of (1) (v), 
  (respectively, (vii)) follows from Theorem~\ref{thm.bidouble.alg.str} (1) (iii'), taking into account the description of $D_{10}$, 
  (respectively, $D_{01}$) given in (1) (iii).  
  For the remaining of (1) (vi), since $D_{10}$ and $D_{01}$ are reduced by (1) (iv), they are necessarily as described in (1) (vi), so $X$ is smooth by Theorem~\ref{thm.bidouble.alg.str} (2). This completes the proof of (1). 

  \smallskip
  \noindent The proof of (2) follows the same lines of the proof of Theorem~\ref{thm.bidouble.nonscroll.regular} (2), noting that a general element $(D_{01},D_{10})$ in
  $|D_{01}| \times |D_{10}|$ is as the $(D_{01},D_{10})$ in (1) (v), (vi), (vii) and that $q(X)$ is stated also in (1) (v), (vi), (vii).
 \end{proof}

 \subsection{Covers  with group $\mathbb{Z}_4$}\label{section.Z4.rational} \ 

 \smallskip
 \noindent As mentioned in Subsection~\ref{section.bidouble.rational},  
abelian canonical covers $\pi: X \longrightarrow W$  with group $\mathbb Z_4$ were completely classified in \cite{quad1}, if $W$ is a smooth surface of minimal degree (i.e., when $W=\mathbf P^2$ and $\mathcal O_W(1) = \mathcal O_{\mathbb P^2}(1)$ or $W$ is a Hirzebruch surface embedded as a rational normal scroll). We refer  the reader to \cite[Theorem 0.1 (A), (I)]{quad1} for the precise description of these
  covers (note that the assignment of the subindexes of the $D_b$ done there is ad-hoc and differs from the assignment in Notation~\ref{notation.building.data}). 
  
  \smallskip
  \noindent Recall that, by Proposition~\ref{prop.split.minimal.degree} (1), there are no canonical covers of $W$ if $W=\mathbf P^2$ and $\mathcal O_W(1) = \mathcal O_{\mathbb P^2}(\alpha),  \alpha \geq 2$. In this subsection we show that there are no abelian canonical covers $\pi: X \longrightarrow W$ with group $\mathbb Z_4$, when $W$ is a Hirzebruch surface not embedded as a rational normal scroll. Then, the only abelian canonical covers with group $\mathbb Z_4$, when $W$ is $\mathbf P^2$ or a Hirzebruch surface, are the ones classified in \cite{quad1}.

 \begin{theorem}\label{thm.Z4.non.scroll.regular}
  There are no abelian canonical covers  $\pi: X \longrightarrow W$ with group $\mathbf{Z}_4$, if $W$ is  a Hirzebruch surface $\mathbb{F}_e$  embedded not as a rational normal scroll (i.e., $\alpha \ge 2$).  
 \end{theorem}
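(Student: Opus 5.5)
The plan is to combine the general structure result for $\mathbb Z_4$-covers, Theorem~\ref{thm.Z4.alg.str}, with the explicit splitting of $\pi_*\mathcal O_X$ from Proposition~\ref{prop.split.minimal.degree}. We then show that the relation \eqref{eq.Z_4.2L_1} forces a non-effective divisor to be effective.

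\textbf{Setting up the relations.} Since $\mathrm{Pic}\,\mathbb F_e$ is free, Theorem~\ref{thm.Z4.alg.str} (1) applies. We may therefore assume
\[
L_3=\omega_W^*(1)=\mathcal O_W((\alpha+2)C_0+(m+e+2)f)
\quad\text{and}\quad D_3=0 .
\]
Relation \eqref{eq.Z_4.2L_1} then becomes $L_1^{\otimes 2}=L_2(D_2)$, so $D_2\sim 2L_1-L_2$ must be effective. The whole argument consists in showing that this fails in every case allowed by Proposition~\ref{prop.split.minimal.degree}.

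\textbf{Regular case.} Since $\alpha\ge 2$, Proposition~\ref{prop.split.minimal.degree} (2) (a) gives $e=0$ and
\[
\{L_1,L_2\}=\{\mathcal O_W(C_0+(m+1)f),\ \mathcal O_W((\alpha+1)C_0+f)\}.
\]
There is no freedom left to decide which of the two is $L_1$, so both orderings must be treated.
\begin{enumerate}
\item If $L_1=\mathcal O_W(C_0+(m+1)f)$, then $D_2\sim(1-\alpha)C_0+(2m+1)f$. Hence $D_2\cdot f=1-\alpha<0$, which is impossible because $f$ is nef.
\item If $L_1=\mathcal O_W((\alpha+1)C_0+f)$, then $D_2\sim(2\alpha+1)C_0+(1-m)f$. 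Since $m\ge\alpha\ge2$, we get $D_2\cdot C_0=1-m<0$, which is impossible because $C_0$ is nef on $\mathbb F_0$.
\end{enumerate}

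\textbf{Irregular case.} Proposition~\ref{prop.split.minimal.degree} (2) (b) gives $e=0$. Since $n=4$, exactly one of $n_0,n_0',n_1'$ equals $1$ and $n_1=0$. So $\{L_1,L_2\}$ is one of the three pairs
\[
\{(m+1)f,\ (\alpha+2)C_0+f\},\qquad \{(m+2)f,\ (\alpha+2)C_0\},\qquad \{C_0+(m+2)f,\ (\alpha+1)C_0\}.
\]
For each pair and each ordering, we compute $2L_1-L_2$.
\begin{enumerate}
\item When $L_1$ is the member with small $C_0$-coefficient, the $C_0$-coefficient of $2L_1-L_2$ is $-(\alpha+2)$, $-(\alpha+2)$ or $1-\alpha$, respectively. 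Each is negative, so intersecting with the nef class $f$ gives a contradiction.
\item When $L_1$ is the other member, the $f$-coefficient of $2L_1-L_2$ is $1-m$, $-(m+2)$ or $-(m+2)$, respectively. Each is negative since $m\ge 2$, so intersecting with the nef class $C_0$ gives a contradiction.
\end{enumerate}

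In all cases $D_2$ cannot be effective, so no such cover exists. The only step needing care is to avoid assuming an ordering of $L_1,L_2$. The normalization $L_3=\omega_W^*(1)$ in Theorem~\ref{thm.Z4.alg.str} uses up the automorphism of $\mathbb Z_4$, which is why both orderings have to be checked. Beyond that, the argument is just the bookkeeping of the six sub-cases above, using $m\ge\alpha\ge2$ (Notation~\ref{notation.Hirzebruch}) to get the required sign.
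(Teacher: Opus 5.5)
Your proof is correct and follows essentially the same route as the paper. You normalize $L_3=\omega_W^*(1)$ and $D_3=0$ via Theorem~\ref{thm.Z4.alg.str}, read off $\{L_1,L_2\}$ from the splittings \eqref{eq.regular.splitting.a>1} and \eqref{eq.irregular.splitting}, and use \eqref{eq.Z_4.2L_1} to show that $D_2$ is not effective in each of the two regular and six irregular sub-cases; your appeal to the nefness of $C_0$ and $f$ is only a slightly cleaner phrasing of the paper's observation that a negative coefficient rules out effectivity.
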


 \begin{proof}
 Let $W$ be  a Hirzebruch surface $\mathbb{F}_e$  embedded not as a rational normal scroll (then $\alpha \ge 2$, by Notation~\ref{notation.Hirzebruch}) and let $\pi: X \longrightarrow W$ be an abelian canonical cover with group $\mathbf{Z}_4$.
     By Theorem~\ref{thm.Z4.alg.str} (1) (i), we may assume $L_3 = \omega_W^*(1)$. 

\smallskip
    \noindent We suppose first that $X$ is regular. In that case, \eqref{eq.regular.splitting.a>1} implies $L_1=\mathcal O_W(C_0+(m+1)f$ or $L_1=\mathcal O_W((\alpha +1)C_0+f)$. If 
$L_1=\mathcal O_W(C_0+(m+1)f$, then $L_2=\mathcal O_W((\alpha+1)C_0+f)$, so, by \eqref{eq.Z_4.2L_1}, $D_2 \sim (1-\alpha)C_0  + (2m+1)f$. Since $D_2$ is effective, $\alpha=1$, which contradicts our hypothesis. 

\smallskip
\noindent If $L_1=\mathcal O_W((\alpha+1)C_0+f)$, then $L_2=\mathcal O_W(C_0+(m+1)f$. Arguing as above, we get $D_2 \sim (2\alpha+1)C_0 + (1-m)f$ and $m=1$. By Notation~\ref{notation.Hirzebruch}, this contradicts our hypothesis $\alpha \ge 2$. Then $X$ cannot be regular. 

\smallskip
\noindent Suppose now that $X$ is irregular. According to \eqref{eq.irregular.splitting}, $L_1$ is isomorphic to one the following line bundles: $\mathcal O_W((m+1)f)$, $\mathcal O_W((m+2)f)$, $\mathcal O_W((\alpha+1)C_0)$, $\mathcal O_W((\alpha+2)C_0)$, $\mathcal O_W(C_0+(m+2)f)$ and $\mathcal O_W((\alpha+2)C_0+f)$. In each case, $L_2$ is determined by \eqref{eq.irregular.splitting}. Arguing as when we supposed $X$ regular, we see that, in the first four cases, the coefficient of $C_0$ or $f$ in the linear equivalence class of $D_2$ is negative, so $D_2$ cannot effective, which is a contradiction. Likewise, in the fifth case, we get $\alpha=1$ and in the last case, $m=1$, which also contradicts our hypothesis $\alpha \ge 2$. Therefore $X$ cannot be irregular either and, indeed, covers as in the statement cannot exist.
 \end{proof}

\subsection{Covers of degree $6$}\label{section.degree.6.rational} 

\begin{theorem}\label{thm.P2.Z6}
Let $W=\mathbb P^2$, $\mathcal{O}_W(1)=\mathcal{O}_{\mathbb{P}^2}(1)$ and $G= \mathbb{Z}_6$.

\begin{enumerate}
    \item If $\pi: X \longrightarrow W$ is an abelian canonical cover with group $G$, then,
    \begin{enumerate}
   \item  $X$ is singular;
        \item  the mildest possible set of singular points of such a surface $X$ consists of $4$ singularities of type $A_1$ and $4$ singularities of type $A_2$; this happens if $D_1, D_2, D_3$ below are smooth and meet one another transversally.
    \end{enumerate}

    \smallskip
    \item[] Moreover, 
    after possibly relabeling $D_{b}$ by  an automorphism of $G$ and $L_{a}$ 
    by the inverse of its dual automorphism on $G^*$, the building data of $\pi$ is the following:
    \begin{enumerate}
        \item[(i)] $L_1=L_2=L_3=L_4=\mathcal{O}_{\mathbb{P}^2}(2)$; $L_5 = \mathcal{O}_{\mathbb{P}^2}(4)$; 
        \item[(ii)] $D_4=D_5=0$;
        \item[(iii)] $D_1, D_2, D_3$ are conics;  
        \item[(iv)] $D_1, D_2, D_3$ are reduced divisors without common components. 
    \end{enumerate}

     \smallskip
 \noindent
The $L_{a}$ and $D_{b}$ in (i), (ii) and (iii) satisfy \eqref{relation.ring.structure}. 
\smallskip
  
\smallskip
\item Conversely, let $\{L_{a},D_{b}\}$ $(a,b= 1, \dots, 5)$ be as in (i), (ii), (iii) above. If   $(D_1, D_2, D_3)$ is a general element of $|\mathcal{O}_{\mathbb{P}^2}(2)| \times |\mathcal{O}_{\mathbb{P}^2}(2)| \times |\mathcal{O}_{\mathbb{P}^2}(2)|$, then $\{L_{a},D_{b}\}$ satisfies (iv) above and \eqref{relation.ring.structure}. Let   $\pi: X \longrightarrow W$ be the abelian cover with group  $G$, building data $\{L_{a},D_{b}\}$. Then $\pi$ is a canonical cover and  $X$ has canonical singularities; precisely, $X$ has exactly these singular points: $4$ singularities of type $A_1$ and $4$ singularities of type $A_2$.
\end{enumerate}
\end{theorem}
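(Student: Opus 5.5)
Here is the plan. By Proposition~\ref{prop.split.minimal.degree} (1), $\pi_*\mathcal O_X=\mathcal O\oplus\mathcal O(-2)^{\oplus 4}\oplus\mathcal O(-4)$. Since $\mathrm{Pic}\,\mathbb P^2=\mathbb Z$ has no torsion, Theorem~\ref{thm.Z6.alg.str} (1) applies. After relabeling by an automorphism of $G$, it gives $L_5=\omega_W^*(1)=\mathcal O(4)$ and $D_4=D_5=0$. The remaining $L_1,\dots,L_4$ are then all $\mathcal O(2)$, which proves (i) and (ii).

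For (iii) I would read off the classes from the relations of Proposition~\ref{prop.alg.struct.cyclic6} with $D_4=D_5=0$:
\begin{itemize}
\item from \eqref{eq.alg.str.Z6.L_2L_2}, $L_2^2=L_4(D_2)$, so $D_2\sim 2$;
\item from \eqref{eq.alg.str.Z6.L_1L_1}, $L_1^2=L_2(D_3)$, so $D_3\sim 2$;
\item from \eqref{eq.alg.str.Z6.L_4L_4}, $L_4^2=L_2(D_1)$, so $D_1\sim 2$.
\end{itemize}
As a consistency check, \eqref{eq.alg.str.Z6.L_1L_5} gives $\deg(D_1+D_2+D_3)=6$. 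Part (iv) is Theorem~\ref{thm.Z6.alg.str} (1) (iii). The relations \eqref{relation.ring.structure} are then verified degree by degree; on $\mathbb P^2$ each relation reduces to an identity of integers.

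For (1)(a), suppose $X$ were smooth. Then Theorem~\ref{thm.Z6.alg.str} (iii') would force $D_1$ to be disjoint from $D_2$ (and from $D_3$). But two conics in $\mathbb P^2$ always meet, by Bézout, so $X$ is singular.

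For (1)(b) and the converse, I would analyse the singularities locally, using Pardini's local description of the cover over the points of $D_1\cap D_2$ and $D_1\cap D_3$. If $D_1,D_2,D_3$ are smooth and meet transversally, each of these two intersections consists of $4$ points, and $D_2\cap D_3$ causes no singularity by (iii') and \cite[Proposition 3.1]{Pardini}. At a point of $D_1\cap D_2$ the inertia groups are $H_1=G$ and $H_2\cong\mathbb Z_3$. I would take the normalization of the fibre product of the local cyclic covers: $\mathbb Z_6$ totally ramified along $x=0$, and $\mathbb Z_3$ along $y=0$. The result is a quotient of the $\mathbb Z_6\times\mathbb Z_3$ cover $u^6=x$, $v^3=y$ by the kernel $K\cong\mathbb Z_3$ of the map to $G$. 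I expect $K$ to act as $\frac13(1,1)$ or $\frac13(1,2)$, and it must be $\frac13(1,2)$ to give an $A_2$ point; each such point of $W$ has a single preimage, which gives $4A_2$. At $D_1\cap D_3$, with $H_3\cong\mathbb Z_2$, the same computation gives $\frac12(1,1)=A_1$ and $4A_1$.

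Identifying these cyclic quotient singularities, i.e.\ determining the exact weights of the kernel action and the number of preimages, is the main obstacle. I would settle it by computing the characters explicitly in Notation~\ref{notation.building.data}. Minimality ("mildest possible") also has to be justified. Any $X$ has points over $D_1\cap D_2$ and $D_1\cap D_3$, and nontransversal intersections or singular $D_i$ only worsen the singularities. So the transversal configuration, with exactly $4$ intersection points in each case, is the mildest.

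For the converse, a general triple of conics is smooth and pairwise transversal by Bertini, and has no common components, so (iv) holds. The singularities found above are rational double points, hence Gorenstein. Theorem~\ref{thm.Z6.alg.str} (2)(a) then shows that $\pi$ is canonical; condition (v) holds because $h^0(\mathcal O(-3)\otimes\mathcal O(2))=0$.
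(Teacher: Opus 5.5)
Your proposal follows the paper's proof step for step: Proposition~\ref{prop.split.minimal.degree}~(1) and Theorem~\ref{thm.Z6.alg.str}~(1) give (i)--(iv), B\'ezout together with (iii') gives singularity, and Theorem~\ref{thm.Z6.alg.str}~(2)(a) gives the converse; the only difference is that the paper reads the singularity types directly off \cite[Proposition 3.3]{Pardini}, while you redo that local computation by hand. That computation closes as you hoped: the kernel of $H_1\times H_2\to G$ is generated by $(\bar 4,\bar 2)$, which acts by $(\psi_1(\bar 4),\psi_2(\bar 2))=(\zeta^2,\zeta)$ with $\zeta=e^{2\pi i/3}$, giving $\frac13(1,2)=A_2$, and for $H_1\times H_3$ the generator $(\bar 3,\bar 3)$ acts by $(-1,-1)$, giving $A_1$ (had $D_4$ survived instead of $D_2$, then $\psi_4(\bar 2)=\zeta^2$ would give the non-Gorenstein $\frac13(1,1)$, so it is precisely $D_4=0$ that makes the argument work).
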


\begin{proof}

We first prove (1). 
Part (i) of (1) follows from  Proposition~\ref{prop.split.minimal.degree} (1), since we can assume, by Theorem~\ref{thm.Z6.alg.str} (1) (i), that $L_{5}=\omega_W^*(1)= \mathcal O_{\mathbb P^2}(4)$. Theorem~\ref{thm.Z6.alg.str} (1) (ii) then says that $D_{4}= D_{5} =0$. This together with \eqref{eq.alg.str.Z6.L_1L_1}, \eqref{eq.alg.str.Z6.L_1L_2}, \eqref{eq.alg.str.Z6.L_1L_5}, implies (1) (iii). Since $D_{1}, D_{2}, D_3$ are conics, $D_1$ meets both $D_2$ and $D_3$, so $X$ is singular by Theorem \ref{thm.Z6.alg.str} (1) (iii'). The mildest possible singularities for $X$ happen when $D_1, D_2, D_3$ are smooth and meet one another transversally. In such a case, $D_1$ and $D_2$ (which are branch divisors that correspond, respectively, to $H_1$ and $H_2$) meet at $4$ points;  by \cite[Proposition 3.3]{Pardini}, these points originate $4$ singularities of type $A_2$ in $X$. Likewise, $D_1$ and $D_3$ (which are branch divisors that correspond, respectively, to $H_1$ and $H_3$) meet at $4$ points, that originate $4$ singularities of type $A_1$ in $X$. \cite[Proposition 3.1]{Pardini} implies that $X$ is smooth away from these $8$ points. This completes the proof of (1) (b). 
 
    \smallskip
    \noindent Now we prove (2). If 
    $\{L_a,D_b\}$ satisfies (1) (i), (ii), (iii), then it also satisfies \eqref{relation.ring.structure}. By Theorem~\ref{thm.Pardini}, there is an abelian cover $\pi: X \longrightarrow W$ associated to $\{L_a,D_b\}$. If  $(D_{1}, D_{2}, D_{3})$ is general in $|\mathcal O_{\mathbb{P}^2}(2)| \times |\mathcal O_{\mathbb{P}^2}(2)| \times 
    |\mathcal O_{\mathbb{P}^2}(2)|$, then $D_1, D_2, D_3$ are smooth and meet one another transversally, so they satisfy (1) (iv) and, as argued in the proof of (1) (b), the singular points of $X$ are exactly $4$ singularities of type $A_1$ and $4$ singularities of type $A_2$. These are canonical singularities, so $X$ is locally Gorenstein. Finally, since  $\{L_a,D_b\}$ satisfies (1) (i), (ii), (iii), Theorem~\ref{thm.Z6.alg.str} (1) (v) holds. Then Theorem~\ref{thm.Z6.alg.str} (2) implies that $\pi$ is a canonical abelian cover with group $G$.
\end{proof}

\begin{theorem}\label{thm.Hirzebruch.Z6.regular}
  Let $W$ be a Hirzebruch surface $\mathbb{F}_e$ and let $G= \mathbb{Z}_6$.
\begin{enumerate}
    \item
    If $\pi: X \longrightarrow W$  is an abelian canonical cover with group $G$ and $X$ is regular, then 
    \begin{enumerate}
        \item $e=0$ or $e=1$;
        \item  $\alpha=1$ (i.e., $W$ is embedded by $|C_0+mf|$, hence, a rational normal scroll);
        \item  $X$ is singular; 
    \end{enumerate}
        \smallskip
 
 \item[] Moreover, after possibly relabeling $D_{b}$ by  an automorphism of $G$ and $L_{a}$ by the inverse of its dual automorphism on $G^*$ 
    and after possibly swapping $C_0$ and $f$ when $W=\mathbb{F}_0$, the building data of $\pi$ is the following:
 \begin{enumerate}
 \item[(i)]  $L_{5}=\mathcal O_W(3C_0+(m+e+2)f)$;
 \item[(ii)] $D_4=D_5=0$; 
\item[(iii)] one of these two cases happens: 
\item[] \underline{\emph{Case 1:}} $L_1=L_3=\mathcal{O}_W(C_0+(m+1)f)$, $L_2=L_4=\mathcal O_W(2C_0+(e+1)f)$; $D_1 \sim D_2 \sim 2C_0+(e+1)f$, $D_3 \sim (2m-e+1)f$; 

\item[] \underline{\emph{Case 2:}} $e=0$ and $m=1$; $L_1=L_2=\mathcal{O}_W(2C_0+f)$, $L_3=L_4=\mathcal O_W(C_0+2f)$; $D_1 \sim 3f$, $D_2 \sim 3C_0$, $D_3 \sim 2C_0+f$; 

 \item[(iv)] $D_{1}, D_{2}, D_{3}$ are reduced divisors without common components; 

 \item[(v)] the mildest possible singularities for $X$ are
 $4m-2e+2$ singularities of type $A_1$ and $4$ singularities of type $A_2$ in Case 1 above and $6$ singularities of type $A_1$ and $9$ singularities of type $A_2$ in Case 2 above. 
 \end{enumerate}
 
 \smallskip
 \noindent
The $L_{a}$ and $D_{b}$ in (i), (ii) and (iii) satisfy \eqref{relation.ring.structure}. 
\smallskip

\item Conversely, assume (1) (a) and (1) (b) and let $\{L_{a},D_{b}\}$ $(a,b= 1, \dots, 5)$ be as in (i), (ii), (iii) above. If   $(D_1, D_2, D_3)$ is a general element in  $|D_{1}| \times |D_{2}| \times |D_{3}|$, then (iv) above holds and $\{L_{a},D_{b}\}$ satisfies \eqref{relation.ring.structure}. Let  $\pi: X \longrightarrow W$ be the abelian cover with group  $G$ and building data $\{L_{a},D_{b}\}$. Then $\pi$ is canonical and   $X$ is regular and has canonical singularities; precisely, $X$ has exactly these singular points: 
 \begin{enumerate}
     \item[(a)]  In Case 1 of (iii) above, $4m-2e+2$ singularities of type $A_1$ and $4$ singularities of type $A_2$. 
       \item[(b)]  In Case 2 of (iii) above,  $6$ singularities of type $A_1$ and $9$ singularities of type $A_2$. 
 \end{enumerate}
 \end{enumerate}
 \end{theorem}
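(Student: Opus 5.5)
By Theorem~\ref{thm.Z6.alg.str} (1) (i) we may assume $L_5=\omega_W^*(1)=\mathcal O_W((\alpha+2)C_0+(m+e+2)f)$; this is legitimate because $\mathrm{Pic}\,W$ is torsion free. Since $X$ is regular, Proposition~\ref{prop.split.minimal.degree} (2) (a) applies with $n=6$. Hence among $L_1,\dots,L_4$ exactly two are $\mathcal O_W(C_0+(m+1)f)$ and the other two are $\mathcal O_W((\alpha+1)C_0+(e+1)f)$, where $e=0$ if $\alpha\ge 2$. Theorem~\ref{thm.Z6.alg.str} (1) (ii) gives $D_4=D_5=0$. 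With this, the relations of Proposition~\ref{prop.alg.struct.cyclic6} reduce to the following:
\[
2L_1=L_2+D_3,\quad L_1+L_2=L_3+D_2,\quad L_1+L_4=L_5,\quad L_2+L_3=L_5,\quad 2L_2=L_4+D_2,
\]
\[
L_1+L_5=D_1+D_2+D_3,\quad 2L_3=D_1+D_3,\quad 2L_4=L_2+D_1 .
\]
(Here we write line bundles additively.)

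From $L_1+L_4=L_5=L_2+L_3$ and self-duality, $L_4$ is the partner of $L_1$ and $L_3$ is the partner of $L_2$. I will case-split on the type of $L_1$, say $A=C_0+(m+1)f$ or $B=(\alpha+1)C_0+(e+1)f$, and similarly on the type of $L_2$.

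If $L_1=L_2$, then $D_3\sim L_1$ and $D_2\sim 2L_1-L_3$. For $L_1=A$ this gives $L_3=A$, $L_4=B$, and $D_2\sim 2A-A\sim$ wait: more carefully, $D_2=L_1+L_2-L_3=A$ and $D_1=2L_4-L_2=2B-A$. Now $2B-A=(2\alpha+1)C_0+(2e+1-m)f$, and its effectivity forces restrictions on $m$ (for $\alpha=1,e=0$ we need $m\le1$, etc.). For $L_1=B$ we get $D_1=2A-B=(1-\alpha)C_0+(2m+1-e)f$, which forces $\alpha=1$. In that case we get Case 1, after normalizing via the automorphism $\chi_1\leftrightarrow\chi_5$ (i.e.\ relabeling), or possibly the swap of $C_0$ and $f$.

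If $L_1\neq L_2$, we get $D_3=2L_1-L_2$. If $L_1=A$, $L_2=B$, then $D_3=2A-B$, which forces $\alpha=1$, and $D_3\sim(2m-e+1)f$; the remaining classes $D_1\sim D_2\sim 2C_0+(e+1)f$ follow, giving Case 1. If $L_1=B$, $L_2=A$, then $D_3=2B-A=(2\alpha+1)C_0+(2e+1-m)f$, which requires $m\le 2e+1$. Combined with $m\ge\alpha e+1$, with the condition $D_1=2L_4-L_2\ge0$, and with $\alpha$-effectivity elsewhere, this should force $\alpha=1$, $e=0$, $m=1$, giving Case 2 (where $B=2C_0+f$, $A=C_0+2f$); the case $e=1$, $m\le 3$ needs to be checked and excluded.

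Every sub-case must also be checked for consistency with all twelve relations, and subcases whose $D$ classes fail to be effective must be discarded. The conclusion $e\le1$ should come from $D_1\sim 2C_0+(e+1)f$ having $D_1\cdot C_0=1-e\ge0$ unless $C_0$ is a fixed component; if it were, $D_1$ and $D_2$ would share $C_0$, contradicting reducedness (Theorem~\ref{thm.Z6.alg.str} (1) (iii)). I expect this case bookkeeping, in particular excluding the stray subcases with $e=1$ or $\alpha\ge2$, to be the main obstacle.

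Singularity of $X$ follows from (1) (iii') of Theorem~\ref{thm.Z6.alg.str}: $D_1$ must meet $D_2$ or $D_3$ by intersection positivity. In Case 1, $D_1\cdot D_3=2(2m-e+1)>0$; in Case 2, $D_1\cdot D_2=9$. For the mildest singularities, when the divisors are smooth and transverse, \cite[Proposition 3.3]{Pardini} gives an $A_2$ point at each point of $D_1\cap D_2$ and an $A_1$ point at each point of $D_1\cap D_3$, and $X$ is smooth elsewhere by \cite[Proposition 3.1]{Pardini}. The counts are: in Case 1, $D_1\cdot D_2=(2C_0+(e+1)f)^2=-4e+4(e+1)=4$ and $D_1\cdot D_3=4m-2e+2$; in Case 2, $D_1\cdot D_2=9$ and $D_1\cdot D_3=6$.

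For the converse, Bertini gives smooth transverse general members; base-point-freeness holds because $e\le1$. The singularities are then $A_n$, hence Gorenstein. Condition (v) of Theorem~\ref{thm.Z6.alg.str} is a direct cohomology check, and Theorem~\ref{thm.Z6.alg.str} (2) (a) makes $\pi$ canonical. Regularity follows from Proposition~\ref{prop.split.minimal.degree} (2) (c).
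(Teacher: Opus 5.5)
Your overall plan matches the paper's: normalize $L_5=\omega_W^*(1)$ and $D_4=D_5=0$ via Theorem~\ref{thm.Z6.alg.str}, use Proposition~\ref{prop.split.minimal.degree} (2) (a), note that $L_4$ and $L_3$ are the partners of $L_1$ and $L_2$, split into four cases, then use Pardini's Propositions 3.1 and 3.3 for the singularities and Bertini for the converse. Your reduced relations, intersection numbers, $e\le 1$ argument in Case 1 and converse are fine.

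The four-case analysis, however, is the real content of (1) (a), (b), (iii), and it contains errors.
\begin{itemize}
\item If $L_1=L_2=A$, the partner relation gives $L_3=L_4=B$. You wrote $L_3=A$, which would give three copies of $A$. So in fact $D_2\sim 2A-B=(1-\alpha)C_0+(2m+1-e)f$, which forces $\alpha=1$, and $D_1\sim 2B-A$.
\item If $L_1=L_2=B$, you cannot normalize by $\chi_1\leftrightarrow\chi_5$. Indeed $\mathrm{Aut}(\mathbb Z_6)=\{\pm 1\}$, and $x\mapsto -x$ moves $\omega_W^*(1)$ from $L_5$ to $L_1$, destroying (i). This subcase is not Case 1: it has $D_3\sim B$, $D_1\sim 2A-B$, $D_2\sim 2B-A$, and when it occurs at all it is exactly Case 2, with no swap.
\item $(L_1,L_2)=(B,A)$ gives $D_1\sim D_2\sim A$ and $D_3\sim 2B-A$. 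It lands in Case 1 with $e=0$, $m=1$ after swapping $C_0$ and $f$, not in Case 2.
\item $(A,A)$ lands in Case 2 after the swap.
\end{itemize}

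The step you defer as the ``main obstacle'' is the missing idea, and effectivity cannot supply it. With $\alpha=1$, three of the four subcases, namely $(A,A)$, $(B,A)$, $(B,B)$, produce a branch divisor (respectively $D_1$, $D_3$, $D_2$) with
\[
D\sim 2B-A=3C_0+(2e+1-m)f .
\]
This class is effective whenever $e+1\le m\le 2e+1$, which is a nonempty range for every $e\ge 1$, not only $e=1$. So no effectivity condition excludes these subcases.

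What excludes them is reducedness. One has $(D-C_0)\cdot C_0=1-m$ (and then $D\cdot C_0<0$ too). Hence for $m\ge 2$, $C_0$ occurs with multiplicity at least $2$ in every member of $|D|$, contradicting Theorem~\ref{thm.Z6.alg.str} (1) (iii). Thus $m=1$, and $m\ge e+1$ gives $e=0$.

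For $\alpha\ge 2$ (where $e=0$ and $m\ge\alpha$), effectivity alone does kill all four subcases. There $2A-B$ has negative $C_0$-coefficient and $2B-A$ has negative $f$-coefficient.

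Without the reducedness argument, (1) (a), (b) and the two-case list in (iii) are not established.
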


 \begin{proof}
 First we prove (1). 
    Theorem~\ref{thm.Z6.alg.str} (1) 
    (ii) imply 
    (1) (ii). Let us suppose now $\alpha \ge 2$.  
    The splitting \eqref{eq.regular.splitting.a>1} and  Theorem~\ref{thm.Z6.alg.str} (1) (i)   imply there are four possible choices for $L_1$ and $L_3$, namely, $L_{1}$ equals $\mathcal O_W(C_0+(m+1)f)$ or $\mathcal O_W((\alpha+1)C_0+f)$ and so does $L_3$.
    If $L_{1} = L_{3} = \mathcal{O}_{W}((\alpha+1)C_{0} + f)$,
   then  \eqref{eq.alg.str.Z6.L_1L_1}, \eqref{eq.alg.str.Z6.L_2L_3}, (1) (ii) and Theorem~\ref{thm.Z6.alg.str} (1) (i)  imply $D_3 \sim (2a+1)C_0+(1-m)f$. Since $D_3$ is effective, $m=1$, but this  is not possible since $m \ge \alpha \ge 2$ by 
   Notation~\ref{notation.Hirzebruch}. For the other three choices of $L_1$ and $L_3$, \eqref{eq.alg.str.Z6.L_1L_1}, \eqref{eq.alg.str.Z6.L_1L_2}, \eqref{eq.alg.str.Z6.L_1L_4}, \eqref{eq.alg.str.Z6.L_2L_3}, \eqref{eq.alg.str.Z6.L_4L_4}, (1) (ii) and Theorem~\ref{thm.Z6.alg.str} (1) (i) imply that one of $D_1$, $D_2$, $D_3$ is linearly equivalent to $(1-\alpha)C_0+(2m+1)f$. Since $D_1$, $D_2$, $D_3$ are effective, $\alpha=1$, in contradiction with our assumption $\alpha \ge 2$. This completes the proof of (1) (b) and, by Theorem~\ref{thm.Z6.alg.str} (1) (i), we have (1) (i). 

   \smallskip
   \noindent Then $\alpha=1$ and, as above, by \eqref{eq.regular.splitting} and (1) (i), there are four choices for $L_1$ and $L_3$, namely, $L_{1}$ equals $\mathcal O_W(C_0+(m+1)f)$ or $\mathcal O_W(2C_0+f)$ and so does $L_3$. We split the proof in four cases accordingly.

   \smallskip
   \noindent \underline{Case 1}: $L_{1} = L_{3} = \mathcal{O}_{W}(C_{0} + (m+1)f)$. 

   \noindent In this case \eqref{eq.regular.splitting} implies $L_2=L_4=\mathcal O_W(2C_0+(e+1)f)$. Arguing as above, $D_1 \sim D_2 \sim 2C_0+(e+1)f$ and $D_3=(2m-e+1)f$, so we get Case 1 of Part (1) (iii) of the statement. Moreover, note that $$(2C_0 + (e+1)f) \cdot C_0 = 1-e$$ is negative if $e \ge 2$. Thus, if $e \ge 2$, then $C_0$ would be in the fixed part of both $|D_1|$ and $|D_2|$, so $D_1+D_2$ would not be reduced, and this would contradict Theorem~\ref{thm.Z6.alg.str} (1) (iii). This proves (1) (a) in this case. 

   \smallskip
   \noindent \underline{Case 2}: $L_{1} = \mathcal O_W(2C_0+(e+1)f)$ and $L_{3} =  \mathcal{O}_{W}(C_{0} + (m+1)f)$.

   \noindent In this case, \eqref{eq.alg.str.Z6.L_1L_4},  \eqref{eq.alg.str.Z6.L_2L_3} and (1) (i), (ii) imply $L_2=\mathcal O_W(2C_0+(e+1)f)$ and $L_4=\mathcal{O}_{W}(C_{0} + (m+1)f)$. Arguing as above, $D_2 \sim 3C_0 + (2e-m+1)f$.  Since 
   $$(D_2-C_0) \cdot C_0 = (2C_0 + (2e-m+1) f) \cdot C_0 = 1-m,$$
   if $m > 1$, 
   then $2C_0$ would be in the fixed part of $|D_2|$, contradicting Theorem~\ref{thm.Z6.alg.str} (1) (iii). Then $m=1$. Since $m \ge e+1$ (see Notation~\ref{notation.Hirzebruch} (2)), $e=0$. Then 
   we also have  $D_1 \sim 3f$ and $D_3 \sim 2C_0+f$, so we get Case 2 of Part (1) (iii) of the statement, and, in particular, (1) (a) also holds in this case.

   \smallskip
   \noindent \underline{Case 3}: $L_1=L_3= \mathcal{O}_{W}(2C_{0} + (e+1)f)$.

   \noindent In this case, \eqref{eq.regular.splitting} implies $L_{2} = L_{4} = \mathcal{O}_{W}(C_{0} + (m+1)f)$. Theorem~\ref{thm.Z6.alg.str} (1) (iii) implies $D_3$ is reduced, so arguing as in Case 2, we see $m=1$ and $e=0$ and that $D_1 \sim D_2 \sim C_0+2f$ and $D_3 \sim 3C_0$. After swapping $C_0$ and $f$, this case is Case 1 when $m=1$ and $e=0$.

   \smallskip
   \noindent \underline{Case 4}: $L_1= \mathcal{O}_{W}(C_{0} + (m+1)f)$ and $L_{3} = \mathcal{O}_{W}(2C_{0} + (e+1)f)$. 

   \noindent In this case \eqref{eq.alg.str.Z6.L_2L_3}, \eqref{eq.alg.str.Z6.L_1L_4} and  (1) (i), (ii)  imply $L_2=\mathcal{O}_{W}(C_{0} + (m+1)f)$ and $L_4=\mathcal{O}_{W}(2C_{0} + (e+1)f)$. Arguing with $D_1$ as we did with $D_2$ in Case 2, we conclude that $m=1$ and $e=0$. In addition, $D_1 \sim 3C_0$, $D_2 \sim 3f$ and $D_3 \sim C_0+2f$. After swapping $C_0$ and $f$, this is Case 2. This completes the proof of (1) (iii).

   \smallskip
   \noindent The set $\{L_a,D_b\}$ satisfies \eqref{relation.ring.structure} by Theorem~\ref{thm.Z6.alg.str} (iv) and Theorem~\ref{thm.Z6.alg.str} (1) (iii) implies (1) (iv).
   Since $D_1$ meets both $D_2$ and $D_3$, Theorem~\ref{thm.Z6.alg.str} (1) (iii') implies $X$ is singular. The mildest singularities of such a surface $X$ occur when $D_1$, $D_2$ and $D_3$ are smooth and meet transversally. When this happens, 
   \begin{equation*}
\begin{matrix}
    D_1 \cdot D_2  =  (2C_0+(e+1)f)^2  = 4 \\
    D_1 \cdot D_3  =  (2C_0+(e+1)f) \cdot (2m-e+1)f  =  4m-2e+2
\end{matrix}
   \end{equation*}
   in Case 1 and 
      \begin{equation*}
\begin{matrix}
    D_1 \cdot D_2  =  (3f) \cdot (3C_0)  = 9 \\
    D_1 \cdot D_3  =  (3f) \cdot (2C_0+f)   =  6
\end{matrix}
  \end{equation*}
  in Case 2. Then we obtain Part (1) (v) of the statement arguing  as in the proof of Theorem~\ref{thm.P2.Z6} (1) (b).

\smallskip
\noindent To prove (2), note that, by (1) (iii), in Case 1, the divisors $D_1$ and $D_2$ and, in Case 2, the divisor $D_1$, are big and base-point-free, so, by Bertini, for a general element $(D_1, D_2, D_3)$  in $|D_1| \times |D_2| \times |D_3|$, the divisors $D_1$, $D_2$ and $D_3$ are smooth and meet one another transversally. Then the proof mimics the proof of Theorem~\ref{thm.P2.Z6} (2). 
\end{proof}

 \begin{theorem}\label{thm.Hirzebruch.Z6.irregular}
Let $W$ be  the Hirzebruch surface $\mathbb{F}_0$ and let $G=\mathbb Z_6$. 
\begin{enumerate}
    \item
    If $\pi: X \longrightarrow W$  is an abelian canonical cover with group $G$ and $X$ is irregular, then 
    \begin{enumerate}
        \item $W$ is embedded as a rational normal scroll (i.e., $\alpha=1$); 
        \item $X$ is singular, with canonical singularities; precisely, the singular points of $X$ are exactly $4m+8$ singularities of type $A_1$; 
        \item The irregularity of $X$ is $q(x)=2$.
    \end{enumerate}
    \smallskip
    
    \item[] After possibly relabeling $D_{b}$ by  an automorphism of $G$ and $L_{a}$ by the inverse of its dual automorphism on $G^*$ 
    and after possibly swapping $C_0$ and $f$, the building data of $\pi$ is the following:
 \begin{enumerate}
 \item[(i)] $L_{1}=L_{3}=\mathcal O_W(C_0+(m+2)f)$, 
 \item [] $L_{2}=L_{4}=\mathcal O_W(2C_0)$;
 \item[]  $L_{5}=\mathcal O_W(3C_0+(m+2)f)$; 
 
 \item[(ii)] $D_{4}=D_{5}=0$;
 
 \item[(iii)]  $D_{1}  \sim D_{2} \sim 2C_0$, $D_{3} \sim (2m+4)f$; 

  \item[(iv)] The divisor $D_{1}+ D_{2}$ consists of $4$ distinct lines linearly equivalent to $C_0$ and the divisor $D_3$  consists of $2m+4$ distinct lines linearly equivalent to $f$.
\end{enumerate}

\smallskip
 \noindent
The $L_{a}$ and $D_{b}$ in (i), (ii) and (iii) satisfy \eqref{relation.ring.structure}.

\smallskip
 \item Conversely, assume (1) (a) and let $\{L_{a},D_{b}\}$ $(a,b= 1, \dots, 5)$ be as in (i), (ii), (iii) above. Then $\{L_{a},D_{b}\}$ satisfies \eqref{relation.ring.structure} and, if $D_1, D_2, D_3$ satisfies (iv) above, let  $\pi: X \longrightarrow W$ be the abelian cover with group  $G$ and building data $\{L_{a},D_{b}\}$. Then, $\pi$ is canonical, and  $X$ satisfies (1) (b) and (1) (c).
\end{enumerate}
 \end{theorem}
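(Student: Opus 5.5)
By Theorem~\ref{thm.Z6.alg.str} (1) (i), (ii) we may assume $L_5=\omega_W^*(1)=\mathcal O_W((\alpha+2)C_0+(m+2)f)$ and $D_4=D_5=0$. Since $X$ is irregular, the splitting \eqref{eq.irregular.splitting} holds with $\frac{n-2}{2}=2$. Hence each of $L_1,\dots,L_4$ lies in the list
$$\mathcal O_W((m+1)f),\ \mathcal O_W((m+2)f),\ \mathcal O_W(C_0+(m+1)f),\ \mathcal O_W(C_0+(m+2)f)$$
or in the list of their partners $\mathcal O_W((\alpha+2)C_0+f)$, $\mathcal O_W((\alpha+2)C_0)$, $\mathcal O_W((\alpha+1)C_0+f)$, $\mathcal O_W((\alpha+1)C_0)$. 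The pairing is given by $L_{\sigma(i)}=L_i^*\otimes L_5$. From \eqref{eq.alg.str.Z6.L_1L_4} and \eqref{eq.alg.str.Z6.L_2L_3} with $D_4=D_5=0$ we get $L_1\otimes L_4=L_5$ and $L_2\otimes L_3=L_5$, so $\sigma$ pairs $1\leftrightarrow 4$ and $2\leftrightarrow 3$. It therefore suffices to choose $L_1$ and $L_3$ from the eight possibilities each. Then $L_4=L_5\otimes L_1^*$ and $L_2=L_5\otimes L_3^*$.

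Next, the $D_b$ are obtained from the remaining relations of Proposition~\ref{prop.alg.struct.cyclic6} with $D_4=D_5=0$:
\begin{align*}
D_3&\sim 2L_1-L_2 \quad\text{by } \eqref{eq.alg.str.Z6.L_1L_1},\\
D_2&\sim L_1+L_2-L_3 \quad\text{by } \eqref{eq.alg.str.Z6.L_1L_2},\\
D_1&\sim L_1+L_5-D_2-D_3 \quad\text{by } \eqref{eq.alg.str.Z6.L_1L_5}.
\end{align*}
Consistency with the other relations, for instance \eqref{eq.alg.str.Z6.L_3L_3}, \eqref{eq.alg.str.Z6.L_4L_4} and \eqref{eq.alg.str.Z6.L_5L_5}, prunes the cases further. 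I then run through the $8\times 8$ pairs $(L_1,L_3)$ and discard those where some $D_b$ has a negative coefficient in $C_0$ or $f$. I also discard those where $D_1+D_2+D_3$ is forced to be non-reduced, which contradicts Theorem~\ref{thm.Z6.alg.str} (1) (iii). On $\mathbb F_0$ this happens, for example, when a class is $kC_0$ but with a multiple fixed component, or when two of the $D_b$ would have to share a component. I also use $m\ge\alpha\ge 1$ from Notation~\ref{notation.Hirzebruch}, as in the proofs of Theorems~\ref{thm.Z4.non.scroll.regular} and \ref{thm.Hirzebruch.Z6.regular}. The expected outcome is that, up to the automorphism of $G$ swapping the generators and up to swapping $C_0$ and $f$, only $L_1=L_3=\mathcal O_W(C_0+(m+2)f)$ with $\alpha=1$ survives. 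Indeed, the classes come out as $D_3\sim(2m+4)f$, $D_2\sim 2C_0$ and $D_1\sim 2C_0$, and this forces $\alpha=1$, since otherwise a coefficient $(1-\alpha)$ appears. This gives (1)(a) and (i)--(iii). Also $n_1'=2$, so $q(X)=h^1$ of the two copies of $\mathcal O_W(-2C_0)$, which is $2$ by the computation in Proposition~\ref{prop.split.minimal.degree} (2) (c). This gives (1)(c).

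For (iv): on $\mathbb F_0$, every member of $|2C_0|$ and of $|(2m+4)f|$ is a union of fibres of the respective rulings. Reducedness of $D_1+D_2+D_3$ (Theorem~\ref{thm.Z6.alg.str} (1) (iii)) then forces four distinct lines in $C_0$ and $2m+4$ distinct lines in $f$. These lines are all smooth. The $D_1$-lines and $D_2$-lines are disjoint, and each meets $D_3$ transversally. Thus the only singular points of $X$ come from $D_1\cap D_3$, which consists of $2(2m+4)=4m+8$ points. The pair $H_1,H_3$ gives $A_1$ points by \cite[Proposition 3.3]{Pardini}. On the other hand, $D_2\cap D_3$ is harmless by \cite[Proposition 3.1]{Pardini}, since $H_2,H_3$ satisfy iii) a). This proves (1)(b). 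For (2), I check the relations of (1)(iv) of Theorem~\ref{thm.Z6.alg.str} directly, together with the vanishing of $h^0(\omega_W\otimes L_a)$ for $a\ne5$. Since $A_1$ singularities are Gorenstein, Theorem~\ref{thm.Z6.alg.str} (2)(a) applies, and Proposition~\ref{prop.split.minimal.degree} (2) (c) gives irregularity.

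The main obstacle is the case elimination in the first step. Many pairs yield effective-looking classes such as $3C_0$ or $(2\alpha+2)C_0+2f$, and these must be excluded by the subtler arguments: non-reducedness, incompatibility between the different relations of Proposition~\ref{prop.alg.struct.cyclic6}, or the irregularity constraint $n_0+n_0'+n_1'>0$. I would organize the elimination by $n_0,n_0',n_1,n_1'$ with $n_0+n_0'+n_1+n_1'=2$. Then I would test each configuration against all twelve relations, not only the three used to define the $D_b$.
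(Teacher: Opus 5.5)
Your proposal is correct and follows essentially the paper's route: normalize via Theorem~\ref{thm.Z6.alg.str}, enumerate the line bundles allowed by \eqref{eq.irregular.splitting}, prune by effectivity of $D_1,D_2,D_3$ and by $n_0+n_0'+n_1'>0$, read off $\alpha=1$ from $D_3\sim(1-\alpha)C_0+(2m+4)f$, and conclude with Pardini's Propositions 3.1 and 3.3 and the cohomology of the splitting; the paper's nonemptiness tests on $L_1^{\otimes 5}\otimes L_5^*$, $L_1\otimes L_2^{\otimes 2}\otimes L_5^*$ and $L_1^{\otimes 2}\otimes L_3\otimes L_5^*$ are exactly effectivity of $2D_3+D_2$, $D_2$ and $D_3$. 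If you rewrite your formulas as $D_1\sim L_5+L_3-2L_1$, $D_2\sim L_1+L_5-2L_3$, $D_3\sim 2L_1+L_3-L_5$, then effectivity together with $n_0+n_0'+n_1'>0$ already finishes the elimination, because every other pair with all $D_b$ effective has $L_1,L_3\in\{\mathcal O_W(C_0+(m+1)f),\mathcal O_W((\alpha+1)C_0+f)\}$, i.e.\ gives a regular cover; so the reducedness arguments you anticipate are not needed, and consistency with the remaining relations is automatic once $L_2,L_4$ are defined through the involution.
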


 \begin{proof}
We first prove (1). By Theorem \ref{thm.Z6.alg.str} (1) (i)(ii), we can assume 
\begin{equation}\label{eq.L_5.omega}
 L_5=\omega_W^*(1)=\mathcal O_W((\alpha+2)C_0+(m+2)f)   
\end{equation}
 and we also get (1) (ii). Now we prove (1) (i). By Proposition \ref{prop.split.minimal.degree} (2) (b), $L_1, L_2$ should be equal to one or two of $8$ non trivial line bundles in \eqref{eq.irregular.splitting}, different from $\mathcal O_W((\alpha+2)C_0+(m+2)f)$. However, it is easy to rule out most of the possible combinations for $L_1$ and $L_2$, as we outline now. On the one hand, in the notation of Proposition~\ref{prop.split.minimal.degree} (2) (b), $n_0+n_0'+n_1' >0$. Therefore, if $L_{1}$ equal $\mathcal{O}_{W}(C_{0} + (m+1)f)$ or $\mathcal{O}_{W}((\alpha+1)C_{0}+f)$, then $L_{2}$ cannot be $\mathcal{O}_{W}(C_{0} +(m+1)f)$ or $
\mathcal{O}_{W}((\alpha+1)C_{0}+f)$ (recall that $L_3, L_4$ are determined from $L_1, L_2$ by \eqref{eq.alg.str.Z6.L_1L_4}  \eqref{eq.alg.str.Z6.L_2L_3}, \eqref{eq.L_5.omega} and (1) (ii)). On the other hand, \eqref{relation.ring.structure} implies the linear systems $|L_1^{\otimes 5}  \otimes L_5^*|$,  $|L_1 \otimes L_2^{\otimes 2}  \otimes L_5^*|$ and  $|L_1^{\otimes 2} \otimes L_3  \otimes L_5^*|$
are not empty. This rules out all the combinations of $L_1$ and $L_2$ for which the coefficient of $C_0$ or $f$ in the linear equivalence class of one of the line bundles $L_1^{\otimes 5}  \otimes L_5^*$,  $L_1 \otimes L_2^{\otimes 2}  \otimes L_5^*$ or $L_1^{\otimes 2} \otimes L_3  \otimes L_5^*$ is negative. 
Finally, using \eqref{eq.alg.str.Z6.L_1L_4}, \eqref{eq.alg.str.Z6.L_2L_3}, \eqref{eq.alg.str.Z6.L_4L_4}, \eqref{eq.L_5.omega} and (1) (ii) to compute $D_1$, we can rule out $8$ more combinations of $L_1, L_2$ for which $D_1$ is not effective. 
 At the end, there are only two possibilities left for $L_1, L_2$:

\smallskip
\noindent \underline{Case 1}: $L_{1} = \mathcal{O}_{W}(C_{0} + (m+2)f)$ and $L_{2} =  \mathcal{O}_{W}((\alpha+1)C_{0})$.

\noindent In this case, by \eqref{eq.alg.str.Z6.L_1L_1}, \eqref{eq.alg.str.Z6.L_1L_4}, \eqref{eq.alg.str.Z6.L_2L_3}, \eqref{eq.L_5.omega} and (1) (ii), we have $L_{3} = \mathcal{O}_{W}(C_{0} + (m+2)f)$, $L_{4} = \mathcal{O}_{W}((\alpha+1)C_{0})$ and  $D_{3} \sim (-\alpha+1)C_{0} +(2m+4)f$. Since $D_3$ is effective, $\alpha=1$, so, in this case (1) (a)  is  proved. In addition, \eqref{eq.alg.str.Z6.L_1L_2}, \eqref{eq.alg.str.Z6.L_4L_4} and (1) (ii) imply $D_1 \sim D_2 \sim 2C_0$. Thus, (1) (i), (iii) are proved in this case.

\smallskip
\noindent \underline{Case 2}: $L_{1} = \mathcal{O}_{W}((\alpha+2)C_{0} + f)$ and $L_{2} =  \mathcal{O}_{W}((m+1)f)$.

\noindent Arguing as in Case 1, $L_{3} = \mathcal{O}_{W}((\alpha+2)C_{0} + f)$, $L_{4} = \mathcal{O}_{W}((m+1)f)$ and $D_{3} \sim (2\alpha+4)C_{0} + (1-m)f$. Since $D_3$ is effective, $m=1$, so $\alpha=1$ by Notation~\ref{notation.Hirzebruch} (2). After swapping $C_0$ and $f$, this a special case of Case 1.

\smallskip
\noindent By Theorem~\ref{thm.Z6.alg.str} (1) (iii), $D_1+D_2$ and $D_3$ are reduced, so (1) (iv) follows from (1) (iii). Since $D_1$ and $D_3$ meet, Theorem~\ref{thm.Z6.alg.str} (1) (iii') implies $X$ is singular. Since $D_1, D_2, D_3$ are smooth, $D_1$ and $D_2$ are disjoint, $D_2$ and $D_3$ meet transversally and $D_1$  and $D_3$ also meet transversally at exactly $4m+8$ distinct points, the remaining of (1) (b) follows from \cite[Propositions 3.1, 3.3]{Pardini}. Finally, (1) (c) follows from (1) (i).

\smallskip

\noindent Now we prove (2). That $\{L_{a}, D_{b}\}$  satisfies \eqref{relation.ring.structure} is straightforward. Moreover, (1) (iv) implies Theorem~\ref{thm.Z6.alg.str} (1) (iii) and (1) (i) implies Theorem~\ref{thm.Z6.alg.str} (1) (v), so (2) follows from Theorem~\ref{thm.Z6.alg.str} (2) and the above arguments to prove (1) (b), (c).
 \end{proof}

\subsection{Covers  with group $\mathbf{Z}_2 \times \mathbf{Z}_2 \times  \mathbf{Z}_2$}\label{section.tridouble.rational}

\begin{theorem}\label{thm.tridouble.P2}
 Let $W=\mathbb P^2$, $\mathcal O_W(1)=\mathcal O_{\mathbb P^2}(1)$ and let $G=\mathbf{Z}_2 \times \mathbf{Z}_2 \times  \mathbf{Z}_2$.

 \smallskip
\begin{enumerate}
    \item
    If $\pi: X \longrightarrow W$  is an abelian canonical cover with group $G$, then,  after possibly relabeling $D_{b_1b_2b_3}$ by  an automorphism of $G$ and $L_{a_1a_2a_3}$ by the inverse of its dual automorphism on $G^*$:
 \begin{enumerate}
 \item[(i)] we may assume $L_{111}=\mathcal O_{\mathbb{P}^2}(4)$; 
 \item[] in this case:
     \item[(ii)] $L_{a_1a_2a_3}=\mathcal O_{\mathbb{P}^2}(2)$, for all $(a_1,a_2,a_3) \neq (0,0,0), (1,1,1)$; 
     \item[(iii)] $D_{110}=D_{101}=D_{011}=0$;  $D_{100}, D_{010}, D_{001}, D_{111}$ are conics;
     \item[(iv)] $D_{100}, D_{010}, D_{001}, D_{111}$  are  reduced  without common components.
      \end{enumerate}
        \smallskip
      \item[] The $L_{a_1a_2a_3}$ and $D_{b_1b_2b_3}$ in (i) to (iii)
    satisfy \eqref{eq.bidouble.alg.str}.

      \smallskip
      \noindent If $X$ is smooth, then we also have: 
     \begin{enumerate}  
     \item[(iv')] $D_{100}, D_{010}, D_{001}, D_{111}$ are smooth conics that meet pairwise transversally
     and such no three of them meet.  
 \end{enumerate}

 \noindent
 
 \smallskip

\item 
 \noindent Conversely,
let $\{L_{a_1a_2a_3}, D_{b_1b_2b_3}\}$ $(a_1, a_2, a_3, b_1, b_2, b_3 = 0,1, (a_1,a_2,a_3),$ $(
{b_1},b_2,b_3) \neq (0,0,0))$
as in (i) to (iii). Then, 
$\{L_{a_1a_2a_3}, D_{b_1b_2b_3}\}$ satisfies \eqref{eq.bidouble.alg.str}. If $(D_{100}, D_{010}, D_{001}, D_{111})$ is general in $|\mathcal{O}_{\mathbb{P}^2}(2)|^4$, then the abelian cover $\pi: X \longrightarrow W$ associated to 
$\{L_{a_1a_2a_3}, D_{b_1b_2b_3}\}$ is canonical and $X$ is smooth.
\end{enumerate}
 \end{theorem}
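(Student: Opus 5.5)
The plan is to combine Proposition~\ref{prop.split.minimal.degree} (1) with Theorem~\ref{thm.tridouble.alg.str}, as was done for $\mathbb Z_6$ in Theorem~\ref{thm.P2.Z6}. For part (1), Theorem~\ref{thm.tridouble.alg.str} (1) (i) lets us relabel by an automorphism of $G$ (and the inverse dual automorphism on $G^*$) so that $L_{111}=\omega_W^*(1)=\mathcal O_{\mathbb P^2}(4)$, using $\mathcal O_W(1)=\mathcal O_{\mathbb P^2}(1)$. Then \eqref{eq.splitting.P^2} with $n=8$ gives $\pi_*\mathcal O_X=\mathcal O\oplus\mathcal O(-2)^{\oplus 6}\oplus\mathcal O(-4)$. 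Since $\mathcal O(-4)$ occurs only once, it must be $L_{111}^*$, and hence every other $L_{a_1a_2a_3}$ is $\mathcal O_{\mathbb P^2}(2)$. This proves (ii).

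For (iii), Theorem~\ref{thm.tridouble.alg.str} (1) (ii) gives $D_{110}=D_{101}=D_{011}=0$. The remaining divisor classes come from \eqref{eq.bidouble.alg.str} applied to $L_\chi^{\otimes 2}=\mathcal O_W(\sum_{\chi(\sigma)=-1}D_\sigma)$.
\begin{itemize}
\item Taking $\chi=\chi_{100}$ gives $\mathcal O(4)=\mathcal O(D_{100}+D_{111})$, since $D_{110}=D_{101}=0$.
\item Similarly, $\chi_{010}$ gives $D_{010}+D_{111}\sim 4$ and $\chi_{001}$ gives $D_{001}+D_{111}\sim 4$.
\item Taking $\chi=\chi_{110}$: the $\sigma$ with $\chi_{110}(\sigma)=-1$ are $100,010,101,011$, so $D_{100}+D_{010}\sim 4$ after removing the zero divisors.
\end{itemize}
These relations force $D_{100}\sim D_{010}\sim D_{001}\sim D_{111}\sim 2$, i.e.\ all four are conics. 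Equation \eqref{eq.2L_111} then reads $8=2+2+2+2$, which is consistent. Part (iv) follows from Theorem~\ref{thm.tridouble.alg.str} (1) (iii). For (iv'), Theorem~\ref{thm.tridouble.alg.str} (1) (iii') gives smoothness and the simple normal crossing condition. Simple normal crossings on a surface already rules out three of the curves passing through a common point, so no three of the conics meet. Transversality of each pair is likewise part of simple normal crossings. That \eqref{eq.bidouble.alg.str} holds for all pairs is a direct check: every relation becomes $2+2=2+2k$ or $2+2=4+2k$, where $k$ is the number of nonzero $D_\sigma$ in the sum, and this sum is determined by the three vanishing divisors.

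For part (2), the same computation shows that the data satisfy \eqref{eq.bidouble.alg.str}, using Proposition~\ref{prop.alg.struct.bidouble}. Condition (v) of Theorem~\ref{thm.tridouble.alg.str} holds because $h^0(\mathcal O_{\mathbb P^2}(-3)\otimes\mathcal O(2))=0$. Conditions (i) to (iv) hold by construction. For four general conics, Bertini shows that each is smooth and that they meet pairwise transversally. A dimension count shows that three general conics have no common point, so in particular the four have no common point. This gives (iii'), and Theorem~\ref{thm.tridouble.alg.str} (2) (b) then yields that the cover is canonical with $X$ smooth.

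The only genuinely fiddly step is the verification of the relations \eqref{eq.bidouble.alg.str}, and in particular choosing the characters $\chi$ that isolate each $D_\sigma$ in (iii). Once the three divisors $D_{110},D_{101},D_{011}$ are known to vanish, this is bookkeeping.
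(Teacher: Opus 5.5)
Your proposal is correct and follows the paper's proof. You relabel so that $L_{111}=\omega_W^*(1)=\mathcal O_{\mathbb P^2}(4)$, read off the other $L_{a_1a_2a_3}$ from \eqref{eq.splitting.P^2}, and get $D_{110}=D_{101}=D_{011}=0$ from Theorem~\ref{thm.tridouble.alg.str} (1) (ii); you then conclude with (1) (iii), (1) (iii'), (2) (b) of that theorem and Bertini.

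The only cosmetic difference is how the conic classes are found. You use the squares $L_\chi^{\otimes 2}$ and solve a small linear system, whereas the paper uses the mixed relations \eqref{eq.Dabc}, which isolate each $D_\sigma$ directly. Also, your summary of the final check omits the relations involving $L_{111}$ and the squares, which read $4+2=2+2k$, $4=2k$ and $8=2k$; these hold as well.
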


\begin{proof} We first prove (1). 
    Statements  (i) and (ii) follow from  Proposition~\ref{prop.split.minimal.degree} (1), since we may assume, as in Theorem~\ref{thm.splitting} (II) (1), that $L_{111}=\omega_W^*(1)= \mathcal O_{\mathbb P^2}(4)$.  Statement (iii) follow from Theorem~\ref{thm.tridouble.alg.str} (1) (ii). By Proposition~\ref{prop.alg.struct.bidouble}, the $L_{a_1a_2a_3}$ and $D_{abc}$ satisfy \eqref{eq.bidouble.alg.str}. In particular, (iii) and Corollary~\ref{cor.alg.struct.tridouble} imply
     \begin{equation}\label{eq.Dabc}
        \begin{matrix}
        \mathcal O_{W}(D_{100}) =
        L_{100} \otimes L_{101} \otimes  L_{001}^*,\\
         \mathcal O_{W}(D_{010}) =
        L_{010} \otimes L_{011} \otimes  L_{001}^*,\\
         \mathcal O_{W}(D_{001}) =
        L_{001} \otimes L_{011} \otimes  L_{010}^*,\\
         \mathcal O_{W}(D_{111}) =
        L_{100} \otimes L_{010} \otimes  L_{110}^*.
        \end{matrix}
    \end{equation}
     Therefore, by (i) and (ii), $D_{100}, D_{010}, D_{001}$ and $D_{111}$ are conics. They are
    reduced  without common components by 
    Theorem~\ref{thm.tridouble.alg.str} (1) (iii). If, in addition, $X$ is smooth, then Theorem~\ref{thm.tridouble.alg.str} (1) (iii') implies  $D_{100}, D_{010}, D_{001}$ and $D_{111}$ are smooth and meet transversally. 

    \smallskip
    \noindent Now we prove (2). 
    It is  easy to check that $L_{a_1a_2a_3}$ and $D_{b_1b_2b_3}$    satisfy \eqref{eq.bidouble.alg.str} and Theorem~\ref{thm.tridouble.alg.str} (1) (v). If 
   $(D_{100}, D_{010}, D_{001}, D_{111})$ is general, then  $D_{100}, D_{010}, D_{001},$ $D_{111}$ satisfy (1) (iv') above, so  Theorem~\ref{thm.tridouble.alg.str} (1) (iii') holds. Then we conclude by Theorem~\ref{thm.tridouble.alg.str} (2).
\end{proof}

\begin{theorem}\label{thm.scroll.regular}
  Let $W$ be a Hirzebruch surface $\mathbb{F}_e$, embedded as {a rational normal scroll}. 
  Let $G= \mathbb{Z}_2 \times \mathbb{Z}_2 \times  \mathbb{Z}_2$.
\begin{enumerate}
    \item
    If $\pi: X \longrightarrow W$  is an abelian canonical cover with group $G$ and $X$ is regular, then $e=0$ or $e=1$ and, after possibly relabeling $D_{b_1b_2b_3}$ by  an automorphism of $G$ and $L_{a_1a_2a_3}$ by the inverse of its dual automorphism on $G^*$ 
    and after possibly swapping $C_0$ and $f$ when $W=\mathbb{F}_0$, the building data of $\pi$ is the following:
 \begin{enumerate}
 \item[(i)]  $L_{111}=\mathcal O_W(3C_0+(m+e+2)f)$,
 \item[] $L_{100}=L_{010}=L_{001}=\mathcal O_W(C_0+(m+1)f)$, 
 \item[] $L_{110}=L_{101}=L_{011}=\mathcal O_W(2C_0+(e+1)f)$;   
  \smallskip
  
 \item[(ii)] $D_{110}=D_{101}=D_{011}=0$; 
 \item[(iii)]  $D_{100} \sim D_{010} \sim D_{001} \sim 2C_0+(e+1)f$, 
  $D_{111} \sim (2m-e+1)f$;
  \item[(iv)] $D_{100}, D_{010}, D_{001}, D_{111}$ are reduced divisors without common components.  
 \end{enumerate}

  \smallskip
 \noindent
The $L_{a_1a_2a_3}$ and $D_{b_1b_2b_3}$ in (i) to (iv) above satisfy \eqref{eq.bidouble.alg.str}.

  \smallskip
 \noindent If $X$ is smooth, then, in addition, 
  \begin{enumerate}
 \item[(iv')]  $D_{100}, D_{010}, D_{001}, D_{111}$ are smooth divisors such that the intersection of any three of them is empty and  any two of them meet transversally.
  \end{enumerate}

\smallskip
 \color{black}
\item 
 \noindent Conversely, let $e=0,1$ and let $L_{a_1a_2a_3}$ and $D_{b_1b_2b_3}$ be as in (i), (ii), (iii) above. 
 Then, $L_{a_1a_2a_3}$ and $D_{b_1b_2b_3}$ satisfy \eqref{eq.bidouble.alg.str} and,
 \begin{enumerate}
     \item[(a)] if $D_{111}$ is the union of $2m-e+1$ lines and $(D_{100}, D_{010}, D_{001})$ is general in $|2C_0+(e+1)f|^3$, then   (iv') above is satisfied;
     \item[(b)]  if (iv') above is satisfied, then the  abelian cover $\pi: X \longrightarrow W$ with group  $G$ and building data $\{L_{a_1a_2a_3}, D_{b_1b_2b_3}\}$ is canonical and  $X$ is smooth and regular.
 \end{enumerate}

\end{enumerate}
 \end{theorem}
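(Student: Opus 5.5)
The proof runs parallel to that of Theorem~\ref{thm.Hirzebruch.Z6.regular}, now using Theorem~\ref{thm.tridouble.alg.str} in place of Theorem~\ref{thm.Z6.alg.str}.

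For (1), since $W$ is a rational normal scroll we have $\alpha=1$, and $X$ is regular, so Proposition~\ref{prop.split.minimal.degree} (2) (a), namely \eqref{eq.regular.splitting} with $n=8$, gives
\[
\pi_*\mathcal O_X=\mathcal O_W\oplus\mathcal O_W(-C_0-(m+1)f)^{\oplus 3}\oplus\mathcal O_W(-2C_0-(e+1)f)^{\oplus 3}\oplus\mathcal O_W(-3C_0-(m+e+2)f).
\]
By Theorem~\ref{thm.tridouble.alg.str} (1) (i) we take $L_{111}=\omega_W^*(1)=\mathcal O_W(3C_0+(m+e+2)f)$, and (1) (ii) gives $D_{110}=D_{101}=D_{011}=0$. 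Relations \eqref{relations.ring.structure.Z_2^3} then become $L_{100}\otimes L_{011}=L_{010}\otimes L_{101}=L_{001}\otimes L_{110}=L_{111}$. So the six remaining characters split into three complementary pairs, each pair consisting of one bundle $A:=\mathcal O_W(C_0+(m+1)f)$ and one bundle $B:=\mathcal O_W(2C_0+(e+1)f)$.

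The key combinatorial step is to show that the three characters carrying $A$ are exactly $\{\chi_{100},\chi_{010},\chi_{001}\}$ up to an automorphism of $G^*$ fixing $\chi_{111}$. Write $\{\chi,\chi\chi_{111}\}$ for the pairs and let $S$ be the set of three characters carrying $A$, one from each pair. Two cases can occur.
\begin{itemize}
\item If $S$ is \emph{not} closed under the relevant products, that is, if the product of two characters in $S$ is again in $S$, then by \eqref{eq.bidouble.alg.str} we get $L_\chi\otimes L_{\chi'}=L_{\chi\chi'}(\ldots)$. Comparing classes gives an effective divisor of class $A\otimes A\otimes A^*=A$, which is fine. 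A more useful check is on $A\otimes A\otimes B^*$ or $B\otimes B\otimes A^*=\mathcal O_W(3C_0+(2e+1-m)f)$. Using \eqref{eq.bidouble.alg.str}, I would compute directly the divisors $D_{100},D_{010},D_{001},D_{111}$ as in \eqref{eq.Dabc}.
\item The main obstacle is to rule out the configurations where $S$ is, say, $\{\chi_{100},\chi_{010},\chi_{110}\chi_{111}=\chi_{001}\}$ versus $\{\chi_{100},\chi_{010},\chi_{110}\}$. The automorphisms of $\mathbb Z_2^3$ fixing $(1,1,1)$ act on the complementary pairs, and the choice of one element from each of the three pairs gives $8$ choices falling into two orbits. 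The first orbit has the $A$-characters summing to $\chi_{111}$ (up to relabeling, $\{\chi_{100},\chi_{010},\chi_{001}\}$). The second has them closed in the sense that $\chi\chi'\chi''=1$ (for example $\{\chi_{100},\chi_{010},\chi_{110}\}$).
\end{itemize}

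In the second orbit I compute the branch divisors via \eqref{eq.Dabc}-type formulas $D_\sigma=L_\chi+L_{\chi'}-L_{\chi\chi'}$. The expectation is that some $D_\sigma$ gets class $2C_0+(e+1)f+(\text{A}-\text{B})$-type expressions such as $(1-e?)\ldots$, or more precisely $3C_0+(2e+1-m)f$ or $C_0+(2m-e+1)f-C_0\ldots$. I would then argue, as in Case 2 of Theorem~\ref{thm.Hirzebruch.Z6.regular}, that $C_0\cdot(D-C_0)<0$ forces $2C_0$ into the base locus, contradicting reducedness (Theorem~\ref{thm.tridouble.alg.str} (1) (iii)). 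If it survives only for $m=1$ and $e=0$, swapping $C_0$ and $f$ should return it to the first orbit.

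In the first orbit, \eqref{eq.Dabc} gives $D_{100}\sim D_{010}\sim D_{001}\sim B$ and $D_{111}\sim 2A-B=(2m-e+1)f$. This yields (i) and (iii). For $e\ge2$ we have $B\cdot C_0=1-e<0$, so $C_0$ lies in the base locus of all three of $|D_{100}|,|D_{010}|,|D_{001}|$, contradicting reducedness; hence $e\le1$. Parts (iv) and (iv') follow from Theorem~\ref{thm.tridouble.alg.str} (1) (iii), (iii').

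For (2), relations \eqref{eq.bidouble.alg.str} are checked directly. For (a), when $e\le1$ the class $B$ is base-point-free and big, so by Bertini general members are smooth and meet each other and a fixed union of distinct fibres transversally. Any three of the divisors have empty intersection by a dimension count: three general members of a base-point-free system on a surface have no common point, and two general members of $|B|$ miss the finitely many points already lying on one divisor. For (b), the vanishing (v) follows from Remark~\ref{remark.thm.splitting} and the cohomology computations of Proposition~\ref{prop.split.minimal.degree}, and then Theorem~\ref{thm.tridouble.alg.str} (2) (b) applies. Regularity follows from Proposition~\ref{prop.split.minimal.degree} (2) (c).
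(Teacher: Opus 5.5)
Your proposal is correct and is essentially the paper's proof. Your two orbits of $A$-carrying character triples are exactly the paper's two cases $L_{001}=\mathcal O_W(C_0+(m+1)f)$ and $L_{001}=\mathcal O_W(2C_0+(e+1)f)$, after it normalizes $L_{100}=L_{010}=\mathcal O_W(C_0+(m+1)f)$. In the second case the paper uses \eqref{eq.Dabc} to get $D_{001}\sim 3C_0+(2e+1-m)f$, rules it out by the base-locus/reducedness argument unless $e=0$, $m=1$, and then swaps $C_0$ and $f$, just as you do; in the first case it excludes $e\ge 2$ because $C_0$ is a fixed component of $|2C_0+(e+1)f|$.

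One small repair: in (2)(b), do not cite Remark~\ref{remark.thm.splitting} to obtain condition (v) of Theorem~\ref{thm.tridouble.alg.str}, because that remark presupposes $\pi$ is already canonical. Instead, check directly that $\omega_W\otimes\mathcal O_W(C_0+(m+1)f)=\mathcal O_W(-C_0+(m-e-1)f)$ and $\omega_W\otimes\mathcal O_W(2C_0+(e+1)f)=\mathcal O_W(-f)$ have no nonzero sections.
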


\begin{proof} 
Recall that, by Notation~\ref{notation.Hirzebruch} (2), $\alpha =1$, so $\mathcal O_W(1)=\mathcal O_W(C_0+mf)$. 

\smallskip
\noindent 
We first prove (1). By \eqref{eq.regular.splitting}, there are $3$ line bundles of the splitting of $\pi_*\mathcal O_X$ which are isomorphic to $\mathcal O_W(C_0+(m+1)f)$. Then we may, after  possibly relabeling $D_{b_1b_2b_3}$ by  an automorphism of $G$ and $L_{a_1a_2a_3}$ by the inverse of its dual automorphism on $G^*$, assume 
that $L_{100}=L_{010}=\mathcal O_W(C_0+(m+1)f)$ and $L_{111}=\omega_W^*(1)=\mathcal O_W(3C_0+(m+e+2)f)$. 
 Then   (ii) follows from Theorem~\ref{thm.tridouble.alg.str} (1), (ii). 

 \smallskip
 \noindent Now  $L_{001}$ is isomorphic to either  $\mathcal O_W(C_0+(m+1)f)$ or $\mathcal O_W(2C_0+(e+1)f)$. We split the argument in those two cases.

 \smallskip
 \noindent \emph{\underline{Case  1: $L_{001}=\mathcal O_W(C_0+(m+1)f)$}}: In this case, by 
 \eqref{eq.regular.splitting},
 $L_{110}=L_{101}=L_{011}=\mathcal O_W(2C_0+(e+1)f)$, so, in this case (i) is proved. By Proposition~\ref{prop.alg.struct.bidouble}, the $L_{a_1a_2a_3}$ and the $D_{b_1b_2b_3}$ satisfy \eqref{eq.bidouble.alg.str} and, in particular, \eqref{eq.Dabc}, 
so 
\begin{equation*}
 \begin{matrix}
    D_{100} \sim D_{010} \sim D_{001} \sim 2C_0+(e+1)f, \\
    D_{111} \sim (2m-e+1)f,
    \end{matrix}
\end{equation*} 
so (iii) is proved in this case.
If $e \geq 2$, then $C_0$ is in the fixed part of $|2C_0+(e+1)f|$ and appears with multiplicity $3$ in $D_{100}+D_{010}+D_{001}+D_{111}$.  
This contradicts Theorem~\ref{thm.tridouble.alg.str} (1) (iii). 
Then $e = 0$ or $e=1$. Theorem~\ref{thm.tridouble.alg.str} (1) (iii) also implies (iv) in this case. Finally, Theorem~\ref{thm.tridouble.alg.str} (1) (iii') implies, if $X$ is smooth, condition (iv') in this case.

    \smallskip
 \noindent \emph{\underline{Case  2: $L_{001}=\mathcal O_W(2C_0+(e+1)f)$}}: In this case, by \eqref{relations.ring.structure.Z_2^3},  
 \eqref{eq.regular.splitting}
 and Theorem~\ref{thm.tridouble.alg.str} (1) (ii), $L_{110}=\mathcal O_W(C_0+(m+1)f)$ and 
 $L_{101}=L_{011}=\mathcal O_W(2C_0+(e+1)f$. By Proposition~\ref{prop.alg.struct.bidouble}, the $L_{a_1a_2a_3}$ and the $D_{b_1b_2b_3}$ satisfy \eqref{eq.bidouble.alg.str}. Then, arguing as above we get 
 \begin{equation*}
 \begin{matrix}
 D_{100} \sim D_{010} \sim D_{111} \sim C_0+(m+1)f, \\
 D_{001} \sim 3C_0+(2e-m+1)f. 
     \end{matrix}
\end{equation*} 
Note that $(2C_0+(2e-m+1)f)C_0=-m+1 \le -e$. Then, if $e > 0$,  the divisor $2C_0$ would be in the base locus of $|3C_0+(2e-m+1)f|$ and 
this would contradict Theorem~\ref{thm.tridouble.alg.str} (1) (iii).
If  $e=0$, since $D_{001}$ is effective and $m \ge 1$, then  $m=1$. Therefore
\begin{equation*}
 \begin{matrix}
 D_{100} \sim D_{010} \sim D_{111} \sim C_0+2f, \\
 D_{001} \sim 3C_0. 
     \end{matrix}
\end{equation*} 
Now we swap $C_0$ and $f$. Then, on the one hand, the line bundles $L_{100}, L_{010}, L_{001}$,  $L_{110}, L_{101}, L_{011}$  become respectively the line bundles $L_{011}, L_{101}, L_{001}, L_{110}, L_{010}$, $L_{100}$ of Case 1, when  $e=0$ and $m=1$,  while $L_{111}$ remains the same. On the other hand, the divisors $D_{100}, D_{010}, D_{001}, D_{111}$  become respectively the divisors
$D_{010}, D_{100}, D_{111}, D_{001}$ of Case 1 when $e=0$ and $m=1$. This is a harmless relabeling, because it can be accomplished by applying a suitable automorphism of $G$ to the subindexes of the $D_{b_1b_2b_3}$ and the inverse of the dual of this automorphism 
to the subindexes of the $L_{a_1a_2a_3}$, so it preserves  \eqref{eq.bidouble.alg.str}. Thus Case 2 is a subcase of Case 1. This completes the proof of (1).

    \smallskip
    \noindent Now we prove (2).  Note that part (1) of the theorem states that, for $L_{a_1a_2a_3}$ and $D_{b_1b_2b_3}$ satisfying (i), (ii), (iii),  the relations \eqref{eq.bidouble.alg.str} hold. Since $2C_0+(e+1)f$ is base-point-free, 
    Bertini's theorem 
   implies (2) (a) and, therefore, Theorem~\ref{thm.tridouble.alg.str} (1) (iii').
     By an elementary cohomology computation, (1) (i) implies that $L_{a_1a_2a_3}$ and $D_{b_1b_2b_3}$ also satisfy Theorem~\ref{thm.tridouble.alg.str} (1) (v) (recall that  $\omega_{W}=\mathcal{O}_{W}(-2C_0-(e+2)f)$). Then (2) (b) follows from Theorem~\ref{thm.tridouble.alg.str} (2) (b), the regularity of $X$ coming from the splitting \eqref{eq.regular.splitting}, which holds by (1) (i).
\end{proof}

\begin{theorem}\label{thm.non.scroll.regular}
 Let $W$ be  a Hirzebruch surface $\mathbb{F}_e$, not embedded as a rational normal scroll. If $X$ is regular, then there are no abelian canonical covers  $\pi: X \longrightarrow W$ with group $\mathbf{Z}_2 \times \mathbf{Z}_2 \times  \mathbf{Z}_2$. 
 \end{theorem}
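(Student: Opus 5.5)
Suppose, for a contradiction, that such a cover $\pi$ exists. Since $W$ is not embedded as a rational normal scroll, $\alpha\ge 2$. Proposition~\ref{prop.split.minimal.degree} (2) (a) then gives $e=0$ and the splitting \eqref{eq.regular.splitting.a>1}. For $n=8$ this splitting contains three copies of $A:=\mathcal O_W(C_0+(m+1)f)$, three copies of $B:=\mathcal O_W((\alpha+1)C_0+f)$, and the bundle $\omega_W^*(1)$. By Theorem~\ref{thm.tridouble.alg.str} (1) we may take $L_{111}=\omega_W^*(1)=\mathcal O_W((\alpha+2)C_0+(m+2)f)$ and $D_{110}=D_{101}=D_{011}=0$. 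Every other $L_{a_1a_2a_3}$ is then $A$ or $B$, with exactly three of each. The relations \eqref{relations.ring.structure.Z_2^3} with those divisors zero give $L_{100}\otimes L_{011}=L_{111}$, and similarly for the other two complementary pairs. Since $A\otimes A$ and $B\otimes B$ are not $L_{111}$ (because $\alpha\ge2$), each pair $\{L_\chi, L_{\chi\chi_{111}}\}$ must consist of one $A$ and one $B$.

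Up to an automorphism of $G$ fixing $\chi_{111}$, there are two possibilities. In the first, $L_{100}=L_{010}=L_{001}=A$ and $L_{110}=L_{101}=L_{011}=B$. In the second, $L_{100}=L_{010}=A$, $L_{001}=B$, so $L_{110}=A$ and $L_{101}=L_{011}=B$. I expect that $GL_3(\mathbb F_2)$ acting on the $A$-positions reduces everything to these two cases: the three characters carrying $A$ are either linearly independent or sum to zero.

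I would then compute the branch divisors using \eqref{eq.bidouble.alg.str}, in the form \eqref{eq.Dabc}:
\[
\mathcal O_W(D_{100})=L_{100}\otimes L_{101}\otimes L_{001}^*,\qquad \mathcal O_W(D_{111})=L_{100}\otimes L_{010}\otimes L_{110}^*,
\]
and analogously for the remaining ones.

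In the first possibility, $D_{100}\sim A+B-A=B$ is harmless. However, $D_{111}\sim 2A-B=(1-\alpha)C_0+(2m+1)f$, which is not effective since $\alpha\ge2$. This is a contradiction.

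In the second possibility, $D_{100}\sim A+B-B=A$ and $D_{111}\sim A+A-A=A$. The remaining divisor is $D_{001}\sim L_{001}+L_{011}-L_{010}=2B-A=(2\alpha+1)C_0+(1-m)f$. Effectivity forces $m\le1$. But $m\ge\alpha\ge2$ by Notation~\ref{notation.Hirzebruch}, so this is a contradiction as well.

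The main obstacle is making sure the case reduction is exhaustive. I would verify it by checking directly that the only constraint is the complementary-pair condition, and then running through the finitely many configurations. If the symmetry reduction is doubtful, I would fall back on computing all four of $D_{100},D_{010},D_{001},D_{111}$ in each of the $2^3$ choices of which member of each pair is $A$. In every choice, one of these divisors is $2A-B$ or $2B-A$, and neither is effective.
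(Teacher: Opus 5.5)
Your proposal is correct and follows essentially the same route as the paper. The paper normalizes $L_{111}=\omega_W^*(1)$ with $D_{110}=D_{101}=D_{011}=0$, reduces to $L_{100}=L_{010}=\mathcal O_W(C_0+(m+1)f)$ with $L_{001}$ equal to either $\mathcal O_W(C_0+(m+1)f)$ or $\mathcal O_W((\alpha+1)C_0+f)$, and then uses \eqref{eq.Dabc} to show that $D_{111}\sim(1-\alpha)C_0+(2m+1)f$, respectively $D_{001}\sim(2\alpha+1)C_0+(1-m)f$, cannot be effective. Your complementary-pair and orbit argument (or the check of all eight choices) makes that case reduction more explicit than the paper does, and your aside that $A^{\otimes 2},B^{\otimes 2}\not\simeq L_{111}$ actually holds for every $\alpha\ge 1$, not only because $\alpha\ge 2$.
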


\begin{proof}  Let $C_0$, $f$, $\alpha$ and $m$ be as in Notation~\ref{notation.Hirzebruch}. Recall that, by  Notation~\ref{notation.Hirzebruch} (2), $m \ge \alpha \ge 2$. We will use \eqref{eq.regular.splitting.a>1} in our arguments.

\smallskip
\noindent Arguing as in the proof of Theorem~\ref{thm.scroll.regular} we may assume $L_{111}=\mathcal O_W((\alpha+2)C_0+(m+2)f)$ and, in this case, 
$D_{110}=D_{101}=D_{011}=0$ and 
 $D_{100}, D_{010}, D_{001}, D_{111}$ 
 {would be effective divisors}. 
Continuing using the arguments of the proof of Theorem~\ref{thm.scroll.regular},  we may assume without loss of generality that, in addition to $L_{111} = \omega_{W}^*(1)$, $L_{100}=L_{010}=\mathcal O_W(C_0+(m+1)f)$, so $L_{001}$ is isomorphic either to $\mathcal O_W(C_0+(m+1)f)$ or $\mathcal O_W((\alpha+1)C_0+f)$. We see that neither of these two cases can occur:

 \smallskip
 \noindent \emph{\underline{Case  $L_{001}=\mathcal O_W(C_0+(m+1)f)$}}: In this case, by Proposition~\ref{prop.split.minimal.degree} (2), 
 $L_{110}=L_{101}=L_{011}=\mathcal O_W((\alpha+1)C_0+f)$. By Proposition~\ref{prop.alg.struct.bidouble}, the $L_{a_1a_2a_3}$ and the $D_{b_1b_2b_3}$ satisfy \eqref{eq.bidouble.alg.str} and, in particular, \eqref{eq.Dabc}, 
 so 
\begin{equation*}
    D_{111} \sim (1-\alpha)C_0+(2m+1)f.
\end{equation*} 
Since $\alpha \ge 2$, $D_{111}$ would not be effective, so we would arrive at a contradiction. Therefore, this case cannot occur.

    \smallskip
 \noindent \emph{\underline{Case  {$L_{001}=\mathcal O_W((\alpha+1)C_0+f)$}}}: In this case, by Proposition~\ref{prop.split.minimal.degree} (2), \eqref{relations.ring.structure.Z_2^3} and Theorem~\ref{thm.tridouble.alg.str} (1) (ii), $L_{110}=\mathcal O_W(C_0+(m+1)f)$
 and $L_{101}=L_{011}=\mathcal O_W((\alpha+1)C_0+f)$. By Proposition~\ref{prop.alg.struct.bidouble}, the $L_{a_1a_2a_3}$ and the $D_{b_1b_2b_3}$ satisfy \eqref{eq.bidouble.alg.str}. Then from \eqref{eq.Dabc} we get 
 \begin{equation*}
 D_{001} \sim (2\alpha+1)C_0+(1-m)f. 
\end{equation*}
Since $m \ge 2$, $D_{001}$ would not be effective,  so we would arrive at a contradiction. Therefore, this case cannot occur either. 
\end{proof}

\begin{theorem}\label{thm.Hirzebruch.irregular}
Let $W$ be  the Hirzebruch surface $\mathbb{F}_0$ and 
let $G=\mathbb{Z}_2 \times \mathbb{Z}_2 \times  \mathbb{Z}_2$.
\begin{enumerate}
    \item
    If $\pi: X \longrightarrow W$  is an abelian canonical cover with group $G$ and $X$ is irregular, then 
    $W$ is embedded as a rational normal scroll and, after possibly relabeling $D_{b_1b_2b_3}$ by  an automorphism of $G$ and $L_{a_1a_2a_3}$ by the inverse of its dual automorphism on $G^*$ 
    and after possibly swapping $C_0$ and $f$, the building data of $\pi$ is the following:
 \begin{enumerate}
 \item[(i)] $L_{111}=\mathcal O_W(3C_0+(m+2)f)$; 
 
 \item[(ii)] $D_{110}=D_{101}=D_{011}=0$; 
 \item [(iii)] $D_{100}, D_{010}, D_{001}, D_{111}$ are reduced divisors without common components; 
 
 \item[(iv)]  one of these two cases happens: 
 \item[] \underline{Case 1}: $L_{100}=L_{010}=L_{001}=\mathcal O_W(C_0+(m+2)f)$, $L_{110}=L_{101}=L_{011}=\mathcal O_W(2C_0)$; $D_{100} \sim D_{010} \sim D_{001} \sim 2C_0$ and $D_{111} \sim (2m+4)f$. 
 \item[] In this case, $q(X)=3$.
 \smallskip
 \item[] \underline{Case 2}: $L_{100}=L_{010}=\mathcal O_W(C_0+(m+1)f)$, $L_{001}=\mathcal O_W(C_0+(m+2)f)$, $L_{110}=\mathcal O_W(2C_0)$, $L_{101}=L_{011}=\mathcal O_W(2C_0+f)$; $D_{100} \sim D_{010} \sim 2C_0$, $D_{001} \sim 2C_0+2f$, $D_{111}= (2m+2)f$.
 \item[] In this case, $q(X)=1$.
 \smallskip
 
 \item[(v)] In Case 1 of (iv), $X$ is smooth. 
 
 \item[(vi)] In Case 2 of (iv), if $X$ is smooth, then, in addition to (iii), $D_{001}$ is smooth, passes neither through $D_{100} \cap D_{111}$ nor through ${D_{010}} \cap D_{111}$, and meets each $D_{100}, D_{010}, D_{111}$ transversely. 
\end{enumerate}

\smallskip
 \noindent
The $L_{a_1a_2a_3}$ and $D_{b_1b_2b_3}$ in (i) to (iv) above satisfy \eqref{eq.bidouble.alg.str}.

\smallskip
 \color{black}
\item 
 \noindent Conversely, let 
 $\alpha=1$ and let $L_{a_1a_2a_3}$ and $D_{b_1b_2b_3}$ be as in (i), (ii), (iv) above. Then, $L_{a_1a_2a_3}$ and $D_{b_1b_2b_3}$ satisfy \eqref{eq.bidouble.alg.str}. If 
 \begin{enumerate}
     \item[(a)] 
    in Case 1 above,  $D_{100}+D_{010}+D_{001}$ consists of $6$ distinct lines  and $D_{111}$ consists of $2m+4$ distinct lines; and 
     \item[(b)] in Case 2 above,  $D_{100}+D_{010}$ consists of $4$ distinct lines; $D_{111}$ consists of $2m+2$ distinct lines; and $D_{001}$ is general in $|2C_0+2f|$; 
      \end{enumerate}
     \item[] then  the abelian  cover $\pi: X \longrightarrow W$ with group  $G$ and  building data $L_{a_1a_2a_3}, D_{b_1b_2b_3}$ is canonical and $X$ is smooth and irregular, with $q(X)=3$ if (a) holds, and $q(X)=1$ if (b) holds. 
\end{enumerate}
 \end{theorem}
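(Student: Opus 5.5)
We follow the scheme of the proofs of Theorems~\ref{thm.scroll.regular} and \ref{thm.Hirzebruch.Z6.irregular}. By Theorem~\ref{thm.tridouble.alg.str} (1) we may assume $L_{111}=\omega_W^*(1)=\mathcal O_W((\alpha+2)C_0+(m+2)f)$, which gives (i) once we know $\alpha=1$. The same theorem gives (ii), (iii) and the relations \eqref{eq.bidouble.alg.str}. Since $n=8$, Proposition~\ref{prop.split.minimal.degree} (2) (b) gives $n_0+n_0'+n_1+n_1'=3$ with $n_0+n_0'+n_1'>0$. The six bundles $L_\chi$ with $\chi\neq 1,\chi_{111}$ are then taken from the list \eqref{eq.irregular.splitting}, in pairs $\mathcal L,\ \mathcal L^*\otimes\omega_W^*(1)$.

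The key structural fact is \eqref{relations.ring.structure.Z_2^3} combined with (ii). It gives $L_{100}\otimes L_{011}=L_{010}\otimes L_{101}=L_{001}\otimes L_{110}=L_{111}$. So the duality pairing of Theorem~\ref{thm.splitting} (II) (2) is realized by complementary characters, and each pair is one of the four types $n_0,n_0',n_1,n_1'$. Up to relabeling by automorphisms of $G$, which permute the three complementary pairs and swap elements inside a pair, we may assume $L_{100},L_{010},L_{001}$ are the ``small'' members: $\mathcal O_W(\epsilon C_0+(m+1+\delta)f)$ with $\epsilon,\delta\in\{0,1\}$, or their swaps. This is not quite canonical, since within a pair either member could be called $L_{100}$. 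We will handle this by choosing three characters forming a basis-type triple, and the group-automorphism argument of Theorem~\ref{thm.scroll.regular} allows this.

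Next we compute the divisors from \eqref{eq.Dabc}:
\[
D_{100}=L_{100}+L_{101}-L_{001},\qquad D_{010}=L_{010}+L_{011}-L_{001},
\]
\[
D_{001}=L_{001}+L_{011}-L_{010},\qquad D_{111}=L_{100}+L_{010}-L_{110}.
\]
We then impose effectivity and reducedness of $D_{100}+D_{010}+D_{001}+D_{111}$. Reducedness means no fixed component occurs with multiplicity, as in the $e\ge2$ arguments above. The conditions $\alpha\ge2$ and $m\ge\alpha$ will make some coefficient negative, exactly as in Theorem~\ref{thm.non.scroll.regular}, giving terms $(1-\alpha)C_0$ or $(1-m)f$. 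This forces $\alpha=1$. With $\alpha=1$ the surviving combinations will be Cases 1 and 2 up to swapping $C_0$ and $f$. Combinations with $m=1$ forced should become special instances of these after the swap and a relabeling, as in Case 2 of Theorem~\ref{thm.scroll.regular}. The irregularity then follows from $h^1$ of \eqref{eq.irregular.splitting}: $h^1(\mathcal O_W(-(m+2)f))=m+1$ is not the relevant count. Instead, $h^1$ of $\mathcal O_W(-C_0-(m+2)f)$ and of $\mathcal O_W(-2C_0)$ each equal $0$ and $1$ respectively, so Case 1 gives $3$ and Case 2 gives $1$.

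For (v) and (vi), in Case 1 the divisors $D_{100},D_{010},D_{001}$ are reduced members of $|2C_0|$, hence unions of distinct lines in the ruling. Together with (iii) they are pairwise disjoint, and $D_{111}$ consists of fibers meeting them transversally with no triple points. Theorem~\ref{thm.tridouble.alg.str} (2) (b) then gives smoothness. Case 2 reads off from (iii'), using that $D_{100},D_{010},D_{111}$ are unions of lines. For the converse (2), we check (v) of Theorem~\ref{thm.tridouble.alg.str} cohomologically, apply Bertini to $|2C_0+2f|$, which is base-point-free, and apply Theorem~\ref{thm.tridouble.alg.str} (2) (b).

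The main obstacle is the case analysis in the second step: enumerating all admissible assignments from \eqref{eq.irregular.splitting} compatible with the group relations, and reducing them to the two cases via automorphisms of $G$ and the $C_0\leftrightarrow f$ swap. We would first fix $(n_0,n_0',n_1,n_1')$ and, for each, test effectivity of the four $D$'s.
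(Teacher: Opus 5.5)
Your skeleton is the paper's: normalize $L_{111}=\omega_W^*(1)$, read off from \eqref{relations.ring.structure.Z_2^3} and (ii) that the self-duality pairs are the complementary pairs $\{\chi,\chi\chi_{111}\}$, then test the divisors of \eqref{eq.Dabc} for effectivity. However, the normalization you build on is false, and the configurations it discards do occur.

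Automorphisms of $G^*$ fixing $\chi_{111}$ act on the three complementary pairs through $S_3$. The kernel of that action consists of the maps $\chi\mapsto\chi\,\chi_{111}^{\lambda(\chi)}$ with $\lambda$ linear and $\lambda(\chi_{111})=0$, and such a map exchanges the two members of exactly zero or two pairs. So you can make $L_{100},L_{010}$ the small members (coefficient of $C_0$ at most $1$). But whether the three small characters multiply to $\chi_{111}$ or to the trivial character is an invariant. In the second case $L_{001}$ is forced to be the large member of its pair. The argument of Theorem~\ref{thm.scroll.regular} you invoke gives no more: there too only $L_{100},L_{010}$ are normalized and $L_{001}$ runs over both members (its Case 2).

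This second class is not empty. Write $L_{100}=\mathcal O_W(s_1)$, $L_{010}=\mathcal O_W(s_2)$, $L_{110}=\mathcal O_W(s_3)$ with $s_i=\epsilon_iC_0+(m+1+\delta_i)f$. Then $D_{001}\sim 2(\alpha+2)C_0+2(m+2)f-s_1-s_2-s_3$. Its $f$-coefficient is $1-m-\sum\delta_i$, which forces $m=\alpha=1$ and $\delta_i=0$. Effective configurations remain, e.g. $L_{100}=L_{010}=\mathcal O_W(C_0+2f)$, $L_{001}=\mathcal O_W(3C_0+f)$. This gives $D_{100}\sim D_{010}\sim 2f$, $D_{001}\sim 4C_0$, $D_{111}\sim 2C_0+2f$, the paper's Case IIc. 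Likewise $L_{100}=L_{010}=\mathcal O_W(2f)$, $L_{001}=\mathcal O_W(3C_0+f)$ gives effective divisors.

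These covers are on the theorem's list only because, for $\alpha=m=1$, swapping $C_0$ and $f$ exchanges small and large members, hence the two classes, after which one relabels. That is exactly how the paper reduces Case IIc to Case IIe, and your proposal has no counterpart for it.

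Apart from this, the decisive case analysis is only announced (``will make some coefficient negative'', ``the surviving combinations will be''). In your parametrization it is short and worth writing out. When $L_{100},L_{010},L_{001}$ are the small members, $D_{111}\sim(\epsilon_1+\epsilon_2+\epsilon_3-\alpha-2)C_0+(2m+1+\sum\delta_i)f$. This forces $\alpha=1$ and all $\epsilon_i=1$ at once. Then $D_{100},D_{010},D_{001}\sim 2C_0+(1+\delta_i-\delta_j-\delta_k)f$, with $i=1,2,3$ respectively and $\{j,k\}$ the other two indices. Together with irregularity, which here means $n_1'>0$, this leaves exactly one or three $\delta_i$ equal to $1$, i.e. Cases 2 and 1.

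Adding the treatment of the other class above would give a complete proof, noticeably shorter than the paper's enumeration of $38$ possibilities. The irregularity computation, parts (v) and (vi), and the converse via Bertini and Theorem~\ref{thm.tridouble.alg.str} (2) (b) are fine and agree with the paper.
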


\noindent We will use the following lemma to prove Theorem~\ref{thm.Hirzebruch.irregular}:

 \begin{lemma}\label{lemma.independence.characters}
     Let $\mathcal X, \mathcal Y$ be 
    irreducible algebraic 
     varieties and let $\pi: \mathcal X \longrightarrow \mathcal Y$
be a 
{flat} abelian cover with group $G = \mathbb Z_2^r$. If $\chi_1, \chi_2$ and $\chi_3$ are three  distinct, nontrivial characters of $G^*$ 
such that $|L_{\chi_1} \otimes L_{\chi_2} \otimes L_{\chi_3}^*|=\emptyset$, then $\chi_1, \chi_2$ and $\chi_3$ are independent.
  \end{lemma}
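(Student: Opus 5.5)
We argue by contraposition: assuming $\chi_1,\chi_2,\chi_3$ are dependent, we show that $|L_{\chi_1}\otimes L_{\chi_2}\otimes L_{\chi_3}^*|$ is nonempty. In $G^*\simeq\mathbb Z_2^r$, three distinct nontrivial characters are dependent exactly when some nontrivial combination with coefficients in $\mathbb Z_2$ vanishes. No single character vanishes, since all three are nontrivial. No pair sums to zero, since in $\mathbb Z_2^r$ that would force $\chi_i=\chi_j$, contradicting distinctness. Hence dependence means precisely $\chi_1\chi_2\chi_3=1$, that is, $\chi_3=\chi_1\chi_2$ (every character being its own inverse).

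In that case we apply Proposition~\ref{prop.alg.struct.bidouble}, i.e., relation \eqref{eq.bidouble.alg.str} with $\chi=\chi_1$, $\chi'=\chi_2$:
\begin{equation*}
L_{\chi_1}\otimes L_{\chi_2}=L_{\chi_1\chi_2}\Bigl(\sum D_\sigma\Bigr)=L_{\chi_3}\Bigl(\sum D_\sigma\Bigr),
\end{equation*}
where the sum runs over the $\sigma$ with $\chi_1(\sigma)=\chi_2(\sigma)=-1$. Therefore $L_{\chi_1}\otimes L_{\chi_2}\otimes L_{\chi_3}^*\simeq\mathcal O(\sum D_\sigma)$. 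Since each $D_\sigma$ is effective, this line bundle has the nonzero section defining $\sum D_\sigma$ (the constant section $1$ if the sum is empty). So the linear system is nonempty, contradicting the hypothesis, and $\chi_1,\chi_2,\chi_3$ must be independent.

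The only point needing care is the validity of \eqref{eq.bidouble.alg.str} in this setting. Theorem~\ref{thm.Pardini} is stated for $\mathcal Y$ smooth and $\mathcal X$ normal, whereas the lemma assumes only flatness. I would handle this by observing that Theorem~\ref{thm.split.abelian} already gives the eigensheaf splitting for any flat abelian cover. Moreover, the multiplication map $L_{\chi_1}^*\otimes L_{\chi_2}^*\to L_{\chi_3}^*$ is given by a global section of $L_{\chi_1}\otimes L_{\chi_2}\otimes L_{\chi_3}^*$, as explained before Theorem~\ref{thm.Pardini}. This section is nonzero because $\mathcal X$ is irreducible, hence reduced generically: if the product of the $\chi_1$- and $\chi_2$-eigensheaves were zero, then at the generic point the function field would be a field in which nonzero eigenfunctions multiply to zero, which is impossible. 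This direct argument avoids the normality hypothesis entirely and I would present it as the main proof, citing Proposition~\ref{prop.alg.struct.bidouble} only as the explicit form of that section in the normal case.
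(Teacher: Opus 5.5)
Your proposal is correct. Its skeleton is the paper's: in $G^*\simeq\mathbb Z_2^r$, dependence of three distinct nontrivial characters forces $\chi_3=\chi_1\chi_2$ (the paper phrases this as $\langle\chi_1,\chi_2,\chi_3\rangle\simeq\mathbb Z_2\times\mathbb Z_2$). From there one only needs a nonzero section of $L_{\chi_1}\otimes L_{\chi_2}\otimes L_{\chi_3}^*$.

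Where you differ is in how you justify that section. The paper simply quotes \eqref{eq.bidouble.alg.str}, which exhibits $\sum D_\sigma$ as an effective divisor in the linear system. That relation ultimately rests on Theorem~\ref{thm.Pardini}, which assumes $\mathcal Y$ smooth and $\mathcal X$ normal. The lemma as stated assumes only flatness, although these extra hypotheses do hold where the lemma is used, in Theorem~\ref{thm.Hirzebruch.irregular}.

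Your generic-point argument removes that mismatch. The multiplication $L_{\chi_1}^*\otimes L_{\chi_2}^*\to L_{\chi_1\chi_2}^*$ cannot vanish, because over the generic point of $\mathcal Y$ it is multiplication of nonzero elements of the field $K(\mathcal X)$. This proves the lemma exactly as stated, for any flat abelian cover with $\mathcal X$ integral. The only thing you give up is the explicit identification of the zero locus of the section as $\sum D_\sigma$, which the lemma does not need.

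One wording fix: irreducibility alone does not give generic reducedness. What you actually use is that $\mathcal X$, being a variety, is reduced and hence integral. Then $(\pi_*\mathcal O_{\mathcal X})\otimes K(\mathcal Y)=K(\mathcal X)$ is a field, since it is a domain finite over $K(\mathcal Y)$, and the rank-one eigensheaves have nonzero generic stalks inside it.
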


  \begin{proof}
      Since $G$ is a finite abelian group, $G^*$ is isomorphic to $G$. If $\chi_1, \chi_2$ and $\chi_3$ are not independent, then $<\chi_1, \chi_2, \chi_3>$ is isomorphic to $\mathbb Z_2 \times \mathbb Z_2$, so $\chi_1\chi_2= \chi_3$. 
      Then, the relations in \eqref{eq.bidouble.alg.str} imply that there is an effective divisor in   $|L_{\chi_1} \otimes L_{\chi_2} \otimes L_{\chi_3}^*|$  and this contradicts our hypothesis. 
  \end{proof}

  \begin{proof} \emph{of Theorem~\ref{thm.Hirzebruch.irregular}}.
    We first prove (1). Parts (ii) and (iii) of (1) follow from Theorem~\ref{thm.tridouble.alg.str} (1) (ii), (iii). By Proposition~\ref{prop.split.minimal.degree} (2) (b), $e=0$ and $\pi_*\mathcal O_X$ splits as in \eqref{eq.irregular.splitting}. More precisely, $\pi_*\mathcal O_X$ has 19 non isomorphic possible splittings that correspond to the 19 (ordered) partitions $(n_0, n_0', n_1, n_1')$ of $3$ by non negative integers $n_0, n_0', n_1, n_1'$ such that $n_0 + n_0' + n_1' > 0$. %
    Having in account Lemma~\ref{lemma.independence.characters}, after  possibly relabeling $D_{b_1b_2b_3}$ by  an automorphism of $G$ and $L_{a_1a_2a_3}$ by the inverse of its dual automorphism on $G^*$, for these different partitions we may assume  $L_{111}=\omega_W^*(1)=\mathcal O_W((\alpha+2)C_0+(m+2)f)$, so (1) (i) is proved, and  that $L_{100}$ and $L_{010}$, in the cases below, are as follows: 

 \smallskip
    \noindent {\emph{Case Ia} ($n_0=3$)}:
$L_{100}=L_{010}=\mathcal O_W((m+1)f)$.

   \smallskip
    \noindent {\emph{Case Ib}  ($n_0'=3$):} $L_{100}=L_{010}=\mathcal O_W((m+2)f)$.

 \smallskip
    \noindent {\emph{Case Ic} ($n_1'=3$):} $L_{100}=L_{010}=\mathcal O_W(C_0+(m+2)f)$.

 \smallskip
    \noindent {\emph{Case IIa} ($n_0=2$):} $L_{100}=L_{010}=\mathcal O_W((m+1)f)$. 

    \smallskip
    \noindent {\emph{Case IIb}  ($n_0'=2$):} $L_{100}=L_{010}=\mathcal O_W((m+2)f)$.

   \smallskip
    \noindent {\emph{Case IIc}  ($n_1=2$, $n_0=1$):} $L_{100}=L_{010}=\mathcal O_W(C_0+(m+1)f)$.

\smallskip
    \noindent \emph{Case IId}  ($n_1=2$, $n_0'=1$): $L_{100}=L_{010}=\mathcal O_W(C_0+(m+1)f)$

\smallskip
    \noindent \emph{Case IIe}  ($n_1=2$, $n_1'=1$): $L_{100}=L_{010}=\mathcal O_W(C_0+(m+1)f)$

    \smallskip
    \noindent {\emph{Case IIf}  ($n_1'=2$):} $L_{100}=L_{010}=\mathcal O_W(C_0+(m+2)f)$

\smallskip
    \noindent {\emph{Case IIIa}  ($n_0=n_0'=1$):} $L_{100}=\mathcal O_W((m+1)f), L_{010}=\mathcal O_W((m+2)f)$.

\smallskip
    \noindent {\emph{Case IIIb}  ($n_1=n_1'=1$):} $L_{100}=\mathcal O_W(C_0+(m+1)f), L_{010}=\mathcal O_W(C_0+(m+2)f)$. 

\smallskip
\noindent Once it is set what $L_{100}, L_{010}, L_{111}$ are, Formulae
 \eqref{eq.irregular.splitting}, \eqref{relations.ring.structure.Z_2^3} and item (ii) in the statement allow several possibilities for $L_{001}$: two possibilities in each one of Cases Ia, Ib, Ic, IIc, IId, IIe;  six possibilities in each one of Cases IIa, IIb, IIf; and four possibilities in each one of Cases IIIa, IIIb. Once $L_{001}$ is fixed,  the remaining line bundles $L_{110}, L_{101}, L_{011}$ are determined by  \eqref{relations.ring.structure.Z_2^3} and  item (ii) in the statement. After all this process, there are in total  38 possibilities that can occur for the line bundles $L_{a_1a_2a_3}$, but not all of them actually happen because the divisors $D_{100}, D_{010}, D_{001}, D_{111}$, whose linearly equivalence class depends on the $L_{a_1a_2a_3}$, should be effective. Precisely, \eqref{eq.Dabc} implies that, in Cases Ia, Ib, IIa, IIb, IId, IIf, IIIa and IIIb, there are no effective divisors linearly equivalent to  one of 
$D_{100}, D_{010}, D_{001}, D_{111}$, so these cases do not occur. We now deal with Cases Ic, IIc and IIe.

\smallskip
    \noindent  Recall that, in Case Ic, $L_{111}=\mathcal O_W((\alpha+2)C_0+(m+2)f)$ and $L_{100}=L_{010}=\mathcal O_W(C_0+(m+2)f)$. By Formulae
 \eqref{eq.irregular.splitting}, \eqref{relations.ring.structure.Z_2^3} and  item (ii) in the statement, $L_{001}$ is isomorphic to $\mathcal O_W(C_0+(m+2)f)$ or to $\mathcal O_W((\alpha+1)C_0)$  and the remaining among the line bundles $L_{a_1a_2a_3}$ are determined by \eqref{relations.ring.structure.Z_2^3} and  item (ii) in the statement. If $L_{001}=\mathcal O_W((\alpha+1)C_0)$, it follows from \eqref{eq.Dabc} that there would be  no effective divisors linearly equivalent to $D_{001}$, hence this option for $L_{001}$ cannot occur. If $L_{001}=\mathcal O_W(C_0+(m+2)f)$, then  \eqref{eq.Dabc} implies 
$D_{111} \sim (1-\alpha)C_0+(2m+4)f$. Then $|(1-\alpha)C_0+(2m+4)f|$ is non empty if and only if $\alpha\leq 1$, i.e., if and only if $\alpha=1$, hence $W$ is a rational normal scroll in this case.  Then $D_{111} \sim (2m+4)f$ and \eqref{eq.Dabc} also implies $D_{100} \sim D_{010} \sim D_{001} \sim 2C_0$. In addition, \eqref{relations.ring.structure.Z_2^3} and  item (ii) in the statement  implies $L_{110}=L_{101}=L_{011}=\mathcal O_W(2C_0)$. Thus we get Case 1 of Theorem~\ref{thm.Hirzebruch.irregular} (1) (iv).

\smallskip
\noindent   Recall that, in Case IIe, $L_{111}=\mathcal O_W((\alpha+2)C_0+(m+2)f)$ and
$L_{100}=L_{010}=\mathcal O_W(C_0+(m+1)f)$. By Formulae
 \eqref{eq.irregular.splitting}, \eqref{relations.ring.structure.Z_2^3} and  item (ii) in the statement, $L_{001}$ is isomorphic to $\mathcal O_W(C_0+(m+2)f)$ or to 
$\mathcal O_W((\alpha+1)C_0)$ and the remaining among the line bundles $L_{a_1a_2a_3}$ are determined by \eqref{relations.ring.structure.Z_2^3} and  item (ii) in the statement. If $L_{001}$ is isomorphic to $\mathcal O_W((\alpha+1)C_0)$, then \eqref{eq.Dabc} implies that  there are no effective divisors linearly equivalent to $D_{001}$, so this option for $L_{001}$ cannot happen. If $L_{001}$ is isomorphic  to 
$\mathcal O_W(C_0+(m+2)f)$, 
then  \eqref{eq.Dabc} implies 
$D_{111} \sim (1-\alpha)C_0+(2m+2)f$. Then $|(1-\alpha)C_0+(2m+2)f|$ is non empty if and only if $\alpha\leq 1$, i.e., if and only if $\alpha=1$, hence $W$ is a rational normal scroll in this case.  Then $D_{111} \sim (2m+2)f$ and \eqref{eq.Dabc} also implies $D_{100} \sim D_{010} \sim 2C_0$ and $D_{001} \sim 2C_0+2f$.  In addition, \eqref{relations.ring.structure.Z_2^3} and  item (ii) in the statement implies $L_{110}=\mathcal O_W(2C_0)$ and $L_{101}=L_{011}=\mathcal O_W(2C_0+f)$. Thus we get Case 2 of Theorem~\ref{thm.Hirzebruch.irregular} (1) (iv).

\smallskip
\noindent   Recall that, in Case IIc, $L_{111}=\mathcal O_W((\alpha+2)C_0+(m+2)f)$ and
$L_{100}=L_{010}=\mathcal O_W(C_0+(m+1)f)$. By Formulae
 \eqref{eq.irregular.splitting}, \eqref{relations.ring.structure.Z_2^3} and  item (ii) in the statement, $L_{001}$ is isomorphic to $\mathcal O_W((m+1)f)$ or to 
$\mathcal O_W((\alpha+2)C_0+f)$. If $L_{001}$ is isomorphic to $\mathcal O_W((m+1)f)$, then \eqref{eq.Dabc} implies that  there are no effective divisors linearly equivalent to $D_{111}$, so this option for $L_{001}$ cannot happen. If $L_{001}$ is isomorphic  to 
$\mathcal O_W((\alpha+2)C_0+f)$, 
then  \eqref{eq.Dabc} implies 
$D_{001} \sim (2\alpha+2)C_0+(1-m)f$. Then $|(2\alpha+2)C_0+(1-m)f|$ is non empty if and only if $m \leq 1$, i.e., if and only if $m=1$. By Notation~\ref{notation.Hirzebruch}, $\alpha=1$. Then $D_{001} \sim 4C_0$, and \eqref{eq.Dabc} also implies $D_{100} \sim D_{010} \sim 2f$ and $D_{111} \sim 2C_0+2f$.
 Now we swap $C_0$ and $f$. Then, on the one hand, the line bundles $L_{100}, L_{010}, 
 {L_{001}}, L_{110}, L_{101}, L_{011}$ 
 become respectively the line bundles $L_{011}, L_{101}, L_{001}, L_{110}, L_{010}, L_{100}$ of Case IIe, when  $m=1$, while $L_{111}$ remains the same and, therefore, equal to the line bundle $L_{111}$  of Case IIe, $m=1$. On the other hand, the divisors $D_{100}, D_{010}, D_{001}, D_{111}$  become, respectively, the divisors
$D_{010}, D_{100}, D_{111}, D_{001}$ of Case IIe when $m=1$. By the same arguments at the end of the proof of Theorem~\ref{thm.scroll.regular} (1), this is harmelss relabeling, 
so Case IIc is a subcase of Case IIe. This completes the proof of (1) (iv).

\smallskip 
\noindent Item (iii) of (1) imply that, in Case 1 of (1) (iv), $D_{100}+D_{010}+D_{001}$ consists of $6$ distinct lines linearly equivalent to $C_0$ and $D_{111}$ consists of $2m+4$ distinct lines linearly equivalent to $f$. Then, necessarily, the intersections of any three among $D_{100}, D_{010}, D_{001}, D_{111}$ are empty and the pairwise intersections of $D_{100}, D_{010}, D_{001}, D_{111}$ are empty or transverse, so, arguing as in the proof of Theorem~\ref{thm.tridouble.alg.str} (2) (b), we conclude that $X$ is smooth in this case. This proves (1) (v). Finally, (1) (vi) follows follows from Theorem~\ref{thm.tridouble.alg.str} (1) (iii').

\smallskip
\noindent Now we prove (2). If (2) (a) is satisfied, then
Theorem~\ref{thm.tridouble.alg.str} (1) (iii') holds. 
Since $2C_0+2f$ is very ample, if (2) (b) is satisfied, then  Theorem~\ref{thm.tridouble.alg.str} (1) (iii') holds by Bertini theorem. Arguing as in the proof of Theorem~\ref{thm.scroll.regular} (2), we conclude that $X$ is smooth. Since  the $L_{a_1a_2a_3}$ satisfy (1) (iv), the claims on the irregulary of $X$ follow. 
 \end{proof}

 \subsection{Covers with group $\mathbb{Z}_2 \times \mathbb{Z}_4$}\label{section.Z2Z4.rational}

\begin{theorem}\label{thm.P2.Z2Z4}
Let $W=\mathbb P^2$ and $\mathcal{O}_W(1)=\mathcal{O}_{\mathbb P^2}(1)$.
Let $G=\mathbf{Z}_2 \times \mathbf{Z}_4$.
 
 \smallskip
\begin{enumerate}
    \item
    If $\pi: X \longrightarrow W$  is an abelian canonical cover with group $G$, after possibly relabeling $D_{b_1b_2}$ by  an automorphism of $G$ and $L_{a_1a_2}$ by the inverse of its dual automorphism on $G^*$, 
     then $X$ and the building data of $\pi$ are as follows:
 \begin{enumerate}
 \item[(i)] $L_{13}=\mathcal O_{\mathbb{P}^2}(4)$; 
 \item[] $L_{a_1a_2}=\mathcal O_{\mathbb{P}^2}(2)$, for all $(a_1,a_2) \neq (0,0), (1,3)$; 
     \item[(ii)] $D_{02}=D_{03}=D_{11}=D_{12}=0$;
     \item[(iii)] $D_{01}, D_{10}, D_{13}$ are conics;
     \item[(iv)] $D_{01}, D_{10}, D_{13}$ are reduced divisors without common components.  
     \item[(v)] $X$ is singular  and the mildest possible set of singular points of such a surface $X$ consists of $4$ singularities of type $A_1$.

      \smallskip
 \noindent
The $L_{a_1a_2}$ and $D_{b_1b_2}$ in (i) to (iv) above satisfy \eqref{relation.ring.structure}. 

 \end{enumerate}
 \smallskip

\item 
 \noindent Conversely,  let $L_{a_1a_2}$ and $D_{b_1b_2}$ be as in (i), (ii), (iii) above. If the element $(D_{01}, D_{10}, D_{13})$ is general in $|\mathcal O_{\mathbb{P}^2}(2)| \times |\mathcal O_{\mathbb{P}^2}(2)| \times |\mathcal O_{\mathbb{P}^2}(2)|$, then
 the 
 abelian  cover $\pi: X \longrightarrow W$ with group  $G$ and building data $L_{a_1a_2}, D_{b_1b_2}$ is canonical and $X$  has exactly $4$ singularities of type $A_1$.
\end{enumerate}
 \end{theorem}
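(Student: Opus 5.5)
The plan is to run the same pattern as the proofs of Theorems~\ref{thm.P2.Z6} and \ref{thm.tridouble.P2}: first pin down the line bundles from Proposition~\ref{prop.split.minimal.degree} (1), then read off the degrees of the branch divisors from the algebra relations of Proposition~\ref{prop.alg.struct.Z2Z4}, and finally analyse singularities with \cite[Propositions 3.1, 3.3]{Pardini}.

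\textbf{Line bundles.} $\mathrm{Pic}\,\mathbb P^2$ has no torsion, so Theorem~\ref{thm.Z2Z4.alg.str} (1) applies. After relabeling we may assume
$$L_{13}=\omega_W^*(1)=\mathcal O_{\mathbb P^2}(4) \quad\text{and}\quad D_{03}=D_{11}=D_{12}=0.$$
By \eqref{eq.splitting.P^2}, every other $L_{a_1a_2}$ is $\mathcal O_{\mathbb P^2}(2)$. This proves (i).

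\textbf{Degrees of the branch divisors.} We compare degrees in the relations of Proposition~\ref{prop.alg.struct.Z2Z4}, using $D_{03}=D_{11}=D_{12}=0$.
\begin{itemize}
\item[$\cdot$] From \eqref{eq.alg.str.L_10L_12}, $4=2+\deg D_{10}$, so $D_{10}$ is a conic.
\item[$\cdot$] From \eqref{eq.alg.str.L_02L_02}, $\deg(D_{01}+D_{13})=4$.
\item[$\cdot$] From \eqref{eq.alg.str.L_01L_03}, $\deg(D_{01}+D_{02}+D_{13})=4$. Together with the previous item this forces $\deg D_{02}=0$. Since $D_{02}$ is effective, $D_{02}=0$, which completes (ii). This is the one extra vanishing not already given by Theorem~\ref{thm.Z2Z4.alg.str}.
\item[$\cdot$] From \eqref{eq.alg.str.L_01L_11}, $4=2+\deg D_{13}$, so $D_{13}$ is a conic, and then $D_{01}$ is a conic as well. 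This gives (iii).
\item[$\cdot$] As a consistency check, \eqref{eq.alg.str.L_13L_13} gives $8=2+6$.
\end{itemize}
Part (iv) follows from Theorem~\ref{thm.Z2Z4.alg.str} (1) (iii), and the relations \eqref{relation.ring.structure} hold by Theorem~\ref{thm.Z2Z4.alg.str} (1) (iv).

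\textbf{Singularities.} Two plane conics always meet, so $D_{01}\cap D_{13}\neq\emptyset$. Theorem~\ref{thm.Z2Z4.alg.str} (1) (iii') requires $D_{01}$ and $D_{13}$ to be disjoint when $X$ is smooth, so $X$ is singular. The mildest configuration is $D_{01},D_{10},D_{13}$ smooth, meeting pairwise transversally, with no common point of all three. In that case $D_{01}$ and $D_{13}$ meet in exactly $4$ points. The subgroups $H_{01}$ and $H_{13}$ are both cyclic of order $4$ and intersect in the order-$2$ subgroup $\langle(0,2)\rangle$, and \cite[Proposition 3.3]{Pardini} should show that each of these points yields an $A_1$ singularity. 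Away from these $4$ points, $X$ is smooth by \cite[Proposition 3.1]{Pardini}, because the pair $H_{01},H_{10}$ satisfies condition iii) a) there, as does $H_{13},H_{10}$. This gives (v).

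\textbf{Converse.} For a general triple of conics, Bertini gives three smooth conics meeting pairwise transversally with no triple points, so (iv) holds and the local analysis above shows $X$ has exactly $4A_1$. These are canonical singularities, hence $X$ is locally Gorenstein. Condition (v) of Theorem~\ref{thm.Z2Z4.alg.str} holds because $h^0(\mathcal O_{\mathbb P^2}(-3+2))=0$. Theorem~\ref{thm.Z2Z4.alg.str} (2) (a) then shows $\pi$ is canonical.

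\textbf{Main obstacle.} The step I expect to be hardest is identifying the local singularity over $D_{01}\cap D_{13}$ precisely. Here the stabilizers are two cyclic groups of order $4$ sharing their order-$2$ subgroup, so the local model is a quotient of a smooth bidouble-type cover. I would first try to read off the type directly from Pardini's normal form, in the same way Theorem~\ref{thm.P2.Z6} does for $\mathbb Z_6$.
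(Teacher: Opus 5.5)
Your proposal is correct and follows the paper's proof essentially step by step. You fix $L_{13}=\mathcal O_{\mathbb P^2}(4)$ via Theorem~\ref{thm.Z2Z4.alg.str} and \eqref{eq.splitting.P^2}, obtain $D_{02}=0$ and the three conics by degree comparison, and deduce that $X$ is singular from $D_{01}\cap D_{13}\neq\emptyset$, with \cite[Propositions 3.1, 3.3]{Pardini}. The paper kills $D_{02}$ using \eqref{eq.alg.str.L_01L_02} and \eqref{eq.alg.str.L_01L_11} rather than your \eqref{eq.alg.str.L_01L_03} and \eqref{eq.alg.str.L_02L_02}, which is immaterial.

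The step you flagged works out as you expect. The kernel of $\mathbb Z_4\times\mathbb Z_4\to G$, $(a,b)\mapsto a(0,1)+b(1,3)$, is generated by $(2,2)$, which acts as $-1$ on the local $(u,v)$-chart. This map is onto $G$, so there is a single point of $X$ over each of the $4$ points of $D_{01}\cap D_{13}$, and each is an $A_1$ singularity.
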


\begin{proof} We first prove (1). 
    Statement  (i)  follows from  Proposition~\ref{prop.split.minimal.degree} (1), since we may assume, by Theorem~\ref{thm.Z2Z4.alg.str} (1) (i), that $L_{13}=\omega_W^*(1)= \mathcal O_{\mathbb P^2}(4)$. Theorem~\ref{thm.Z2Z4.alg.str} (1) (ii) then says that $D_{03}= D_{11} = D_{12}=0$. This together with \eqref{eq.alg.str.L_01L_02}, \eqref{eq.alg.str.L_01L_11}, \eqref{eq.alg.str.L_02L_02}, \eqref{eq.alg.str.L_10L_12} and (i),  implies $D_{02}=0$ and (iii). Statement (iv) follows from Theorem~\ref{thm.Z2Z4.alg.str} (1) (iii). Since $D_{01}$
 and $D_{13}$ are conics, they meet; since $H_{01}$ and $H_{13}$ do not fulfill \cite[Proposition 3.1 iii) a)]{Pardini}, $X$ is singular. To have a surface $X$ with the mildest possible singularities the best we can do is to choose $D_{01}, D_{10}, D_{13}$ smooth, meeting each other transversally and such that the intersection of the three of them is empty (this can be achieved by
 taking $(D_{01}, D_{10}, D_{13})$ general in $|\mathcal O_{\mathbb{P}^2}(2)| \times |\mathcal O_{\mathbb{P}^2}(2)| \times |\mathcal O_{\mathbb{P}^2}(2)|$). In this case, \cite[Proposition 3.1]{Pardini} and \cite[Proposition 3.3]{Pardini} complete the proof of (v).  
 
    \smallskip
    \noindent Now we prove (2). Note that, since $L_{a_1a_2}$ and $D_{b_1b_2}$ satisfy (i), (ii), (iii) of (1), it follows that $L_{a_1a_2}$ and $D_{b_1b_2}$ satisfy \eqref{relation.ring.structure} (see the statement of part 1 of the theorem).

    \noindent As explained above, if  $(D_{01}, D_{10}, D_{13})$ is general in $|\mathcal O_{\mathbb{P}^2}(2)| \times |\mathcal O_{\mathbb{P}^2}(2)| \times 
    |\mathcal O_{\mathbb{P}^2}(2)|$, then 
    $D_{01}, D_{10}, D_{13}$ are smooth, meet each other transversally and  the intersection of the three of them is empty.
    The cohomology of the line bundles of $\mathbb{P}^2$ implies that $L_{a_1a_2}$ and $D_{b_1b_2}$ satisfy Theorem~\ref{thm.Z2Z4.alg.str} (1) (v). Then, taking into account that $A_1$ singularities are locally Gorenstein,  Theorem~\ref{thm.Z2Z4.alg.str}, \cite[Propositions 3.1, 3.3]{Pardini} and \cite[Proposition 3.3]{Pardini} complete the proof of (2). 
\end{proof}

\begin{theorem}\label{thm.Hirzebruch.Z2Z4.regular}
  Let $W$ be a Hirzebruch surface $\mathbb{F}_e$ and 
  let $G= \mathbb{Z}_2 \times \mathbb{Z}_4$.
\begin{enumerate}
    \item
    If $\pi: X \longrightarrow W$  is an abelian canonical cover with group $G$ and $X$ is regular, then $e=0$ or $e=1$; $\alpha=1$ (i.e., $W$ is embedded by $|C_0+mf|$, hence, a rational normal scroll); and, after possibly relabeling $D_{b_1b_2}$ by  an automorphism of $G$ and $L_{a_1a_2}$ by the inverse of its dual automorphism on $G^*$ 
     and after possibly swapping $C_0$ and $f$ when $W=\mathbb{F}_0$, the building data of $\pi$ is the following:
 \begin{enumerate}
 \item[(i)]  $L_{13}=\mathcal O_W(3C_0+(m+e+2)f)$,
 \item[] $L_{10}=L_{11}=L_{12}=\mathcal O_W(C_0+(m+1)f)$, 
 \item[] $L_{01}=L_{02}=L_{03}=\mathcal O_W(2C_0+(e+1)f)$;   
  \smallskip
  
 \item[(ii)] $D_{02}=D_{03}=D_{11}=D_{12}=0$;  

 \item[(iii)] $D_{01} \sim D_{13}  \sim 2C_0+(e+1)f$, 
  $D_{10} \sim (2m-e+1)f$.

    \item[(iv)] $D_{01}, D_{10}, D_{13}$ are reduced divisors without common components.  

     \item[(v)] $X$ is singular  and the mildest possible set of singular points of such a surface $X$ consists of $4$ singularities of type $A_1$.

 \smallskip
 \noindent
The $L_{a_1a_2}$ and $D_{b_1b_2}$ in (i), (ii) and (iii) satisfy \eqref{relation.ring.structure}.

 \end{enumerate}

\smallskip
 \color{black}
\item 
 \noindent Conversely, let $e=0,1$, let $\alpha=1$ and let $L_{a_1a_2}$ and $D_{b_1b_2}$ be as in (i), (ii), (iii) above. If $D_{10}$  consists of $2m-e+1$ lines and $(D_{01},  D_{13})$ is general in $|2C_0+(e+1)f|^2$, 
 then
the
 abelian cover $\pi: X \longrightarrow W$ with group  $G$ and building data $L_{a_1a_2}, D_{b_1b_2}$ 
is canonical and $X$ is regular with exactly $4$ singularities of type $A_1$.

\end{enumerate}
 \end{theorem}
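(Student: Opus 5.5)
We follow the pattern of the proofs of Theorems~\ref{thm.Hirzebruch.Z6.regular} and \ref{thm.scroll.regular}. By Theorem~\ref{thm.Z2Z4.alg.str} (1) (i), (ii) we may assume $L_{13}=\omega_W^*(1)=\mathcal O_W((\alpha+2)C_0+(m+e+2)f)$ and $D_{03}=D_{11}=D_{12}=0$. Since $X$ is regular, Proposition~\ref{prop.split.minimal.degree} (2) (a) applies. The six remaining $L_{a_1a_2}$ therefore split into three equal to $\mathcal O_W(C_0+(m+1)f)$ and three equal to $\mathcal O_W((\alpha+1)C_0+f)$, with the first bundle replaced by $\mathcal O_W(2C_0+(e+1)f)$ when $\alpha=1$; when $\alpha\ge2$ we also have $e=0$.

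The relations \eqref{eq.alg.str.L_01L_12}, \eqref{eq.alg.str.L_02L_11}, \eqref{eq.alg.str.L_03L_10} with the vanishing divisors give $L_{01}\otimes L_{12}=L_{02}\otimes L_{11}=L_{03}\otimes L_{10}=L_{13}$. Each of these pairs consists of one bundle of each type. So the data are determined once we know the type of each of $L_{01},L_{02},L_{03}$, giving $2^3$ cases. In each case we compute the classes of the branch divisors from \eqref{eq.alg.str.L_02L_02} (this gives $D_{01}+D_{13}$), \eqref{eq.alg.str.L_01L_02} (this gives $D_{13}$), \eqref{eq.alg.str.L_02L_03} (this gives $D_{01}$), \eqref{eq.alg.str.L_10L_12} (this gives $D_{10}+D_{12}$) and \eqref{eq.alg.str.L_01L_11} (this gives $D_{02}$).

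The main work is to discard the bad cases, as in Theorem~\ref{thm.Z4.non.scroll.regular}. In most of them some $D_b$ has class with a negative coefficient, such as $(1-\alpha)C_0+(2m+1)f$ or $(2\alpha+1)C_0+(1-m)f$. Effectivity then forces $\alpha=1$, or $m=1$ (which forces $\alpha=1$ and $e=0$). Another source of contradictions is a class $aC_0+bf$ with $(aC_0+bf)\cdot C_0<0$ after removing one copy of $C_0$. Then $2C_0$ lies in the fixed part, which contradicts the reducedness in Theorem~\ref{thm.Z2Z4.alg.str} (1) (iii); this is the argument used for $e\ge2$ in Case 1 of Theorem~\ref{thm.Hirzebruch.Z6.regular}. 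The same device rules out $e\ge2$ in the surviving case, where $D_{01}\sim D_{13}\sim 2C_0+(e+1)f$. The surviving case is $L_{01}=L_{02}=L_{03}=\mathcal O_W(2C_0+(e+1)f)$. This yields (i), gives $D_{02}=0$ and the classes in (iii), and (iv) comes from Theorem~\ref{thm.Z2Z4.alg.str} (1) (iii). The $\mathbb F_0$, $m=1$ special cases should reduce to it after swapping $C_0$ and $f$ together with an automorphism of $G$. I expect this case bookkeeping, and checking that every such swap is induced by an automorphism of $G$, to be the main obstacle.

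For (v), $D_{01}\cdot D_{13}=(2C_0+(e+1)f)^2=4$, so these divisors meet, and $H_{01},H_{13}$ violate \cite[Proposition 3.1 iii) a)]{Pardini}; hence $X$ is singular. The mildest configuration has all divisors smooth, meeting transversally, with no triple points. Then \cite[Propositions 3.1, 3.3]{Pardini} give exactly $4$ points of type $A_1$ over $D_{01}\cap D_{13}$, while the points of $D_{10}\cap D_{01}$ and $D_{10}\cap D_{13}$ are smooth by Theorem~\ref{thm.Z2Z4.alg.str} (iii').

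For (2), $2C_0+(e+1)f$ is base-point-free for $e=0,1$, so Bertini gives general $D_{01},D_{13}$ that are smooth and transverse to each other and to the fixed lines of $D_{10}$, with no triple points. Condition (1) (v) of Theorem~\ref{thm.Z2Z4.alg.str} is a routine cohomology check. Since $A_1$ points are Gorenstein, Theorem~\ref{thm.Z2Z4.alg.str} (2) (a) shows that $\pi$ is canonical, and regularity follows from Proposition~\ref{prop.split.minimal.degree} (2) (c).
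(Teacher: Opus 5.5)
Your proposal is correct and follows the paper's proof. Both reduce via Proposition~\ref{prop.split.minimal.degree} (2) (a) to the eight choices for $(L_{01},L_{02},L_{03})$, discard cases by effectivity and by fixed components contradicting reducedness of the branch locus, and finish with \cite[Propositions 3.1, 3.3]{Pardini} and Theorem~\ref{thm.Z2Z4.alg.str} (2).

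The bookkeeping you worry about is light. The paper observes that $\mathcal O_W(D_{02})=L_{01}\otimes L_{03}\otimes L_{02}^{-2}$ is non-effective in all six mixed cases. The all-$\mathcal O_W(C_0+(m+1)f)$ case then survives only for $e=0$, $m=1$, and there swapping $C_0$ and $f$ alone, with no automorphism of $G$, turns it into the main case. One small correction: for $e\ge 2$ in the main case, the contradiction is that $C_0$ is a fixed component of both $|D_{01}|$ and $|D_{13}|$, not that $2C_0$ is fixed in one of them.
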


 \begin{proof}
First we prove (1). 
    By Theorem~\ref{thm.Z2Z4.alg.str} (1) we may assume $L_{13}=\mathcal O_W((\alpha+2)C_0+(m+e+2)f)$. By \eqref{eq.regular.splitting} and \eqref{eq.regular.splitting.a>1} the  line bundles $L_{01}, L_{02}, L_{03}$ are equal to $\mathcal O_W(C_0+(m+1)f)$ or $\mathcal O_W((\alpha+1)C_0+(e+1)f)$, so there are eight possibilities for the triple $L_{01}, L_{02}, L_{03}$. 
     By Theorem~\ref{thm.Z2Z4.alg.str} (1) (ii) and \eqref{eq.alg.str.L_01L_03}, \eqref{eq.alg.str.L_02L_02}, 
    \begin{equation}\label{eq.D_02}
        \mathcal O_W(D_{02})=L_{01} \otimes L_{03} \otimes L_{02}^{-2}.
    \end{equation}
Since $D_{02}$ is effective and $\alpha \geq 1$ and $m \geq e+1$,  \eqref{eq.D_02} implies that the only possible choices among the eight are $L_{01}=L_{02}=L_{03}=\mathcal O_W(C_0+(m+1)f)$ or  $L_{01}=L_{02}=L_{03}=\mathcal O_W((\alpha+1)C_0+(e+1)f)$. In both cases, $D_{02}=0$. Then, this and Theorem~\ref{thm.Z2Z4.alg.str} (1) (ii) imply (1) (ii).
We argue the two cases separately. 

\smallskip
\noindent \underline{\emph{Case 1}}: $L_{01}=L_{02}=L_{03}=\mathcal O_W((\alpha+1)C_0+(e+1)f)$. 

\noindent In this case, \eqref{eq.regular.splitting} and \eqref{eq.regular.splitting.a>1}
imply  $L_{10}=L_{11}=L_{12}=\mathcal O_W(C_0+(m+1)f)$. By (ii) and \eqref{eq.alg.str.L_10L_12}, 
$$D_{10} \sim (1-\alpha)C_0 + (2m+1-e)f.$$ Since $D_{10}$ is effective and $\alpha \ge 1$, we have $\alpha=1$. Then  (ii) and  \eqref{eq.alg.str.L_01L_02},  \eqref{eq.alg.str.L_01L_03} imply 
$$D_{01} \sim D_{13} \sim 2C_0+(e+1)f,$$
so (iii) is proved in this case.
Then $D_{01} \cdot C_0=  D_{13}\cdot C_0= 1-e$, so, if $e \ge 2$, then $C_0$ is contained in both $D_{01}$ and $D_{13}$. This is not possible by Theorem~\ref{thm.Z2Z4.alg.str} (1) (iii).
Then $e=0, 1$. 
Theorem~\ref{thm.Z2Z4.alg.str} (1) (iii) also yields (1) (iv). Theorem~\ref{thm.Z2Z4.alg.str} (1) (iv) implies that $L_{a_1a_2}$ and $D_{b_1b_2}$ in (i), (ii) and (iii) satisfy \eqref{relation.ring.structure}. 

\smallskip
\noindent Finally, the proof of (v) 
follows the same lines as the proof of (v) 
of Theorem~\ref{thm.P2.Z2Z4}, taking into account that $2C_0 + (e+1)f$ is base-point-free and big and that the projection from a smooth curve in $|2C_0 + (e+1)f|$ to $C_0$ has a finite number of ramification points. Thus, if $D_{10}$ consists of $2m-e+1$ distinct lines and $(D_{01}, 
D_{13})$ is general in $|D_{01}| 
\times |D_{13}|$, then
 $D_{01}, D_{13}$ are smooth and meet each other transversally; $D_{10}$  meets each $D_{01}, D_{13}$  transversally and  do not pass through $D_{01} \cap D_{13}$.

\smallskip
\noindent \underline{\emph{Case 2}}: $L_{01}=L_{02}=L_{03}=\mathcal O_W(C_0+(m+1)f)$. 

\noindent In this case, \eqref{eq.regular.splitting} and \eqref{eq.regular.splitting.a>1}
imply  $L_{10}=L_{11}=L_{12}=\mathcal O_W((\alpha+1)C_0+(e+1)f)$. Then \eqref{eq.alg.str.L_10L_12} implies $$D_{10} \sim (2\alpha+1)C_0 + (2e+1-m)f,$$
so $$(D_{10}-C_0) \cdot C_0 = 2(1-\alpha)e + (1-m),$$ which is negative if $e \ge 1$ (since $m \geq e+1$) or if $e=0$ and $m > 1$. Thus, unless $e=0$ and $m=1$, $2C_0$ would be in  the fixed locus of $|D_{10}|$ and this would contradict  Theorem~\ref{thm.Z2Z4.alg.str} (1) (iii).
Therefore $e=0$ and $m=1$ and, by Notation~\ref{notation.Hirzebruch} (2), $\alpha=1$, so $$D_{10} \sim 3C_0  ; \ D_{01} \sim D_{13} \sim C_0+2f.$$
After swapping $C_0$ and $f$ in $\mathbb F_0$, Case 2 becomes a particular subcase of Case 1 (the subcase in which $e=0$ and $m=1$). This completes the proof of (1). 

\smallskip
\noindent The proof of (2) goes along the same lines of the proof of Theorem~\ref{thm.P2.Z2Z4} (2). 
\end{proof} 

\begin{theorem}\label{thm.Hirzebruch.Z2Z4.irregular}
Let $W$ be the Hirzebruch surface $\mathbb{F}_0$ 
and let $G=\mathbb{Z}_2 \times   \mathbb{Z}_4$.
\begin{enumerate}
    \item
    If $\pi: X \longrightarrow W$  is an abelian canonical cover with group $G=\mathbb{Z}_2 \times   \mathbb{Z}_4$ and $X$ is irregular, then 
    $\alpha=1$ (i.e., $W$ is embedded by $|C_0+mf|$, hence, a rational normal scroll); and, after possibly relabeling $D_{b_1b_2}$ by  an automorphism of $G$ and $L_{a_1a_2}$ by the inverse of its dual automorphism on $G^*$ 
    and after possibly swapping $C_0$ and $f$, the surface $X$ and the building data of $\pi$ are as follows:
 \begin{enumerate}
 \item[(i)] $L_{13}=\mathcal O_W(3C_0+(m+2)f)$; 
 
 \item[(ii)] $D_{03}=D_{11}=D_{12}=0$; 
 
 \item[(iii)]  one of these three cases happens: 
 \item[] \underline{Case 1}: $L_{01}=L_{02}=L_{03}=\mathcal O_W(2C_0)$, $L_{10}=L_{11}=L_{12}=\mathcal O_W(C_0+(m+2)f)$; $D_{01}  \sim D_{13} \sim 2C_0$, $D_{02}=0$ and $D_{10} \sim (2m+4)f$. 
 \item[] \underline{Case 2}: 
 {$L_{01}=L_{03}=\mathcal O_W(2C_0+f)$, $L_{02}=\mathcal O_W(2C_0)$, $L_{10}=L_{12}=\mathcal O_W(C_0+(m+1)f)$, $L_{11}=\mathcal O_W(C_0+(m+2)f)$; $D_{01} \sim D_{13} \sim 2C_0$, $D_{02} \sim 2f$, $D_{10} \sim (2m+2)f$.}
 \item[] \underline{Case 3}: $m=1$; $L_{01}=L_{11} =\mathcal O_W(C_0+2f)$, $ L_{02}=L_{12} =\mathcal O_W(2C_0+f)$, $L_{03}=\mathcal O_W(3C_0+f)$, $L_{10} =\mathcal O_W(2f)$; $D_{01} \sim 4C_0$, $D_{02} \sim f$, $D_{10} \sim D_{13} \sim 2f$. 

 \item[(iv)] The divisor $D_{01}+ D_{02} + D_{10} + D_{13}$ consists of distinct lines, linearly equivalent to $C_0$ or $f$.

 \item[(v)] In Case 1 of (iii), $X$ is smooth and $q(X)=3$.

 \item[(vi)] In Cases 2 and 3 of (iii), $X$ has 16 singularities of type $A_1$ and $q(X)=1$. 
\end{enumerate}

\smallskip
 \noindent
The $L_{a_1a_2}$ and $D_{b_1b_2}$ in (i), (ii) and (iii) satisfy \eqref{relation.ring.structure}. 
\smallskip

 \color{black}
\item 
 \noindent Conversely, 
 let $L_{a_1a_2}$ and $D_{b_1b_2}$ be as in (i), (ii), (iii) above. If  (iv) above holds, 
 then
 the abelian  cover $\pi: X \longrightarrow W$ with group  $G$ and  building data $L_{a_1a_2}, D_{b_1b_2}$ is canonical and $X$ is irregular, with $q(X)$ and the singularities of $X$ as in (v) or (vi) above, accordingly.
\end{enumerate}
 \end{theorem}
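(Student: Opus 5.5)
The proof will follow the pattern of Theorems~\ref{thm.Hirzebruch.irregular} and \ref{thm.Hirzebruch.Z2Z4.regular}. By Theorem~\ref{thm.Z2Z4.alg.str} (1) (i), (ii), which applies because Pic$\,\mathbb F_0$ is torsion free, we may assume $L_{13}=\omega_W^*(1)=\mathcal O_W((\alpha+2)C_0+(m+2)f)$ and $D_{03}=D_{11}=D_{12}=0$. This gives (1) (i) and (ii) directly. By Proposition~\ref{prop.split.minimal.degree} (2) (b), the remaining six line bundles are taken from the list in \eqref{eq.irregular.splitting}, with $n_0+n_0'+n_1+n_1'=3$ and $n_0+n_0'+n_1'>0$. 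Self-duality pairs them through $\mathcal L\mapsto \mathcal L^*\otimes L_{13}$, and with $D_{03}=D_{11}=D_{12}=0$ the relations \eqref{eq.alg.str.L_01L_12}, \eqref{eq.alg.str.L_02L_11}, \eqref{eq.alg.str.L_03L_10} realize this pairing as $L_{01}\leftrightarrow L_{12}$, $L_{02}\leftrightarrow L_{11}$, $L_{03}\leftrightarrow L_{10}$. So it suffices to choose $L_{01}$, $L_{02}$, $L_{03}$ from the eight nontrivial summands, subject to this pairing and to the condition on the $n$'s. The relation \eqref{eq.alg.str.L_02L_03} with $D_{11}=0$ also gives $\mathcal O(D_{01})=L_{02}\otimes L_{03}\otimes L_{01}^*$, which removes one more degree of freedom.

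Next comes the effectivity sieve. The relations of Proposition~\ref{prop.alg.struct.Z2Z4} express each surviving divisor as a combination of the $L$'s:
\begin{equation*}
\mathcal O_W(D_{02})=L_{01}\otimes L_{03}\otimes L_{02}^{-2},\qquad
\mathcal O_W(D_{10})=L_{10}\otimes L_{12}\otimes L_{02}^{*},
\end{equation*}
$D_{01}$ is given as above, and $D_{13}$ comes from \eqref{eq.alg.str.L_01L_02} or \eqref{eq.alg.str.L_13L_13}. For every admissible triple I will compute the classes $aC_0+bf$ of $D_{01},D_{02},D_{10},D_{13}$ and discard any triple where a coefficient is negative. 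Where a coefficient of $C_0$ becomes $1-\alpha$, effectivity forces $\alpha=1$. Where the coefficient of $f$ becomes $1-m$, it forces $m=1$, hence $\alpha=1$ by Notation~\ref{notation.Hirzebruch}. This is how $\alpha=1$ is obtained in every surviving case.

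I also need to discard triples in which some $D_b$ with class $kC_0$ or $kf$ must share a component with another $D_{b'}$, since $D_{01}+D_{02}+D_{10}+D_{13}$ is reduced by Theorem~\ref{thm.Z2Z4.alg.str} (1) (iii). Finally, I will identify cases that differ only by swapping $C_0$ and $f$ or by an automorphism of $G$, as was done at the end of the proof of Theorem~\ref{thm.scroll.regular}. What should remain is exactly Cases 1–3 of (iii). I expect this bookkeeping to be the main obstacle, because there are many triples and the relabeling identifications have to be verified against \eqref{relation.ring.structure}, not only against the linear equivalence classes. To keep the count manageable I would first fix $L_{02}$, since it is $2$-divisible-like through \eqref{eq.alg.str.L_02L_02}, and then range over $L_{01}$.

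Once the cases are fixed, I will prove (iv). Each $D_b$ has class $kC_0$ or $kf$ on $\mathbb F_0$, so it is a union of $k$ fibres of one ruling, and reducedness of the total branch locus makes all these lines distinct. For (v) and (vi) I will use Pardini's Propositions 3.1 and 3.3, as in Theorem~\ref{thm.Z2Z4.alg.str}. In Case 1, $D_{02}=0$, the divisors $D_{01}$ and $D_{13}$ lie in the same ruling and are disjoint, and $D_{10}$ meets them transversally; the pair $H_{10}$, $H_{01}$ and the pair $H_{10}$, $H_{13}$ both satisfy condition iii) a), so $X$ is smooth. In Cases 2 and 3, the pairs among $D_{01},D_{02},D_{13}$ that lie in different rulings meet transversally. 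For instance, in Case 2 the divisor $D_{02}\sim 2f$ meets $D_{01}+D_{13}\sim 4C_0$ in $8$ points, and each such point gives $2$ $A_1$ points above it, for $16$ in total; this count needs checking through the stabilizer computation in Pardini 3.3. The irregularity comes from $h^1$ of the summands via \eqref{eq.irregular.splitting}, using $h^1(\mathcal O(-(m+k)f))=m+k-1$ and $h^1(\mathcal O(-2C_0))=1$. The converse (2) follows from Theorem~\ref{thm.Z2Z4.alg.str} (2) (a): $A_1$ singularities are Gorenstein, and condition (v) there is an elementary cohomology check.
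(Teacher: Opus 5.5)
Your proposal is correct and follows essentially the paper's route. Both normalize $L_{13}=\omega_W^*(1)$ and $D_{03}=D_{11}=D_{12}=0$, express $D_{01},D_{02},D_{10},D_{13}$ through $L_{01},L_{02},L_{03}$, sieve by effectivity starting from $L_{02}$, identify cases via the $C_0\leftrightarrow f$ swap and the automorphism exchanging $\chi_{01},\chi_{03}$, and finish with Pardini's Propositions 3.1, 3.3 and $h^1$ of the splitting; the only difference is that the paper replaces your enumeration by the inequalities $\alpha_{01}+\alpha_{03}\ge 2\alpha_{02}$ and $\alpha_{01}+\alpha_{02}+\alpha_{03}\le 2\alpha+4$ (and their $f$-analogues), which give $3\alpha_{02}\le 2\alpha+4$, $3\beta_{02}\le 2m+4$ and exclude $\alpha\ge 2$ at once.

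One caution for the stabilizer check you flag: in Case 3 the $4$ points of $D_{01}\cap D_{02}$ each carry two $A_1$ points, but the $8$ points of $D_{01}\cap D_{13}$ have stabilizer $H_{01}+H_{13}=G$ and so each carry a single $A_1$, which gives $16$ rather than the $24$ that extrapolating from Case 2 would suggest.
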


\begin{proof}
If $a_1=i \in \{0,1\}$ and $a_2 = j \in \{0,1,2,3\}$, let $\alpha_{ij}, \beta_{ij}$ be non negative integers such that $L_{a_1a_2}=\mathcal O_W(\alpha_{ij}C_0+\beta_{ij}f)$. Recall that, by Proposition~\ref{prop.split.minimal.degree} (2) (b), $e=0$. By Theorem~\ref{thm.Z2Z4.alg.str} (1) (i), we may assume (1) (i), so $\alpha_{13}=\alpha+2$ and $\beta_{13}=m+2$. Theorem~\ref{thm.Z2Z4.alg.str} (1) (ii) implies (1) (ii). From \eqref{eq.alg.str.L_01L_02}, \eqref{eq.alg.str.L_01L_03}, \eqref{eq.alg.str.L_01L_12},  \eqref{eq.alg.str.L_02L_02}, \eqref{eq.alg.str.L_02L_03}, \eqref{eq.alg.str.L_03L_10}, \eqref{eq.alg.str.L_10L_12} and  (1) (ii), we get
 \begin{equation}\label{eq.divisors.Z2Z4}
     \begin{matrix}
 \mathcal O_W(D_{01}) & = &  L_{02} \otimes L_{03} \otimes L_{01}^*\\
\mathcal O_W(D_{02}) & = &  L_{01} \otimes L_{03} \otimes L_{02}^{-2} \\
\mathcal O_W(D_{10}) & = &  L_{13}^{\otimes 2} \otimes L_{01}^* \otimes L_{02}^* \otimes L_{03}^* \\
\mathcal O_W(D_{13}) & = &  L_{01} \otimes L_{02} \otimes L_{03}^* 
     \end{matrix}
     \end{equation}
Since $D_{01}, D_{02}, D_{10}, D_{13}$ are effective divisors on $W$,  \eqref{eq.divisors.Z2Z4} yields 
\begin{equation}\label{ineq.a}
    \begin{matrix}
        \alpha_{02} + \alpha_{03} - \alpha_{01} & \ge & 0 \\
       \alpha_{01} + \alpha_{03} - 2\alpha_{02} & \ge & 0 \\ 
            2\alpha+4 -\alpha_{01} - \alpha_{02} - \alpha_{03} & \ge & 0 \\
                 \alpha_{01} + \alpha_{02} - \alpha_{03} & \ge & 0 \\
    \end{matrix}
\end{equation}
and 
\begin{equation}\label{ineq.b}
    \begin{matrix}
        \beta_{02} + \beta_{03} - \beta_{01} & \ge & 0 \\
       \beta_{01} + \beta_{03} - 2\beta_{02} & \ge & 0 \\ 
            2m+4 -\beta_{01} - \beta_{02} - \beta_{03} & \ge & 0 \\
                 \beta_{01} + \beta_{02} - \beta_{03} & \ge & 0 \\
    \end{matrix}
\end{equation}
We divide the proof of (1) (iii) in several steps:

\smallskip
\noindent \underline{\emph{Step 1.}} $a_{02} \neq \alpha+2$ and $\beta_{02} \neq m+2$:

\noindent Assume $\alpha_{02} = \alpha+2$. Then \eqref{ineq.a} implies
$$2\alpha + 4 \ge \alpha_{01} + \alpha_{02} + \alpha_{03} \ge 3\alpha_{02} = 3\alpha + 6,$$ that leads to 
$\alpha \le -2$, a contradiction. Analogously, it follows from \eqref{ineq.b} that  $\beta_{02} \neq m+2$.

\smallskip

\noindent \underline{\emph{Step 2.}} The case $\alpha \ge 2$ cannot occur: 

\noindent Let us assume $\alpha \geq 2$ and arrive at a contradiction. We show first that $\alpha_{02}= 0$ or $1$. Indeed, suppose the contrary. By \eqref{eq.irregular.splitting} and Step 1, $\alpha_{02} = \alpha+1$, so \eqref{ineq.a} implies
$$2\alpha + 4 \ge \alpha_{01} + \alpha_{02} + \alpha_{03} \ge 3\alpha_{02} = 3\alpha + 3,$$
but this implies $\alpha \le 1$. Since $\alpha_{02}= 0$ or $1$, \eqref{eq.irregular.splitting} and Step 1 imply $\beta_{02} = m+1$. Then \eqref{ineq.b} implies
$$2m + 4 \ge \beta_{01} + \beta_{02} + \beta_{03} \ge 3\beta_{02} = 3m + 3,$$
which implies $m \le 1$. According to  Notation~\ref{notation.Hirzebruch} (2), $m \ge \alpha$, so $m \ge 2$ and we get a contradiction. 

\smallskip 
\noindent \underline{\emph{Step 3.}} Case $\alpha=1$ and $m \geq 2$: 

\noindent We just saw in Step 2 that $\beta_{02}=m+1$ implies $m \le 1$, so \eqref{eq.irregular.splitting} implies $\beta_{02} =0$ or $1$ and $L_{02} = \mathcal O_W(2C_0)$
or $\mathcal O_W(2C_0+f)$. 
\smallskip

\noindent   \underline{\emph{Step 3.1.}} Case $L_{02} = \mathcal O_W(2C_0)$: 

\noindent If $L_{02} = \mathcal O_W(2C_0)$, then \eqref{ineq.a} and \eqref{ineq.b} imply 
$\alpha_{01}+\alpha_{03}=4$ and $\beta_{01}=\beta_{03}$. This and \eqref{eq.irregular.splitting} imply that $L_{01}=L_{03}=\mathcal O_W(2C_0)$ or $L_{01}=L_{03}=\mathcal O_W(2C_0+f)$.

\smallskip

\noindent   \underline{\emph{Step 3.2.}} The case  $L_{02} = \mathcal O_W(2C_0+f)$ cannot occur: 

\noindent Assume $L_{02} = \mathcal O_W(2C_0+f)$. 
As in Step 3.1, $\alpha_{01}+\alpha_{03}=4$,
but now $\beta_{01}+\beta_{03} \ge 2$. Then, either $\alpha_{01}=\alpha_{03}=2$ and then, by \eqref{eq.irregular.splitting} and $\beta_{01}+\beta_{03} \ge 2$,
$L_{01}=L_{03}=\mathcal O_W(2C_0+f)$, 
but this contradicts Proposition~\ref{prop.split.minimal.degree} (2) (b), that says  $n_1 \neq 3$;  or $\alpha_{01}=1$ and $\alpha_{03}=3$; or vice versa.  In the last two cases, \eqref{eq.irregular.splitting} and \eqref{ineq.b} imply either $m \le \beta_{01} - \beta_{03} \le 1$ or
$m \le \beta_{03} - \beta_{01} \le 1$, respectively, which contradicts the hypothesis of Step 3. 

\smallskip 
\noindent \underline{\emph{Step 4.}} Case $\alpha=1$ and $m =1$: 

\noindent By Step 1 and \eqref{eq.irregular.splitting}, and after swapping $C_0$ and $f$ if necessary, either $L_{02}=\mathcal O_W(2C_0)$ or  $\mathcal O_W(2C_0+f)$. 

\smallskip 

\noindent   \underline{\emph{Step 4.1.}} Case $L_{02} = \mathcal O_W(2C_0)$: 

\noindent
This case was already dealt with in Step 3.1 (note that the arguments used there do not use the hypothesis $m \ge 2$ of Step 3), 
Therefore $L_{01}=L_{03}=\mathcal O_W(2C_0)$ or $L_{01}=L_{03}=\mathcal O_W(2C_0+f)$ in this case.

\smallskip 

\noindent   \underline{\emph{Step 4.2.}} Case $L_{02} = \mathcal O_W(2C_0+f)$: 

\noindent We argue like in Step 3.2.  Then  either $\alpha_{01}=\alpha_{03}=2$, 
{so we get again a contradiction};  or $\alpha_{01}=1$ and $\alpha_{03}=3$; or vice versa. 

\noindent
If
$\alpha_{01}=1$ and $\alpha_{03}=3$, then $\beta_{03}=\beta_{01}-1$, so \eqref{eq.irregular.splitting} implies 
that $L_{01}=\mathcal O_W(C_0+2f)$ and $L_{03}=\mathcal O_W(3C_0+f)$. 

\noindent If $\alpha_{01}=3$ and $\alpha_{03}=1$, then $\beta_{03}=\beta_{01}+1$. Since there is an automorphism of $G^*$ that fixes $\chi_{02}$ and $\chi_{13}$ and swaps $\chi_{01}$ and $\chi_{03}$, after  relabeling the  $L_{a_1a_2}$ by such automorphism and relabeling the $D_{b_1b_2}$ by the automorphism of $G$ that is the inverse of its dual, we may assume $\alpha_{01}=1$, $\alpha_{03}=3$ and $\beta_{03}=\beta_{01}-1$, so again $L_{01}=\mathcal O_W(C_0+2f)$ and $L_{03}=\mathcal O_W(3C_0+f)$.  

\smallskip
\noindent In  all the cases of Steps 3.1 and 4, once we know the values of $L_{01}, L_{02}$ and $L_{03}$, the values of $L_{12}, L_{11}$ and $L_{10}$ are determined by Theorem~\ref{thm.Z2Z4.alg.str} (1) (ii) and \eqref{eq.alg.str.L_01L_12}, \eqref{eq.alg.str.L_02L_11}, 
\eqref{eq.alg.str.L_03L_10}. Likewise the divisors $D_{b_1b_2}$ are determined by Theorem~\ref{thm.Z2Z4.alg.str} (1) (ii) and \eqref{eq.divisors.Z2Z4}. This way, Step 3.1 gives rise to Cases 1 and 2 of (1) (iii) when $m \ge 2$; Step 4.1 gives rise to Cases 1 and 2 of (1) (iii) when $m =1$; and Step 4.2 gives rise to Case 3 of (1) (iii). 

 \smallskip
\noindent Theorem~\ref{thm.Z2Z4.alg.str} (1) (iii)  yields (1) (iv). Theorem~\ref{thm.Z2Z4.alg.str} (1) (iv) implies that $L_{a_1a_2}$ and $D_{b_1b_2}$ in (i), (ii) and (iii) satisfy \eqref{relation.ring.structure}. To prove (v) note that (iii) and (iv) imply that $D_{01}, D_{10}, D_{13}$ are smooth, $D_{01} \cap D_{13} = \emptyset$ and $D_{10}$ meets each $D_{01}, D_{13}$ transversally.  In addition, the pairs $H_{01}, H_{10}$ and $H_{10}, H_{13}$ satisfy the condition 
in \cite[Proposition 3.1 iii) a)]{Pardini}, so \cite[Proposition 3.1]{Pardini} implies $X$ is smooth. The computation $q(X)=3$ follows from computing $h^1(\pi_*\mathcal{O}_X)$, taking into account the values of the $L_{a_1a_2}$ of Case 1 of (iii). This finishes the proof of (v). 

\smallskip
\noindent The same computations as above give $q(X)=1$ in Cases 2 and 3 of (iii). To finish the proof of (vi), we argue similarly to the proof of (v). This time, in Case 2 of (iii), it follows from (iv) that $D_{01}$ do not meet $D_{13}$, that $D_{02}$ meets   each of $D_{01}$, $D_{13}$ in exactly $4$ points and such intersections are transverse. The pairs $H_{01}, H_{02}$ and $H_{02}, H_{13}$ do not satisfy the condition 
in \cite[Proposition 3.1 iii) a)]{Pardini}. Then the number and type of the singularities of $X$ of Case 2 of (iii) follow from \cite[Proposition 3.3]{Pardini}. The argument for the singularities of Case 3 of (iii) is the same. This completes the proof of (vi).  

\smallskip
\noindent Now we prove (2). If 
$D_{01}, D_{02}, D_{10}, D_{13}$ satisfy (1) (iv), then, as in the proof of Theorem~\ref{thm.P2.Z2Z4} (2),  Theorem~\ref{thm.Z2Z4.alg.str} implies the existence of an 
abelian canonical cover $\pi: X \longrightarrow W$ associated to $L_{a_1a_2}, D_{b_1b_2}$. In addition,  (i), (ii), (iii) and (iv) of (1)  imply (v) and (vi) of (1) for such $X$. 
\end{proof}

 \subsection{Covers with group $\mathbb{Z}_8$}

\begin{theorem}\label{thm.noZ8.covers}
There are no abelian canonical covers  $\pi: X \longrightarrow W$ with group $\mathbb{Z}_8$.
\end{theorem}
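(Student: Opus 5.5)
The plan is to combine the structural result for $\mathbb Z_8$, Theorem~\ref{thm.Z8.alg.str}, with the explicit splittings of Proposition~\ref{prop.split.minimal.degree}. I will show that one single relation of the algebra structure cannot be satisfied by any admissible choice of line bundles. Throughout, $W$ is $\mathbb P^2$ or a Hirzebruch surface, as in the rest of this section. In both cases $\mathrm{Pic}\,W$ is torsion free, so Theorem~\ref{thm.Z8.alg.str} (1) applies. Hence we may assume $L_7=\omega_W^*(1)$ and $D_3=D_5=D_6=D_7=0$. Substituting this into \eqref{eq.alg.str.Z8.L_2L_4} gives
\begin{equation*}
L_2\otimes L_4 = L_6 ,
\end{equation*}
and this is the relation I will contradict in every case. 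I will also use that $L_2,L_4,L_6$ are three of the line bundles $\mathcal L_1,\dots,\mathcal L_6$ of the splitting. Since only one summand of the splitting is isomorphic to $\omega_W^*(1)$, none of them is isomorphic to $L_7$.

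\emph{Case $W=\mathbb P^2$.} By Proposition~\ref{prop.split.minimal.degree} (1), we have $L_a=\mathcal O_{\mathbb P^2}(2)$ for every $a\ne 7$. The relation then reads $\mathcal O(4)=\mathcal O(2)$, which is absurd.

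\emph{Case $W$ a Hirzebruch surface, $X$ regular.} By Proposition~\ref{prop.split.minimal.degree} (2) (a) with $n=8$, each of $L_2,L_4,L_6$ is either
\begin{equation*}
A=\mathcal O_W(C_0+(m+1)f) \quad \text{or} \quad B=\mathcal O_W((\alpha+1)C_0+(e+1)f),
\end{equation*}
where $e=0$ if $\alpha\ge2$. Moreover $A\otimes B=L_7$, so a mixed product $A\otimes B$ is excluded by the remark above. I compare coefficients of $C_0$ and $f$ in the remaining products.
\begin{itemize}
\item $B\otimes B$ has $C_0$-coefficient $2\alpha+2>\alpha+1\ge 1$, so it is neither $A$ nor $B$.
\item $A\otimes A=2C_0+(2m+2)f$ is not $A$. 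It could equal $B$ only if $\alpha=1$ and $2m+2=e+1$, which is impossible because $m\ge e+1$.
\end{itemize}
So $L_2\otimes L_4=L_6$ fails.

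\emph{Case $W$ a Hirzebruch surface, $X$ irregular.} By Proposition~\ref{prop.split.minimal.degree} (2) (b), $e=0$ and each of $L_2,L_4,L_6$ lies in the list of eight bundles of \eqref{eq.irregular.splitting} other than $\mathcal O_W$ and $L_7$. I sort these by their $f$-coefficient.
\begin{itemize}
\item \emph{Large} bundles have $f$-coefficient $m+1$ or $m+2$ and $C_0$-coefficient $0$ or $1$.
\item \emph{Small} bundles have $f$-coefficient $0$ or $1$ and $C_0$-coefficient $\alpha+1$ or $\alpha+2$.
\end{itemize}
Now I check the three kinds of products.
\begin{itemize}
\item Two large bundles give $f$-coefficient $\ge 2m+2>m+2$, which no bundle in the list has.
\item Two small bundles give $C_0$-coefficient $\ge 2\alpha+2>\alpha+2$, which no bundle in the list has.
\item One of each gives $C_0$-coefficient $\ge\alpha+1$ and $f$-coefficient $\ge m+1$. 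In the list, only $L_7=\mathcal O_W((\alpha+2)C_0+(m+2)f)$ has both, and it is excluded.
\end{itemize}
Hence no abelian canonical cover with group $\mathbb Z_8$ exists.

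The main obstacle is this irregular case. There I must make sure the coefficient bookkeeping covers all combinations allowed by \eqref{eq.irregular.splitting}, and that the exclusion of $L_6\cong L_7$ is justified by the multiplicity-one appearance of $\omega_W^*(1)$ in the splitting.
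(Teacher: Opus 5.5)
Your proof is correct and follows essentially the same route as the paper. Both arguments first use Theorem~\ref{thm.Z8.alg.str} (1) (ii) to obtain a relation $L_a\otimes L_b=L_c$ among summands other than $L_7=\omega_W^*(1)$, and then exclude it by comparing $C_0$- and $f$-coefficients in the splittings of Proposition~\ref{prop.split.minimal.degree}, treating $\mathbb P^2$, regular Hirzebruch and irregular Hirzebruch separately. The only difference is the relation chosen: you use $L_2\otimes L_4=L_6$ (from $D_3=D_7=0$), while the paper uses $L_1\otimes L_2=L_3$ (from $D_3=D_6=D_7=0$), and either works equally well.
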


\begin{proof} 
By Theorem~\ref{thm.Z8.alg.str} (1) 
  we may  assume $L_{7} = \omega_{W}^*(1)$ without any loss of generality. By \eqref{eq.alg.str.Z8.L_1L_2} and Theorem~\ref{thm.Z8.alg.str} (1) (iii), we have 
  \begin{equation}\label{eq.alg.str.Z8.for.contradiction}
   L_{1} \otimes L_{2} = L_{3}.    
  \end{equation}
  We split the proof into several cases.
\smallskip

\noindent \underline{\emph{Case 1}}: $W=\mathbb P^2$.

\smallskip
\noindent   If $W=\mathbb{P}^2$, by Proposition~\ref{prop.split.minimal.degree} (1), then $i(W)$ is linear $\mathbb{P}^2$. Then  $L_{7} = \omega_{W}^*(1)=\mathcal O_W(4)$, so Proposition~\ref{prop.split.minimal.degree} (1) implies 
  $L_1=L_2=L_3=\mathcal{O}_{W}(2)$ and this contradicts \eqref{eq.alg.str.Z8.for.contradiction}.  

  \smallskip

\noindent \underline{\emph{Case 2}}: $W=\mathbb F_e$ and $X$ is regular.
Since  $L_{7} = \omega_{W}^*(1)$, \eqref{eq.regular.splitting},  \eqref{eq.regular.splitting.a>1} and \eqref{eq.alg.str.Z8.for.contradiction} imply $\alpha=1$,  $L_1=L_2=\mathcal{O}_{W}(C_{0} +(m+1)f)$ and 
$L_3=\mathcal{O}_{W}(2C_{0} +(e+1)f)$. Then $2m+2 =e+1$. Since $m \geq e+1$, then $m \le -2$, which is a contradiction. 

\smallskip

\noindent \underline{\emph{Case 3}}: $X$ is irregular. Since  $L_{7} = \omega_{W}^*(1)$ and $\alpha$ is positive, \eqref{eq.irregular.splitting} and \eqref{eq.alg.str.Z8.for.contradiction} imply
that, if the coefficient of $C_0$ in the linear equivalence class of $L_1$ or $L_2$ is $\alpha+1$ or $\alpha+2$, then the coefficient of $C_0$ in the linear equivalence class of $L_3$ is at least $\alpha+1$ and  the coefficient of $f$ in the linear equivalence class of $L_3$ is at most $1$. Then \eqref{eq.irregular.splitting}, \eqref{eq.alg.str.Z8.for.contradiction}  and $\alpha$ being positive imply the coefficient of $C_0$ in the linear equivalence class of $L_1 \otimes L_2$ is at most $\alpha+2$, so the coefficient of $f$ in the linear equivalence class of $L_1 \otimes L_2$ is at least $m+1$. Since, by \eqref{eq.alg.str.Z8.for.contradiction}, the coefficients of $f$ in  the linear equivalence classes of $L_1 \otimes L_2$ and $L_3$ are equal, we have $m \leq 0$, which is a contradiction. 

\smallskip
\noindent If  the coefficient of $C_0$ in the linear equivalence class of $L_1$ or $L_2$ is $0$ or $1$, then the coefficient of $f$ in the linear equivalence class of $L_1$ or $L_2$ is $m+1$ or $m+2$, so an argument as the above, interchanging the roles of $C_0$ and $f$, yields again a contradiction. 
\end{proof}

\section*{Acknowledgments} We are most grateful to Purnaprajna Bangere and Roberto Pignatelli for sharing their time with us for very helpful discussions.


\end{document}